\newcommand{\rbal}{r_{\mathrm{bal}}}
\newcommand{\rupper}{r_{\mathrm{upper}}}
\newcommand{\rBP}{r_{\mathrm{BP}}}
\newcommand\pd{p_{\vec d}} 
\newcommand\wMAJ{w_{\mathrm{maj}}} 
\newcommand\bemph[1]{{\bf\em #1}}
\newcommand\mymap{p} 
\newcommand\sigmaMAJ{\sigma_{maj}} 
\newcommand\PHId{\PHI_{\vec d}} 
\newcommand\PHIdm{\PHI_{\vec d,\vec m}} 
\def\vec#1{\mathchoice{\mbox{\boldmath$\displaystyle#1$}}
{\mbox{\boldmath$\textstyle#1$}}
{\mbox{\boldmath$\scriptstyle#1$}}
{\mbox{\boldmath$\scriptscriptstyle#1$}}}
\newcommand{\qed}{\hfill$\Box$\smallskip}
\newenvironment{proof}{\emph{Proof.}}{}
\newcommand\SIGMA{\vec\sigma} 
\newcommand\OMEGA{\vec\omega} 
\newcommand\TAU{\vec\tau} 
\newcommand\Zgood{Z'}
\newtheorem{definition}{Definition}[section]
\newtheorem{remark}[definition]{Remark}
\newtheorem{theorem}[definition]{Theorem}
\newtheorem{lemma}[definition]{Lemma}
\newtheorem{proposition}[definition]{Proposition}
\newtheorem{corollary}[definition]{Corollary}
\newtheorem{fact}[definition]{Fact}
\newtheorem{conjecture}[definition]{Conjecture}
\newcommand\rk{r_{k\mathrm{-SAT}}}
\newcommand\sign{\mathrm{sign}}
\newcommand\inv{\mathrm{inv}}
\newcommand\id{\mathrm{id}}
\newcommand\dist{\mbox{dist}} 
\newcommand\PHI{\vec\Phi}
\newcommand\cA{\mathcal{A}} 
\newcommand\cB{\mathcal{B}} 
\newcommand\cC{\mathcal{C}} 
\newcommand\cD{\mathcal{D}}
\newcommand\cE{\mathcal{E}}
\newcommand\cH{\mathcal{H}} 
\newcommand\cS{\mathcal{S}} 
\newcommand\cT{\mathcal{T}}
\newcommand\cL{\mathcal{L}} 
\newcommand\cM{\mathcal{M}} 
\newcommand\cO{\mathcal{O}} 
\newcommand\cP{\mathcal{P}} 
\newcommand\cY{\mathcal{Y}} 
\newcommand\cW{\mathcal{W}} 
\newcommand\cZ{\mathcal{Z}} 
\def\cR{{\mathcal R}}
\def\cC{{\mathcal C}}
\def\cE{{\cal E}}
\newcommand\eul{\mathrm{e}} 
\newcommand\eps{\varepsilon} 
\newcommand\ZZ{\mathbf{Z}} 
\newcommand\Var{\mathrm{Var}} 
\newcommand\Erw{\mathrm{E}} 
\newcommand\pr{\mathrm{P}}
\newcommand{\vecone}{\vec{1}}
\newcommand{\Vol}{\mathrm{Vol}}
\newcommand{\Po}{{\rm Po}} 
\newcommand{\Bin}{{\rm Bin}} 
\newcommand{\Be}{{\rm Be}}
\newcommand{\bink}[2] {{{#1}\choose {#2}}}
\newcommand\ra{\rightarrow} 
\newcommand\bc[1]{\left({#1}\right)} 
\newcommand\cbc[1]{\left\{{#1}\right\}} 
\newcommand\bcfr[2]{\bc{\frac{#1}{#2}}} 
\newcommand\brk[1]{\left\lbrack{#1}\right\rbrack} 
\newcommand\norm[1]{\left\|{#1}\right\|} 
\newcommand\abs[1]{\left|{#1}\right|}
\newcommand\RR{\mathbf{R}} 
\newcommand{\Whp}{W.h.p.} 
\newcommand{\whp}{w.h.p.} 
\newcommand{\stacksign}[2]{{\stackrel{\mbox{\scriptsize #1}}{#2}}}
\newcommand{\Erdos}{Erd\H{o}s}
\newcommand{\Renyi}{R\'enyi}
\newcommand{\Exp}{\Erw}
\newcommand\Lem{Lemma}
\newcommand\Prop{Proposition}
\newcommand\Thm{Theorem}
\newcommand\Cor{Corollary}
\newcommand\Sec{Section}
\begin{document} 

\title{\bf Going after the k-SAT Threshold}

\author{
Amin Coja-Oghlan\thanks{%
Goethe University, Mathematics Institute, Frankfurt 60054, Germany,
  {\tt acoghlan@math.uni-frankfurt.de}. Supported by ERC Starting Grant 278857--PTCC (FP7).}
\and
Konstantinos Panagiotou\thanks{%
	University of Munich, Mathematics Institute, Theresienstr.\ 39, 80333 M\"unchen, Germany,
       {\tt kpanagio@math.lmu.de},  Supported by DFG grant PA 2080/2-1.}
}
\date{\today}

\maketitle

\begin{abstract}
Random $k$-SAT  is the single most intensely studied example of a random constraint satisfaction problem.
But despite substantial progress over the past decade,
the threshold for the existence of satisfying assignments is not known precisely for any $k\geq3$.
The best current results, based on the second moment method, yield upper and lower bounds
that differ by an additive $k\cdot \frac{\ln2}2$, a term that is unbounded in~$k$
	(Achlioptas, Peres: STOC~2003).
The basic reason for this gap is 
	the inherent asymmetry of the Boolean value `true' and `false'
		in contrast to the perfect symmetry, e.g., among the various colors in a graph coloring problem.
Here we develop a new \emph{asymmetric second moment method}
that allows us to tackle this issue head on for the first time in the theory of random CSPs.
This technique enables us to compute the
$k$-SAT threshold up to an additive $\ln2-\frac12+O(1/k)\approx 0.19$.
Independently of the rigorous work, physicists have developed a sophisticated but non-rigorous technique called the ``cavity method''
for the study of random CSPs (M\'ezard, Parisi, Zecchina: Science~2002).
Our result matches the best bound that can be obtained from the so-called ``replica symmetric'' version of the cavity method,
and indeed our proof directly harnesses parts of the physics calculations.
\end{abstract}

\section{Introduction}

Since the early 2000s physicists have developed a sophisticated but highly non-rigorous technique
called the ``cavity method''
for the study of random constraint satisfaction problems. 
This method allowed them to put forward a very detailed \emph{conjectured} picture according to which
various phase transitions affect both computational and structural properties of random CSPs.
In addition, the cavity method has inspired new message passing algorithms
called \emph{Belief/Survey Propagation guided decimation}.
Over the past few years there has been significant progress in turning bits and pieces of the physics picture into rigorous theorems.
Examples include results on the interpolation method~\cite{Energetic,BGT10} or the geometry of the solution space~\cite{Barriers,Molloy,Prasad}
and their algorithmic implications~\cite{DPLL,BetterAlg}.

In spite of this progress, substantial gaps remain.
Perhaps most importantly, in most random CSPs the threshold for the existence of solutions is not known precisely.
In 
the relatively simple case of the random $k$-NAESAT (``Not-All-Equal-Satisfiability'')
problem the difference between the best current lower and upper bounds is
as tiny as $2^{-\Omega(k)}$~\cite{Catching}.
By contrast, in random graph $k$-coloring, a problem already studied by \Erdos\ and \Renyi\ in the 1960s,
 the best current bounds differ by $\Theta(\ln k)$~\cite{AchNaor}.
Hence, the difference is \emph{unbounded} in terms of the number of colors.
Even worse, in random $k$-SAT the gap is as big as $\Theta(k)$~\cite{yuval}.
Yet 
random $k$-SAT is probably the single most important example of a random CSP, not least
due to the great amount of experimental and algorithmic work conducted on it (e.g.,~\cite{KirkpatrickSelman,Kroc}).

The reason for
the large gap in random $k$-SAT 
is that the satisfiability problem lacks a certain \emph{symmetry property}.
This property is vital to the current rigorous proof methods, particularly the \emph{second moment method}, on which most
of the previous work is  based (e.g., \cite{nae,AchNaor,yuval}).
More precisely, in random graph coloring the different colors all play the exact same role:
	for any proper coloring of a graph, another proper coloring can be obtained by simply permuting the color classes
		(e.g., color all red vertices blue and vice versa).
Similarly, in $k$-NAESAT, where the requirement is that in each clause at least one literal must be true and
at least one false,  the binary inverse of any NAE-solution is a NAE-solution as well.
By contrast, 
in $k$-SAT there is an inherent asymmetry between 
the Boolean values `true' and `false'.

As has been noticed in prior work~\cite{nae,yuval}, the  second moment method 
is fundamentally ill-posed to deal with such asymmetries.
Roughly speaking, the second moment method is based on the assumption that in a random CSP instance,
two randomly chosen solutions are perfectly uncorrelated.
But in random $k$-SAT, this is simply not the case.
Indeed, suppose that a variable $x$ appears much more often positively than negatively throughout the formula.
Then it seems reasonable to expect that most satisfying assignments set $x$ to `true', thereby satisfying all clauses where
$x$ appears positively.
More generally, define the \emph{majority vote} $\sigmaMAJ$ to be the assignment that sets variable $x$ to true
if it appears more often positively than negatively, and to false otherwise.
Then we expect that the satisfying assignments of a random formula
``gravitate toward''  $\sigmaMAJ$.
Unfortunately, the correlations among satisfying assignments induced by this drift toward $\sigmaMAJ$ doom the second moment method.
Previously this issue was sidestepped by symmetrizing the problem artificially~\cite{nae,yuval}.
But this inevitably leaves a $\Theta(k)$ gap.

The main contribution of the present work is 
a new \emph{asymmetric second moment method}
that enables us to tackle this problem  head on.
A key feature of this method is that we harness the Belief Propagation calculation from physics,
	called the ``replica symmetric case'' of the cavity method in physics jargon.
We are going to employ Belief Propagation directly as an ``educated guess'' 
in the design the random variable upon which our proof is based in order
to quantify how much a typical satisfying assignment leans toward $\sigmaMAJ$.

This is in contrast to most prior work on the subject, where
individual statements hypothesized on the basis of physics
arguments were proved via completely different methods (with the notable exception of the interpolation
technique~\cite{Energetic,BGT10,FranzLeone}).
Hence, we view the present work as a pivotal step in the long-term effort of providing a rigorous foundation
for the physicists' cavity method.
In fact, the general approach developed here does not hinge on particular
properties of the $k$-SAT problem, and thus we expect that the technique will extend
to other asymmetric problems as well.
Examples include not only other random CSPs that are asymmetric per se, but also
instances of random problems that arise at intermediate steps of message passing algorithms
such as \emph{Belief/Survey Propagation guided decimation}, even if the initial problem is symmetric.
In particular, we believe that getting a handle on asymmetric problems is a necessary step to
analyze such message passing algorithms  accurately.

To state our results precisely, we let $k\geq3,n>0$ be integers and we let $V=\cbc{x_1,\ldots,x_n}$ be a set
of $n$ Boolean variables.
Further, let $\PHI=\PHI_k(n,m)$ denote a Boolean formula with $m$ clauses of length $k$ over the variables $V$
chosen uniformly at random among all $(2n)^{km}$ such formulas.
Let $r=m/n$ denote the \emph{density}.
We say that an event occurs \emph{with high probability} (`\whp') if its probability tends to $1$ as $n\ra\infty$.

Friedgut~\cite{Ehud} showed that for any $k\geq3$ there exists a \emph{threshold sequence}%
	\footnote{It is widely conjecture but as yet unproved that $\rk(n)$ converges for any $k\geq3$.}
	 $\rk(n)$ such that
for any (fixed) $\eps>0$ \whp\ $\PHI$ is satisfiable if $m/n<(1-\eps)\rk(n)$, while for $m/n>(1+\eps)\rk(n)$
$\PHI$ is unsatisfiable \whp

Upper bounds on $\rk$ can be obtained via the \emph{first moment method}.
The best current ones~\cite{FranzLeone,KKKS} are
	\begin{equation}\label{XeqUpper}
	\rk\leq\rupper=2^k\ln2-\bc{1+\ln2}/2+o_k(1),
	\end{equation}
where 
$o_k(1)$ hides a term that tends to $0$ for large $k$.
The best prior lower bound is due to Achlioptas and Peres~\cite{yuval}, who used a ``symmetric'' second moment argument to show
	\begin{equation}\label{Xeqprevious}
	\rk\geq\rbal=2^k\ln2-k\cdot\frac{\ln2}2-\bc{1+\frac{\ln2}2}+o_k(1).
	\end{equation}
The bounds~(\ref{XeqUpper}) and~(\ref{Xeqprevious}) leave an additive gap of
 $k\cdot\frac{\ln2}2+\frac12+o_k(1)$,  i.e., the gap is unbounded in terms of $k$.

\begin{theorem}\label{XThm_main}
There is $\eps_k=o_k(1)$ such that
	\begin{equation}\label{XeqrBP}
	\rk\geq\rBP=2^k\ln2-\frac{3\ln2}{2}-\eps_k.
	\end{equation}
\end{theorem}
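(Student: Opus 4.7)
The plan is a weighted second moment argument with a weight function inspired by the Belief Propagation (BP) fixed point of the cavity method. The unweighted second moment method is doomed because the satisfying assignments of a random formula \emph{drift} toward the majority vote $\sigmaMAJ$, and this introduces correlations between pairs of solutions. To cancel this drift we weight each satisfying assignment $\sigma$ by $w(\sigma)=\prod_{C\in\PHI}\phi_C(\sigma)$, where $\phi_C(\sigma)$ vanishes unless $\sigma$ satisfies $C$ and otherwise depends only on the pattern of true/false literals in $C$ under $\sigma$; concretely, $\phi_C$ is chosen so that the resulting Gibbs measure on satisfying assignments reproduces, at the level of single-variable marginals, the ``replica symmetric'' BP fixed point of random $k$-SAT. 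Setting $Z=\sum_\sigma w(\sigma)$, the Paley-Zygmund inequality yields $\pr[Z>0]\geq\Erw[Z]^2/\Erw[Z^2]$, and Friedgut's sharp threshold theorem upgrades the conclusion from positive probability to w.h.p.\ once we show $\Erw[Z]^2/\Erw[Z^2]=\Omega(1)$ at density $r=\rBP$.

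The next step is to estimate the two moments. For $\Erw[Z]$, linearity and independence of clauses give $\Erw[Z]=\sum_\sigma(\Erw\phi_C(\sigma))^m$, which depends on $\sigma$ only through the fraction $\alpha$ of variables set to true; Stirling and Laplace's method then yield $\Erw[Z]=\exp(nF_1(r)+o(n))$ with $F_1(r)=\max_\alpha\{H(\alpha)+r\ln\psi_1(\alpha)\}$ for an explicit $\psi_1$. For $\Erw[Z^2]$ one parameterizes pairs $(\sigma,\tau)$ by their joint type $\rho$ on $\{0,1\}^2$ and obtains $\Erw[Z^2]=\exp(nF_2(r)+o(n))$ with $F_2(r)=\max_\rho\Psi(\rho)$ for an explicit $\Psi(\rho)=H(\rho)+r\ln\psi_2(\rho)$. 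The target identity is $F_2(\rBP)=2F_1(\rBP)$, equivalently that the supremum defining $F_2$ is attained at the product type $\rho^\star=\rho_1\otimes\rho_1$, where $\rho_1$ is the BP single-spin marginal.

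The central obstacle, and the step where the construction of $\phi_C$ pays off, is showing that $\rho^\star$ is indeed the global maximizer of $\Psi$ on the three-dimensional polytope of joint distributions. By design of the weight, the stationarity conditions at $\rho^\star$ are precisely the BP fixed-point equations, so the first-order conditions hold automatically. The remaining work splits in two: a \emph{local} analysis showing that the Hessian of $\Psi$ at $\rho^\star$ is negative definite, via a large-$k$ expansion of the BP equations that pins down the sharp constant $\tfrac{3\ln2}{2}$; and a \emph{global} analysis ruling out competing maximizers away from $\rho^\star$. The global step is the truly delicate one -- one must exploit the specific form of $\Psi$ (concavity in some directions, tamed convexity in others) to exclude overlap values far from the BP-prescribed one. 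This is precisely the issue that previous symmetric approaches bypassed by forcing $\sigma,\tau$ to be balanced at the cost of losing $\Theta(k)$ in the threshold. Once the global maximizer is pinned down, $\Erw[Z^2]=O(\Erw[Z]^2)$ follows, and the bound $\rk\geq\rBP$ drops out via Friedgut's theorem, with the error $\eps_k=o_k(1)$ absorbing lower-order terms in the large-$k$ expansion.
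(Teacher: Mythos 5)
Your high-level strategy (use a BP-informed modification of the counting variable so that the second moment saturates at the product overlap, then apply Paley--Zygmund and Friedgut) is the same as the paper's, and your emphasis on the drift toward $\sigmaMAJ$ and on the global optimization over overlaps correctly identifies where the difficulty lies. However, the specific implementation you propose --- a fixed, clause-local weight $w(\sigma)=\prod_{C}\phi_C(\sigma)$ where $\phi_C$ depends only on the pattern of satisfied literals in $C$ --- has a genuine gap, and the paper deliberately avoids exactly this formulation. For a uniformly random clause $C$ and a fixed $\sigma$, the distribution of the truth pattern in $C$ under $\sigma$ is the same for every $\sigma$ (each position is satisfied independently with probability $\tfrac12$), and for a pair $(\sigma,\tau)$ it depends only on the scalar Hamming overlap; hence a $\vec d$-independent, pattern-based weight makes both $\Erw[Z_w]$ and $\Erw[Z_w^2]$ factor through a low-dimensional summary. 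That is precisely the regime of the Achlioptas--Peres calculation, and it caps at $\rbal = 2^k\ln2 - k\tfrac{\ln2}{2}-O(1)$: the weighted measure concentrates on ``balanced'' assignments, which cease to exist for $r>\rbal$. Your weight cannot break the $\Theta(k)$ barrier because it never references the actual occurrence counts $d_x,d_{\neg x}$, whereas the paper's entire point is to prescribe marginals $p_{\vec d}(x)=\tfrac12+\tfrac{d_x-d_{\neg x}}{2^{k+1}}$ that vary from variable to variable according to the realized degree sequence. The paper does this not with a soft weight but with a \emph{hard constraint} (``$p_{\vec d}$-judicious'' assignments), after conditioning on the degree sequence $\vec d$ and on the clause-type counts $\vec m$ to kill the majority-weight and clause-type fluctuations identified in \Sec~\ref{XSec_bla}; this is the content of \Lem~\ref{XLemma_wmaj} and \Lem~\ref{XLemma_PHIdBad}, which show that the fluctuations you need to cancel are intrinsically $\vec d$-dependent.

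Two further omissions: first, your overlap parameter $\rho$ is a single distribution on $\{0,1\}^2$, but once marginals differ across degree-asymmetry classes the natural overlap is the \emph{vector} $\cO(\sigma,\tau)=\brk{\cO_t}_{t\in\cT}$ indexed by type $t$, which is what \Sec~\ref{Sec_secondMoment} works with --- a three-parameter Laplace analysis would not separate the contributions of the different types $t$ correctly. Second, you do not address the cluster-size fluctuations. For $\rbal < r < \rBP$ the solution space is clustered, and the second moment fails even after all the degree conditioning unless one additionally restricts to ``good'' assignments whose cluster is not atypically large (\Lem~\ref{XLemma_theLocalCluster}, \Prop~\ref{Prop_firstGood}); without this truncation the ratio $\Erw[Z^2]/\Erw[Z]^2$ diverges in the density range you need. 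Your sketch of the local (Hessian) and global analyses correctly describes where the work goes, but the constructions that make those analyses go through --- conditioning on $\vec d$ and $\vec m$, the judicious constraint, and the goodness truncation --- are the missing ingredients, not details to be filled in later.
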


Achlioptas and Peres asked whether the gap $\rupper-\rk$ is bounded by an absolute constant (independent of $k$).
\Thm~\ref{XThm_main} answers this question, 
reducing the gap to $\ln2-\frac12\approx 0.19$.
No attempt at optimizing the error term $\eps_k$ has been made, but
 our proofs yield rather directly that $\eps_k=O(1/k)$.

Apart from the quantitative improvement,
the main point of this paper is that we manage to solve the problem of asymmetry in random CSPs
for the first time.
To explain this point, we start by discussing what we mean by asymmetry and
how it derails the second moment method.
That this is so was already intuited in~\cite{nae,yuval}.
In the next section, we are going to verify and elaborate on those discussions.

\section{Asymmetry and the second moment method}\label{XSec_bla}

\noindent
{\bf\em The second moment method.}
In general, the second moment method works as follows.
Suppose that $Z=Z(\PHI)$  is a non-negative random variable  such that $Z>0$ only if $\PHI$ is satisfiable.
Moreover, suppose that for some density $r>0$ there is a number $C=C(k)>0$ that may depend on $k$ but not on $n$ such that
	\begin{equation}\label{Xeqsmm}
	0<\Erw\brk{Z^2}\leq C\cdot \Erw\brk{Z}^2.
	\end{equation}
We claim that then $\rk\geq r$.
Indeed, the \emph{Paley-Zygmund inequality}
	\begin{equation}\label{XeqPZ}
	\pr\brk{Z>0}\geq\frac{\Erw\brk{Z}^2}{\Erw\brk{Z^2}}
	\end{equation}
implies that
	$\pr\brk{\PHI\mbox{ is satisfiable}}\geq\pr\brk{Z>0}\geq 1/C.$
Because the right hand side remains bounded away from $0$ as $n\ra\infty$,
the following simple consequence of Friedgut's sharp threshold result implies $\rk\geq r$.

\begin{lemma}[\cite{Ehud}]\label{XLemma_Friedgut}
Let $k\geq 3$.
If for some $r$ we have 
	$$\liminf_{n\ra\infty}\pr\brk{\PHI\mbox{ is satisfiable}}>0,$$
then $\rk\geq r-o(1)$.
\end{lemma}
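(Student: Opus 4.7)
The plan is to derive this as a direct consequence of Friedgut's sharp threshold theorem (already invoked in the paper to define $\rk(n)$), via a simple contrapositive argument. Write $m = \lfloor rn \rfloor$ and set $\alpha = \liminf_{n\to\infty}\pr\brk{\PHI_k(n,m)\mbox{ is satisfiable}} > 0$.

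First I would assume, for contradiction, that the conclusion fails: there exists $\delta > 0$ and an infinite subsequence $(n_j)_{j\geq 1}$ along which $\rk(n_j) \leq r - \delta$. Next, I would use the fact that the threshold sequence is uniformly bounded above: the first moment bound (\ref{XeqUpper}) gives $\rk(n) \leq \rupper + 1$ for all sufficiently large $n$, so in particular there is a constant $R$ with $\rk(n_j) \leq R$ for all large $j$. Choose $\eps = \delta/(2R) > 0$, which does not depend on $j$. Then along the subsequence
\[
  \frac{m}{n_j} \;=\; r + O(1/n_j) \;\geq\; \rk(n_j) + \delta - O(1/n_j) \;\geq\; (1+\eps)\,\rk(n_j)
\]
for all sufficiently large $j$.

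By the unsatisfiability half of Friedgut's theorem, this forces $\pr\brk{\PHI_k(n_j,m)\mbox{ is satisfiable}} \to 0$ along the subsequence, contradicting $\liminf_n \pr\brk{\PHI\mbox{ is satisfiable}} = \alpha > 0$. Hence no such $\delta$ exists, which is precisely the statement $\rk(n) \geq r - o(1)$.

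The argument is essentially a one-line application of Friedgut's theorem, so there is no genuinely hard step; the only point requiring a modicum of care is that $\eps$ in Friedgut's theorem is supposed to be a fixed constant, independent of $n$. This is handled by the uniform upper bound $\rk(n) \leq R$, which lets me convert the \emph{additive} gap $\delta$ between $r$ and $\rk(n_j)$ into a \emph{multiplicative} gap $1+\eps$ with $\eps$ constant. If one wished to avoid invoking (\ref{XeqUpper}), the much cruder bound $\rk(n)\leq 2^k$ coming from the trivial first moment over all $2^n$ assignments suffices.
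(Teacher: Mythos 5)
Your argument is correct, and it is the natural (essentially unique) derivation of this lemma from Friedgut's sharp threshold theorem. The paper does not supply its own proof---it simply cites \cite{Ehud}---so there is nothing to compare against; what you wrote is exactly the contrapositive bookkeeping that the authors are implicitly relying on. The one genuine point of care, converting the additive gap $\delta$ into a fixed multiplicative $\eps$ via a uniform upper bound on $\rk(n)$, is handled correctly, and your remark that the crude bound $\rk(n)\leq 2^k$ suffices for this purpose (avoiding any circularity with the paper's first-moment estimate~(\ref{XeqUpper})) is a good catch.
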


Hence, we ``just'' need to find a random variable that satisfies~(\ref{XeqPZ}).
Let $\cS(\PHI)$ denote the set of satisfying assignments;
then certainly $Z=\abs{\cS(\PHI)}$ is the most obvious choice.
However,  this ``vanilla'' second moment argument turns out to fail spectacularly. 
We need to understand why.

\smallskip
\noindent
{\bf\em Asymmetry and the majority vote.}
The origin of the problem is that $k$-SAT is asymmetric in the following sense.
Suppose that all we know about the random formula $\PHI$ is for each variable $x$ the number $d_x$  of times that $x$
appears as a positive literal in the formula, and the number $d_{\neg x}$ of negative occurrences.
Then our best stab at constructing a satisfying assignment seems to be the ``majority vote'' assigment $\sigmaMAJ$
where we set $x$ to true if $d_x> d_{\neg x}$ and to false otherwise.
Indeed, by maximizing 
the total number of true literal occurrences, of which a satisfying assignment must put one
in every clause, $\sigmaMAJ$ also
 maximizes the probability of being satisfiable.

Our proof of \Thm~\ref{XThm_main} allows us to formalize this observation,
thereby verifying a conjecture from~\cite{yuval}.
Let $\dist(\cdot,\cdot)$ denote the Hamming distance.

\begin{corollary}\label{XCor_asymmetric}
There is a number $\delta=\delta(k)>0$ such that
for $2^k/k<r<\rBP$ \whp\ we have
	\begin{equation}\label{Xeqlean}
	\sum_{\sigma\in\cS(\PHI)}\frac{\dist(\sigma,\sigmaMAJ)}{\abs{\cS(\PHI)}}\leq \bc{\frac12-\delta}\cdot n.
	\end{equation}
\end{corollary}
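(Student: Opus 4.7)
The plan is a \emph{planted-model} argument that leverages the second moment estimate underlying \Thm~\ref{XThm_main}.

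In the planted distribution one samples $\SIGMA^*\in\cbc{0,1}^V$ uniformly and then samples $\PHI^*$ uniformly from the $k$-CNFs with $m$ clauses that $\SIGMA^*$ satisfies; by symmetry we may fix $\SIGMA^*=\vec 1$. First I would quantify how much $\sigmaMAJ(\PHI^*)$ leans toward $\SIGMA^*$ under this distribution. Conditional on $\vec 1\models c$, each of the $k$ literal positions of the clause $c$ carries sign $+$ with probability $2^{k-1}/(2^k-1)=\tfrac12+\Omega(2^{-k})$, and the signs in distinct clauses are independent. Consequently, for every variable $x$ the degree difference $d_x-d_{\neg x}$ is centered at a strictly positive value, and an elementary Gaussian estimate yields some $\delta_0=\delta_0(k)>0$ with $\pr\brk{d_x>d_{\neg x}}\geq\tfrac12+\delta_0$. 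A standard concentration argument applied to $\sum_x\mathbf 1\cbc{\sigmaMAJ(x)\neq\SIGMA^*(x)}$ then produces $\delta=\delta(k)>0$ with
\begin{equation*}
\pr_{\mathrm{planted}}\brk{\dist(\SIGMA^*,\sigmaMAJ(\PHI^*))>(\tfrac12-2\delta)n}\leq \exp(-\Omega(n)).
\end{equation*}

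The second step is the transfer to the uniform model. For any event $E(\sigma,\PHI)$ invariant under joint relabelling of variables and joint sign-flipping of $(\sigma,\PHI)$, a direct symmetry calculation gives the identity
\begin{equation*}
\Erw\brk{\abs{\cbc{\sigma\in\cS(\PHI):E(\sigma,\PHI)}}}=\Erw\brk{\abs{\cS(\PHI)}}\cdot\pr_{\mathrm{planted}}\brk{E(\SIGMA^*,\PHI^*)}.
\end{equation*}
Specializing to $E=\cbc{\dist(\sigma,\sigmaMAJ)>(\tfrac12-2\delta)n}$ and applying Markov's inequality, \whp\ at most $\exp(-\Omega(n))\cdot\Erw\brk{\abs{\cS(\PHI)}}$ satisfying assignments are ``bad''. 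On the other hand, the asymmetric second moment bound $\Erw\brk{Z^2}\leq C\Erw\brk{Z}^2$ of \Thm~\ref{XThm_main}, together with the Paley--Zygmund and concentration machinery that accompanies it, yields $\abs{\cS(\PHI)}\geq\exp(-o(n))\cdot\Erw\brk{\abs{\cS(\PHI)}}$ \whp. Combining the two estimates, the fraction of $\cS(\PHI)$ at distance more than $(\tfrac12-2\delta)n$ from $\sigmaMAJ$ is $\exp(-\Omega(n))$ \whp, and the corollary follows (after replacing $2\delta$ by $\delta$ to absorb the negligible remaining contribution).

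The main obstacle is making the transfer work at the right resolution. The planted bias $\delta(k)$ is quantitatively small (of order $\sqrt{k/2^k}$ for the densities $r>2^k/k$ covered by the corollary), so any estimate in which $\Erw\brk{Z^2}/\Erw\brk{Z}^2$ grows exponentially in $n$ would swamp it. This is precisely what makes the \emph{asymmetric} second moment of \Thm~\ref{XThm_main} indispensable: it delivers the bounded ratio needed to retain the tiny planted bias after the transfer. The symmetric second moment of~\cite{yuval} fails for exactly this reason, which explains why settling the Achlioptas--Peres conjecture formalised by~(\ref{Xeqlean}) requires the new machinery of this paper rather than the older approach.
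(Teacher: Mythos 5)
Your route matches the paper's own: plant a satisfying assignment, show that in the planted model the majority vote is biased toward the planted truth assignment, and transfer to the uniform model via the identity
\[
\Erw\brk{\abs{\cbc{\sigma\in\cS(\PHI):E(\sigma,\PHI)}}}=\Erw\brk{\abs{\cS(\PHI)}}\cdot\pr_{\Lambda}\brk{E}
\]
together with a lower bound on $\abs{\cS(\PHI)}$ relative to its expectation. This is exactly the structure of the paper's Proposition~\ref{Prop_partition}, Corollary~\ref{Cor_partition} and Lemma~\ref{Lemma_towardMaj}.

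However, there is a genuine gap in your step~5, and you only gesture at it in your final paragraph rather than close it. You assert that the second moment machinery ``yields $\abs{\cS(\PHI)}\geq\exp(-o(n))\cdot\Erw\brk{\abs{\cS(\PHI)}}$ \whp'' This is false, and the paper itself proves the opposite: \Lem~\ref{XLemma_wmaj} shows $\abs{\cS(\PHI)}\leq\exp(-\Omega(4^{-k})n)\cdot\Erw\brk{\abs{\cS(\PHI)}}$ \whp, i.e.\ the first moment overshoots the truth by a factor that is exponentially large in $n$ for any fixed $k$. What the second moment argument (for the modified random variable $Z_{p,\mathrm{good}}$, not for $\abs{\cS(\PHI)}$) can give, \emph{combined with a concentration result for $\ln\abs{\cS(\PHI)}$ from~\cite{Barriers} to upgrade Paley--Zygmund's constant-probability conclusion to a \whp\ statement}, is Proposition~\ref{Prop_partition}: $\abs{\cS(\PHI)}\geq\Erw\abs{\cS(\PHI)}\cdot\exp\brk{-\frac{nr}{k^9 4^k}}$ \whp\ The exponent here is $\Theta(n)$ for fixed $k$, but with a $k$-dependence of order $1/(k^9 2^k)$. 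For the transfer to yield a vanishing fraction of bad solutions, the planted-model large deviation exponent must \emph{beat} this loss: the paper's Lemma~\ref{Lemma_towardMaj} proves the planted bad event has probability $\leq\exp\brk{-\frac{2nr}{k^94^k}}$, with the deliberate factor of $2$ matching Corollary~\ref{Cor_partition}. Your parenthetical ``$\exp(-\Omega(n))$'' does not track the $k$-dependence of the hidden constant, and with $\delta$ of order $\sqrt{k}\,2^{-k/2}$ the standard Chernoff exponent is indeed only $\Theta(n/(k^{O(1)}2^k))$; unless this is explicitly compared against $\frac{nr}{k^94^k}$, the transfer step is not justified. So the missing piece is precisely the quantitative match you flagged as ``the main obstacle'': Proposition~\ref{Prop_partition} (which requires~\cite{Barriers}, not only \Thm~\ref{XThm_main}) on the one side and a planted-model tail bound with twice its exponent on the other.
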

Hence, the average Hamming distance of $\sigma\in\cS(\PHI)$ from $\sigmaMAJ$ is strictly smaller than $n/2$, i.e.,
the set $\cS(\PHI)$ is ``skewed toward''  $\sigmaMAJ$ \whp\

This asymmetry dooms the second moment method.
To see why,
let
	$$\wMAJ=\wMAJ(\PHI)=\sum_{x\in V}\frac{\max\cbc{d_x,d_{\neg x}}}{km}$$
denote the  \bemph{majority weight} of $\PHI$. 
Then the larger $\wMAJ$, the more likely $\sigmaMAJ$ and assignments close to it are to be satisfying.
In effect, 
the bigger $w_{maj}$, the more satisfying assignments we expect  to have.
The consequence of this is that the number $\abs{\cS(\PHI)}$ of satisfying assignments
behaves like a ``lottery'': its expectation is driven up by a tiny fraction of ``lucky'' formulas with $w_{maj}$  much bigger than expected.

Let us highlight this tradeoff, as it is characteristic of the kind of trouble that asymmetry causes.
For $\xi>0$ independent of $n$ but sufficiently small it turns out that for a certain constant $c>0$,
	\begin{equation}\label{Xeqwmaj1}
	\pr\brk{\wMAJ\sim\Erw\brk{\wMAJ}+\xi}=\exp\brk{-(c\xi^2+O(\xi^3))n}.
	\end{equation}
That is, the probability is exponentially small but,  like in the Chernoff bound,  the exponent is a \emph{quadratic} function of $\xi$.
By comparison,  increasing the majority weight by $\xi$ boosts the expected number of satisfying assignments by
a \emph{linear} exponential factor: there is $c'>0$ such that
	\begin{eqnarray}\label{Xeqwmaj2}
	\Erw\brk{\,|\cS(\PHI)|\ \big|\ \wMAJ\sim\Erw\brk{\wMAJ}+\xi}&=&
		\exp\brk{(c'\xi+O(\xi^2))n}\cdot \Erw\brk{|\cS(\PHI)|\,\big|\,\wMAJ\sim\Erw\brk{\wMAJ}}.
	\end{eqnarray}
The exponent in~(\ref{Xeqwmaj2}) is linear because
for a typical assignment $\tau$ at distance $(\frac12-\delta)n$ from $\sigmaMAJ$
increasing $\wMAJ$ by $\xi$ boosts the number of literals that are true under $\tau$ by $2\delta\xi\cdot km$,
a term that is linear in $\xi$.

Since the exponent is linear in~(\ref{Xeqwmaj2}) but quadratic in~(\ref{Xeqwmaj1}), there is a
 (small but) strictly positive $\xi>0$ such that the ``gain'' $\exp\brk{(c'\xi+O(\xi^2))n}$ in the expected number of satisfying assignments
 exceeds the ``penalty'' $\exp\brk{-(c\xi^2+O(\xi^3))n}$ for deviating from $\Erw\brk{\wMAJ}$.
With little extra work, this observation leads to
  
\begin{lemma}
	\label{XLemma_wmaj}
For any $k\geq 3$ and $r>2^k/k$ 
we have 
	$$\abs{\cS(\PHI)}\leq\exp\bc{-\Omega(4^{-k})\cdot n}\cdot\Erw\brk{\abs{\cS(\PHI)}}
		\quad\mbox{\whp}$$
\end{lemma}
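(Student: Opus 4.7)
The plan is to exploit the ``lottery'' tension between (\ref{Xeqwmaj1}) and (\ref{Xeqwmaj2}): since $\Erw[|\cS(\PHI)|]$ is dominated by an exponentially small fraction of formulas with anomalously high $\wMAJ$, a typical formula must carry correspondingly fewer satisfying assignments than the overall mean.

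I first identify the $k$-dependence of the implicit constants $c,c'$. The coefficient $c$ comes from the variance of $\wMAJ$: viewing $\PHI$ as $km$ i.i.d.\ literal slots drawn uniformly from $2n$ values, each variable contributes $\Var|d_x-d_{\neg x}| = \Theta(kr)$ and the variables are near-independent, so $\Var(\wMAJ)=\Theta(1/(krn))$ and $c=\Theta(kr)$; as a byproduct, $\wMAJ \sim \Erw[\wMAJ]$ \whp. The coefficient $c'$ is obtained from the identity $\alpha(\tau) = \tfrac12+2\delta(\wMAJ-\tfrac12)$ for the true-literal fraction of $\tau$ at distance $(\tfrac12-\delta)n$ from $\sigmaMAJ$ (a direct consequence of $\sum_x\max(d_x,d_{\neg x})=km\wMAJ$), combined with the per-clause derivative $k(1-\alpha)^{k-1}/(1-(1-\alpha)^k) \sim 2k\cdot 2^{-k}$ at $\alpha=\tfrac12$; a saddle-point analysis of the first-moment integral $nH(\tfrac12-\delta)+m\log(1-(1-\alpha)^k)$ locates the dominant $\tau$'s at $\delta^\ast=\Theta(\sqrt{kr}\cdot 2^{-k})$, yielding $c'=\Theta((kr)^{3/2}\cdot 4^{-k})$.

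Now (\ref{Xeqwmaj1}) and (\ref{Xeqwmaj2}) together give, for each sufficiently small constant $\xi$,
\begin{equation*}
\Erw[|\cS(\PHI)|] \ \ge\ \pr[\wMAJ\sim\Erw[\wMAJ]+\xi]\cdot\Erw[|\cS(\PHI)|\mid\wMAJ\sim\Erw[\wMAJ]+\xi] \ \ge\ \exp\bigl((c'\xi-c\xi^2+o(c\xi^2))n\bigr)\cdot\Erw[|\cS(\PHI)|\mid\wMAJ\sim\Erw[\wMAJ]].
\end{equation*}
Maximising at $\xi^\ast=c'/(2c)$ (small enough that the $O(\xi^2)$ and $O(\xi^3)$ corrections in (\ref{Xeqwmaj1})--(\ref{Xeqwmaj2}) are dominated by the main term) yields $\Erw[|\cS(\PHI)|\mid\wMAJ\sim\Erw[\wMAJ]] \le \exp\bigl(-((c')^2/(4c)-o(1))n\bigr)\cdot\Erw[|\cS(\PHI)|]$. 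Since $\wMAJ\sim\Erw[\wMAJ]$ holds \whp, Markov's inequality applied conditionally on this typical event promotes this mean bound to the pointwise \whp\ bound $|\cS(\PHI)|\le\exp\bigl(-((c')^2/(8c))n\bigr)\cdot\Erw[|\cS(\PHI)|]$. Finally, $(c')^2/c=\Theta((kr)^2\cdot 16^{-k})=\Omega(4^{-k})$ whenever $kr\ge 2^k$, i.e.\ whenever $r\ge 2^k/k$, as required.

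The main obstacle is Step~2: locating $\delta^\ast$ precisely enough to nail down the $k$-dependence of $c'$, and verifying that the sub-leading corrections in the Taylor expansion of $\log(1-(1-\alpha)^k)$ around $\alpha=\tfrac12$ do not disturb the scaling $c'\sim(kr)^{3/2}\cdot 4^{-k}$. Once the constants are in hand, the concentration argument for $\wMAJ$ and the conditional Markov step are entirely standard.
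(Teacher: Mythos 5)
Your proposal takes a genuinely different route from the paper's, and the two are worth contrasting. You take the paper's motivating heuristic (\ref{Xeqwmaj1})--(\ref{Xeqwmaj2}) literally, work out the $k$-dependence of the constants ($c=\Theta(kr)$ from the variance of $\wMAJ$, $\delta^\ast=\Theta(\sqrt{kr}\,2^{-k})$ and hence $c'=\Theta((kr)^{3/2}4^{-k})$ from the saddle point of the conditional first moment), and then balance the quadratic penalty against the linear gain at $\xi^\ast=c'/(2c)$, obtaining $(c')^2/(4c)=\Theta((kr)^2 16^{-k})=\Omega(4^{-k})$ when $kr\geq 2^k$. The scalings are all correct. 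The paper's own proof instead works with the \emph{planted model}: it conditions on $\vecone\in\cS(\PHI)$, under which $d_x$ and $d_{\neg x}$ are independent Poissons with means tilted by $\Theta(2^{-k})$, computes that $\Erw[\wMAJ\mid\vecone\in\cS(\PHI)]-\Erw[\wMAJ]=\Theta(4^{-k}(kr)^{1/2})$, applies Azuma in both the unconditional and the planted product spaces to get exponentially separated confidence intervals, and closes with the identity $\Erw[\abs{\cS(\PHI)}\cdot\vecone_{\wMAJ\leq W}]=2^n\pr[\vecone\in\cS(\PHI),\,\wMAJ\leq W]$ plus Markov. The two arguments are Bayesian duals: your tilt coefficient $c'$ equals (up to the factor $2c$) the conditional mean shift the paper computes directly. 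What the paper's route buys is that it entirely sidesteps the need to prove (\ref{Xeqwmaj1})--(\ref{Xeqwmaj2}) as precise statements — no local limit theorem for $\wMAJ$ and no analysis of $\Erw[\abs{\cS(\PHI)}\mid\wMAJ]$ as a function of $\wMAJ$ are required, only two easy Poisson mean calculations and two applications of Azuma in product spaces. Your step computing $c'$ via the saddle point $nH(\tfrac12-\delta)+m\log(1-(1-\alpha)^k)$ is the genuinely delicate part (as you acknowledge): conditioning on $\wMAJ$ does not fix the degree sequence $\vec d$, so making the tilt in $\Erw[\abs{\cS(\PHI)}\mid\wMAJ]$ rigorous requires more bookkeeping than the sketch suggests, which is presumably why the authors went through the planted model instead.
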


\noindent
\Lem~\ref{XLemma_wmaj} entails rather easily that the ``vanilla'' second moment argument
fails dramatically.
Indeed, as already noticed in~\cite{nae,yuval}, we have
 $\Erw\brk{|\cS(\PHI)|^2}\geq \exp(\Omega(n))\cdot\Erw\brk{|\cS(\PHI)|}^2$.
Hence,  we miss our mark~(\ref{Xeqsmm}) by an exponential factor.
But \Lem~\ref{XLemma_wmaj} is witness to an even worse failure:
	not only does~(\ref{Xeqsmm}) fail to hold, but
	even the normally much more dependable \emph{first} moment overshoots the ``actual'' number of satisfying assignments by
	an exponential factor!
(\Lem~\ref{XLemma_wmaj} is an improvement of an observation from~\cite{Barriers},
	showing that $\abs{\cS(\PHI)}\leq\exp(-\xi n)\Erw\brk{|\cS(\PHI)|}$ \whp\ for some tiny $\xi=\xi(k)>0$;
		we conjecture that the $4^{-k}$ term in \Lem~\ref{XLemma_wmaj} is tight.)

In summary, the drift toward $\sigmaMAJ$ and the resulting fluctuations of the majority weight 
induce a tremendous source of variance, derailing the ``vanilla'' second moment argument.

\smallskip\noindent
{\bf\em Balanced assignments.}
A natural way to sidestep this issue is to work with
a `symmetric' subset of $\cS(\PHI)$.
Perhaps the most obvious choice is the set $\cS_{\mathrm{NAE}}(\PHI)$  of NAE-solutions.
In a landmark paper, Achlioptas and Moore~\cite{nae} proved that indeed 
there is $C=C(k)>0$ such that for $Z_{\mathrm{NAE}}=|\cS_{\mathrm{NAE}}(\PHI)|$ we have
	\begin{equation}\label{XeqAchMoore}
	\Erw\brk{Z_{\mathrm{NAE}}^2}\leq C\cdot\Erw\brk{Z_{\mathrm{NAE}}}^2\quad\mbox{for $r\leq 2^{k-1}\ln2-O_k(1)$}.
	\end{equation}
As we saw above (cf.\ \Lem~\ref{XLemma_Friedgut}), this implies that $\rk\geq 2^{k-1}\ln2-O(1)$.
However, a simple (first moment) calculation shows that for $r>2^{k-1}\ln2$, 
the set $\cS_{\mathrm{NAE}}(\PHI)$ is empty \whp\ 
Thus, the idea of working with NAE-solutions stops working at $r\sim2^{k-1}\ln2$, about a factor of two below
the satisfiability threshold.

Achlioptas and Peres~\cite{yuval} obtained a better bound by
precipitating symmetry in a more subtle manner.
Let us call $\sigma\in\cbc{0,1}^n$ \bemph{balanced} if under $\sigma$ 
out of the $km$ literal occurrences in $\PHI$ \emph{exactly half} are true (i.e., $\frac{km}2\pm1$).
Thus, balanced assignments are expressly forbidden from pandering to $\sigmaMAJ$.
Now, let $\cS_{\mathrm{bal}}(\PHI)$ be the set of all balanced satisfying assignments, and set $Z_{\mathrm{bal}}=|\cS_{\mathrm{bal}}(\PHI)|$.
Achlioptas and Peres used a clever weighting scheme to prove that
	\begin{equation}\label{XeqAP}
	\Erw\brk{Z_{\mathrm{bal}}^2}\leq C\cdot\Erw\brk{Z_{\mathrm{bal}}}^2\qquad\mbox{ for $r\leq\rbal\quad$ (cf.~(\ref{Xeqprevious}))}.
	\end{equation}
As before, this implies that $\rk\geq\rbal$ (\Lem~\ref{XLemma_Friedgut}).

Yet as in the case of NAE-solutions, balanced satisfying assignments cease to exist way before the satisfiability threshold.
Indeed, Achlioptas and Peres observed that $\cS_{\mathrm{bal}}(\PHI)=\emptyset$ for $r>2^k\ln2-k\frac{\ln2}2$ \whp\ 
In effect, to close in further on $\rk$ we will have to accommodate assignments that lean toward $\sigmaMAJ$.
How can this be accomplished?

\smallskip
\noindent
{\bf\em A quick fix?}
We saw that to make an asymmetric second moment argument work,
we need to rule out fluctuations of the majority weight.
A sensible way of implementing this is by actually fixing the entire vector $\vec d=(d_x,d_{\neg x})_{x\in V}$
that counts the positively/negatively occurrences of each variable.
More precisely, 
given a non-negative integer vector $\vec d=(d_x,d_{\neg x})_{x\in V}$ with $\sum_{x\in V}d_x+d_{\neg x}=km$ let
$\PHId$ denote a uniformly random $k$-CNF in which each variable $x$ appears $d_x$ times positively and $d_{\neg x}$
times negatively.
Then we can split the generation of a random formula $\PHI$ into two steps:
\begin{enumerate}
\item[] First, choose the occurrence vector $\vec d$ randomly from the ``correct'' distribution $\vec D$.
\item[]  Then, choose a random formula $\PHId$.
\end{enumerate}

The ``correct'' $\vec D$ is as follows.
Let ${\vec e}=(e_x,e_{\neg x})_{x\in V}$ be a family of independent Poisson variables with mean $kr/2$ each.
Moreover, let $\cE$ be the event that $\sum_{x\in V}e_x+e_{\neg x}=km$.
Let $\vec D$ be the conditional distribution of $\vec e$ given $\cE$.
Then standard arguments show that the outcome of first choosing $\vec d$ and then $\PHId$
is exactly the uniformly random  $\PHI$.

The point of generating $\PHI$ in two steps as above is that given the outcome $\vec d$ of the first step,
the majority weight is \emph{fixed}.
Hence, if we could show that \emph{given} a ``typical'' $\vec d$, 
the second moment succeeds for $|\cS(\PHId)|$
we would obtain a lower bound on $r_{k-SAT}$.
Unfortunately, matters are not so simple.
\begin{lemma}\label{XLemma_PHIdBad}
\Whp\ for a vector $\vec d$ chosen from $\vec D$ we have
	$\Erw[\abs{\cS(\PHId)}^2]\geq\exp\bc{\Omega(n)}\cdot\Erw\brk{\abs{\cS(\PHId)}}^2$.
\end{lemma}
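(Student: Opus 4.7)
The plan is to perform the second-moment calculation conditional on a typical $\vec d$ drawn from $\vec D$, and to exhibit a class of correlated pairs of assignments whose contribution to $\Erw[|\cS(\PHId)|^2]$ exceeds $\Erw[|\cS(\PHId)|]^2$ by an exponential factor. The fundamental phenomenon to exploit is that, even after fixing $\vec d$, the satisfying assignments of $\PHId$ remain biased toward the conditional majority vote $\sigmaMAJ$ (in analogy with \Cor~\ref{XCor_asymmetric}), so pairs of satisfying assignments are positively correlated through their joint pull toward $\sigmaMAJ$.

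I would work in the configuration-model representation of $\PHId$: each variable $x$ contributes $d_x$ positive and $d_{\neg x}$ negative literal slots, and the slots are grouped into $m$ clauses of length $k$ by a uniformly random perfect matching. In this representation, $\pr[\sigma\in\cS(\PHId)\mid\vec d]$ is a function only of the true-literal count $t(\sigma)=\sum_x\bc{d_x[\sigma_x{=}1]+d_{\neg x}[\sigma_x{=}0]}$, and $\pr[\sigma,\tau\in\cS(\PHId)\mid\vec d]$ depends only on the joint type $(s_{00},s_{01},s_{10},s_{11})$ recording, for each of the four Boolean pairs $(\sigma_x,\tau_x)$, the number of literal slots of that kind. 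For typical $\vec d\sim\vec D$, the Poisson-like fluctuations force $t(\sigmaMAJ)-km/2=\Theta(n\sqrt{kr})$, so $\sigmaMAJ$ has a strictly higher per-variable true-literal density than a balanced assignment.

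First I would evaluate $\Erw[|\cS(\PHId)|\mid\vec d]=\sum_\sigma p(t(\sigma))$ by a Laplace/saddle-point computation, stratifying $\sigma$ by Hamming distance $\eta n$ from $\sigmaMAJ$; optimizing the entropy term $nH(\eta)$ against the single-assignment log-probability yields a dominating $\eta^\ast$ close to but strictly below $1/2$, and hence a sharp expression for $\Erw[|\cS(\PHId)|\mid\vec d]^2$. Next I would lower-bound $\Erw[|\cS(\PHId)|^2\mid\vec d]$ by restricting the sum to ordered pairs $(\sigma,\tau)$ both at Hamming distance $\approx\eta^\ast n$ from $\sigmaMAJ$ but with a mutual overlap chosen to balance the joint entropy against the joint satisfaction rate. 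For such pairs (i) both assignments have elevated $t(\cdot)$, inflating the marginals, and (ii) agreement on many true-literal slots creates per-clause correlations beyond the product of marginals; a careful two-dimensional optimization then shows that the contribution from this subset exceeds $\Erw[|\cS(\PHId)|\mid\vec d]^2$ by a factor $\exp(\Omega(n))$ (with implicit constant of order $2^{-k}$, mirroring the scale of $t(\sigmaMAJ)-km/2$).

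The main technical obstacle is the joint saddle-point analysis: one must optimize over a multidimensional joint-type polytope and verify a strictly positive exponential excess for a \emph{typical} $\vec d$, not merely for $\vec d$ with atypically large bias. This relies on (a) log-concavity of $p(t)$ and strict concavity of the entropy, which give sharp Laplace approximations, and (b) Poisson–Gaussian concentration of $\vec D$, guaranteeing $|d_x-d_{\neg x}|$ of the expected order for almost every $\vec d$ in its support, so that the excess of $t(\sigmaMAJ(\vec d))$ over the balanced count is genuinely $\Theta(n\sqrt{kr})$ \whp\ This is also where the lower bound $r>2^k/k$ enters: below this density the bias scale becomes too small for the correlation boost to beat the entropy loss.
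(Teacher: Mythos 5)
The paper does not present an explicit proof of this lemma, but the discussion immediately following it is pointed: even after fixing $\vec d$ there remain formula-level parameters that fluctuate — notably the clause-type counts $\vec m$ and, e.g., the number of clauses left unsatisfied by $\sigmaMAJ$ — and the expected number of satisfying assignments has a linearly exponential dependence on these quantities while their deviation probability decays only quadratically in the exponent, the same linear-vs-quadratic mismatch as in the display (\ref{Xeqwmaj1})--(\ref{Xeqwmaj2}) and the proof of \Lem~\ref{XLemma_wmaj}. Your proposal instead attributes the failure directly to the fact that satisfying assignments ``lean toward $\sigmaMAJ$'' and therefore pairs are positively correlated. This diagnosis is off: the paper's own construction in \Sec~\ref{XSec_theRandomVar} works with assignments that are deliberately biased toward $\sigmaMAJ$ (the $p_{\mathrm{BP}}$-marginals are skewed away from $1/2$), and yet \Thm~\ref{XThm_secondPlain} proves that the second moment \emph{succeeds} once $\vec d$, $\vec m$, the judicious constraint, and the cluster-size bound are all imposed. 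Thus a fixed bias toward $\sigmaMAJ$ cannot by itself be the source of the exponential excess; what kills the second moment for $\PHId$ is that, $\vec d$ being fixed, the remaining structure (the random matching of literal clones to clause slots) is still free to create lucky instances. Your sketch never isolates the quantity whose fluctuation is responsible, and without that it is not clear which direction the overlap optimum is pushed or why the gain is $\exp(\Omega(n))$ rather than subexponential.

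The second, independent, issue is that the proposal asserts rather than proves the crucial step: ``a careful two-dimensional optimization then shows that the contribution from this subset exceeds $\Erw[|\cS(\PHId)|]^2$ by a factor $\exp(\Omega(n))$.'' Nothing in the sketch demonstrates that the joint entropy-times-probability exponent, evaluated at your proposed correlated overlap, actually exceeds twice the first-moment exponent. Indeed for balanced assignments the analogous two-parameter optimization is a \emph{success} of the second moment (Achlioptas--Peres), so the sign of the inequality is genuinely in question and must be established, not inferred from a vague appeal to ``positive correlation.'' The cleaner and shorter route — which matches the paper's language — is the one modeled on \Lem~\ref{XLemma_wmaj}: pick a single scalar statistic of $\PHId$ (such as $\sum_\ell m_{\PHId}(\ell)\vecone\{\sigmaMAJ\text{ violates type-}\ell\text{ clauses}\}$ or simply the number of all-false clauses under $\sigmaMAJ$), show that conditioning it up by a small $\xi$ costs only $\exp(-c\xi^2 n)$ while multiplying $\Erw[|\cS(\PHId)|]$ by $\exp(c'\xi n)$, and conclude by Cauchy--Schwarz that the unconditional second moment overshoots by $\exp(\Omega(n))$. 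That argument avoids the multidimensional saddle-point you invoke and is where the paper is clearly heading. Finally, the restriction $r>2^k/k$ you introduce is not part of this lemma's statement; you are importing a hypothesis from \Lem~\ref{XLemma_wmaj} and \Cor~\ref{XCor_asymmetric} without justification.
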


Let us stress the two levels of randomness in \Lem~\ref{XLemma_PHIdBad}.
First, there is the choice of $\vec d$.
Then, for a \emph{given} $\vec d$, we compare $\Erw[\abs{\cS(\PHId)}^2]$ and $\Erw\brk{\abs{\cS(\PHId)}}^2$.
Of course, both of these quantities depend on $\vec d$, and we find that \whp\ $\vec d$ is such that the first exceeds
the second by an exponential factor.

The explanation for this is that even if we fix $\vec d$,
various other types of fluctuations remain, turning $\abs{\cS(\PHId)}$ into a ``lottery''.
For instance, even given $\vec d$ 
the number of clauses that are unsatisfied under $\sigmaMAJ$ fluctuates.
Hence, the inherent asymmetry of $k$-SAT puts not only the majority weight but also
various other parameters on a slippery slope.
What we need  is a way of controlling all these fluctuations simultaneously.
We will present our solution in \Sec~\ref{XSec_theRandomVar}.

\smallskip
\noindent{\bf\em Catching the $k$-SAT threshold?}
Before we come to that, let us discuss what it would take to eliminate the (small but non-zero) gap left by
\Thm~\ref{XThm_main}, i.e., how far we are from ``catching'' the $k$-SAT threshold.
The physicists' cavity method comes in two installments.
The (relatively speaking) simpler ``replica symmetric'' version is based on Belief Propagation.
\Thm~\ref{XThm_main} provides a rigorous proof of the best possible bound on the $k$-SAT threshold
that can be obtained from this version of the cavity method (up to possibly the precise error term $\eps_k$)~\cite{pnas}.

Unfortunately, for $r>\rBP$ the replica symmetric version (and in particular the Belief Propagation predictions
that we depend upon) are conjectured to break down.
 According to the more sophisticated ``1-step replica symmetry breaking'' (1RSB) version of the cavity method, 
the reason for this is that at $r\sim\rBP$ 
a new type of correlation amongst satisfying assignments arises.
To deal with these correlations, the physics methods replace Belief Propagation by the \emph{much} more intricate Survey Propagation technique.

In~\cite{Catching} we managed to prove rigorously that the 1RSB prediction for the random $k$-NAESAT
threshold is correct (up to an additive $2^{-\Omega(k)}$).
However, \cite{Catching} depends \emph{heavily} on the fact that $k$-NAESAT is symmetric.
While it would be very interesting to combine the merits of the present paper with those of~\cite{Catching},
this appears to be quite challenging.
Thus, putting the 1RSB calculation for random $k$-SAT on a rigorous foundation
remains an important open problem.
That said, we believe that any such attempt would need to build upon the techniques developed in this paper.

\section{Related work}

The interest in random $k$-SAT originated largely from the \emph{experimental} observation
that there seems to be a sharp threshold for satisfiability and, moreover,  that for certain densities $r<r_{k-SAT}$
no polynomial time algorithm is known to find a satisfying assignment \whp~\cite{KirkpatrickSelman,Kroc}.
Currently, the precise $k$-SAT threshold is known (rigorously) only  in two cases.
Chvatal and Reed~\cite{mick} and Goerdt~\cite{Goerdt} proved independently that $r_{2-\mathrm{SAT}}=1$.
Of course, $2$-SAT is special because there is a simple criterion for (un)satisfiability,
which enables the proofs of~\cite{mick,Goerdt}.
Unsurprisingly, these methods do not extend to $k>2$.
Additionally, the threshold is known precisely  when $k>\log_2n$, i.e.,
the clause length \emph{diverges} as a function of $n$~\cite{FriezeWormald}.
In this case, the problem of asymmetry evaporates because 
the majority weight is sufficiently concentrated for the ``vanilla'' second moment method to succeed.
(Note that \Prop~\ref{XLemma_wmaj} holds for any \emph{fixed} $k$, but not for $k=k(n)\ra\infty$.)
The issue of asymmetry also disappears in the case of \emph{strongly regular} formulas~\cite{Rathi} where
for some fixed $d$ we have $d_x=d_{\neg x}=d$ for all $x\in V$.

Also in random $k$-XORSAT (random linear equations mod 2) the threshold for the existence of solutions is known precisely~\cite{Dubois}.
The proof relies on computing the second moment of the number of solutions (after the instance has been stripped down to a suitable core).
In contrast to random $k$-SAT, the random $k$-XORSAT problem is symmetric
	(cf.\ Remark~\ref{XRem_sym} below), albeit in a more
subtle way than $k$-NAESAT.

Other problems where the second moment method succeeds are symmetric as well.
Pioneering the use of the second moment method in random CSPs,
Achlioptas and Moore~\cite{nae}
computed the random $k$-NAESAT threshold within an additive $1/2$.
By enhancing this argument with insights from physics  this gap
can be narrowed to a mere $2^{-\Omega(k)}$~\cite{Catching,Lenka}.
Moreover, the best current bounds on the random (hyper)graph $k$-colorability thresholds are based
on ``vanilla'' second moment arguments as well~\cite{AchNaor,DyerFriezeGreenhill}.
In summary, in all the previous second moment arguments,
the issue of asymmetry either did not appear at all by the nature of the
	problem~\cite{nae,AchNaor,Catching,Lenka,Dubois,DyerFriezeGreenhill,FriezeWormald},
or it was sidestepped~\cite{yuval}.

The best current algorithms for random $k$-SAT find satisfying assignments \whp\
for densities up to $1.817\cdot 2^k/k$ (better for small $k$) resp.\ $2^k\ln(k)/k$ (better for large $k$)~\cite{BetterAlg,FrSu},
a factor of $\Theta(k/\ln k)$ below the satisfiability threshold.
By comparison, the Lov\'asz Local Lemma and its algorithmic version 
succeed up to $r=\Theta(2^k/k^2)$~\cite{Moser}.

Apart from experimental work~\cite{Kroc}, 
 very little is known about the physics-inspired message passing algorithms (``Belief/Survey Propagation guided decimation'')~\cite{MPZ}.
The most basic variant of Belief Propagation guided decimation is known to fail \whp\ on random formulas if $r>c\cdot 2^k/k$
for some constant $c>0$~\cite{BP}.
However, it is conceivable that Survey Propagation and/or other variants of Belief Propagation perform better.

\section{Preliminaries}

We shall make repeated use of the following local limit theorem for the sums of independent random variables, see~\cite{FlajSed} and~\cite{Catching}.
\begin{lemma}
\label{lem:locallimit}
Let $X_1, \dots, X_n$ be independent random variables with support on $\mathbf{N}_0$ with probability generating function $P(z)$. Let $\mu = \Exp[X_1]$ and $\sigma^2 = \Var[X_1]$. Assume that $P(z)$ is an entire and aperiodic function. 
Then, uniformly for all $T_0 < \alpha < T_\infty$, where $T_x = \lim_{z\to x}\frac{zP'(z)}{P(z)}$,  as $n \to \infty$
\begin{equation}
\label{eq:locallimit}
	\Pr[X_1 + \dots + X_n = \alpha n] = (1 + o(1))\, \frac{1}{\zeta \sqrt{2\pi n \xi}} \, \left(\frac{P(\zeta)}{\zeta^\alpha}\right)^n, 
\end{equation}
where $\zeta$ and $\xi$ are the solutions to the equations
\begin{equation}
\label{eq:saddlepoints}
	\frac{\zeta P'(\zeta)}{P(\zeta)} = \alpha
	\qquad \text{ and } \qquad
	\xi = \frac{d^2}{dz^2}\left(\ln P(z) - \alpha \ln z\right) \Big|_{z = \zeta}.
\end{equation}
Moreover, there is a $\delta_0 > 0$ such that for all $0 \le |\delta| \le \delta_0$ the following holds. If $\alpha = \Exp[X_1] + \delta\sigma$, then
\begin{equation}
\label{eq:limitclose}
	\Pr[X_1 + \dots + X_n = \alpha n] = (1 + O(\delta)) \, \frac{1}{\sqrt{2\pi n \sigma}} \, e^{(-\delta^2/2 + O(\delta^3))n}.
\end{equation}
\end{lemma}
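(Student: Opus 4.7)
The plan is to apply the classical saddle-point method to the Cauchy coefficient integral for the probability generating function of $S_n=X_1+\cdots+X_n$. Since the $X_i$ are i.i.d.\ with pgf $P(z)$, one has
$$\Pr[S_n=\alpha n]=[z^{\alpha n}]P(z)^n=\frac{1}{2\pi i}\oint\frac{P(z)^n}{z^{\alpha n+1}}\,dz,$$
where the contour is any circle around the origin contained in the disc of convergence of $P$. Because $P$ is entire, I can take any radius $\zeta>0$, and I would pick $\zeta$ to be the unique positive solution of $\zeta P'(\zeta)/P(\zeta)=\alpha$. Existence and uniqueness in the range $T_0<\alpha<T_\infty$ follow because $T(z)=zP'(z)/P(z)$ is strictly increasing on $(0,\infty)$; its derivative equals the variance of the tilted law with pgf $P(\zeta z)/P(\zeta)$ divided by $z$, which is positive. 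Parametrising the contour as $z=\zeta e^{i\theta}$ with $\theta\in[-\pi,\pi]$ transforms the integral into
$$\Pr[S_n=\alpha n]=\frac{1}{2\pi}\int_{-\pi}^{\pi}\exp\!\bigl(ng(\zeta e^{i\theta})\bigr)\,d\theta,\qquad g(z):=\ln P(z)-\alpha\ln z.$$

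The main contribution comes from a small neighbourhood of $\theta=0$. Since $g'(\zeta)=0$ by the saddle choice and $g''(\zeta)=\xi>0$, translating $z$-derivatives into $\theta$-derivatives gives $g(\zeta e^{i\theta})=g(\zeta)-\tfrac12\zeta^2\xi\,\theta^2+O(\theta^3)$, the minus sign originating from $(i\zeta)^2=-\zeta^2$. Restricting the integral to $|\theta|\le n^{-2/5}$, the cubic remainder contributes $o(1)$ to the exponent, and a Gaussian evaluation yields
$$\frac{e^{ng(\zeta)}}{2\pi}\int_{-\infty}^{\infty}e^{-n\zeta^2\xi\theta^2/2}\,d\theta=(1+o(1))\cdot\frac{1}{\zeta\sqrt{2\pi n\xi}}\!\left(\frac{P(\zeta)}{\zeta^\alpha}\right)^{\!n},$$
which is precisely the right-hand side of the claimed estimate.

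It remains to control the tail $|\theta|\ge n^{-2/5}$, and this is the step that relies on aperiodicity. Since the $\gcd$ of the support of $X_1$ equals $1$, one has $|P(\zeta e^{i\theta})|<P(\zeta)$ for every $\theta\in(0,2\pi)$, and by continuity and compactness this bound is uniform: there is $c=c(\alpha)>0$ with $|P(\zeta e^{i\theta})|/P(\zeta)\le e^{-c}$ whenever $|\theta|\ge n^{-2/5}$ and $n$ is large enough. The tail is therefore $O(e^{-cn})$ relative to the main term and may be absorbed into the $o(1)$. Uniformity over compact subsets of $(T_0,T_\infty)$ follows because both $\zeta$ and $\xi$ depend analytically on $\alpha$.

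For the second, local statement I would Taylor-expand around $\zeta=1$, where $T(1)=\Exp[X_1]=\mu$ and $P(1)=1$. A direct differentiation shows $T'(1)=\Var[X_1]=\sigma^2$, so the saddle equation $T(\zeta)=\mu+\delta\sigma$ yields $\zeta=1+\delta/\sigma+O(\delta^2)$. Substituting into $g$ and using $g(1)=0$, the second-order terms combine to produce $g(\zeta)=-\delta^2/2+O(\delta^3)$; simultaneously $\xi=g''(\zeta)=\sigma^2+O(\delta)$ and $\zeta=1+O(\delta)$ in the prefactor, and feeding these expansions into the first part of the lemma yields the claim. The main obstacle is tracking uniformity of the error terms: the $O(\theta^3)$ in the Gaussian expansion, the constant $c$ in the tail bound, and the $O(\delta)$ in the local prefactor must all be controlled uniformly, which reduces to showing that the third derivative of $g$ is locally bounded and that $\xi$ is bounded away from $0$ on compact subsets of the admissible range. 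Both follow from the analyticity and aperiodicity hypotheses on $P$.
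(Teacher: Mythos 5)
The paper does not prove this lemma; it cites Flajolet--Sedgewick and the earlier NAESAT paper, where the result is obtained exactly by the saddle-point method you sketch. Your argument follows the standard route and the main steps are all correct: the Cauchy coefficient integral, the choice of radius $\zeta$ via $T(\zeta)=\alpha$ with monotonicity of $T$ on $(0,\infty)$, the identity $h''(0)=-\zeta^2 g''(\zeta)=-\zeta^2\xi$ when passing from $z$- to $\theta$-derivatives, the Gaussian evaluation of the central window $|\theta|\le n^{-2/5}$, and the Taylor expansion around $\zeta=1$ for the local form.

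The one genuine gap is the tail estimate. You claim that compactness yields a fixed $c>0$ with $|P(\zeta e^{i\theta})|/P(\zeta)\le e^{-c}$ for all $|\theta|\ge n^{-2/5}$ once $n$ is large, but this is false: as $\theta\to 0$ the ratio tends to $1$, so no such $n$-independent $c$ exists on the shrinking annulus $n^{-2/5}\le|\theta|\le\pi$. The correct fix is to split the tail into two ranges. For $\varepsilon_0\le|\theta|\le\pi$ (a fixed compact set not containing $\theta=0$), aperiodicity and compactness give $|P(\zeta e^{i\theta})|/P(\zeta)\le e^{-c(\varepsilon_0)}$, so that range contributes $O(e^{-cn})$. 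For the intermediate range $n^{-2/5}\le|\theta|\le\varepsilon_0$, one does not use compactness but rather the same quadratic expansion you used in the central window: $|P(\zeta e^{i\theta})|^n/P(\zeta)^n\le\exp(-c'n\theta^2)\le\exp(-c'n^{1/5})$, which is still $o(n^{-1/2})$ and hence negligible against the main term. With this two-regime split the proof is complete; the rest of your write-up, including the uniformity discussion and the expansion $g(\zeta)=-\delta^2/2+O(\delta^3)$ and $\xi=\sigma^2+O(\delta)$, is sound.
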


\noindent
From this we can rather easily derive the following well-known statement about the rate function of the binomial distribution.
\begin{lemma}\label{Lemma_binomial}
Let $0<p,q<1$.
Let
	\begin{eqnarray*}
	\psi(p,q)&=&
		-q\ln\bcfr qp-(1-q)\ln\bcfr{1-q}{1-p},
	\end{eqnarray*}
If $p,q$ remain fixed as $n\ra\infty$, then
	$$\pr\brk{\Bin(n,p)=qn}=\Theta(n^{-1/2})\exp\brk{\psi(p,q)n}.$$
\end{lemma}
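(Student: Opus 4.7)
The plan is to derive this directly from the local limit theorem stated as Lemma~\ref{lem:locallimit} by taking $X_1,\dots,X_n$ to be i.i.d.\ Bernoulli($p$) variables, so that $X_1+\cdots+X_n$ has distribution $\Bin(n,p)$. The event $\{\Bin(n,p)=qn\}$ then corresponds to $\alpha=q$ in that lemma's notation.

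First, I would verify the hypotheses. The probability generating function of a Bernoulli($p$) variable is $P(z)=1-p+pz$, which is a polynomial, hence entire, and its support $\{0,1\}$ is not contained in a lattice $d\mathbf{Z}$ with $d>1$, so $P$ is aperiodic. A short calculation gives $T_0=\lim_{z\to 0}zP'(z)/P(z)=0$ and $T_\infty=\lim_{z\to\infty}zP'(z)/P(z)=1$, so the hypothesis $T_0<q<T_\infty$ of Lemma~\ref{lem:locallimit} is satisfied for any $0<q<1$.

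Next, I would solve the saddle-point equations. The first equation $\zeta P'(\zeta)/P(\zeta)=q$ reads $p\zeta/(1-p+p\zeta)=q$, which yields the closed form
\[
\zeta \;=\; \frac{q(1-p)}{p(1-q)}.
\]
Substituting back, $P(\zeta)=1-p+p\zeta=(1-p)/(1-q)$. Taking logarithms,
\[
\ln\!\frac{P(\zeta)}{\zeta^q}
\;=\;\ln\frac{1-p}{1-q}-q\ln\frac{q(1-p)}{p(1-q)}
\;=\;-q\ln\frac{q}{p}-(1-q)\ln\frac{1-q}{1-p}
\;=\;\psi(p,q),
\]
so $(P(\zeta)/\zeta^q)^n=\exp(\psi(p,q)n)$, giving the claimed exponential factor. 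For the prefactor, the second saddle-point equation determines $\xi$ as an explicit finite, strictly positive constant depending only on $p$ and $q$ (concretely, $\xi=q(1-q)/(\text{constant})$ from differentiating $\ln P(z)-q\ln z$ twice at $z=\zeta$); since $p,q$ are fixed while $n\to\infty$, the quantity $1/(\zeta\sqrt{2\pi n\xi})$ is of order $\Theta(n^{-1/2})$. Combining these gives exactly the stated asymptotic.

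There is essentially no technical obstacle here — the argument is a direct substitution into Lemma~\ref{lem:locallimit}. The only minor care required is checking aperiodicity of $P$ and confirming that the saddle point $\zeta$ lies in the admissible range $(T_0,T_\infty)=(0,1)\cdot\infty$, which is immediate from $0<p,q<1$.
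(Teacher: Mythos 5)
Your proposal is correct and takes exactly the approach the paper intends: the paper explicitly introduces Lemma~\ref{Lemma_binomial} with the remark that it follows ``rather easily'' from the local limit theorem (Lemma~\ref{lem:locallimit}), and your substitution of i.i.d.\ Bernoulli$(p)$ variables with the saddle-point solution $\zeta=q(1-p)/(p(1-q))$ and the verification that $\ln(P(\zeta)/\zeta^q)=\psi(p,q)$ is precisely that derivation.
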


\noindent
The following form of the chain rule will prove useful.
\begin{lemma}\label{Lemma_chainrule}
Let $g:\RR^a\ra\RR^b$ and $f:\RR^b\ra\RR$ be of class $C^2$, i.e, with continuous second derivatives.
Then for any $x_0\in\RR^a$ and with $y_0=g(x_0)$ we have for any $i,j\in\brk a$
\begin{eqnarray*}
\frac{\partial^2f\circ g}{\partial x_i\partial x_j}\bigg|_{x_0}&=&
	\sum_{k=1}^b\frac{\partial f}{\partial y_k}\bigg|_{y_0}\frac{\partial^2 g_k}{\partial x_i\partial x_j}\bigg|_{x_0}+
		\sum_{k,l=1}^b\frac{\partial^2f}{\partial y_k\partial y_l}\bigg|_{y_0}\frac{\partial g_k}{\partial x_i}\bigg|_{x_0}\frac{\partial g_l}{\partial x_j}\bigg|_{x_0}.
\end{eqnarray*}
\end{lemma}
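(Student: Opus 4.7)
The statement is the standard second-order multivariate chain rule, so the plan is simply to apply the first-order chain rule twice and keep track of the terms produced by the product rule. First I would apply the ordinary chain rule to get, for each $j$ and all $x$ in a neighborhood of $x_0$,
\[
\frac{\partial f \circ g}{\partial x_j}(x) \;=\; \sum_{l=1}^b \frac{\partial f}{\partial y_l}\bigg|_{g(x)} \cdot \frac{\partial g_l}{\partial x_j}\bigg|_{x}.
\]
This is justified because $g$ is $C^1$ and $f$ is $C^1$, which is implied by the $C^2$ assumption.

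Next I would differentiate this identity with respect to $x_i$, using the product rule on each summand. Each summand $\frac{\partial f}{\partial y_l}\circ g \cdot \frac{\partial g_l}{\partial x_j}$ contributes two pieces. The piece in which the derivative falls on $\frac{\partial g_l}{\partial x_j}$ produces
\[
\frac{\partial f}{\partial y_l}\bigg|_{y_0}\cdot \frac{\partial^2 g_l}{\partial x_i\partial x_j}\bigg|_{x_0},
\]
summed over $l$, giving the first sum in the statement. The piece in which the derivative falls on $\frac{\partial f}{\partial y_l}\big|_{g(x)}$ requires a second application of the chain rule, this time to the composition $x \mapsto \frac{\partial f}{\partial y_l}(g(x))$; since $\frac{\partial f}{\partial y_l}$ is $C^1$ (here we use $f \in C^2$) and $g$ is $C^1$, this yields
\[
\sum_{k=1}^b \frac{\partial^2 f}{\partial y_k \partial y_l}\bigg|_{y_0}\cdot\frac{\partial g_k}{\partial x_i}\bigg|_{x_0}.
\]
Multiplying by $\frac{\partial g_l}{\partial x_j}\big|_{x_0}$ and summing over $l$ gives the double sum in the statement.

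Combining the two contributions produces exactly the displayed formula at the point $x_0$, with $y_0 = g(x_0)$. No step is really an obstacle: the $C^2$ hypothesis ensures that all partial derivatives appearing in intermediate steps exist and are continuous, so the product rule and chain rule may be applied unambiguously, and no issue of symmetry of mixed partials needs to be invoked since the result as stated does not assert equality of $\partial^2 f\circ g/\partial x_i\partial x_j$ and $\partial^2 f\circ g/\partial x_j\partial x_i$ (although this would in fact follow from Clairaut's theorem together with the symmetry of the right-hand side). \qed
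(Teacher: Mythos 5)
Your proof is correct and is the standard derivation: apply the first-order chain rule, then differentiate the resulting identity with the product rule, using the chain rule again on the composed factor. The paper states this lemma without proof (treating it as a known form of the second-order chain rule), so there is nothing to compare against; your argument supplies exactly the routine justification one would expect, and your remark about not needing Clairaut symmetry is apt.
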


\noindent
Finally, we need the following version of the inverse function theorem that states under which conditions a given system of equations can be solved around a specific point $u$. A detailed exposition can be found in~\cite{Spivak}. 

\begin{lemma}\label{Lemma_inverseFunction}
Let $U\subset\RR^h$ be open and let $f\in C^1(U)$.
Assume that $u\in U$ and $\lambda>0$ are such that
	$$\cbc{x\in\RR^h:\norm{x-u}_2\leq\lambda}\subset U.$$
Let $Df(x)$ be the Jacobian matrix of $f$ at $x$, $\id$ the identity matrix, and $\norm{\cdot}$ denote the operator norm over $L^2(\RR^h)$. Assume that $Df(u)=\id$ and
	$$\norm{Df(x)-\id}\leq\frac13\qquad\mbox{ for all $x\in\RR^h$ such that }\norm{x-u}_2\leq\lambda,$$
Then for each $y\in\RR^h$ such that $\norm{y-f(u)}\leq \lambda/2$ there is precisely one $x\in\RR^h$ such that $\norm{x-u}\leq r$
and $f(x)=y$.
Furthermore, the inverse map $f^{-1}$ is $C^1$ on $\cbc{x\in\RR^h:\norm{x-u}_2<\lambda}$, and
	$Df^{-1}(x)=(Df(x))^{-1}$ on this set.
\end{lemma}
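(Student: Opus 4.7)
The plan is to deduce this quantitative inverse function theorem from the Banach fixed-point theorem, applied to the auxiliary family of maps $\varphi_y(x) = x + y - f(x)$ indexed by the target $y$. Fixed points of $\varphi_y$ are precisely the solutions of $f(x)=y$, so the existence and uniqueness assertion reduces to showing that each $\varphi_y$ is a strict contraction mapping the closed ball $B = \cbc{x\in\RR^h:\norm{x-u}_2\leq\lambda}$ into itself whenever $\norm{y-f(u)}_2\leq\lambda/2$.

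Concretely, I would first compute $D\varphi_y(x) = \id - Df(x)$, so that the hypothesis immediately yields $\norm{D\varphi_y(x)}\leq 1/3$ on the convex set $B$. The mean value inequality along line segments then gives $\norm{\varphi_y(x_1)-\varphi_y(x_2)}_2 \leq \tfrac{1}{3}\norm{x_1-x_2}_2$ for all $x_1,x_2\in B$. To verify $\varphi_y(B)\subset B$, for any $x\in B$ I would estimate
\[
\norm{\varphi_y(x)-u}_2 \leq \norm{\varphi_y(x)-\varphi_y(u)}_2 + \norm{\varphi_y(u)-u}_2 \leq \tfrac{\lambda}{3} + \norm{y-f(u)}_2 \leq \tfrac{\lambda}{3}+\tfrac{\lambda}{2} \leq \lambda,
\]
and the Banach fixed-point theorem then produces the unique fixed point $x\in B$ with $f(x)=y$. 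Since the bound $\tfrac{\lambda}{3}+\tfrac{\lambda}{2}<\lambda$ is strict, this $x$ in fact lies in the open ball, a fact needed for the differentiability step.

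For the $C^1$ claim I would note first that $\norm{Df(x)-\id}\leq 1/3<1$ on $B$ implies, via the Neumann series, that $Df(x)$ is invertible with $\norm{Df(x)^{-1}}\leq 3/2$. Writing $g$ for the local inverse of $f$, the reverse inequality $\norm{f(x_1)-f(x_2)}_2 \geq \tfrac{2}{3}\norm{x_1-x_2}_2$ (read off from the contraction bound on $\varphi_0 = \id - f$) shows that $g$ is Lipschitz with constant $3/2$, hence continuous. Differentiability of $g$ at $y=f(x)$ then follows by substituting $h = g(y+\eta)-g(y)$ into the Taylor expansion $f(x+h)=f(x)+Df(x)h+o(\norm{h}_2)$, using $\norm{h}_2\leq\tfrac{3}{2}\norm{\eta}_2$ to absorb $o(\norm{h}_2)$ into $o(\norm{\eta}_2)$, and inverting $Df(x)$ to obtain $g(y+\eta)-g(y)=Df(x)^{-1}\eta+o(\norm{\eta}_2)$. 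Continuity of $y\mapsto (Df(g(y)))^{-1}$ as a composition of continuous maps then delivers $g\in C^1$ with $Dg(y)=(Df(g(y)))^{-1}$, as claimed. There is no substantive obstacle: the whole argument is a textbook application of the contraction principle, and the constants $1/3$ and $\lambda/2$ in the hypothesis are tuned precisely so that the self-map and strict contraction properties hold simultaneously.
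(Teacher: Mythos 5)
Your proof is correct. The paper does not actually supply a proof of this lemma --- it states it as a black box and cites Spivak's \emph{Calculus on Manifolds} for a detailed exposition --- so there is no internal argument to compare against. The contraction-mapping route you take, fixing a target $y$ and iterating $\varphi_y(x)=x+y-f(x)$ on the closed $\lambda$-ball, is the standard quantitative proof; it is closer in spirit to Rudin's treatment than to Spivak's own, which establishes surjectivity by a compactness minimization argument on the boundary sphere rather than via Banach's theorem. Your approach fits the constants in the statement exactly: $1/3$ is the contraction modulus, and $\lambda/2$ is precisely what makes $\varphi_y$ a self-map of the ball, since $\tfrac13\lambda+\tfrac12\lambda<\lambda$. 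The remaining steps --- invertibility of $Df(x)$ via a Neumann series with $\norm{Df(x)^{-1}}\leq 3/2$, the reverse Lipschitz bound $\norm{f(x_1)-f(x_2)}_2\geq\tfrac23\norm{x_1-x_2}_2$ read off from the contraction estimate for $\varphi_0$, and the difference-quotient computation establishing $Dg(f(x))=(Df(x))^{-1}$ with continuity by composition --- are all correct and standard. (Incidentally, the paper's statement has a typo, writing $\norm{x-u}\leq r$ with an undefined $r$ that should be $\lambda$, and it locates the $C^1$ claim for $f^{-1}$ on a ball around $u$ rather than around $f(u)$; you implicitly read these as intended.)
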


\smallskip
\noindent
{\bf Notation.}
We will generally assume that $n>n_0,k>k_0$ for certain large enough constants $n_0,k_0$.
We are going to use the asymptotic symbols $O(f(x))$, $\Omega(f(x))$, etc.
It is understood that the asymptotic is with respect to the parameter $x$ of the function $f(x)$.
Thus, if $f$ is a function of $n$, then the asymptotic notation refers to the limit $n\ra\infty$,
and if $f$ is a function of $k$, then the notation refers to $k$ being large.
We use the following convention for the $O$-notation in the case that $f$ is a constant:
we let $O(1)$ be a term that remains bounded in the limit of large $n$, but that may by unbounded in terms of $k$.
By constrast, $O_k(1)$ refers to a term that remains bounded both in the limit of large $k$ and large $n$.
Expressions such as $o_k(1)$ are to be interpreted analogously.
Generally, all asymptotics are \emph{uniform} in the various other parameters (such as the degree sequence $\vec d$ or $r$).
For a function $f(k)>0$ use the symbol $\tilde O(f(k))$ to denote a function $g(k)$ such that for some constant $c>0$ we have
	$g(k)=O(f(k)\cdot\ln^c f(k))$.
For vectors $\xi,\eta$ we use the symbol
	$$\eta\doteq\xi$$
to denote the fact that $\norm{\xi-\eta}_\infty\leq O(1/n)$.

Let $V=\cbc{x_1,\ldots,x_n}$ and let $L=\cbc{x_1,\neg x_1,\ldots,x_n,\neg x_n}$.
For a literal $l\in L$ we let $\abs l$ denote the underlying variable.
Moreover, $\sign(l)=1$ if $l$ is a positive literal, and $\sign(l)=-1$ otherwise.
For a $k$-CNF $\Phi$ we let $\Phi_i$ denote the $i$th clause of $\Phi$
and $\Phi_{ij}$ the $j$th literal of $\Phi_i$.

From here on out, we let
\begin{equation}
\label{eq:rrho}
	r=2^{-k}\ln2-\rho \quad\text{with}\quad \rho=\frac32\ln2-\eps_k
\end{equation}
for some sequence $\eps_k=o_k(1)$ that tends to $0$ sufficiently slowly.

\section{The random variable}\label{XSec_theRandomVar}

\subsection{The construction}

Our goal is to
make the second moment method work for
 a random variable that counts ``asymmetric'' satisfying assignments.
In this section, we develop this random variable.
The starting point, and the key ingredient, is simply a map $\mymap:\ZZ\ra\brk{0,1}$.
For the sake of clarity, we start by setting up the framework for generic maps $p$;
	below we will use the Belief Propagation formalism to pick the ``optimal'' $p$.

The idea is that $p$ prescribes how strongly the assignments that we work with lean toward the majority vote.
Informally speaking, we are going to work with assignments such that
a variable $x$ that occurs $d_x$ times positively and $d_{\neg x}$ times negatively has
a $\mymap(d_x-d_{\neg x})$ chance of being set to `true'.
Before we give a formal definition, we need to fix the number of times that each variable appears positively or negatively.

\smallskip\noindent
{\bf\em Fixing the majority weight.}
As we saw in \Sec~\ref{XSec_bla}, in order to make the second moment argument work,  we need to rule out fluctuations of the majority weight. 
To achieve this, we follow the strategy outlined in \Sec~\ref{XSec_bla}.
That is, we are going to work with formulas $\PHId$ with a given vector $\vec d=(d_x,d_{\neg x})_{x\in V}$
of occurrence counts, where each variable $x$ appears precisely $d_x$ times positively
and $d_{\neg x}$ times negatively.
As in \Sec~\ref{XSec_bla}, we let $\vec D$ denote the (conditional Poisson) distribution over sequences $\vec d$ such that
first choosing $\vec d$ from $\vec D$ and then generating $\PHId$ is equivalent to choosing a $k$-CNF $\PHI$ uniformly at random.

\smallskip\noindent
{\bf\em Fixing the marginals.}
Now, fix one such vector $\vec d$.
Then the map $\mymap:\ZZ\ra\brk{0,1}$ induces a map
	$p_{\vec d}$ from the set
	$L=\cbc{x,\neg x:x\in V}$
	 of literals to $\brk{0,1}$ in the natural way: we let
	\begin{equation}\label{Xeqpd}
	\pd(x)=\mymap(d_x-d_{\neg x})\mbox{ and }\pd(\neg x)=1-\mymap(x).
	\end{equation}
The idea is that, given $\vec d$, we should set variable $x$ to `true' with probability $\pd(x)$.

To formalize this, 
we call $\pd(l)$ the \bemph{$\pd$-type} of the literal $l$.
Let 
 $\cT=\cT_{\vec d}=\cbc{\pd(l):l\in L}$ be the set of all possible $\pd$-types.
We say that $\sigma:V\ra\cbc{0,1}$ has \bemph{$\pd$-marginals}
	if for any type $t\in\cT_{\vec d}$ we have%
	$$\sum_{l\in L:\pd(l)=t}(\sigma(l)-t)\cdot d_l=O(1).$$
i.e., among all occurrences of literals of type $t$, 
a $t$ fraction is true under $\sigma$.
This definition captures the above idea that variable $x$ has a $\pd(x)$ chance of being `true'.

\smallskip\noindent
{\bf\em Fixing the clause types.}
We define the \bemph{$\pd$-type} of a clause $l_1\vee\cdots\vee l_k$ as the $k$-tuple
$(\pd\bc{l_1},\ldots,\pd\bc{l_k})\in\brk{0,1}^k$
comprising of the individual literal types.
Let $\cL=\cL_{\vec d}=\cT_{\vec d}^k$ be the set of all possible clause types.
For each $\ell\in\cL_{\vec d}$ let $M_{\PHId}(\ell)$ be the set of indices $i\in\brk m$ such that
the $i$th clause of $\PHId$ has type $\ell$, and let $m_{\PHId}(\ell)=\abs{M_{\PHId}(\ell)}$.

In addition to fluctuations of the majority weight, we also need to suppress fluctuations of the numbers $m_{\PHId}(\ell)$.
We are going to use the same trick as in the case of the majority weight.
Namely, we split the generation of a random formula $\PHId$ into two steps:
\begin{enumerate}
\item[]	First, choose a vector $\vec m=(m(\ell))_{\ell\in\cL}$ from the ``correct'' distribution $\vec M_{\vec d}$.
\item[]	Then, generate a formula $\PHIdm$ uniformly at random in which 
each variable $x$ appears exactly $d_x$ times positively and exactly  $d_{\neg x}$ times negatively
and that has exactly $m(\ell)$ clauses of type $\ell$ for all $\ell\in\cL$.
\end{enumerate}
Formally, the ``correct'' $\vec M_{\vec d}$ is
just the distribution of the random vector $\vec m_{\PHId}=(m_{\PHId}(\ell))_{\ell\in\cL}$
that counts the clauses by types in the ``unrestricted'' formula $\PHId$.
It is easily verified that the overall outcome of the above experiment is identical to $\PHId$.
From now on, we fix both $\vec d$ and $\vec m$.

Given $\vec d,\vec m$ there is a simple way of generating the random formula $\PHIdm$.
Namely, create $d_l$ clones of each literal $l$, and put all the clones of a given $\pd$-type on a pile.
Then the formula $\PHIdm$ is simply the result of matching the clones on the type $t$ pile 
randomly to all  the clauses where a literal of type $t$ is required.

An assignment $\sigma$ with $p_{\vec d}$-marginals splits each pile into two subsets,
namely the clones that are true under $\sigma$ and those that are false.
For each type $t$, among the clones in the type $t$ pile, a $t$-fraction are true, since $\sigma$ has $p_{\vec d}$-marginals. Therefore, we expect that under the random matching, for each clause type $\ell=(\ell_1, \dots,\ell_k)$ and each index $j$, in an $\ell_j$-fraction of  clauses the $j$th literal is matched to a `true' clone.

\smallskip\noindent
\noindent{\bf\em Judicious assignment.}
This observation motivates the following definition.
We say that an assignment $\sigma$ is 
\bemph{$\pd$-judicious} in $\PHIdm$
if for all clause types $\ell=(\ell_1,\ldots,\ell_k)\in\cL$ and all
$j\in\brk k$ we have 
	\begin{equation}\label{Xeqjudicious}
	\sum_{i\in M_{\PHIdm}(\ell)}\sigma(\PHI_{\vec d,\vec m,i,j})=m(\ell)\cdot \ell_j+O(1),
	\end{equation}
where $\PHI_{\vec d,\vec m,i,j}$ denotes the $j$th literal of the $i$th clause of $\PHIdm$,
and the sum is over all $i$ such that the $i$th clause has type $\ell$.
Let $\cS_p(\PHIdm)$ be the set of $p$-judicious satisfying assignments, and set $Z_p(\PHIdm)=\abs{\cS_p(\PHIdm)}$.

Given that $\sigma$ is $p$-judicious, 
in order for $\sigma$ to be satisfying we just need that for each type $\ell$ the `true' clones
are distributed so that each clause receives at least one.
Thus, the event of being satisfying is merely a matter of how exactly the `true' clones
are ``shuffled''  amongst the clauses of type $\ell$, while
for each $j$ the total number of `true' clones of type $\ell_j$ is fixed.
In particular, this shuffling occurs independently for each clause type.
Such random shuffling problems tend to be amenable to the second moment method.
Therefore, it seems reasonable to expect that a second moment argument succeeds for $Z_p(\PHIdm)$.
This is indeed the case for $r<\rBP-1+\ln2\approx\rBP-0.3$.
However,  to actually reach $\rBP$ we need to control one further parameter.

\smallskip\noindent
{\bf\em Fixing the cluster size.}
According to the physics predictions~\cite{pnas,MPZ},
for $\rbal<r<\rBP$ the set of satisfying assignments decomposes into an exponential number of well-separated `clusters'.
More precisely, we expect that  \whp\ for any two satisfying $\sigma,\tau$ either $\dist(\sigma,\tau)<0.01n$
(if $\sigma,\tau$ belong to the same cluster), or $\dist(\sigma,\tau)>0.49n$ (different clusters).
Formally, we simply define the \bemph{cluster of $\sigma$} as
	$$\cC_\sigma(\Phi)=\cbc{\tau\in\cS(\PHIdm):\frac{\dist(\sigma,\tau)}n\not\in\brk{\frac12-k^22^{-k/2},\frac12+k^22^{-k/2}}}.$$
The intuitive reason why the second moment argument for $Z_p(\PHIdm)$ breaks down for $r$ close to $\rBP$ is that
the cluster sizes $|\cC_{\sigma}(\PHIdm)|$ fluctuate.
A similar problem occurred in prior work on random $k$-NAESAT~\cite{Catching,Lenka}.

As in those papers, the problem admits a remarkably simple solution:
let us call an assignment $\sigma$ \bemph{good} in $\PHIdm$ if	
	\begin{equation}\label{Xeqgood}
	\abs{\cC_\sigma(\PHIdm)}\leq\Erw\brk{Z_p(\PHIdm)}.
	\end{equation}
Let $\cS_{p,\mathrm{good}}(\PHIdm)$ be the set of all good $\sigma\in\cS_p(\PHIdm)$.
To avoid fluctuations of the cluster size, we are just going to work with
	$Z_{p,\mathrm{good}}=|\cS_{p,\mathrm{good}}(\PHIdm)|$.

\smallskip\noindent
\noindent{\bf\em The second moment bound.}
We now face the task of estimating the first and the second moment of $Z_{p,\mathrm{good}}(\PHIdm)$.
The result can be summarized as follows.

\begin{theorem}\label{XThm_secondPlain}
Suppose $\rbal<r<\rBP$.
There exists $C=C(k)$ 
and a map $\mymap=\mymap_{\mathrm{BP}}:\ZZ\ra\brk{0,1}$
such that for $\vec d$ chosen from $\vec D$ 
and for $\vec m$ chosen from $\vec M_{\vec d}$
\whp\
		$$0<\Erw\brk{Z_{p,\mathrm{good}}(\PHIdm)^2}\leq C\cdot \Erw\brk{Z_{p,\mathrm{good}}(\PHIdm)}^2.$$
\end{theorem}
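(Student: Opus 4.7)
The plan is to carry out the asymmetric second moment method on $Z_{p,\mathrm{good}}(\PHIdm)$ conditional on typical outcomes of $\vec d$ and $\vec m$. By the local limit theorem (Lemma~\ref{lem:locallimit}) together with concentration of the conditional Poisson law $\vec D$ and the multinomial-type law $\vec M_{\vec d}$, whp the relevant degree- and clause-type statistics of $(\vec d,\vec m)$ agree with their means up to fluctuations of order $\sqrt{n}$, so all subsequent estimates become deterministic functions of these means. From here on I would work with a fixed typical pair $(\vec d,\vec m)$ and show that $\Erw\brk{Z_{p,\mathrm{good}}(\PHIdm)^2}\leq C\cdot\Erw\brk{Z_{p,\mathrm{good}}(\PHIdm)}^2$ with $\Erw\brk{Z_{p,\mathrm{good}}(\PHIdm)}>0$.

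For the first moment, given $(\vec d,\vec m)$ and a $p$-judicious $\sigma$, the generation of $\PHIdm$ decomposes into independent random matchings, one for each clause type $\ell\in\cL$. For a fixed type $\ell=(\ell_1,\dots,\ell_k)$, the $km(\ell)$ clause-slots are filled from the type piles in prescribed proportions, and the probability that every clause of type $\ell$ gets at least one true literal is a product of hypergeometric terms whose logarithm is computable by Stirling and Lemma~\ref{lem:locallimit}. Multiplying by the multinomial count of $p$-judicious $\sigma$ yields $\Erw\brk{Z_p(\PHIdm)}=n^{-O(1)}\exp(n\cdot f(p))$ for an explicit functional $f(p)$. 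Positivity of $\Erw\brk{Z_{p,\mathrm{good}}(\PHIdm)}$ then follows from this being exponentially large together with a Markov bound showing that only an exponentially small fraction of satisfying assignments can fail the goodness condition~(\ref{Xeqgood}).

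For the second moment, parametrize pairs $(\sigma,\tau)$ of $p$-judicious assignments by the overlap vector $\alpha=(\alpha_t)_{t\in\cT}$, where $\alpha_t$ is the fraction of literal-occurrences of type $t$ on which both $\sigma$ and $\tau$ are true. Because the random matching factorizes over clause types, both the number of pairs with a given overlap and the conditional probability that both assignments are satisfying factorize along clause types, yielding $\Erw\brk{Z_p(\PHIdm)^2}=\sum_\alpha n^{-O(1)}\exp(n\cdot F(\alpha))$ for an explicit functional $F$. The target is to show that the \emph{symmetric overlap} $\alpha_t^{\mathrm{sym}}=t^2$ is the unique global maximizer of $F$ on the relevant domain, with $F(\alpha^{\mathrm{sym}})=2f(p)$. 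The choice $p=p_{\mathrm{BP}}$ is made precisely so that the stationarity condition $\nabla F(\alpha^{\mathrm{sym}})=0$ holds: the Belief Propagation fixed-point equations on the $k$-SAT factor graph are exactly these first-order conditions. Strict local maximality then reduces via Lemma~\ref{Lemma_chainrule} to verifying that $\nabla^2 F(\alpha^{\mathrm{sym}})$ is negative definite, a bounded-dimensional linear-algebraic check amenable to a large-$k$ expansion using~(\ref{eq:rrho}).

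The main obstacle is the \emph{global} analysis of $F$. The regime $r<\rBP$ is precisely where a direct analysis of $Z_p$ breaks down due to mass concentrated near the \emph{cluster overlap} $\alpha_t\approx t$, corresponding to pairs $(\sigma,\tau)$ with $\tau\in\cC_\sigma(\PHIdm)$. This is exactly what the goodness restriction~(\ref{Xeqgood}) is designed to neutralize: by definition the cluster contribution to $\Erw\brk{Z_{p,\mathrm{good}}(\PHIdm)^2}$ is at most $\Erw\brk{Z_{p,\mathrm{good}}(\PHIdm)}\cdot\Erw\brk{Z_{p,\mathrm{good}}(\PHIdm)}$, which is within the tolerance we need. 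Outside a neighborhood of $\alpha^{\mathrm{sym}}$ and of the cluster regime, I would establish $F(\alpha)<F(\alpha^{\mathrm{sym}})-\Omega(1)$ by combining the local Hessian estimate with a large-$k$ expansion in the spirit of~\cite{yuval} and the inverse function theorem (Lemma~\ref{Lemma_inverseFunction}) to move between the overlap parametrization and a more tractable parametrization by joint clause-type overlaps. Summing over the $\mathrm{poly}(n)$ feasible values of $\alpha$ then delivers the bound $\Erw\brk{Z_{p,\mathrm{good}}(\PHIdm)^2}\leq C\cdot\Erw\brk{Z_{p,\mathrm{good}}(\PHIdm)}^2$ claimed by the theorem.
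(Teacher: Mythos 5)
Your proposal captures the paper's overall architecture accurately: condition on typical $(\vec d,\vec m)$, compute the first moment by local CLT arguments, parametrize the second moment by overlap, choose $p=p_{\mathrm{BP}}$ so that the symmetric overlap is a stationary point, control the second derivative, and use the goodness cap $|\cC_\sigma|\leq\Erw[Z_p]$ to neutralize the cluster regime. This is essentially the same route the paper takes. However, two steps are glossed over in ways that hide real work.

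First, the reduction to a single functional $F(\alpha)$ of the coarse type-level overlap $\alpha=(\alpha_t)_{t\in\cT}$ is not as immediate as you suggest. Given $\alpha$ alone, the satisfaction events do \emph{not} factorize over clause types: given $\alpha$, the per-clause-type-and-position overlap matrix $\omega=(\omega_{\ell,j})_{\ell\in\cL,j\in\brk k}$ is itself a random object (a multivariate hypergeometric outcome of the matching), and it is only conditional on $\omega$ that the satisfaction probability factors over $\ell$. The paper therefore runs a two-level analysis: Proposition~\ref{Prop_omegacentre} shows that, conditional on the coarse overlap $\cO$ and the judiciousness event, $\omega$ concentrates sharply around the balanced matrix $\bar\omega$, with subgaussian tails; only then does Proposition~\ref{Prop_Pcentre} analyze the per-$\ell$ factor $\cP_\ell(\omega_\ell)$ (stationarity at $\omega^*$, second derivative $\tilde O(4^{-k})$), and Corollary~\ref{Cor_omegacentre} sums the two contributions to recover an effective bound in terms of $\cO$. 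Invoking the inverse function theorem does let you change between $\vec q$- and $\omega$-parametrizations of a \emph{deterministic} object (and the paper does this in Lemmas~\ref{Lemma_fixedpoint} and~\ref{Lemma_11fixedpoint}), but it does not address the randomness of $\omega$ given $\alpha$; that requires the separate concentration step. Your plan as written conflates these, so the claimed factorization and the resulting $\exp(nF(\alpha))$ form do not follow from what you state.

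Second, justifying $\Erw[Z_{p,\mathrm{good}}]\sim\Erw[Z_p]$ is not a simple Markov bound. What you need is that the expected number of satisfying $\sigma$ whose cluster exceeds $\Erw[Z_p]$ is negligible, and the only way the paper obtains this is through the rigidity analysis of Section~\ref{Sec_frozen}: w.h.p.\ for all but $(1+o_k(1))2^{-k}n$ variables, flipping them forces $\Omega(n)$ other flips, so a cluster has at most $2^{(1+o_k(1))2^{-k}n}$ elements, which is strictly below $\Erw[Z_p]=\exp(2^{-k}(\rho-\frac{\ln2}{2}+o_k(1))n)$ precisely because $\rho>\frac32\ln2$. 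This structural argument (supporting clauses, self-contained sets, dense subsets) is essential and not captured by a naive Markov estimate on cluster sizes. Your outline would need to incorporate both ingredients to be complete.
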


Together with Paley-Zygmund~(\ref{XeqPZ}),
	\Thm~\ref{XThm_secondPlain} shows that with $\vec d$ chosen from $\vec D$ and $\vec m$ chosen from $\vec M_{\vec d}$
	\whp
	\begin{eqnarray}			\label{Xeqfinish1}
	\pr\brk{\PHIdm\mbox{ is satisfiable}}
	&\geq&
	\pr\brk{Z_{p,\mathrm{good}}(\PHIdm)>0}\geq
			\frac{\Erw\brk{Z_{p,\mathrm{good}}(\PHIdm)}^2}{\Erw\brk{Z_{p,\mathrm{good}}(\PHIdm)^2}}
			\geq\frac1{C}.
	\end{eqnarray}
The construction of $\vec D$, $\vec M_{\vec d}$ ensures that
choosing $\PHI$ at random is the same as first picking $\vec d$ from $\vec D$ and $\vec m$ from $\vec M_{\vec d}$
and then generating $\PHIdm$.
Therefore, (\ref{Xeqfinish1}) implies 
$\liminf_{n\ra\infty}\pr\brk{\PHI\mbox{ is satisfiable}}>0$,
so that \Lem~\ref{XLemma_Friedgut} yields $\rk\geq\rBP$.
Hence, we are left to prove \Thm~\ref{XThm_secondPlain}.
We begin by constructing the map $p_{\mathrm{BP}}$.

\smallskip\noindent
{\bf\em Guessing the marginals.}
For a set $\emptyset\neq S\subset\cbc{0,1}^V$ and a variable $x$ we define the \bemph{$S$-marginal of $x$} as
	\begin{equation}\label{XeqMarginals}
	\mu_S(x)=\sum_{\sigma\in S}\frac{\sigma(x)}{|S|}.
	\end{equation}
The definition of `$\pd$-judicious' is guided by the idea that 
 $\pd(x)$ should prescribe 
the marginal
of $x$ in the set of all $\pd$-judicious satisfying assignments.
Hence, 
in order to make the set of $\pd$-judicious assignments as good an approximation
of the \emph{entire} set of satisfying assignments as possible,
we better pick $p$ so that $\pd(x)$
is a good approximation to the actual marginal $\mu_{\cS(\PHId)}(x)$
of $x$ in the set of \emph{all} satisfying assignments.
The problem is that, because of the asymmetry of the $k$-SAT problem, these marginals are highly non-trivial quantities.
Indeed, on general formulas $\Phi$ the marginals $\mu_{\cS(\Phi)}(x)$ are $\#P$-hard to compute.

However, according to the physicists' 
cavity method,
on random formulas with density $r<\rBP$ the marginals can be computed by means of an efficient message passing algorithm
called \emph{Belief Propagation}~\cite{pnas}.
While the mechanics of this are not important in our context, the result is.

\begin{conjecture}\label{XConj_marg}
Suppose that $\rbal<r<\rBP$.
Let $\vec d$ be chosen from $\vec D$ 
and let $x$ be a variable.
Then  \whp\ 
	\begin{equation}\label{Xeqmargs}
	\mu_{\cS(\PHId)}(x)
		=	
			\frac12+\frac{d_x-d_{\neg x}}{2^{k+1}}+O\bc{\frac{d_x-d_{\neg x}}{2^{k}}}^2.
	\end{equation}
\end{conjecture}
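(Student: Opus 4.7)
The plan is to combine a local weak limit argument with the Belief Propagation recursion and a correlation-decay bound. First, I would argue that for $\vec d$ drawn from $\vec D$ and a fixed variable $x$, the depth-$\ell$ neighbourhood of $x$ in $\PHId$ converges in distribution to a random factor tree in which the root has $d_x$ positive and $d_{\neg x}$ negative clause children, each clause spawns $k-1$ further variable descendants, and each deeper variable has independent $\Po(kr/2)$ positive and $\Po(kr/2)$ negative clause children. This is the routine first-moment calculation that the expected number of short cycles through $x$ vanishes as $n\to\infty$.

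Second, on a tree formula the marginal at the root is given exactly by Belief Propagation. Writing $\eta_{a\to y}$ for the cavity probability that no literal in $a\setminus\{y\}$ satisfies $a$, one has $\eta_{a\to y}=\prod_{z\in a\setminus\{y\}}\zeta_{z\to a}$, where $\zeta_{z\to a}$ is the cavity probability that $z$ itself fails to satisfy $a$ and is in turn a ratio of products of $(1-\eta_{b\to z})$ over clauses other than $a$ that contain $z$. The recursion admits a paramagnetic fixed point $\eta\equiv 2^{-(k-1)}$, $\zeta\equiv 1/2$. Inserting the fixed point into the root formula yields
\[
\mu_{\mathrm{tree}}(x) \;=\; \frac{(1-2^{-(k-1)})^{d_{\neg x}}}{(1-2^{-(k-1)})^{d_x}+(1-2^{-(k-1)})^{d_{\neg x}}},
\]
and setting $\tau = (d_x-d_{\neg x})\ln(1-2^{-(k-1)})$ and expanding via $\tfrac{1}{1+\eul^{\tau}} = \tfrac{1}{2} - \tfrac{\tau}{4} + O(\tau^3)$ gives exactly $\tfrac12 + (d_x-d_{\neg x})/2^{k+1} + O((d_x-d_{\neg x})/2^{k})^2$. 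Contributions from non-trivial messages further up the tree are controlled by showing that the BP operator is a contraction around the paramagnetic fixed point whenever $r<\rBP$, so that messages stabilise to within $\eps$ of the fixed-point values once the tree has depth $\omega_\eps(1)$.

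Third, the tree computation is transferred to $\PHId$ itself by coupling the Gibbs measure on $\cS(\PHId)$ with that on a finite-depth truncation of the limiting tree so that marginals agree up to total-variation error $o(1)$. This coupling step is the main obstacle: it requires correlation decay for random $k$-SAT in the entire window $(\rbal,\rBP)$, which is exactly the replica-symmetric hypothesis of the cavity method. Rigorous correlation-decay arguments via couplings of Markov chains or Dobrushin-type conditions are presently available only for densities well below $\rBP$, and closing the gap is a substantial open problem. For this reason the statement is posed as a conjecture; within the authors' proof of \Thm~\ref{XThm_main} it plays only a motivational role, guiding the choice of the map $p_{\mathrm{BP}}$ in \Sec~\ref{XSec_theRandomVar}, whose correctness is then established \emph{a posteriori} by a direct second-moment calculation on $Z_{p,\mathrm{good}}(\PHIdm)$.
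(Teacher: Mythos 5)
You have correctly recognized that the paper does not prove this statement: it is stated as a conjecture and serves only to motivate the choice of the map $p_{\mathrm{BP}}$ in~(\ref{XeqpBP}), whose adequacy is then validated a~posteriori by the second moment bound for $Z_{p,\mathrm{good}}(\PHIdm)$ (cf.\ Remark~\ref{XRem_BP}, which notes that BP in fact predicts something sharper but that even that would not improve the constant in \Thm~\ref{XThm_main}). Your derivation of the marginal formula from the paramagnetic BP fixed point $\zeta\equiv\frac12$, $\eta\equiv 2^{-(k-1)}$ is the standard cavity-method heuristic and reproduces~(\ref{Xeqmargs}) correctly; your identification of the missing ingredient for a rigorous proof --- a correlation-decay/Gibbs-uniqueness result valid throughout $(\rbal,\rBP)$ --- is exactly the obstruction that makes this a conjecture rather than a lemma. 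In short, there is no proof in the paper to compare against, and your account of both the heuristic and the gap is accurate.
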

We observe that~(\ref{Xeqmargs}) is in line with the notion that $\cS(\PHId)$ is ``skewed toward'' $\sigmaMAJ$.
Indeed, the conjecture quantifies how much so.
Motivated by Conjecture~\ref{XConj_marg},  we define 
	\begin{equation}\label{XeqpBP}
	\mymap_{\mathrm{BP}}(z)=\left\{\hspace{-2mm}\begin{array}{cl}
		\displaystyle\frac12+\frac{z}{2^{k+1}}&\hspace{-2mm}\mbox{if }|z|\leq 10\sqrt{k2^k\ln k},\\[2mm]
		\displaystyle\frac12&\hspace{-2mm}\mbox{otherwise.}
		\end{array}\right.
	\end{equation}
Under the distribution $\vec D$, the random variables $d_x,d_{\neg x}$ are asymptotically independent
Poisson with mean $kr/2$ (cf.\ \Sec~\ref{XSec_bla}).
Therefore,
	$$\Erw_{\vec d}\brk{(d_x-d_{\neg x})^2}=kr\leq k2^k\ln2,$$
and standard concentration inequalities show that \whp\ there are no more than $n/k^{30}$
variables $x$ with $(d_x-d_{\neg x})^2>100k2^k\ln k$.
Hence, $\pd=p_{\mathrm{BP},\vec d}$ is (asymptotically) equal to the conjectured value on 
the bulk of variables \whp

In summary, 
the problem with the ``vanilla'' second moment argument is that the drift toward $\sigmaMAJ$
 induces correlations amongst the satisfying assignments.
Indeed, they are correlated with the majority assignment and thus with each other.
We circumvent this problem by 
explicitly prescribing the marginal probability that each variable is set to `true'.
One could think of this as working with the intersection of $\cS(\PHI)$ with a 
particular ``surface'' within the Hamming cube $\cbc{0,1}^n$, namely the assignments with $\pd$-marginals.
Within this surface,
all assignments are slanted equally toward $\sigmaMAJ$.
The Belief Propagation-informed definition of $p_{\mathrm{BP}}$ is meant to ensure
that the surface that we consider with is (about) the most populous one, i.e., the one with the largest number of satisfying assignments in it.
The core of our argument will be to show that 
\emph{with respect to the marginal distribution $p_{\mathrm{BP}}$}, i.e., within the surface 
that $p_{\mathrm{BP}}$ defines, two random elements of $\cS_p(\PHIdm)$
are typically uncorrelated.
But before we come to that, we need to compute the ``first moment'',
i.e., the  expected number of good $p_{BP}$-judicious satisfying assignments.

\begin{remark}\label{XRem_BP}
Belief Propagation actually leads to a stronger prediction than Conjecture~\ref{XConj_marg}.
Namely, it yields a conjecture for $\mu_{\cS(\PHId)}(x)$ up to an additive error then tends to $0$ as $n\ra\infty$.
However, (a) this stronger conjecture is not in explicit form, and (b) it does not only depend
on $d_x,d_{\neg x}$, but also on various other parameters.
In any case, even a more accurate prediction would not yield a better constant than $\frac32\ln2$ in \Thm~\ref{XThm_main}.
\end{remark}

\begin{remark}\label{XRem_balanced}
In the present framework, the notion of balanced satisfying assignments from~\cite{yuval} simply corresponds to
working with the constant map $p_{bal}:\ZZ\ra\brk{0,1},\ z\mapsto\frac12$.
This hightlights that the improvement that we obtain here stems from choosing
the non-constant map $p_{\mathrm{BP}}$ inspired by Belief Propagation.
\end{remark}

\begin{remark}\label{XRem_sym}
The definition~(\ref{XeqMarginals}) of the marginal of a set gives rise to a formal notion of `symmetric problem'.
Namely, we could call a (binary) random CSP \bemph{symmetric} if its set $\cS_{\mathrm{CSP}}(\PHI)$ of solutions
is such that for each variable $x$ \whp\ we have $\mu_x(\cS_{\mathrm{CSP}}(\PHI))=\frac12+o(1)$.
Clearly, $k$-NAESAT passes this test as $\mu_x(\cS_{\mathrm{NAE}}(\PHI))=\frac12$ for all $x$ with certainty.
Similarly, the problem of having a 
balanced satisfying assignment is symmetric~\cite{yuval}, as is 
random $k$-XORSAT.
\end{remark}

{\bf From here on out we keep the assumptions of \Thm~\ref{XThm_secondPlain}.
In particular, we assume $\rbal<r<\rBP$.
Let $\vec d$ be chosen from $\vec D$, and let $\vec m$ be chosen from $\vec M_{\vec d}$.
Let $p=p_{\mathrm{BP}}$ be as in~(\ref{XeqpBP})
and $\pd$ as in~(\ref{Xeqpd}).}

\subsection{Typical degree sequences}

We need to collect a few basic properties of the sequence $\vec d$ chosen from $\vec D$.
Let us call
a sequence $\vec d=(d_l)_{l\in L}$ of non-negative integers such that
	$\sum_{l\in L}d_l=km$  a \bemph{signed degree sequence}.
For a $k$-CNF $\Phi$ let $\vec d\bc{\Phi}=(d_{l}\bc\Phi)_{l\in L}$ denote the vector whose entry $d_{l}\bc\Phi$ is equal to
the number of times that literal $l$ occurrs in $\Phi$.
Then $\vec D=\vec D_k(n,m)$ is just the distribution of the signed degree sequence $\vec d(\PHI)$.

The \emph{signature} of a literal $l\in L$ with respect to a signed degree sequence $\vec d$ is the triple $(\sign(l),d_{\abs l},d_{\neg{\abs l}})$.
We omit the reference to $\vec d$ if it is clear from the context.
Let $T=T(\vec d)$ be the set of all possible signatures.
For each literal $l$ we let $T(l)$ denote its signature.
Furthermore, for a signature $\theta=(\sign(l),d_{\abs l},d_{\neg{\abs l}})\in T$ we let $\neg\theta=(-\sign(l),d_{\abs l},d_{\neg{\abs l}})$.

Let $\vec d$ be a signed degree sequence.
A $k$-CNF $\Phi$ over $V$ is \bemph{$\vec d$-compatible} if 
$\vec d(\Phi)=\vec d$.
Thus, 
	$$\PHId=\PHI_{\vec d,1}\wedge\cdots\wedge \PHI_{\vec d,m}$$
is a uniformly random $\vec d$-compatible $k$-CNF.

In the sequel we are going to prove statements about the random formula $\PHId$ for a ``typical'' signed degree sequence $\vec d$.
Formally, this means that we first choose $\vec d$ from the distribution $\vec D$ at random.
Then, conditioning on $\vec d$, we will study the random formula $\PHId$.
Thus, there are \emph{two levels} of randomness: the distribution of $\vec d$ and then, given $\vec d$,
the choice of the random formula $\PHId$.
When referring to the random choice of $\vec d$ we use the notation $\pr_{\vec d}\brk\cdot$, $\Erw_{\vec d}\brk\cdot$.
By contrast, if we choose $\PHId$ randomly for $\vec d$ fixed, then we use $\pr\brk\cdot$, $\Erw\brk\cdot$.

\begin{lemma}\label{Lemma_danneal}
\begin{enumerate}
\item Let $\cE$ be an event such that
		$\pr\brk{\PHI\in\cE}=o(1)$.
	Then \whp\ a signed degree sequence $\vec d$ chosen from the distribution $\vec D$ is such that
		$\pr\brk{\PHId\in\cE}=o(1)$.
	Conversely, if \whp\ for a random $\vec d$ chosen from $\vec D$ we have
		$\pr\brk{\PHId\in\cE}=o(1)$,
	then $\pr\brk{\PHI\in\cE}=o(1)$.
\item For any random variable $X\geq0$ and any $\eps>0$ we have
		$\pr_{\vec d}\brk{\Erw\brk{X(\PHId)}>\Erw\brk{X(\PHI)}/\eps}\leq\eps.$
\end{enumerate}
\end{lemma}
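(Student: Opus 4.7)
The plan is to reduce both parts of the lemma to the law of total probability together with Markov's inequality, invoking the two-step generation identity established earlier in \Sec~\ref{XSec_bla}: drawing $\vec d$ from $\vec D$ and then $\PHId$ uniformly among $\vec d$-compatible formulas yields exactly the uniformly random $k$-CNF $\PHI$. This identity means that for any event $\cE$,
\[
\pr\brk{\PHI\in\cE}=\Erw_{\vec d}\brk{\pr\brk{\PHId\in\cE}},
\]
and, for any non-negative random variable $X$ defined on $k$-CNFs,
\[
\Erw\brk{X(\PHI)}=\Erw_{\vec d}\brk{\Erw\brk{X(\PHId)}}.
\]
Once these tower identities are in hand, the proof is essentially mechanical.

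For the first assertion of part~1, set $f(\vec d)=\pr\brk{\PHId\in\cE}\in[0,1]$. The assumption $\pr\brk{\PHI\in\cE}=o(1)$ combined with the tower identity gives $\Erw_{\vec d}\brk{f(\vec d)}=o(1)$. Markov's inequality applied to $f$ with threshold $\sqrt{\Erw_{\vec d}\brk{f(\vec d)}}=o(1)$ then yields $\pr_{\vec d}\brk{f(\vec d)>\sqrt{\Erw_{\vec d} f}}\leq \sqrt{\Erw_{\vec d} f}=o(1)$, so \whp\ $f(\vec d)=o(1)$. For the converse direction, suppose $f(\vec d)=o(1)$ \whp; that is, there is $g(n)\to 0$ with $\pr_{\vec d}\brk{f(\vec d)\leq g(n)}=1-o(1)$. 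Because $f$ is bounded by~$1$, splitting according to whether $f\leq g(n)$ or not shows
\[
\pr\brk{\PHI\in\cE}=\Erw_{\vec d}\brk{f(\vec d)}\leq g(n)+ \pr_{\vec d}\brk{f(\vec d)>g(n)}= o(1).
\]

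For part~2, put $Y(\vec d)=\Erw\brk{X(\PHId)}\geq 0$. The tower identity gives $\Erw_{\vec d}\brk{Y(\vec d)}=\Erw\brk{X(\PHI)}$, whence Markov's inequality immediately yields
\[
\pr_{\vec d}\brk{Y(\vec d)>\Erw\brk{X(\PHI)}/\eps}\leq \eps,
\]
which is the claim. There is no real obstacle: the only mildly delicate point is checking the equivalence of the two-step generation with the uniform model, but this has already been asserted in \Sec~\ref{XSec_bla} via the standard Poissonization argument (conditioning the independent Poisson occurrence counts on the total $\sum_{l\in L}e_l=km$), so we may simply quote it.
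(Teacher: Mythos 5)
Your proof is correct and follows exactly the same route as the paper's: reduce to the tower identity $\pr\brk{\PHI\in\cE}=\Erw_{\vec d}\brk{\pr\brk{\PHId\in\cE}}$ (respectively $\Erw\brk{X(\PHI)}=\Erw_{\vec d}\brk{\Erw\brk{X(\PHId)}}$) and apply Markov's inequality. The paper's proof is more terse but uses the identical strategy; you have simply spelled out the Markov threshold choices and the converse direction in more detail.
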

\begin{proof}
The first claim follows from Markov's inequality as
	$\pr\brk{\PHI\in\cE}=\Erw_{\vec d}\brk{\pr\brk{\PHId\in\cE}}$.
The second claim follows from from Markov's inequality as well because
	$\Erw\brk{X(\PHI)}=\Erw_{\vec d}\brk{\Erw\brk{X(\PHId)}}$.
\qed\end{proof}

\begin{lemma}\label{Lemma_tame}
For $\vec d$ chosen from $\vec D$ the following statements hold \whp
\begin{enumerate}
\item 	$\sum_{x\in V} (d_x-d_{\neg x})^2\sim km.$
\item	 $\frac1n\sum_{x\in V}|d_x-d_{\neg x}|=\tilde O(2^{k/2})$.
\item Let $\cM$ contain the $n$ literals of largest degree.
		Then
		$\frac1{km}\sum_{l\in\cM}d_l=\frac{1}2+\tilde O(2^{-k/2})$.
\end{enumerate}
\end{lemma}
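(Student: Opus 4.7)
My plan is to reduce all three estimates to concentration bounds in the unconditioned independent-Poisson model. Recall from \Sec~\ref{XSec_bla} that $\vec D$ is the distribution of the family $(e_x,e_{\neg x})_{x\in V}$ of $2n$ independent Poisson variables of mean $kr/2$, conditioned on the event $\cE=\{\sum_{x\in V}(e_x+e_{\neg x})=km\}$. The local limit theorem (\Lem~\ref{lem:locallimit}) gives $\pr[\cE]=\Theta(n^{-1/2})$, so any event $\cA$ with unconditional probability $o(n^{-1/2})$ satisfies $\pr_{\vec D}[\cA]\le\pr[\cA]/\pr[\cE]=o(1)$. Hence it suffices to establish each of (i)--(iii) with unconditional probability $1-o(n^{-1/2})$.

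For part~(i), set $Y_x=(e_x-e_{\neg x})^2$. The $Y_x$ are iid with $\Erw Y_x=\Var(e_x-e_{\neg x})=kr$ and second moment $\Theta((kr)^2)$, so Chebyshev yields $\pr[|\sum_x Y_x-nkr|\ge\eta nkr]=O(\eta^{-2}n^{-1})$ for any fixed $\eta>0$, which is $o(n^{-1/2})$; since $nkr=km$, this gives~(i). For part~(ii), set $X_x=|e_x-e_{\neg x}|$. The Skellam difference $e_x-e_{\neg x}$ has mean $0$ and variance $kr=\Theta(k2^k)\to\infty$, so $\Erw X_x=\Theta(\sqrt{kr})=\tilde O(2^{k/2})$ and $\Var X_x=O(kr)$; a further Chebyshev application then delivers~(ii) with probability $1-O(n^{-1})$.

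For part~(iii), let $\tau$ denote the smallest literal degree appearing in $\cM$, so that up to ties at $\tau$ one has $\cM=\{l\in L:d_l\ge\tau\}$. A direct computation then yields
\[
\sum_{l\in\cM}d_l \;=\;\frac{km}{2}+\frac12\sum_{l\in L}|d_l-\tau|,
\]
because $\sum_l|d_l-\tau|=\sum_{l\in\cM}(d_l-\tau)+\sum_{l\notin\cM}(\tau-d_l)=2\sum_{l\in\cM}d_l-km$ using $|\cM|=|L\setminus\cM|=n$. To bound the remainder I split $\sum_l|d_l-\tau|\le\sum_l|d_l-kr/2|+2n|\tau-kr/2|$. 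The first term is $\tilde O(n 2^{k/2})$ by the same Chebyshev argument as in part~(ii) applied to the centred literal degrees; for the second, standard quantile concentration for a sample of $2n$ iid Poisson($kr/2$) variables places the median within $\tilde O(\sqrt{kr})=\tilde O(2^{k/2})$ of $kr/2$ with probability $1-o(n^{-1/2})$, which can be extracted from a union bound on the binomial count of literals exceeding each candidate threshold. Dividing the resulting bound on $\sum_l|d_l-\tau|$ by $km=\Theta(nk2^k)$ yields $(km)^{-1}\sum_{l\in\cM}d_l=\tfrac12+\tilde O(2^{-k/2})$.

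The main technical obstacle is ensuring that each failure probability is $o(n^{-1/2})$ rather than merely $o(1)$, so that it survives conditioning on $\cE$. The variance-based estimates for parts~(i) and~(ii) give $O(n^{-1})$, comfortably below this threshold; only the quantile concentration for $\tau$ in part~(iii) needs a moment's care, but it too is routine.
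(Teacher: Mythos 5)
Your proposal is correct and takes a genuinely different route from the paper's. For parts~(i) and~(ii) the paper first truncates the Poisson variables at $\ln^2 n$ and then applies Azuma's inequality, yielding exponentially small failure probabilities; you instead invoke Chebyshev directly on the (un-truncated) second-moment and first-moment functionals of the Skellam differences, which gives only polynomial tails $O(n^{-1})$ -- but as you correctly observe, anything below the $\Theta(n^{-1/2})$ bar set by $\pr[\cE]$ suffices, so Chebyshev is enough and avoids the truncation step entirely. For part~(ii) the paper instead deduces the bound from part~(i) by a one-line Cauchy--Schwarz step, which is tidier than your separate Chebyshev application, but both are valid. The more interesting divergence is in part~(iii): the paper asserts that the claim is ``immediate from the second,'' but in fact the inequality $\sum_{l\in\cM}d_l\geq\sum_x\max\{d_x,d_{\neg x}\}$ furnished by part~(ii) yields only the \emph{lower} bound $\frac{1}{km}\sum_{l\in\cM}d_l\geq\frac12+\tilde O(2^{-k/2})$; a matching upper bound genuinely requires additional input beyond part~(ii), since $\cM$ may include both $x$ and $\neg x$ for heavy variables while excluding both literals of light variables. (One can construct degree vectors with $d_x\equiv d_{\neg x}$ -- so part~(ii) holds with room to spare -- for which $\frac{1}{km}\sum_{l\in\cM}d_l$ is close to~$1$.) Your exact identity $\sum_{l\in\cM}d_l=\frac{km}{2}+\frac12\sum_{l}|d_l-\tau|$, together with the concentration of the centred literal degrees and of the sample median $\tau$ around $kr/2$, supplies precisely this missing ingredient. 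So your treatment of part~(iii) is actually the more complete of the two.
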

\begin{proof}
We use the following description of the distribution $\vec D$.
Let $\vec e=(e_l)_{l\in L}$ be a family of indepedent $\Po(kr/2)$ variables.
Moreover, let $\cE$ be the event that $\sum_{l\in L}e_l=km$.
It is well known that $\vec e$ given $\cE$ has distribution~$\vec D$.
Furthermore, a simple calculation based on Stirling's formula yields
	\begin{equation}\label{eqDtame1}
	\pr\brk{\cE}=\Theta(n^{-1/2}).
	\end{equation}
Let $\hat e_l=\min\cbc{e_l,\ln^2n}$.
Employing Stirling's formula once more, we find that $\pr\brk{\hat e_l\neq e_l}\leq n^{-10}$ for all $l\in L$.
Hence, by the union bound,
	\begin{equation}\label{eqDtame2}
	\pr\brk{\forall l\in L:\hat e_l= e_l}\geq1-n^{-9}.
	\end{equation}
Furthermore, as $e_x,e_{\neg x}$ are independent for any $x\in V$, we have
	\begin{equation}\label{eqDtame3}
	\Erw\brk{(\hat e_x-\hat e_{\neg x})^2}=2\Var(\hat e_x)= 2\Var(e_x)+O(n^{-1})=kr+O(n^{-1}).
	\end{equation}
Because $\hat e_l\leq\ln^2n$ and the random variables $\cbc{(\hat e_x-\hat e_{\neg x})^2}_{x\in V}$ are mutually independent,
Azuma's inequality yields
	\begin{equation}\label{eqDtame4}
	\pr\brk{\abs{\sum_{x\in V}(\hat e_x-\hat e_{\neg x})^2-\Erw\sum_{x\in V}(\hat e_x-\hat e_{\neg x})^2}>n^{2/3}}
		\leq2\exp\brk{-\frac{n^{1/3}}{8\ln^8n}}\leq n^{-10}.
	\end{equation}
Combining (\ref{eqDtame1})--(\ref{eqDtame4}), we find
	\begin{eqnarray*}
	\pr_{\vec d}\brk{\abs{\sum_{x\in V} (d_x-d_{\neg x})^2-km}>n^{3/4}}&=&
		\pr\brk{\abs{\sum_{x\in V} (e_x-e_{\neg x})^2-km}>n^{2/3}\,\bigg|\,\cE}\\
		&\leq&\Theta(n^{1/2})\pr\brk{\abs{\sum_{x\in V} (e_x-e_{\neg x})^2-km}>n^{3/4}}\\
		&\leq&o(1)+\Theta(n^{1/2})\pr\brk{\abs{\sum_{x\in V} (\hat e_x-\hat e_{\neg x})^2-\Erw\sum_{x\in V} (\hat e_x-\hat e_{\neg x})^2}>n^{2/3}}\\
		&=&o(1),
	\end{eqnarray*}
thereby proving the first claim. The second claim follows from the first by means of the Cauchy-Schwarz inequality: \whp
	$$\brk{\frac1n\sum_{x\in V}|d_x-d_{\neg x}|}^2
		\leq\frac1n\sum_{x\in V}(d_x-d_{\neg x})^2\sim kr.$$
Finally, the third assertion is immediate from the second.
\qed\end{proof}

\noindent For a set $S\subset L$ we let $\Vol(S)=\Vol_{\vec d}(S)=\sum_{l\in S}d_l$.
\begin{lemma}\label{Lemma_degs}
Let $\vec d$ be chosen from $\vec D$.
Then \whp\ the following is true.
	\begin{equation}\label{eqdegs}
	\parbox[t]{13cm}{For any set $S\subset L$ of literals we have $\Vol(S)\leq10|S|\max\cbc{kr,\ln(n/|S|)}.$
		Furthermore, if $|S|\geq n2^{-0.8k}$, then $\Vol(S)\geq\frac13|S|kr.$}
	\end{equation}
\end{lemma}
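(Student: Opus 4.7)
The plan is to reduce to the independent Poisson model, where $(e_l)_{l\in L}$ are independent $\Po(kr/2)$ variables. As observed in the proof of \Lem~\ref{Lemma_tame}, $\pr[\cE]=\Theta(n^{-1/2})$, so for any event $\cF$ on signed degree sequences we have $\pr_{\vec d}[\cF]=\pr[\vec e\in\cF\mid\cE]\leq O(n^{1/2})\cdot\pr[\vec e\in\cF]$. It therefore suffices to show that in the Poisson model the negation of each assertion in~(\ref{eqdegs}) has probability $o(n^{-1/2})$. Both bounds will follow from standard Chernoff tail estimates for Poisson sums, combined with a union bound over $S\subset L$ organized by size $s=|S|$.

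\smallskip\noindent\emph{Upper bound.} Fix $S\subset L$ of size $s$, so that $\Vol(S)\sim\Po(\mu_S)$ with $\mu_S=skr/2$. Set $\lambda_S=10s\max\{kr,\ln(n/s)\}$; since $kr\geq 2$, $\lambda_S/\mu_S\geq 20$, and the Chernoff bound $\pr[\Po(\mu)\geq\lambda]\leq\exp(\lambda-\mu-\lambda\ln(\lambda/\mu))$ yields $\pr[\Vol(S)\geq\lambda_S]\leq\exp(-2\lambda_S)$. I would split the union bound into two regimes. In the regime $\ln(n/s)\leq kr$, $s\geq n e^{-kr}=\Omega(n)$ (for fixed $k$) and $\lambda_S\geq 10skr$, while $\ln\binom{2n}{s}\leq s(\ln(2e)+kr)$, so $\binom{2n}{s}\exp(-2\lambda_S)\leq\exp(-18skr)=\exp(-\Omega(n))$; summing over at most $2n$ values of $s$ is still $o(n^{-1/2})$. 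In the regime $\ln(n/s)>kr$, $\pr[\Vol(S)\geq\lambda_S]\leq(s/n)^{20s}$, hence $\binom{2n}{s}\pr[\Vol(S)\geq\lambda_S]\leq(2e)^s(s/n)^{19s}$; the $s=1$ term is $O(n^{-19})$ and higher-$s$ terms decay super-polynomially in $n$, so the total is $o(n^{-1/2})$.

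\smallskip\noindent\emph{Lower bound.} The Chernoff lower-tail bound $\pr[\Po(\mu)\leq(1-\delta)\mu]\leq\exp(-\delta^2\mu/2)$ with $\delta=1/3$ and $\mu=skr/2$ gives $\pr[\Vol(S)\leq skr/3]\leq\exp(-skr/36)$. For $s\geq n 2^{-0.8k}$, $\ln\binom{2n}{s}\leq s(0.8k\ln 2+O(1))\leq sk$, while $kr/36=\Theta(k\cdot 2^k)$ dwarfs $k$ once $k$ is large. Hence $\binom{2n}{s}\exp(-skr/36)\leq\exp(-s\cdot\Omega(k\cdot 2^k))$, and summing over $s\geq n2^{-0.8k}$ yields a bound that is super-exponentially small in $n$, hence certainly $o(n^{-1/2})$.

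\smallskip
The main obstacle is bookkeeping: choosing the Chernoff constants and partitioning the union bound into the ``small $s$'' and ``large $s$'' regimes carefully enough that the correct one of $kr$ and $\ln(n/s)$ drives the exponent of $\lambda_S$, and that what remains still survives the $\Theta(n^{1/2})$ price paid for conditioning on $\cE$. Because $kr=\Theta(k\cdot 2^k\ln 2)$ grows rapidly with $k$, there is substantial slack in both directions, and no delicate moment computation is required; the argument is essentially a two-regime Chernoff bound tuned so that $\lambda_S$ is either $20$ times the mean or $20$ times the entropy of choosing $S$.
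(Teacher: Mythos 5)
Your proof is correct, and the upper bound argument is essentially the same as the paper's: reduce to independent Poissons via the $\pr[\cE]=\Theta(n^{-1/2})$ transfer, bound the Poisson upper tail, and union over $S$; your explicit two-regime split of the union bound makes the paper's bound $\mu\geq 10s\ln(n/s)$ transparent. For the lower bound you diverge slightly: you stay in the Poisson model and pay the same $\Theta(n^{1/2})$ transfer price, whereas the paper invokes \Lem~\ref{Lemma_danneal} to pass to the random formula $\PHI$, where $Y_S\sim\Bin(km,|S|/2n)$, and applies the binomial Chernoff bound there. Both routes are valid; yours is arguably more uniform, using one transfer mechanism for both halves, while the paper's binomial detour avoids even having to think about whether the Poisson lower tail survives the conditioning. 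The slack in the exponent (you get $\exp(-\Omega(nk2^{0.2k}))$; the paper gets $\exp(s(2+k)-krs/100)$) is enormous either way, so the distinction is purely stylistic.
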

\begin{proof}
We use the alternative description of $\vec D$ from the proof of \Lem~\ref{Lemma_tame}.
That is, $\vec e=(e_l)_{l\in L}$ is a family of indepedent $\Po(kr/2)$ variables, and
$\cE$ is the event that $\sum_{l\in L}e_l=km$.
Let $\lambda=kr/2$.
For any fixed set $S\subset L$ the random variable
	$X_S=\sum_{l\in S}e_l$ has distribution $\Po(|S|\lambda)$ (because the sum of two independent Poisson variables is Poisson).
Therefore, letting $\mu=10|S|\max\cbc{kr,\ln(n/|S|)}$, we obtain from Stirling's formula
	\begin{eqnarray}
	\pr\brk{X_S>\mu}&\leq&O(\sqrt n)\pr\brk{X_S=\lceil\mu\rceil}
		\leq O(\sqrt n)\cdot\frac{\lambda^\mu}{\mu!\exp(\lambda)}
		\leq O(\sqrt n)\cdot\bcfr{\eul\lambda}{\mu}^\mu\exp(-\lambda).
		\label{eqdegs1}
	\end{eqnarray}
For $1\leq s\leq 2n$ let $X_s=\sum_{S:\abs S=s}\vecone_{X_S>\mu}$.
Then~(\ref{eqdegs1}) yields
	\begin{eqnarray*}
	\Erw X_s&\leq&O(\sqrt n)\bink{2n}s\cdot\exp(-\lambda-\mu)
		\leq O(\sqrt n)\bcfr{2\eul n}{s}^s\cdot\exp(-\lambda-\mu)=o(1/n^2),
	\end{eqnarray*}
because $\mu\geq10 s\ln(n/s)$.
Thus, the first claim follows from~(\ref{eqDtame1}) and the union bound.

To prove the second claim, we use \Lem~\ref{Lemma_danneal}.
For $S\subset L$ we let $Y_S$ be the total number of occurrences of literals from $S$ in $\PHI$.
Then $Y_S$ has distribution $\Bin(km,|S|/2n)$ with mean $|S|kr/2$.
By the Chernoff bound,
	\begin{eqnarray}
	\pr\brk{Y_S<kr|S|/3}&\leq&\exp\brk{-\frac{kr|S|}{100}}.
		\label{eqdegs2}
	\end{eqnarray}
Hence, letting $Y_s=\sum_{S:|S|=s}\vecone_{Y_S<kr|S|/3}$, we get from~(\ref{eqdegs2}) for $s\geq n2^{-0.8k}$
	\begin{eqnarray*}
	\Erw\brk{Y_s}&\leq&\bink{2n}s\exp\brk{-\frac{krs}{100}}\leq\exp\brk{s(2+k)-\frac{kr s}{100}}=o(n^{-2}).
	\end{eqnarray*}
Thus, by the union bound $\pr\brk{\forall s\geq n2^{-0.8k}:Y_s=0}=1-o(1/n)$.
Applying \Lem~\ref{Lemma_danneal} completes the proof.
\qed\end{proof}

For any $t\in\cT$ we let $n(t)$ be the number of variables $x\in V$ such that $\pd(x)=t$.

\begin{lemma}\label{Lemma_p}
Let $\vec d$ be chosen from $\vec D$.
Then \whp\
for any type $t\in\cT$ we have
	$$n(t)\geq 2^{-3k/4}n.$$
\end{lemma}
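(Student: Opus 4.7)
My plan is to pass to the Poisson representation of $\vec D$ used in the proof of \Lem~\ref{Lemma_tame}: write $\vec d = \vec e$ conditioned on $\cE = \{\sum_l e_l = km\}$, where $(e_l)_{l\in L}$ are i.i.d.\ $\Po(kr/2)$ and $\pr[\cE] = \Theta(n^{-1/2})$. The advantage is that under the unconditional measure the Skellam random variables $(e_x - e_{\neg x})_{x\in V}$ become mutually independent, so each $n(t)$ becomes a sum of $n$ independent Bernoullis to which a Chernoff bound applies. Any unconditional failure probability of $\exp(-\Omega(n))$ will then transfer to $\vec D$ losing only a $\Theta(\sqrt n) = 1/\pr[\cE]$ factor, which is harmless.

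Next I would identify the relevant types. By the definition~(\ref{XeqpBP}) of $p_{\mathrm{BP}}$, every type $t\in\cT_{\vec d}\setminus\{1/2\}$ has the form $t = 1/2 + z/2^{k+1}$ for some non-zero integer $z$ with $|z|\leq 10\sqrt{k 2^k\ln k}$, and then $n(t) = |\{x : d_x-d_{\neg x} = z\}|$; the type $1/2$ contains at least all variables with $d_x = d_{\neg x}$, so the case $z=0$ subsumes it. Hence it suffices to show that for every integer $z$ in the window $|z|\leq 10\sqrt{k 2^k\ln k}$, the count $N_z := |\{x : d_x-d_{\neg x} = z\}|$ satisfies $N_z \geq 2^{-3k/4}n$ \whp. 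The heart of the proof will be a uniform pointwise lower bound on the Skellam pmf: either the local limit theorem (\Lem~\ref{lem:locallimit}) applied to the pair $(e_x,e_{\neg x})$ or a direct Stirling estimate on the closed-form Skellam density will give
\begin{equation*}
\pr[e_x - e_{\neg x} = z] \geq \frac{C_1}{\sqrt{kr}}\exp\!\bc{-\frac{z^2}{2kr}}
\end{equation*}
uniformly for $|z|\leq 10\sqrt{k 2^k\ln k}$, with $C_1 > 0$ an absolute constant. In that range $z^2/(2kr) = O(\ln k)$ since $kr = \Theta(k2^k)$, so the exponential factor is only $k^{-O(1)}$, and multiplying by $1/\sqrt{kr} = \Omega(2^{-k/2}/\sqrt k)$ yields $\pr[e_x - e_{\neg x} = z]\geq 2^{-3k/4}\cdot\omega_k(1)$ for $k$ large, where $\omega_k(1) \to \infty$.

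Once that estimate is in hand, $\Erw_{\vec e}[N_z] \geq 2^{-3k/4}\omega_k(1)\cdot n$, so Chernoff gives $\pr_{\vec e}[N_z < 2^{-3k/4}n]\leq\exp(-\Omega(n))$ for any fixed $k$; a union bound over the $O(2^{k/2}\sqrt{k\ln k})$ admissible $z$ (a count independent of $n$) preserves this, and conditioning on $\cE$ multiplies by $\Theta(\sqrt n)$ and still leaves failure probability $o(1)$. The main technical obstacle will be the Skellam lower bound at the edge of the window, where $|z|\sim 10\sqrt{k 2^k\ln k}$ and the Gaussian suppression $\exp(-z^2/(2kr))$ becomes a genuine negative power of $k$ that has to be balanced carefully against the $1/\sqrt{kr}$ prefactor; it is exactly this trade-off that dictates the target bound $2^{-3k/4}$ in the statement and the specific constant $10\sqrt{k 2^k\ln k}$ in the definition~(\ref{XeqpBP}).
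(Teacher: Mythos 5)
Your proof is correct and follows essentially the same route as the paper's: pass to the independent-Poisson representation of $\vec D$ from the proof of \Lem~\ref{Lemma_tame}, bound the Skellam probability $\pr[e_x-e_{\neg x}=z]$ pointwise over the window $|z|\leq 10\sqrt{k2^k\ln k}$, apply a Chernoff bound to the resulting sum of independent Bernoulli indicators, and absorb the $\Theta(\sqrt n)$ loss from conditioning on $\cE$. Your explicit estimate giving $\Erw[N_z]=\Theta\bc{n\cdot 2^{-k/2}k^{-O(1)}}$ is in fact more careful than the paper's stated intermediate bound $\Erw\brk{X(s,\Delta)}\geq nk^{-c}$ (which overstates the expectation by a factor of order $2^{k/2}$), though since both comfortably exceed $2^{-3k/4}n$ for large $k$, the conclusion is unaffected.
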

\begin{proof}
We use the alternative description of the distribution $\vec D$ from the proof of \Lem~\ref{Lemma_tame}.
That is, let $\vec e=(e_l)_{l\in L}$ be a family of indepedent $\Po(kr/2)$ variables,
and $\cE$ be the event that $\sum_{l\in L}e_l=km$.
For any $s,\Delta$ let $X\bc{s,\Delta}$ denote the number of literals $l$
such that $\sign(l)=s$ and $e_{\abs l}-e_{\neg\abs l}=\Delta$.
Since $\Var(e_l)=kr/2=\Omega_k(k2^k)$, for any $s\in\cbc{\pm1}$ and any $\Delta$ such that $\Delta^2\leq100 k2^k\ln k$ we have
	$\Erw\brk{X\bc{s,\Delta}}\geq n k^{-c}$ for some absolute constant $c>0$.
Furthermore, because the random variables $(e_l)_{l\in L}$ are mutually independent, the Chernoff bound implies that
	\begin{eqnarray}\label{eqLemma_p1}
	\pr\brk{X\bc{s,\Delta}\leq \frac12n k^{-c}}&\leq&\exp(-\Omega(n))\qquad\mbox{provided that $\Delta^2\leq100 k2^k\ln k$.}
	\end{eqnarray}
Similarly, if we let $X_s'$ denote the number of literals $l$
such that $\sign(l)=s$ and $|e_{\abs l}-e_{\neg\abs l}|>100 k2^k\ln k$, then $\Erw\brk{X'\bc{s}}\geq n k^{-c'}$ for some absolute constant $c'$ and
	\begin{eqnarray}\label{eqLemma_p2}
	\pr\brk{X'\bc{s}\leq \frac12n k^{-c'}}&\leq&\exp(-\Omega(n)).
	\end{eqnarray}
Thus, the assertion follows by combining~(\ref{eqDtame1}), (\ref{eqLemma_p1}) and~(\ref{eqLemma_p2}).
\qed\end{proof}

For each 
$t\in\cT$ we let $\pi(t)$ denote the fraction of literal occurrences of $p$-type $t$, i.e.,
\[
	\pi(t)=\sum_{l\in L:\pd(l)=t}\frac{d_l}{km}.
\]

For each $\ell\in \cL$ let
	$$\gamma_\ell=\frac{1}n\Erw\brk{m_{\PHId}(\ell)}.$$

\begin{lemma}\label{Lemma_gammaell}
Let $\vec d$ be chosen from $\vec D$.
Then \whp\
	$\gamma_\ell\sim\prod_{j=1}^k\pi(\ell_j)$
for all $\ell=(\ell_1,\ldots,\ell_k)\in\cL$.
\end{lemma}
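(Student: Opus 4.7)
The plan is to compute $\Erw[m_{\PHId}(\ell)]$ directly via the configuration (clone) model described in \Sec~\ref{XSec_theRandomVar}. Given $\vec d$, I create $d_l$ clones of each literal $l$, yielding $km$ clones in total, and I generate $\PHId$ as a uniformly random bijection between clones and the $km$ clause positions. Writing $m_{\PHId}(\ell)=\sum_{i=1}^m\vecone\{\text{clause $i$ has type $\ell$}\}$ and applying linearity, it suffices to compute $\pr\brk{\text{clause $1$ has type $\ell$}}$ and multiply by $m$.

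To compute this single-clause probability, set $V_t=\sum_{l:\pd(l)=t}d_l=km\cdot\pi(t)$. The event that clause $1$ has type $\ell=(\ell_1,\ldots,\ell_k)$ amounts to placing, in position $j$ of clause $1$, a clone of $\pd$-type $\ell_j$, for $j=1,\dots,k$, with all $k$ clones distinct. A direct count of ordered $k$-tuples of distinct clones with the prescribed types divided by $km(km-1)\cdots(km-k+1)$ yields
\[
\pr\brk{\text{clause $1$ has type $\ell$}}=\prod_{j=1}^k\pi(\ell_j)\cdot\bc{1+O(k^2/(km))}.
\]
The multiplicative correction stems only from coordinates $j\neq j'$ with $\ell_j=\ell_{j'}$, where sampling without replacement from the same type pool slightly reduces the count; each of the $O(k^2)$ such pairs contributes a factor $1-O(1/(km))$. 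Multiplying by $m$ and dividing by $n$, I obtain $\gamma_\ell\sim \prod_{j=1}^k\pi(\ell_j)$ (with the factor $m/n=r$ absorbed, as implicit in the statement).

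The one delicate point is that the error $1+O(k^2/(km))=1+O(1/n)$ must be $1+o(1)$ \emph{uniformly} over $\ell\in\cL$, and the estimate must actually qualify as $\gamma_\ell\sim\prod_j\pi(\ell_j)$ even when $\prod_j\pi(\ell_j)$ is quite small. This requires a lower bound $\pi(t)=\Omega_k(1)$ for all $t\in\cT$, available whp from \Lem~\ref{Lemma_p}: since $n(t)\geq 2^{-3k/4}n$ for every type $t$ and every variable of type $t$ contributes $\Theta(kr)$ clones to $V_t$ (using the volume bound in \Lem~\ref{Lemma_degs}), I get $\pi(t)=\Omega(2^{-3k/4})$ whp. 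Because $|\cT|=\tilde O(2^{k/2})$ by \Lem~\ref{Lemma_tame} and hence $|\cL|=|\cT|^k$ is bounded in terms of $k$ alone, a single union bound suffices to make all the above hold simultaneously for all $\ell\in\cL$, and the $1/n$ correction is negligible compared to $\prod_j\pi(\ell_j)$ for $n$ large. This is the only place where any real care is needed; the rest is routine enumeration in the configuration model.
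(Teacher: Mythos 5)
Your proposal is correct and follows essentially the same route as the paper: reduce by linearity to the probability that one fixed clause has type $\ell$, and factor it over the $k$ positions using the fact that each type-pile has $\Omega(n)$ clones (via \Lem~\ref{Lemma_p} together with \Lem~\ref{Lemma_degs}), so the factorization holds up to a uniform $1+O(1/n)$ correction. You are also right to flag the missing factor $r=m/n$ in the stated formula---indeed $\sum_\ell\gamma_\ell=m/n=r$ whereas $\sum_\ell\prod_j\pi(\ell_j)=1$, and the subsequent first-moment calculation uses $\gamma_\ell\sim r\prod_j\pi(\ell_j)$, which is exactly what your configuration-model count produces.
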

\begin{proof}
By the linearity of expectation, we just need to compute the probability that the first clause $\PHI_{\vec d,1}$
has type $\ell$.
Since $\abs{\cT^{-1}(\vec t)}=\Omega(n)$ for all $\vec t\in\cT$, the types of the $k$ literals
of $\PHI_{\vec d,1}$ are asymptotically independent.
Thus, the assertion follows from the fact that $\pi(\ell_j)$ equals the marginal probability that a random lityal has type $\ell_j$.
\qed\end{proof}

\begin{lemma}\label{Lemma_Gamma}
\Whp\ for $\vec d$ chosen from $\vec D$ we have
	$\pr\brk{\forall\ell\in\cL:|m_{\PHId}(\ell)-\gamma_\ell n|\leq n^{2/3}}=1-o(1).$
\end{lemma}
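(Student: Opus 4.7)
The plan is to prove the inner bound $\pr[\cdots]=1-o(1)$ uniformly over all admissible $\vec d$, so that the outer \whp\ statement follows trivially. The argument has two ingredients: a coarse bound on the number of possible clause types, and a concentration inequality for each individual type.

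First I would observe that $|\cL|$ is bounded by a function of $k$ alone, independently of $n$. Indeed, by~(\ref{XeqpBP}) the map $p_{\mathrm{BP}}$ takes at most $O(\sqrt{k2^k\ln k})$ distinct values, whence $|\cT|=O(\sqrt{k2^k\ln k})$ and $|\cL|=|\cT|^k=O_k(1)$. Consequently, any per-type tail bound of the form $\pr[|m_{\PHId}(\ell)-\gamma_\ell n|>n^{2/3}]=o(1)$ (uniformly in $\vec d$) can be upgraded to the claimed uniform-over-$\ell$ statement by a union bound.

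For the per-type concentration I would invoke Azuma's inequality on a Doob martingale. Realize $\PHId$ via the standard clone model: introduce $d_l$ clones of each literal $l$ and draw a uniformly random bijection $\Pi$ from the $km$ clones to the $km$ literal slots of the $m$ clauses, so that $m_{\PHId}(\ell)$ becomes a function of $\Pi$. A transposition in $\Pi$ changes the types of at most two clauses, so $m_{\PHId}(\ell)$ is $2$-Lipschitz with respect to transpositions. Revealing $\Pi$ slot-by-slot then produces a Doob martingale with $O(1)$-bounded increments, and Azuma's inequality yields
\[
\pr\brk{\abs{m_{\PHId}(\ell)-\gamma_\ell n}>n^{2/3}}\leq 2\exp\brk{-\Omega(n^{1/3})},
\]
uniformly in $\vec d$ and $\ell$. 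Summing over $\ell\in\cL$ and using $|\cL|=O_k(1)$ delivers the claim.

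The only point demanding real care is verifying that the transposition-Lipschitz property indeed translates into $O(1)$-bounded martingale differences in the Doob martingale over slot revelations; this is a standard coupling argument (condition on the first $i-1$ revealed clones, couple two possible values of the $i$th revelation by a swap with the slot eventually occupied by the alternative clone, and note that only $O(1)$ clauses are affected). I do not foresee any genuine obstacle here. If one prefers to sidestep symmetric-group Azuma altogether, Chebyshev's inequality is already enough: the clause-type indicators are negatively associated through the matching, so $\Var(m_{\PHId}(\ell))=O(n)$, giving an $O(n^{-1/3})$ per-type tail bound that still closes after the union bound since $|\cL|=O_k(1)$.
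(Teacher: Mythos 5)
Your proposal is correct, and it takes a genuinely different route from the paper. The paper fixes a type $\ell$, notes that (by \Lem~\ref{Lemma_p}, i.e.\ $\Omega(n)$ literals in each class) conditioning on one clause having type $\ell$ shifts the probability that another does by only a factor $1+O(1/n)$, deduces $\Var(m_{\PHId}(\ell))=O(n)$, and closes by Chebyshev ($O(n^{-1/3})$ per-type tail) plus a union bound over the $O_k(1)$ types. You instead realize $\PHId$ via the configuration/clone model and apply Azuma/McDiarmid for random permutations to the $2$-Lipschitz functional $m_{\PHId}(\ell)$, getting a per-type tail $\exp(-\Omega(n^{1/3}))$. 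Your route is arguably cleaner: it avoids any dependence on the typicality of $\vec d$ (Chebyshev needs $n(t)=\Omega(n)$ to control the covariances, so the paper's argument only applies \whp\ over $\vec d$, whereas your Azuma bound holds for every $\vec d$ and makes the outer ``\whp'' vacuous), and it yields an exponentially small tail rather than a polynomially small one. The only thing to note is that the lemma does not actually require this extra strength --- Chebyshev suffices --- and your closing remark correctly identifies that fallback, which is in essence the paper's own proof (the paper bounds the pairwise correlation directly rather than via negative association, but the effect is the same $O(n)$ variance bound).
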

\begin{proof}
Fix a type $\ell=(\ell_1,\ldots,\ell_j)$.
Because $p$ is a feasible marginal, for any $j\in\brk k$ there are $\Omega(n)$ literals $l$ with $p(l)=p(\ell_j)$.
Therefore, a straightforward calculation shows that
	\begin{eqnarray*}
	\pr\brk{\PHI_{d,i}\mbox{ has type }\ell|\PHI_{d,h}\mbox{ has type }\ell}
		=\pr\brk{\PHI_{d,i}\mbox{ has type }\ell}\cdot(1+O(1/n))\qquad\mbox{ for any }i\neq h.
	\end{eqnarray*}
Consequently, 
	$\Var(m_{\PHId}(\ell))\sim\Erw\brk{m_{\PHId}(\ell)}=O(n)$.
Hence, by Chebyshev's inequality
	\begin{eqnarray}\label{eqGamma1}
	\pr\brk{|m_{\PHId}(\ell)-\Erw\brk{m_{\PHId}(\ell)}|>n^{2/3}}=O(n^{-1/3})=o(1).
	\end{eqnarray}
Since $\abs\cL=O(1)$ as $n\ra\infty$ by the construction of $p$, the assertion follows from~(\ref{eqGamma1}) and the union bound.
\qed\end{proof}

\section{The first moment}

\subsection{Outline}

Let $\rho>\frac32\ln2$ be such that $r=2^k\ln2-\rho$.

\begin{proposition}\label{XProp_firstMoment}
\Whp\ $\vec d,\vec m$ are such that
	\begin{eqnarray*}
	\Erw\brk{Z_{p,\mathrm{good}}(\PHIdm)}
			&=&\exp\brk{\frac{n}{2^k}\bc{\rho-\frac{\ln 2}2+o_k(1)}}.
	\end{eqnarray*}
\end{proposition}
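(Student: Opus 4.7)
The plan is to compute $\Erw[Z_p(\PHIdm)]$ via the configuration-model description of $\PHIdm$, and then argue that the ``good'' cap costs at most a constant factor at the exponential scale $n/2^k$. Throughout, I work on the high-probability event that $\vec d, \vec m$ satisfy the conclusions of Lemmas~\ref{Lemma_tame}--\ref{Lemma_Gamma}, so that all relevant type counts $n(t)$, $\pi(t)$, and $\gamma_\ell$ are near their expected values.

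First I would exploit symmetry: by linearity and exchangeability of variables of the same $p_{\vec d}$-type,
\[
\Erw[Z_p(\PHIdm)] = |\Sigma(\vec d)| \cdot \pr[\sigma_0 \text{ is $p$-judicious and satisfies } \PHIdm],
\]
where $\Sigma(\vec d)$ is the set of assignments with $p_{\vec d}$-marginals and $\sigma_0$ is any fixed element. Applying Stirling to the binomial product that defines $|\Sigma(\vec d)|$, together with the bound $|p_{BP}(l)-1/2|=O(\sqrt{k\ln k}\cdot 2^{-k/2})$, yields $|\Sigma(\vec d)|=2^n\exp[-O(n\ln k/2^k)]$ up to polynomial factors. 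For the probability, I would factorise across clause types in the configuration model: the $p$-judicious event fixes each column sum to $m(\ell)\ell_j+O(1)$, and the local limit theorem (Lemma~\ref{lem:locallimit}) shows this happens with probability polynomial in $1/n$. Conditional on judiciousness, each type-$\ell$ clause independently fails to be satisfied with probability $\prod_j(1-\ell_j)+O(1/n)$ by exchangeability within piles, so the probability that every type-$\ell$ clause is satisfied equals $(1-\prod_j(1-\ell_j))^{m(\ell)}$ to leading exponential order (a second application of Lemma~\ref{lem:locallimit}).

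Assembling the factors and using $\gamma_\ell\sim\prod_j\pi(\ell_j)$ (Lemma~\ref{Lemma_gammaell}), the exponent of $\Erw[Z_p(\PHIdm)]$ becomes an explicit sum over types. The crux is a Taylor expansion around the symmetric point $p\equiv 1/2$: because $p_{BP}$ deviates from $1/2$ by only $O(2^{-k/2}\sqrt{k\ln k})$, the first-order perturbations of $\prod_j(1-\ell_j)$ vanish because the binary entropy is stationary at $1/2$ and the linear terms in $d_x-d_{\neg x}$ average to zero against the symmetric Poisson law of $\vec d$; the second-order corrections are $O(k\ln k/2^{2k})$ per literal and contribute $o_k(n/2^k)$ in total. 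What survives at leading order is the vanilla first moment $2^n(1-2^{-k})^m$, which by Taylor-expanding $\ln(1-2^{-k})$ equals
\[
\exp\bigl[(n/2^k)\bigl(\rho-(\ln 2)/2+O_k(2^{-k})\bigr)\bigr],
\]
matching the claim.

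The main obstacle is executing this cancellation of first-order terms rigorously: one must verify that $p_{BP}$ stationarises the judicious first moment, so that the linear variation really does vanish. This is essentially the Belief Propagation fixed-point identity, checkable by direct differentiation of the explicit expressions together with the Poisson moments of $d_x-d_{\neg x}$. Finally, to pass from $Z_p$ to $Z_{p,\mathrm{good}}$, I would establish (as a companion to the second moment computation) that the expected number of $p$-judicious satisfying $\sigma$ with $|\cC_\sigma(\PHIdm)|>\Erw Z_p$ is $o(\Erw Z_p)$: Markov applied to $|\cC_\sigma|$ bounds this by the expected count of satisfying pairs $(\sigma,\tau)$ with $\dist(\sigma,\tau)/n\notin[1/2-k^2 2^{-k/2},1/2+k^2 2^{-k/2}]$, divided by $\Erw Z_p$, and this ``close-plus-far'' pair contribution is $O(\Erw[Z_p]^2)$ by the same type of calculation (truncated to a narrow overlap regime). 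Hence $\Erw[Z_{p,\mathrm{good}}]=\Theta(\Erw[Z_p])$, and the constant factor is absorbed by the $o_k(1)$ at scale $n/2^k$.
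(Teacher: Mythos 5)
Your high-level architecture (compute $\Erw[Z_p]$ via the configuration model, then show the ``good'' cap costs only a constant factor) matches the paper, and the final number you obtain is correct. However, two of your key steps contain genuine errors that the paper's proof is specifically designed to avoid. First, the claim that ``conditional on judiciousness, each type-$\ell$ clause independently fails to be satisfied with probability $\prod_j(1-\ell_j)+O(1/n)$'' is false: fixing the column sums induces negative correlations among the clauses, and the resulting correction to $\pr[\mbox{sat}\mid\mbox{judicious}]$ is $\exp(-\Theta(kn/2^k))$ -- of the same order as the quantity being computed, not an $\exp(o(n))$ error. This is precisely why the paper passes to the auxiliary product measure $\hat\pr$ and chooses the \emph{tilted} Bernoulli parameters $q_{\ell,j}=\ell_j-2^{-k-1}+\tilde O(2^{-3k/2})$ of Lemma~\ref{Lemma_fixedpoint}, which make $\hat\pr[B\mid S]=\exp(o(n))$ and thereby render $\hat\pr[S\mid B]$ computable. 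Relatedly, the claim that ``the linear terms in $d_x-d_{\neg x}$ average to zero'' is wrong at the relevant precision: the degree-weighted average $\sum_t\pi(t)(t-\frac12)$ equals $2^{-k-1}$, not $0$, and the entropy loss $\frac1n\ln(|\cH_p|/2^n)=-\frac{k\ln2}{2^{k+1}}+\tilde O(2^{-3k/2})$ is $\Theta(k/2^k)$, not second-order negligible. What actually happens, and what the paper's Lemma~\ref{Lemma_probS} and Corollary~\ref{Cor_firstCalc} show, is that three separate $\Theta(kn/2^k)$ contributions -- the entropy loss, the non-vanishing first-order shift of $\prod_j(1-\ell_j)$, and the conditioning factor $\hat\pr[B\mid S]$ -- cancel exactly for $p=p_{\rm BP}$. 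Your narrative obtains the right total by omitting all three of them.

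Second, the proposed treatment of the ``good'' cap by a single first-moment/Markov bound on pair counts fails for very small Hamming distances. A direct computation of the exponent in the pair count at $\dist/n\approx 2^{-k}$ gives $\frac n{2^k}(1+\rho-\frac{\ln2}2)+o(n/2^k)$, which exceeds $2\cdot\frac1n\ln\Erw[Z_p]=\frac n{2^k}(2\rho-\ln2)+o(n/2^k)$ whenever $\rho<1+\frac{\ln2}2\approx1.35$; since $\rho=\frac32\ln2-\eps_k\approx1.04$, the close pairs do dominate $\Erw[Z_p]^2$ exponentially and Markov's inequality gives nothing. This is why Lemma~\ref{Lemma_off} only covers $\dist\geq k2^{-k}n$, and a \emph{separate} rigidity/frozen-variable argument (Lemma~\ref{Lemma_frozen}, via Lemmas~\ref{Lem_supporting}--\ref{Cor_rigid}) bounds $|\cC_\sigma|$ pointwise for the small-distance regime. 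That rigidity argument has no analogue in your proposal, and without it the proof of Lemma~\ref{XLemma_theLocalCluster} does not go through.
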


We begin by computing $\Erw\brk{Z_{p}(\PHIdm)}$.
By  definition, any assignment that is $\pd$-judicious has $\pd$-margi\-nals.
Thus, let $\cH_p(\vec d)\subset\cbc{0,1}^V$ denote the set of all assignments that have $\pd$-marginals.
Then by the linearity of expectation,
	\begin{eqnarray}\label{Xeqfirst1}
	\Erw\brk{Z_{p}(\PHIdm)}&=&\sum_{\sigma\in\cH_p(\vec d)}\pr\brk{\sigma\in\cS_p(\PHIdm)}.
	\end{eqnarray}
Hence, we need to compute $\abs{\cH_p(\vec d)}$ and the probability 
	$\pr\brk{\sigma\in\cS_p(\PHIdm)}$ for any $\sigma\in\cH_p(\vec d)$.
	Using basic properties of the entropy, we obtain

\begin{lemma}\label{XLemma_entropy}\label{Lemma_entropy}
Let $\chi(z)=-z\ln z-(1-z)\ln(1-z)$ denote the entropy function.
Then \whp\ $\vec d$ is such that
	$$\ln\abs{\cH_p(\vec d)}\sim n\cdot\sum_{x\in V}\chi(p(x)).$$
\end{lemma}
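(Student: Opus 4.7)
The plan is to count $\cH_p(\vec d)$ by combining a factorization over types with a local central limit theorem.

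First, I would group the types $\cT$ into pairs $\{t, 1-t\}$. Writing $V_s := \{x \in V : \pd(x) = s\}$, the literals of type $t$ are precisely the positive occurrences of variables in $V_t$ together with the negative occurrences of variables in $V_{1-t}$, and symmetrically for type $1-t$. Hence the constraints defining $\cH_p(\vec d)$ decouple across type-pairs, and $|\cH_p(\vec d)| = \prod_{\{t,1-t\}} N_{\{t,1-t\}}$, where $N_{\{t,1-t\}}$ counts partial assignments $\sigma: V_t \cup V_{1-t} \to \{0,1\}$ satisfying the two linear constraints (one per type) in~(\ref{Xeqjudicious}) (in its weaker ``marginals'' incarnation).

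Second, for each pair I would introduce the product measure $\mu_t$ on $\{0,1\}^{V_t \cup V_{1-t}}$ that sets each $x \in V_s$ to $1$ independently with probability $s$. Under $\mu_t$, both linear constraints defining $N_{\{t,1-t\}}$ are satisfied in expectation by the very choice of $\mu_t$. Their joint covariance has rank two and is of order $\Theta(n)$: the coefficient vectors are essentially $(d_x)$ and $(d_{\neg x})$, which are non-proportional for generic $\vec d$, and $|V_t|, |V_{1-t}| \geq n\, 2^{-3k/4}$ by \Lem~\ref{Lemma_p}, combined with the degree tameness of \Lem~\ref{Lemma_tame} and \Lem~\ref{Lemma_degs}. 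A two-dimensional local limit theorem (a straightforward extension of \Lem~\ref{lem:locallimit}) then gives $\mu_t(\text{both constraints hold}) = \Theta(1/n)$. Since the Shannon entropy of $\mu_t$ equals $\sum_{x \in V_t \cup V_{1-t}} \chi(\pd(x))$, and since one can show that any $\sigma$ in the constraint set has $\mu_t$-mass $\exp\bigl(-\sum_{x \in V_t \cup V_{1-t}} \chi(\pd(x)) + o(n)\bigr)$, this yields $\ln N_{\{t,1-t\}} = \sum_{x \in V_t \cup V_{1-t}} \chi(\pd(x)) + o(n)$.

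Third, since $|\cT| = O(\sqrt{k 2^k \ln k})$ is $O(1)$ with respect to $n$, taking the product over type-pairs aggregates the polynomial-in-$n$ prefactors into a factor that vanishes in the $\sim$ asymptotic, leaving $\ln |\cH_p(\vec d)| = \sum_{t \in \cT} n(t)\,\chi(t) + o(n) = \sum_{x \in V} \chi(\pd(x)) + o(n)$, which matches the claim (the factor of $n$ in the displayed formula appears to be a typo, since $\sum_{x} \chi(\pd(x))$ is already of order $n$).

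The principal obstacle I anticipate is justifying the ``typicality'' estimate $-\ln \mu_t(\sigma) = \sum_{x \in V_t \cup V_{1-t}} \chi(\pd(x)) + o(n)$ for every $\sigma \in \cH_p(\vec d)$. The $\pd$-marginal constraints pin down only \emph{weighted} sums $\sum_{x \in V_s} \sigma(x) d_x$, whereas $-\ln \mu_t(\sigma)$ depends on the \emph{unweighted} counts $\sum_{x \in V_s} \sigma(x)$, and a priori these need not match. The remedy is a concentration argument exploiting that within a single type the degrees are essentially bounded and uniformly distributed (via \Lem~\ref{Lemma_tame} and \Lem~\ref{Lemma_degs}), so that the weighted and unweighted counts are asymptotically proportional up to an $o(n)$ error that is absorbed into the $\sim$ of the final statement.
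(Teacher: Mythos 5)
Your instinct that the statement really asserts $\ln|\cH_p(\vec d)| \sim \sum_{x\in V}\chi(p(x))$ (no extra factor of $n$) is correct — this is what feeds~(\ref{XeqCorentropy}) — and the high-level idea (decouple across types, use a local CLT for a suitably chosen product measure) is sound. But the obstacle you flag at the end is a genuine one and your proposed remedy does not close it.

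The difficulty is quantitative. Under the \emph{type}-based definition of ``$p$-marginals'' from \Sec~\ref{XSec_theRandomVar}, within a type $t\neq \frac12$ the variables carry different degrees $d_x$ (only $d_x-d_{\neg x}$ is common to the type), so the marginal constraint fixes a \emph{weighted} count $\sum_{x}\sigma(x)d_x$. The extremal entropy over the constraint slice is attained at an exponential tilt of the uniform measure, i.e.\ a product measure with variable success probabilities $\nu_x=\frac{\rho^{d_x}}{1+\rho^{d_x}}$, \emph{not} at the constant-probability measure $\mu_t$. Since $\chi$ is strictly concave, $\sum_x\chi(\nu_x)\neq\sum_x\chi(t)$ unless the $d_x$ are constant within the type. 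A short second-order expansion shows that summed over all types the discrepancy is $\Theta\bigl(\frac{1}{kr}\sum_x(p(x)-\tfrac12)^2\bigr)=\Theta(n/4^k)$, which for \emph{fixed} $k$ is $\Theta(n)$, not $o(n)$. So the $\sim$-relation, read as $n\to\infty$ with $k$ fixed, fails under the type-based definition: your ``asymptotically proportional up to $o(n)$'' step is false. (It happens that $n/4^k=o_k(n2^{-k})$, so the looser conclusion would still feed~(\ref{XeqCorentropy}), but it does not prove the lemma as stated.)

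The paper sidesteps this entirely by changing the granularity of the constraint. In \Sec~\ref{sec:enumeration} the definition of ``$p$-marginals'' is refined to be \emph{per signature} $(\pm1,d^+,d^-)$ for good signatures, rather than per type. Within a single signature the literal degree is a \emph{constant}, so the weighted constraint collapses to an unweighted one ``exactly $p(s)|V_s|$ of the variables in $V_s$ are true.'' The count is then a single binomial coefficient $\binom{|V_s|}{p(s)|V_s|}=\exp\{|V_s|\,\chi(p(s))+O(\ln n)\}$, with no LCLT needed and no tilt mismatch. The only place a weighted aggregate constraint survives is the $p=\frac12$ block $V_{t_{1/2}}$, but there the Bernoulli$(\tfrac12)$ measure is uniform, so all configurations have identical mass and the tilt-mismatch issue disappears; the paper handles this via a generating-function extraction $[z^{M/2}]\prod_{v}(z^{d_v}+z^{d_{\neg v}})=\Theta(2^N/\sqrt N)$ (Proposition~\ref{prop:coeff_extract_simple}, proved by a saddle-point argument). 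Your plan would work essentially as written if you pass from types to signatures before pairing; without that refinement it is off by an exponential factor that is small in $k$ but not small in $n$.

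One smaller issue: the ``two-dimensional local limit theorem (a straightforward extension of \Lem~\ref{lem:locallimit})'' is not straightforward in your setting because the summands are not identically distributed (each $x$ carries a different degree weight). The paper's saddle-point calculation in \Sec~\ref{ssec:coeff_extract_simple} is precisely what replaces that step for the $t_{1/2}$ block.
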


Taylor expanding $\chi(z)$ around $z=1/2$ and plugging in the definition~(\ref{XeqpBP}) of $p$, we obtain that \whp\ 
$\vec d$ is such that
	\begin{equation}\label{XeqCorentropy}
	\frac1n\ln\abs{\cH_p(\vec d)}=\ln2-\frac{k\ln2}{2^{k+1}}+o_k(2^{-k}).
	\end{equation}
As a next step, we compute the probability of 
 $\sigma\in\cS_p(\PHIdm)$ for $\sigma\in\cH_p(\vec d)$.

\begin{lemma}\label{XLemma_probability}
\Whp\ $\vec d$, $\vec m$ are such that
for any $\sigma\in\cH_p(\vec d)$,
	\begin{eqnarray}
	\frac1n\ln\pr\brk{\sigma\in\cS_p(\PHIdm)}&=&
		-\ln2+\frac{k\ln2}{2^{k+1}}+2^{-k}\brk{\rho-\frac{\ln2}2+o_k(1)}.
			\label{Xeqprobabilityexpression}
	\end{eqnarray}
\end{lemma}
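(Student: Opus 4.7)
The plan is to analyse $\pr\brk{\sigma\in\cS_p(\PHIdm)}$ through the configuration-model description of $\PHIdm$: given $\vec d,\vec m$, one generates the formula by drawing, independently for each literal $p$-type $t\in\cT$, a uniformly random bijection between the $\tau_t$ type-$t$ clones (literal occurrences) and the $\tau_t$ type-$t$ slots (clause positions). Since $\sigma\in\cH_p(\vec d)$, each clone inherits a deterministic `true'/`false' label from $\sigma$, with exactly $v_t=t\tau_t+O(1)$ true clones per type. Conditioning on the truth-labelling at the slots, the matching splits into independent uniform bijections of true-to-true and false-to-false, which gives
	\[
	\pr\brk{\sigma\in\cS_p(\PHIdm)}\;=\;\prod_{t\in\cT}\binom{\tau_t}{v_t}^{-1}\cdot|\cU|,
	\]
where $|\cU|$ counts labellings of the slots that are simultaneously \emph{judicious} (each $(\ell,j)$-bin has $m(\ell)\ell_j+O(1)$ true slots) and \emph{satisfying} (no all-zero clause). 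The judicious constraint costs only a polynomial factor by the multidimensional local limit theorem (\Lem~\ref{lem:locallimit}), and modulo this $|\cU|$ factorises over clause types: $\ln|\cU|=\sum_{\ell\in\cL}\ln|\cU_\ell|+o(n)$, where $\cU_\ell$ is the set of $m(\ell)\times k$ binary matrices with column sums $m(\ell)\ell_j$ and no all-zero row.

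The technical heart of the proof is the asymptotics of $|\cU_\ell|$. I would apply inclusion--exclusion on the subset of forced all-zero rows, expand the falling-factorial ratios $\binom{m(\ell)-s}{m(\ell)\ell_j}/\binom{m(\ell)}{m(\ell)\ell_j}$ via Stirling, and isolate the Gaussian correction $\exp\bc{-s^2C_\ell/(2m(\ell))}$, where $x_\ell:=\prod_{j=1}^k(1-\ell_j)$ and $C_\ell:=\sum_{j=1}^k\ell_j/(1-\ell_j)$. A Laplace-type evaluation of the resulting alternating sum (which is concentrated near $s\sim m(\ell)x_\ell$) yields
	\[
	\ln|\cU_\ell|\;=\;\ln\prod_{j=1}^k\binom{m(\ell)}{m(\ell)\ell_j}+m(\ell)\ln(1-x_\ell)-\tfrac{C_\ell}{2}\,m(\ell)\,x_\ell^2+O\bc{m(\ell)x_\ell^3+\log n}.
	\]
Applying Stirling also to $\binom{\tau_t}{v_t}$ (whose entropy cancels that of the multinomial coefficients to leading order) and invoking Lemmas~\ref{Lemma_gammaell}--\ref{Lemma_Gamma} together with the identity $\sum_t\pi(t)(1-t)=\tfrac12-2^{-k-1}$ (a direct computation from \Lem~\ref{Lemma_tame} and the definition~(\ref{XeqpBP}) of $p_{\mathrm{BP}}$), one arrives at
	\[
	\tfrac1n\ln\pr\brk{\sigma\in\cS_p(\PHIdm)}\;=\;-\sum_\ell\gamma_\ell x_\ell\;-\;\tfrac12\sum_\ell\gamma_\ell(1+C_\ell)x_\ell^2\;+\;o_k(2^{-k}).
	\]
Now a Taylor expansion of $(1/2-2^{-k-1})^k$ gives $\sum_\ell\gamma_\ell x_\ell=r(1/2-2^{-k-1})^k=\ln 2-2^{-k}(k\ln 2+\rho)+o_k(2^{-k})$, while analogous computations yield $\sum_\ell\gamma_\ell x_\ell^2\sim 2^{-k}\ln 2$ and (using $\ell_j\approx 1/2$, hence $C_\ell\sim k$ on typical clauses) $\sum_\ell\gamma_\ell C_\ell x_\ell^2\sim k\ln 2/2^k$. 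Substituting $r=2^k\ln 2-\rho$ and collecting terms delivers the claimed asymptotic.

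The main obstacle is the Laplace correction $-\tfrac{C_\ell}{2}m(\ell)x_\ell^2$ in the second step. The na\"{\i}ve independent-row approximation $|\cU_\ell|\approx\prod_j\binom{m(\ell)}{m(\ell)\ell_j}(1-x_\ell)^{m(\ell)}$ omits this quadratic term entirely, and since $C_\ell\sim k$ and $\sum_\ell m(\ell)x_\ell^2=\Theta(n/2^k)$, the omission shifts $\tfrac1n\ln\pr$ by an extra $+k\ln 2/2^{k+1}$---exactly destroying the claimed $+k\ln 2/2^{k+1}$ term in~(\ref{Xeqprobabilityexpression}). Extracting this subleading contribution by a careful saddle-point analysis of the alternating inclusion--exclusion sum is therefore the technical crux of the proof.
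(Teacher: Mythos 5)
Your computation arrives at the correct answer, and your bookkeeping (the cancellation of $\prod_t\binom{\tau_t}{v_t}$ against $\prod_\ell\prod_j\binom{m(\ell)}{m(\ell)\ell_j}$, the factorisation over clause types once judiciousness is conditioned upon, and the final Taylor expansion with $r=2^k\ln 2-\rho$) is all consistent with the paper's result. However, your route is genuinely different from the paper's. The paper avoids working directly with the uniform measure on $m(\ell)\times k$ 0/1 matrices with prescribed column sums: instead it introduces a Bernoulli product measure $\hat\pr_{\vec q}$ on the slot labels and then \emph{tilts} $\vec q$ (via the fixed-point $\hat q_{\ell,j}=q_{\ell,j}/(1-\prod_l(1-q_{\ell,l}))=\ell_j$, \Lem~\ref{Lemma_fixedpoint}) so that the judiciousness event $B$ has conditional probability $\exp(o(n))$ given the satisfying event $S$; the quadratic-in-$x_\ell$ term then drops out of the binomial rate functions $\psi(q_{\ell,j},\ell_j)$ in the identity $\ln\hat\pr\brk{S|B}=\ln\hat\pr\brk{S}-\ln\hat\pr\brk{B}+o(n)$. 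Your inclusion--exclusion route produces the same quadratic correction, but its justification is subtler than your phrase ``concentrated near $s\sim m(\ell)x_\ell$'' suggests: the alternating sum $\sum_s(-1)^s\binom{m}{s}x_\ell^s\exp(-C_\ell s^2/2m)$ cannot be evaluated by simply substituting $s=m x_\ell$ into the correction factor (that na\"ive Laplace plug-in yields the wrong sign and a factor-of-3 error, since the alternating sign structure means both the linear shift $-2a\lambda(s-\lambda)$ and the curvature $-a(s-\lambda)^2$ contribute terms of the same order $a\lambda^2$). The correct asymptotic $\exp(-\lambda-a\lambda^2+O(a\lambda))$, with $\lambda=mx_\ell$ and $a=(1+C_\ell)/2m$, emerges from the full factorial-moment expansion $\sum_r(-1)^r\lambda^r/r!\cdot(1-ar^2+\dots)$ using the identities $\sum_r(-1)^r\lambda^r r/r!=-\lambda e^{-\lambda}$ and $\sum_r(-1)^r\lambda^r r(r-1)/r!=\lambda^2 e^{-\lambda}$, or alternatively from a genuine complex contour argument. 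You do flag this as the technical crux, which is the right instinct; but be aware that what is needed is precisely the kind of cancellation that the paper's tilting sidesteps cleanly. The tilting approach is also what makes the second-moment calculation in \Sec~\ref{Sec_secondMoment} tractable, where the analogous matrices are $m(\ell)\times k$ arrays of \emph{pairs} and an alternating-sum route would become considerably worse.
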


Let us defer the proof of \Lem~\ref{XLemma_probability}, which is the core of the first moment computation, for a little while. 
Combining 
(\ref{Xeqfirst1})--(\ref{Xeqprobabilityexpression}), we see that
\whp\ over the choice of $\vec d,\vec m$ we have
	\begin{eqnarray}\nonumber
	\ln\Erw\brk{Z_{p}(\PHIdm)}&=&
		\ln\abs{\cH_p(\vec d)}+\ln\pr\brk{\sigma\in\cS_p(\PHIdm)}\\
		&\sim&2^{-k}\brk{\rho-\frac{\ln2}2+o_k(1)}\cdot n\label{Xeqfirst2}
	\end{eqnarray}
To obtain the expectation of $Z_{p,\mathrm{good}}$,  we show the following.
\begin{lemma}\label{XLemma_theLocalCluster}
\Whp\ over the choice of $\vec d,\vec m$ 
we have
	$$\Erw\brk{Z_{p,\mathrm{good}}(\PHIdm)}\sim \Erw\brk{Z_{p}(\PHIdm)}.$$
\end{lemma}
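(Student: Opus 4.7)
The approach is a pointwise Markov reduction that turns the $Z_p$ versus $Z_{p,\mathrm{good}}$ comparison into a restricted pair-expectation estimate. For every $\sigma\in\cS_p(\PHIdm)$ one has
\begin{equation*}
\mathbf{1}\{|\cC_\sigma(\PHIdm)|>\Erw[Z_p(\PHIdm)]\}\;\le\;\frac{|\cC_\sigma(\PHIdm)|}{\Erw[Z_p(\PHIdm)]},
\end{equation*}
so summing over $\sigma$ and taking expectation yields
\begin{equation*}
0\;\le\;\Erw[Z_p(\PHIdm)]-\Erw[Z_{p,\mathrm{good}}(\PHIdm)]\;\le\;\frac{\cQ}{\Erw[Z_p(\PHIdm)]},\qquad\cQ\;=\;\Erw\Bigl[\sum_{\sigma\in\cS_p(\PHIdm)}|\cC_\sigma(\PHIdm)|\Bigr].
\end{equation*}
Here $\cQ$ is the expected number of ordered pairs $(\sigma,\tau)$ of satisfying assignments of $\PHIdm$ with $\sigma$ additionally $\pd$-judicious and with relative Hamming distance $\alpha=\dist(\sigma,\tau)/n$ lying \emph{outside} the window $J=[\tfrac12-k^22^{-k/2},\,\tfrac12+k^22^{-k/2}]$. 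It therefore suffices to prove $\cQ=o(\Erw[Z_p(\PHIdm)]^2)$, after which dividing by $\Erw[Z_p(\PHIdm)]$ yields the lemma.

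To bound $\cQ$ I would stratify the pair sum by $\alpha$ and compute $F(\alpha)=\Erw[\#\{(\sigma,\tau)\text{ as above with }\dist(\sigma,\tau)=\alpha n\}]$ using the clone-matching bookkeeping that underlies \Lem~\ref{XLemma_entropy} and \Lem~\ref{XLemma_probability}: an entropic factor counting pairs of $\pd$-marginal assignments at relative distance $\alpha$, times a joint-satisfiability factor measuring the probability that a uniform matching of the $\pd$-type clone piles leaves every clause of $\PHIdm$ satisfied simultaneously under both $\sigma$ and $\tau$. A routine calculation then yields $F(\alpha)\asymp\exp(n\phi(\alpha))$ for a smooth function $\phi$ on $(0,1)$.

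The analytic core is to show that $\phi$ attains its unique interior maximum at some $\alpha^\ast$ with $|\alpha^\ast-\tfrac12|=O(k/2^k)$ --- which, by \Lem~\ref{Lemma_tame}, lies well inside $J$ --- and that $\phi''(\alpha^\ast)\le-c$ for an absolute constant $c>0$. Combined with a crude large-deviation estimate bounding $F(\alpha)$ away from $\alpha^\ast$, the Taylor expansion $\phi(\alpha)-\phi(\alpha^\ast)\le-c(\alpha-\alpha^\ast)^2/2+O((\alpha-\alpha^\ast)^3)$ then gives $\phi(\alpha)\le\phi(\alpha^\ast)-\Omega(k^4\cdot 2^{-k})$ for every $\alpha\notin J$. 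Summing $F(\alpha)$ over $\alpha\notin J$ and comparing with the dominant contribution $\sum_\alpha F(\alpha)=\Erw[|\cS_p(\PHIdm)|\cdot|\cS(\PHIdm)|]\le\exp(O(n/2^k))\cdot\Erw[Z_p(\PHIdm)]^2$ (the last bound following from a direct first-moment estimate on $|\cS(\PHIdm)|$ together with \Prop~\ref{XProp_firstMoment}) produces $\cQ\le\exp(-\Omega(k^4\cdot 2^{-k}n))\cdot\Erw[Z_p(\PHIdm)]^2$, as required.

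The principal obstacle will be the quantitative strict concavity of $\phi$ at its maximum in the \emph{asymmetric} setting. For the symmetric ``balanced'' choice $p_{\mathrm{bal}}\equiv\tfrac12$ (cf.\ \Rem~\ref{XRem_balanced}) strict concavity at $\tfrac12$ comes for free from the $\sigma\leftrightarrow\neg\sigma$ symmetry, but once the marginals are tilted toward $\sigmaMAJ$ via the Belief-Propagation formula~(\ref{XeqpBP}) for $p_{\mathrm{BP}}$ one must verify by direct computation that no degenerate direction opens up. This sub-computation is intimately tied to the overlap analysis demanded by \Thm~\ref{XThm_secondPlain}, so some care is needed to arrange the arguments in a non-circular order.
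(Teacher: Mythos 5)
Your Markov reduction in the first display is fine, but the key claim $\cQ=o(\Erw[Z_p]^2)$ is false, and the reason is precisely the phenomenon that the ``good'' truncation is designed to circumvent. Stratify $\cQ$ by $\alpha=\dist(\sigma,\tau)/n$ and look at the regime $\alpha\approx 2^{-k}$. Conditioned on $\sigma\in\cS_p(\PHIdm)$, each clause has at least one literal that is true under $\sigma$; for small $\alpha$ a uniformly random $\tau$ at distance $\alpha n$ stays satisfying with probability $\approx\exp(-\alpha k n\ln 2)$ (roughly $k\ln 2\cdot n$ clauses are supported by a single true literal, each of which gets killed with probability $\approx\alpha$). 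Thus
\begin{equation*}
\cQ_\alpha\;\approx\;\Erw[Z_p]\cdot\binom{n}{\alpha n}\,e^{-\alpha k n\ln 2},\qquad\text{and}\qquad\max_{\alpha\in(0,1)}\Bigl[h(\alpha)-\alpha k\ln 2\Bigr]=2^{-k}+O(4^{-k})\quad\text{at }\alpha\approx 2^{-k},
\end{equation*}
so the near region alone contributes $\approx\Erw[Z_p]\cdot e^{2^{-k}n}$ to $\cQ$. On the other hand $\Erw[Z_p]^2=\Erw[Z_p]\cdot\exp(2^{-k}(\rho-\tfrac{\ln2}2+o_k(1))n)$ by \Prop~\ref{XProp_firstMoment}. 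Comparing exponents, $\cQ=o(\Erw[Z_p]^2)$ would require $\rho-\tfrac{\ln 2}{2}>1$, i.e.\ $\rho>1+\tfrac{\ln 2}{2}\approx1.35$; but the paper works with $\rho<\tfrac32\ln 2\approx1.04$. So $\cQ\gg\Erw[Z_p]^2$ throughout the range of interest and your Markov bound is vacuous. In particular your claim that $\phi$ has a unique interior maximum near $\alpha^\ast\approx\tfrac12$ is wrong: the pair exponent $\phi$ has a ``lobe'' near $\alpha\approx 2^{-k}$ which strictly exceeds $\phi(\alpha^\ast)$ in this density regime. (This is exactly why \Lem~\ref{Lemma_off} is only stated for distances $\ge k\,2^{-k}n$.)

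The paper avoids this by not attempting a first-moment bound on $\Erw[\sum_\sigma|\cC_\sigma|]$ at all. Instead, \Lem~\ref{Lemma_frozen} proves a \emph{typicality} statement: after a structural preprocessing (identifying a large ``self-contained'' set of rigid variables via \Lem s~\ref{Lem_supporting}--\ref{Lemma_dense}), one shows that for all but an $o(\Erw[Z_p])$-sized expected fraction of $\sigma\in\cS(\PHIdm)$, the number of non-rigid variables is at most $(1+2k^{-2})2^{-k}n$, whence any $\tau$ within distance $2^{-0.99k}n$ of $\sigma$ must agree with $\sigma$ on all rigid coordinates and the near-cluster has size at most $2^{(1+o_k(1))2^{-k}n}<\Erw[Z_p]$. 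This bounds the \emph{whp} cluster size, not its expectation --- which is genuinely smaller, precisely because the expectation is inflated by rare formulas with huge clusters (the ``lottery'' effect highlighted in \Sec~\ref{XSec_bla}). Replacing that structural argument by a pure first-moment count, as you propose, loses exactly the factor needed.
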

The proof of \Lem~\ref{XLemma_theLocalCluster} is based on arguments
developed in~\cite{Barriers} for analyzing the geometry of the set of satisfying assignments. 
Combining~(\ref{Xeqfirst2}) and \Lem~\ref{XLemma_theLocalCluster} yields \Prop~\ref{XProp_firstMoment}.

\subsection{Proof of \Lem~\ref{XLemma_probability}}

For a sequence $\vec m=(m(\ell))_{\ell\in\cL}$ of non-negative integers we let $\Gamma_{\vec m}$ denote the event
that $m_{\PHId}(\ell)=m(\ell)$ for all $\ell\in\cL$.
Let us call $\vec m$ \emph{feasible} if $\pr_{\vec m}\brk{\Gamma_{\vec m}}>0$ and $|m(\ell)-\gamma_\ell n|\leq n^{2/3}$ for all $\ell\in\cL$.
Let $Z$ be the number of $\pd$-judicious satisfying assignments.

\begin{proposition}\label{Prop_firstMoment}
Let $\vec d$ be chosen from $\vec D$.
Then \whp\ for any feasible $\vec m=(m(\ell))_{\ell\in\cL}$ the following statements hold.
\begin{enumerate}
\item We have
	\begin{eqnarray*}
	2^{-k}\brk{\rho-\frac{\ln2}2-k^{-9}}&\leq&
	\frac1n\ln\Erw\brk{Z(\PHIdm)}\leq
		2^{-k}\brk{\rho-\frac{\ln2}2+k^{-9}}
		.
	\end{eqnarray*}
\item 
	For any $t\in\cT$ we have
			$$\sum_{l\in L:\pd(l)=t}d_l^2\leq\frac{2km\pi(t)}{n(t)}.$$
\item For any $\sigma\in\cbc{0,1}^V$ with $p$-marginals we have
		$$\frac1{km}\sum_{l\in L}d_l\vecone_{\sigma(l)=1}=\frac12+O(2^{-k}).$$		
\end{enumerate}
\end{proposition}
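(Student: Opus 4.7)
The plan is to establish claims~(3), (2), (1) in that order, since the algebraic identity from~(3) and the concentration bound from~(2) will feed directly into the precise asymptotic expansion needed for~(1).

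For claim~(3), a direct combinatorial computation suffices. Using the definition of $p$-marginals and $|\cT|=O_k(1)$,
\[
\frac{1}{km}\sum_{l\in L}d_l\vecone_{\sigma(l)=1} = \sum_{t\in\cT}t\pi(t)+O(1/n) = \frac{1}{2}+\frac{1}{km}\sum_{x\in V}\bigl(p_{\vec d}(x)-\tfrac12\bigr)(d_x-d_{\neg x}).
\]
On the variables in the unsaturated range $|d_x-d_{\neg x}|\le 10\sqrt{k2^k\ln k}$ of $p_{\mathrm{BP}}$, the summand equals $(d_x-d_{\neg x})^2/2^{k+1}$; the $O(n/k^{30})$ exceptional variables (controlled by a Chebyshev-type bound from Lemma~\ref{Lemma_tame}(1)) contribute $0$. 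Summing and invoking $\sum_x(d_x-d_{\neg x})^2\sim km$ from the same lemma gives the correction $\tfrac{1}{2^{k+1}}$, hence $\tfrac12+O(2^{-k})$. Claim~(2) is a concentration estimate combining the lower bound $n(t)\geq 2^{-3k/4}n$ from Lemma~\ref{Lemma_p}, the maximum-degree/volume control of Lemma~\ref{Lemma_degs}, and the identity $\sum_{l:p_{\vec d}(l)=t}d_l=km\pi(t)$; it reduces to a Cauchy--Schwarz-type estimate against the known total volume of type~$t$.

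For claim~(1), I decompose
\[
\Erw\brk{Z(\PHIdm)} = |\cH_p(\vec d)|\cdot\pr\brk{\sigma^\ast\in\cS_p(\PHIdm)}
\]
for any fixed representative $\sigma^\ast\in\cH_p(\vec d)$; this is legitimate because the right-hand probability depends on $\sigma^\ast$ only through quantities determined by $\sigma^\ast$ having $p$-marginals. Lemma~\ref{XLemma_entropy} together with a second-order Taylor expansion of $\chi$ around $1/2$ (using $\chi''(\tfrac12)=-4$) and Lemma~\ref{Lemma_tame}(1) yields $\tfrac1n\ln|\cH_p(\vec d)|\sim\ln 2-k\ln 2/2^{k+1}+o_k(2^{-k})$. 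For the second factor, I further factor as the product of the probability that $\sigma^\ast$ is $p$-judicious under the random clone-matching (computed by applying the multivariate local limit theorem, Lemma~\ref{lem:locallimit}, to the hypergeometric distribution of `true' clones across the $(\ell,j)$ blocks, contributing only a polynomial prefactor) and the conditional probability that no clause is all-false (obtained by a Poisson-style bound on the number of all-false clauses of each type, together with the next-order correction from $\log(1-\beta_\ell)=-\beta_\ell-\beta_\ell^2/2-\cdots$). The sum $\sum_\ell m(\ell)\beta_\ell$ with $\beta_\ell=\prod_j(1-\ell_j)$ evaluates through Lemmas~\ref{Lemma_gammaell} and~\ref{Lemma_Gamma} and the factorisation $\sum_\ell\prod_j\pi(\ell_j)(1-\ell_j)=\bigl(\sum_t\pi(t)(1-t)\bigr)^k$; the inner sum is $\tfrac12-\tfrac{1}{2^{k+1}}+o_k(2^{-k})$ by claim~(3). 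Assembling everything produces $\tfrac1n\ln\Erw\brk{Z(\PHIdm)}=2^{-k}[\rho-\ln 2/2+o_k(1)]$, and the two-sided $k^{-9}$ bound follows by tracking the $o_k$ slacks through the estimates and choosing $k$ sufficiently large.

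The main obstacle is to carry out this sub-leading asymptotic expansion with sufficient precision: the threshold constant is sensitive to $O(2^{-k})$ corrections, which forces keeping simultaneously (i)~the second-order Taylor correction $(\tfrac12-\tfrac{1}{2^{k+1}})^k\sim 2^{-k}(1-k/2^k)$; (ii)~the quadratic term in $\log(1-\alpha^k)$; and (iii)~the precise interaction between the $p$-judicious constraint and clause satisfaction, whose net effect must combine with the entropy loss $-k\ln 2/2^{k+1}$ so as to leave exactly the Belief-Propagation-corrected term $\rho-\ln 2/2$. Any of these three corrections dropped would shift the final bound by an amount that violates the claim.
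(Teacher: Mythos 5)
Your reduction of claim~(3) to the exact identity $\sum_t t\pi(t)=\frac12+\frac1{km}\sum_x(\pd(x)-\frac12)(d_x-d_{\neg x})$ together with \Lem~\ref{Lemma_tame}(1) is correct, and your treatment of claim~(2) via \Lem s~\ref{Lemma_p} and~\ref{Lemma_degs} is reasonable (after the evident typo $km\pi(t)\to(km\pi(t))^2$ is repaired). Your decomposition $\Erw\brk{Z}=|\cH_p(\vec d)|\cdot\pr\brk{\cB(\sigma)}\cdot\pr\brk{\cS(\sigma)\mid\cB(\sigma)}$ with a polynomial $\pr\brk{\cB(\sigma)}$ also matches the paper (\Cor~\ref{Cor_switch}).

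The gap is in your computation of $\pr\brk{\cS(\sigma)\mid\cB(\sigma)}$. You propose to evaluate it as $\exp\brk{\sum_\ell m(\ell)\ln(1-\beta_\ell)}$ with $\beta_\ell=\prod_j(1-\ell_j)$, i.e.\ treat the truth values as independent Bernoulli$(\ell_j)$ with the inclusion--exclusion correction inside each clause. But this is the \emph{unconditional} satisfaction probability under the product measure $q_{\ell,j}=\ell_j$, not the probability conditioned on the judicious event. In the clone-matching model the all-false events for different clauses of the same type are negatively correlated because the marginal of true clones per $(\ell,j)$-block is fixed, and the size of this correction is exactly what matters here. Concretely: under the product measure with parameters $q_{\ell,j}=\ell_j$, conditioning on a clause being satisfied shifts its per-position truth probability to $\ell_j/(1-\beta_\ell)\approx\ell_j+2^{-k-1}$, so the event $B$ that the empirical fractions equal $\ell_j$ has, given $S$, probability $\exp\brk{-\frac{k\ln2}{2^{k+1}}n+o(n)}$; this factor of $\hat\pr\brk{B\mid S}$ is what you omit. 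Since $\hat\pr\brk{S\mid B}=\hat\pr\brk{S}\hat\pr\brk{B\mid S}/\hat\pr\brk{B}$ and $\hat\pr\brk{B}$ is only polynomial with $q_{\ell,j}=\ell_j$, your formula overestimates $\frac1n\ln\pr\brk{\cS(\sigma)\mid\cB(\sigma)}$ by exactly $\frac{k\ln2}{2^{k+1}}$. Tracking the arithmetic: your method yields
\[
\frac1n\ln\Erw\brk{Z(\PHIdm)}\ \approx\ 2^{-k}\Bigl[\rho+\frac{k\ln2}{2}-\frac{\ln2}{2}\Bigr]
\]
rather than $2^{-k}\bigl[\rho-\frac{\ln2}{2}\bigr]$; the excess $\frac{k\ln2}{2}\cdot2^{-k}$ is unbounded in $k$ and vastly exceeds the permitted $k^{-9}\cdot2^{-k}$ slack. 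You do flag ``the precise interaction between the $p$-judicious constraint and clause satisfaction'' as obstacle~(iii), but the $\log(1-\beta_\ell)$ expansion you invoke only captures within-clause inclusion--exclusion, not the between-clause negative correlation that the conditioning induces. The paper resolves this with the auxiliary space $\hat\Omega$ and, crucially, the inverse-function-theorem choice of $\vec q$ (\Lem~\ref{Lemma_fixedpoint}) with $q_{\ell,j}=\ell_j-2^{-k-1}+\tilde O(2^{-3k/2})$, which makes $\hat\pr\brk{B\mid S}=\exp(o(n))$ and routes the $\frac{k\ln2}{2^{k+1}}$ into explicit $\psi(q_{\ell,j},\ell_j)$ terms (\Lem~\ref{Lemma_probS} and the following lemma, assembled in \Cor~\ref{Cor_firstCalc}). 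Some such mechanism is essential; without it the first-moment estimate fails at exactly the order of magnitude that the bound requires.
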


The proof of \Prop~\ref{Prop_firstMoment} consists of two steps.
We defer the proof of the following lemma to \Sec~\ref{Sec_first}.

\begin{lemma}\label{Lemma_first}
With the assumptions of \Prop~\ref{Prop_firstMoment} and with $\delta,\delta'$ defined by
	\begin{eqnarray*}
	\frac1{km}\sum_{x\in V}\bc{p(x)-\frac12}^2&=&\bc{1+\delta}2^{-2k-2}\quad\mbox{and}\\
	\Sigma&=&\frac1{km}\sum_{x\in V}(1-2p(x))(d_x-d_{\neg x})=-(1+\delta')2^{-k}
	\end{eqnarray*}
we have \whp
	\begin{eqnarray*}
	\frac1n\ln\Erw\brk{Z(\PHIdm)}&=&2^{-k}\brk{\rho-\frac{\ln2}2}+O\bcfr{k(\delta+\delta')}{2^k}+\tilde O(2^{-3k/2}).
	\end{eqnarray*}
\end{lemma}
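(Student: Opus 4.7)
The plan is to use linearity of expectation to write $\Erw[Z(\PHIdm)] = |\cH_p(\vec d)|\cdot\pr[\sigma\in\cS_p(\PHIdm)]$, which is legitimate because the probability depends on $\sigma\in\cH_p(\vec d)$ only through its $p$-marginals and so is essentially constant on $\cH_p(\vec d)$. First I would estimate $\ln|\cH_p(\vec d)|$ via \Lem~\ref{Lemma_entropy} as $\sum_{x}\chi(p(x))+O(\ln n)$, Taylor-expand $\chi(z)=\ln 2-2(z-1/2)^2+O((z-1/2)^4)$ around $z=1/2$, and plug in the defining identity $\sum_x(p(x)-1/2)^2=(1+\delta)km/2^{2k+2}$. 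Using $m=rn$ and $r=2^k\ln 2-\rho$, the quartic term is absorbed via $(p(x)-1/2)^2\leq\tilde O(k/2^k)$ on inliers plus the outlier control from \Lem~\ref{Lemma_tame}, yielding $\ln|\cH_p|/n=\ln 2-(1+\delta)k\ln 2/2^{k+1}+\tilde O(2^{-3k/2})$.

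Next I would compute $\pr[\sigma\in\cS_p(\PHIdm)]$ in the clone-matching model by factoring it as $\pr[\text{jud}]\cdot\pr[\text{sat}\mid\text{jud}]$. The judicious probability is a product over types $t$ of multivariate hypergeometric local-limit probabilities; by Stirling together with the identity $\sum_{(\ell,j):\ell_j=t}m(\ell)=n_t$, the exponential entropy contributions cancel, leaving only a polynomial factor $n^{-O(1)}$. Conditional on jud, satisfaction factors across clause types: for each $\ell$, the probability of no all-false clause equals the probability that a uniformly random $m(\ell)\times k$ 0/1 matrix with column sums $\ell_j m(\ell)$ has no all-zero row, which by a generating-function/saddle-point argument equals $(1-q_\ell)^{m(\ell)}(1+O(1/m(\ell)))$ with $q_\ell=\prod_j(1-\ell_j)$. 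Taking logs and expanding $\ln(1-q_\ell)$ yields $\ln\pr[\text{sat}\mid\text{jud}]=-\sum_\ell m(\ell)q_\ell-\tfrac{1}{2}\sum_\ell m(\ell)q_\ell^2+\tilde O(n/2^{3k/2})$; crucially the second (saddle-point refinement) term, absent from a naive Poisson approximation, is of size $\Theta(n/2^{k+1})$ and is what produces the $-\ln 2/2^{k+1}$ in the target.

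Third, I would reduce the clause-type sums to scalars. \Lem~\ref{Lemma_Gamma} combined with $\gamma_\ell=r\prod_j\pi(\ell_j)$ gives $\sum_\ell m(\ell)q_\ell^s\sim m(1-\alpha)^{sk}$ for $s=1,2$, where $\alpha=\sum_t t\pi(t)$ is the fraction of true literal occurrences. A direct manipulation using $A+B=km$ and the defining identity $\Sigma=-(1+\delta')/2^k$ shows $\alpha=\tfrac{1}{2}+(1+\delta')/2^{k+1}+O(1/n)$, and expanding $(1-\alpha)^k=2^{-k}(1-(1+\delta')/2^k)^k$ to the needed order and multiplying by $m=rn$ gives each contribution explicitly in terms of $\delta,\delta'$. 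Combining all pieces, the leading $\pm n\ln 2$ terms cancel and the remaining linear-in-$k$ corrections—namely $-(1+\delta)kn\ln 2/2^{k+1}$ from the entropy, the $k$-dependent contribution from $m(1-\alpha)^k$, and the saddle-point refinement $-m(1-\alpha)^{2k}/2$—combine to yield the main term $n\cdot 2^{-k}[\rho-\ln 2/2]$ plus a remainder bounded by $O(kn(\delta+\delta')/2^k)$.

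The main obstacle is the delicate bookkeeping of the second-order corrections: one must retain the saddle-point refinement $(1-q_\ell)^{m(\ell)}$ beyond the naive $e^{-m(\ell)q_\ell}$, and track precisely which contributions of order $O(kn/2^k)$ depend on the defects $\delta,\delta'$ (absorbable into the stated error) versus those independent of them (which must cancel exactly against the entropy piece). A further point requiring care is verifying that cubic and higher terms in the expansions of $\chi$ and $(1-\alpha)^k$, as well as the variance correction $\mathrm{Var}_\pi(t)$ appearing in $[\sum_t\pi(t)(1-t)^s]^k$, are all absorbed into $\tilde O(2^{-3k/2})$; this relies on the $p_{\mathrm{BP}}$-specific cutoff $|p(x)-1/2|\leq\tilde O(2^{-k/2})$ together with the concentration $\sum_x(d_x-d_{\neg x})^2\sim km$ of \Lem~\ref{Lemma_tame}.
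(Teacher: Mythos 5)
Your high-level decomposition is essentially the paper's: write $\Erw[Z(\PHIdm)]=\abs{\cH_p(\vec d)}\cdot\pr[\sigma\mbox{ judicious}]\cdot\pr[\sigma\mbox{ satisfying}\mid\mbox{judicious}]$, evaluate the entropy factor via Taylor expansion of $\chi$, and observe that the judicious probability is only polynomial (your observation matches \Cor~\ref{Cor_switch}). The gap is in the third factor.

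You claim that, conditional on column sums $\ell_j m(\ell)$, the probability of no all-zero row in the $m(\ell)\times k$ incidence matrix of type $\ell$ equals $(1-q_\ell)^{m(\ell)}(1+O(1/m(\ell)))$ with $q_\ell=\prod_j(1-\ell_j)$. This is false: conditioning on the column sums changes the \emph{exponential} rate, not just polynomial prefactors. Already at $k=2$, $\ell_1=\ell_2=\frac12$, the actual probability is $1/\binom{m}{m/2}\sim\sqrt{\pi m/2}\cdot 2^{-m}$, while $(1-q_\ell)^m=(3/4)^m$; the ratio $(2/3)^m$ is exponentially small. For general $k$, the correct saddle point of the generating function is not at the column marginals $\ell_j$ but at a shifted vector $q_{\ell,j}$ determined by the fixed-point relation $\hat q_{\ell,j}=\ell_j$ of \Lem~\ref{Lemma_fixedpoint}, i.e.\ $q_{\ell,j}=\ell_j-2^{-k-1}+\tilde O(2^{-3k/2})$, and the saddle-point value carries the rate-function terms $-\sum_j\psi(q_{\ell,j},\ell_j)$ on top of $\ln(1-\prod_j(1-q_{\ell,j}))$ (this is exactly Remark~\ref{Remark_weNeedThis}). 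Your formula differs from the correct one by $\exp\brk{-(1+o_k(1))\tfrac{k\ln2}{2^{k+1}}n}$: carrying out your computation yields an uncancelled $+\frac{k\ln 2}{2^{k+1}}$ in $\frac1n\ln\Erw Z$, of size $\Theta(k2^{-k})$, which cannot be absorbed into either $O(k(\delta+\delta')/2^k)$ (since $\delta,\delta'=O_k(k^{-12})$) or $\tilde O(2^{-3k/2})$. In the paper this extra $\frac{k\ln2}{2^{k+1}}$ is precisely supplied by $-\frac1n\ln\hat\pr[B]$ in the tilted measure; in your decomposition, where the judicious probability is genuinely polynomial, it must come from the satisfaction probability itself, so the shift in the saddle point is indispensable. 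In short: the ``saddle-point refinement'' you track is only the $-\tfrac12 q_\ell^2$ Taylor term (present already for independent entries); the refinement you are missing is the shift of the saddle point away from the column marginals, and it is an order of magnitude larger.
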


\medskip
\noindent
\emph{Proof of \Prop~\ref{Prop_firstMoment}.}
Let $\Delta=100k2^k\ln k$ and let $\delta,\delta'$ be as in \Lem~\ref{Lemma_first}.
Using the alternative description of the distribution $\vec D$ from the proof of \Lem~\ref{Lemma_tame} and applying Azuma's inequality,
one can easily verify that \whp
	\begin{eqnarray}\label{eqProp_firstMoment1}
	\sum_{x\in V}\vecone_{(d_x-d_{\neg x})^2\leq\Delta}\cdot(d_x-d_{\neg x})^2\geq
		(1-k^{-12})\sum_{x\in V}(d_x-d_{\neg x})^2.
	\end{eqnarray}
Therefore, \Lem~\ref{Lemma_tame} entails that \whp
	\begin{eqnarray*}
	\frac1{km}\sum_{x\in V}\bc{p(x)-\frac12}^2&=&\frac{1+O_k(k^{-12})}{km}\sum_{x\in V}\frac{(d_x-d_{\neg x})^2}{4^{k+1}}
		=\frac{1+O_k(k^{-12})}{4^{k+1}}.
	\end{eqnarray*}
Consequently, \whp\ we have
	\begin{eqnarray}\label{eqProp_firstMoment2}
	\delta&=&O_k(k^{-12}).
	\end{eqnarray}
Similarly, invoking~(\ref{eqProp_firstMoment1}) once more, we see that \whp
\[
	-\Sigma = \frac1{km}\sum_{x\in V}(2p(x)-1)(d_x-d_{\neg x})
		=\frac1{2^{k}km}\sum_{x\in V}\vecone_{(d_x-d_{\neg x})^2\leq\Delta}\cdot(d_x-d_{\neg x})^2
		=\frac{1+O_k(k^{-12})}{2^{k}},
\]
whence
	\begin{eqnarray}\label{eqProp_firstMoment3}
	\delta'&=&O_k(k^{-12})
	\end{eqnarray}
\whp\
Thus,
\Prop~\ref{Prop_firstMoment} is a direct consequence of \Lem s~\ref{Lemma_p} and~\ref{Lemma_first}
	and~(\ref{eqProp_firstMoment2}), (\ref{eqProp_firstMoment3}).
\qed

\subsection{Proof of \Lem~\ref{Lemma_first}}\label{Sec_first}

We begin by determining the number $\sigma\in\cbc{0,1}^V$ with $p$-marginals.
The following is an easy consequence of \Lem~\ref{XLemma_entropy}.

\begin{corollary}\label{Cor_entropy}
\Whp\ for $\vec d$ chosen from $\vec D$ we have
	$$\frac1n\ln\abs{\cH(p)}=\ln2-\frac2n\sum_{x\in V}\bc{p(x)-\frac12}^2+\tilde O(2^{-3k/2}).$$
\end{corollary}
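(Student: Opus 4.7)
The plan is to start from Lemma \ref{XLemma_entropy}, which already gives $\ln|\cH_p(\vec d)|\sim\sum_{x\in V}\chi(p(x))$ whp, and then just Taylor-expand the entropy $\chi$ around $z=\tfrac12$. A direct computation yields $\chi(\tfrac12)=\ln 2$, $\chi'(\tfrac12)=\ln\bc{(1-z)/z}|_{z=1/2}=0$ and $\chi''(\tfrac12)=-1/(z(1-z))|_{z=1/2}=-4$, so
\[
	\chi(z)\;=\;\ln2-2\bc{z-\tfrac12}^2+O\bc{(z-\tfrac12)^4}
\]
uniformly on a fixed neighbourhood of $\tfrac12$. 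Summing this over $x\in V$ and dividing by $n$ gives
\[
	\frac1n\sum_{x\in V}\chi(p(x))\;=\;\ln 2-\frac2n\sum_{x\in V}\bc{p(x)-\tfrac12}^2+\frac{1}{n}\sum_{x\in V}O\!\bc{(p(x)-\tfrac12)^4},
\]
so combining with Lemma \ref{XLemma_entropy} yields the claimed identity provided the error sum is $\tilde O(2^{-3k/2})$.

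The heart of the proof is therefore the bound on the fourth-moment error. By the definition~(\ref{XeqpBP}) of $p=p_{\mathrm{BP}}$ we have $p(x)-\tfrac12=(d_x-d_{\neg x})/2^{k+1}$ when $(d_x-d_{\neg x})^2\le 100k2^k\ln k$ and $p(x)=\tfrac12$ otherwise. Hence
\[
	\sum_{x\in V}\bc{p(x)-\tfrac12}^4\;\le\;\frac1{2^{4k+4}}\sum_{x\in V}\vecone_{(d_x-d_{\neg x})^2\leq 100k2^k\ln k}\cdot(d_x-d_{\neg x})^4.
\]
On the indicator range we have $(d_x-d_{\neg x})^4\le 100k2^k\ln k\cdot(d_x-d_{\neg x})^2$. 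Using the first assertion of Lemma \ref{Lemma_tame}, which gives $\sum_{x\in V}(d_x-d_{\neg x})^2\sim km$ whp, together with $r=\Theta(2^k)$, this yields whp
\[
	\frac1n\sum_{x\in V}\bc{p(x)-\tfrac12}^4\;\le\;\frac{\tilde O(k 2^k)}{2^{4k}}\cdot\frac{km}{n}\;=\;\tilde O(k^2\cdot 2^{-3k})\;=\;\tilde O(2^{-3k}),
\]
which is comfortably absorbed into the $\tilde O(2^{-3k/2})$ error term.

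I expect no real obstacle here: the statement is a one-term Taylor expansion of $\chi$ combined with the second-moment control on $d_x-d_{\neg x}$ already supplied by Lemma \ref{Lemma_tame}. The one point requiring a little care is that $p$ is defined piecewise, with a hard cutoff at $|d_x-d_{\neg x}|=10\sqrt{k2^k\ln k}$, so the Taylor expansion is not literally applied for the (few) variables lying past the cutoff; for those, however, $p(x)=\tfrac12$ exactly, so $\chi(p(x))=\ln 2$ and $(p(x)-\tfrac12)^2=0$, and the identity holds trivially for them. Putting everything together gives the claim.
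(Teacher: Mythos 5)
Your proposal matches the paper's approach: the paper's own proof is exactly ``This follows from Lemma~\ref{XLemma_entropy} by Taylor expanding $\chi(\cdot)$ around $\frac12$,'' and you supply the details that the paper glosses over. Your argument is correct, including the observation that the piecewise cutoff in $p_{\mathrm{BP}}$ is harmless because past the cutoff $p(x)=\tfrac12$ exactly. One small arithmetic slip: in the fourth-moment bound you have $\frac{\tilde O(k2^k)}{2^{4k}}\cdot\frac{km}{n}=\frac{\tilde O(k2^k)}{2^{4k}}\cdot\Theta(k2^k)=\tilde O(2^{-2k})$, not $\tilde O(2^{-3k})$; this does not affect the conclusion since $\tilde O(2^{-2k})$ is still absorbed by $\tilde O(2^{-3k/2})$.
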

\begin{proof}
This follows from \Lem~\ref{XLemma_entropy} by Taylor expanding $\chi(\cdot)$ around $\frac12$.
\qed\end{proof}

We need to compute the probability that an assignment $\sigma\in\cbc{0,1}^V$ with $p$-marginals is a $p$-judicious satisfying assignment.
To this end, we introduce a new probability space $(\hat\Omega,\hat\pr)$.
Let $\vec q=(q_{\ell,j})_{\ell\in\cL,j\in\brk k}$ be a matrix with entries in $\brk{0,1}$.
The elements of our new probability space $\hat\Omega$ are all $0/1$ vectors
	$$(\hat\SIGMA_{ij}(\ell))_{\ell\in\cL,i\in\brk{m(\ell)},j\in\brk k}.$$
The distribution $\hat\pr$ is such that the entries $\hat\SIGMA_{ij}(\ell)$ are mutually independent, and 
	for each $\ell=(\ell_1,\ldots,\ell_k)\in\cL$, $i\in\brk{m(\ell)}$, $j\in\brk k$ we let
		$\hat\SIGMA_{ij}(\ell)=\Be(q_{\ell,j})$
	be a Bernoulli random variable.
(It may be helpful to think of $\hat\SIGMA_{ij}(\ell)$ as the truth value of the $j$th literal of the $i$th clause
of type $\ell$ in a random formula $\PHIdm$.)

For $\ell=(\ell_1,\ldots,\ell_k)\in\cL$  let $S_i(\ell)$ be the event that
	$$\max_{j\in\brk k}\hat\SIGMA_{ij}(\ell)=1$$
(the intuition is that this corresponds to the event that the clause $i$ of type $\ell$ is satisfied).
Let $S(\ell)=\bigcap_{i\in\brk{m(\ell)}}S_i(\ell)$ and $S=\bigcap_{\ell\in\cL}S(\ell)$.
Moreover, for $j\in\brk k$ let $B(\ell,j)$ be the event that
	$$\frac1{m(\ell)}\sum_{i\in\brk{m(\ell)}}\hat\SIGMA_{ij}(\ell)\doteq p(t).$$
Let $B(\ell)=\bigcap_{j=1}^kB(\ell,j)$ and $B=\bigcap_{\ell\in\cL}B(\ell)$.
The connection between the probability space $\hat\Omega$ and \Lem~\ref{Lemma_first} is as follows.

\begin{lemma}\label{Lemma_switch}
Suppose that $\sigma\in\cbc{0,1}^V$ has $p$-marginals.
Let $\cS(\sigma)$ be the event that $\sigma$ is a 
satisfying assignment of $\PHIdm$
and let $\cB(\sigma)$ be the event that $\sigma$ is $\pd$-judicious.
Then
	$\pr\brk{\cS(\sigma)|\cB(\sigma)}=\hat\pr\brk{S|B}.$ 
\end{lemma}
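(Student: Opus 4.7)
The plan is to identify, via the clone-matching description of $\PHIdm$ given just before the lemma, the truth-value array $(\sigma(\PHI_{\vec d,\vec m,i,j}))$ of the real model with the array $(\hat\SIGMA_{ij}(\ell))$ of $(\hat\Omega,\hat\pr)$, and to prove that the conditional laws $\pr[\,\cdot\mid\cB(\sigma)]$ and $\hat\pr[\,\cdot\mid B]$ put this array into the \emph{same} distribution. Under the identification, the real-side event $\cS(\sigma)$ coincides with the event $S=\bigcap_{\ell,i}S_i(\ell)$ that every clause contains at least one true literal, so equality of the two conditional probabilities follows.

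For the real side, I would unpack the construction of $\PHIdm$: for each $\pd$-type $t\in\cT$, a uniformly random bijection matches the $km\pi(t)$ clones of literals with $\pd$-value $t$ to the $km\pi(t)$ clause slots requiring a type-$t$ literal, and these bijections are mutually independent across $t$. Since $\sigma$ has $p$-marginals, exactly $t\cdot km\pi(t)+O(1)$ of the type-$t$ clones are true under $\sigma$, so the subset of type-$t$ slots receiving a true clone is uniformly random of that cardinality. The set of type-$t$ slots partitions naturally into the $(\ell,j)$-buckets with $\ell_j=t$, each of size $m(\ell)$, and $\cB(\sigma)$ pins down the number of ``true'' positions in each bucket to $\ell_j m(\ell)+O(1)$. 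Invoking the elementary fact that a uniformly random subset of prescribed size, conditioned on its intersection sizes with a partition, is distributed as an independent family of uniformly random subsets of the prescribed sub-sizes, I conclude that under $\pr[\,\cdot\mid\cB(\sigma)]$ the array $(\sigma(\PHI_{\vec d,\vec m,i,j}))$ is, independently across $(\ell,j)$, a uniformly random $0/1$ vector of length $m(\ell)$ with sum $\doteq \ell_j m(\ell)$.

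On the independent side, mutual independence of the $\hat\SIGMA_{ij}(\ell)$ makes $B=\bigcap_{\ell,j}B(\ell,j)$ a product event, and conditioning an i.i.d.\ Bernoulli vector on its sum produces the uniform law over binary vectors with that sum, for any parameter in $(0,1)$. Hence $\hat\pr[\,\cdot\mid B]$ is exactly the product-of-uniforms ensemble from the previous paragraph, and combining with the identification of $S$ and $\cS(\sigma)$ yields $\pr[\cS(\sigma)\mid\cB(\sigma)]=\hat\pr[S\mid B]$. The main bookkeeping subtlety is reconciling the $O(1)$ slack in the two row-sum specifications so that both sides prescribe the same integer ensemble; this is automatic because the $p$-marginal property of $\sigma$ forces $\sum_{(\ell,j):\ell_j=t}\ell_j m(\ell)=t\cdot km\pi(t)+O(1)$, matching the number of true type-$t$ clones available for distribution among the buckets, so that the partition sums on the real side and the block sum constraints on the $\hat\Omega$ side specify the same uniform measure.
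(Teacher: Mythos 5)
Your proof is correct and follows the same route as the paper: you identify the truth-value array of $\PHIdm$ under $\sigma$ with the $\hat\Omega$ array via the clone-matching description, check that the events $\cS(\sigma),\cB(\sigma)$ correspond to $S,B$, and argue the two conditional laws agree. The one step the paper asserts without justification --- that conditionally on $\cB(\sigma)$ (resp.\ $B$) each side becomes an independent-across-$(\ell,j)$ product of uniform fixed-sum binary vectors, with the $O(1)$ bucket-sum specifications consistent between the two sides --- is precisely what you work out, so you are filling in the detail the paper leaves implicit rather than taking a genuinely different path.
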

\begin{proof}
Note that in $\pr\brk{\cS(\sigma)|\cB(\sigma)}$ probability is taken over the choice of the
random formula $\PHIdm$, while in $\hat\pr\brk{S|B}$ probability
is taken over $\hat\SIGMA$ chosen from the above distribution.
Thus, we need to relate the two probability spaces.

For any $\vec d$-compatible formula $\Phi\in\Gamma_{\vec m}$ we can define a map
	$$\sigma\in\cbc{0,1}^V\mapsto\hat\sigma\big|_{\Phi}=\bc{\sigma_{ij}(\ell)\big|_{\Phi}}_{\ell\in\cL,i\in\brk{m(\ell)},j\in\brk k,}$$
by letting $\hat\sigma_{ij}(\ell)|_{\Phi}$ be the truth value of the $j$th literal of the $i$th clause of type $\ell$ in $\Phi$ under $\sigma$.
In other words, $\hat\sigma|_{\Phi}$ is the string of truth values that we get by ``plugging the assignment $\sigma$ into $\Phi$''.
Then $\sigma$ is judicious iff $\hat\sigma_{\Phi}\in B$.
Furthermore, $\sigma$ is satisfying iff $\hat\sigma_{\Phi}\in S$.
Finally, if $\sigma$ has $p$-marginals, then $\hat\sigma|_{\PHIdm}$ becomes a random vector.
Given $\cB(\sigma)$ its distribution is identical to the conditional distribution of $\hat\SIGMA$ given $B$.
\qed\end{proof}

\begin{corollary}\label{Cor_switch}
With the notation of \Lem~\ref{Lemma_switch} we have
	$\pr\brk{\cS(\sigma)\cap\cB(\sigma)}=\hat\pr\brk{S|B}\exp(o(n))$.
Moreover, for any $\sigma$ with $p$-marginals we have $\pr\brk{\cB(\sigma)}=\Theta\bc{n^{(\abs\cT-k\abs\cL)/2}}$.
\end{corollary}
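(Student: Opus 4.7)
\emph{Proof proposal.} The first assertion reduces to the second. Indeed, writing
\[
\pr[\cS(\sigma) \cap \cB(\sigma)] = \pr[\cS(\sigma) \mid \cB(\sigma)] \cdot \pr[\cB(\sigma)]
\]
and invoking \Lem~\ref{Lemma_switch} yields $\pr[\cS(\sigma) \cap \cB(\sigma)] = \hat\pr[S \mid B] \cdot \pr[\cB(\sigma)]$, so once $\pr[\cB(\sigma)] = n^{O(1)}$ is established this factor is absorbed into $\exp(o(n))$ and the first claim follows.

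To establish the second assertion, the plan is to represent $\PHIdm$ via a configuration model and express $\cB(\sigma)$ as a product of independent multivariate hypergeometric events, then to apply a local central limit theorem. Concretely, for each literal $l \in L$ create $d_l$ clones and group them by $\pd$-type into pools, the pool of type $t \in \cT$ containing $N_t = \sum_{l : \pd(l) = t} d_l$ clones. The assumption that $\sigma$ has $\pd$-marginals means that in each pool exactly $t \cdot N_t + O(1)$ clones are ``true'' under $\sigma$. The slots demanding a type-$t$ clone are the positions $(\ell, j)$ with $\ell_j = t$, each contributing $m(\ell)$ slots; let $h(t)$ denote the number of such positions, so $\sum_{t \in \cT} h(t) = k|\cL|$. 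Generating $\PHIdm$ amounts to matching each pool's clones uniformly at random to its slots, independently across pools. Under this description $\cB(\sigma)$ decomposes as $\bigcap_{t \in \cT} B_t$ with the $B_t$ mutually independent, where $B_t$ asserts that for every $(\ell, j)$ with $\ell_j = t$ the number of true type-$t$ clones landing in the $m(\ell)$ slots at $(\ell, j)$ equals $\ell_j \cdot m(\ell) + O(1)$.

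For a fixed $t$, the joint occupancy vector of true clones across the positions $\{(\ell, j) : \ell_j = t\}$ is multivariate hypergeometric and lives on an $(h(t) - 1)$-dimensional lattice (the coordinates sum to the fixed pool truth-count). A multivariate local limit theorem --- obtainable either from \Lem~\ref{lem:locallimit} by encoding each clone as an independent Bernoulli and conditioning on the pool totals, or by a direct Stirling-formula computation of the hypergeometric point probability --- then yields $\pr[B_t] = \Theta(n^{-(h(t)-1)/2})$. Multiplying over the $|\cT|$ pools produces
\[
\pr[\cB(\sigma)] = \Theta\bigl(n^{-(k|\cL| - |\cT|)/2}\bigr) = \Theta\bigl(n^{(|\cT| - k|\cL|)/2}\bigr),
\]
as claimed.

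The main technical obstacle is verifying the bulk hypotheses that make the local limit theorem applicable with the stated $\Theta(\cdot)$ constants. One needs $N_t = \Theta(n)$ for every $t \in \cT$ (provided by \Lem~\ref{Lemma_p} combined with \Lem~\ref{Lemma_degs}), $m(\ell) = \Theta(n)$ for every $\ell \in \cL$ (provided by feasibility of $\vec m$ together with \Lem~\ref{Lemma_gammaell}), and $t \in (0,1)$ bounded away from the endpoints for each $t \in \cT$ (which holds by the definition~(\ref{XeqpBP}) of $p_{\mathrm{BP}}$, whose values cluster near $\tfrac12$); these together guarantee a non-singular limiting covariance and ensure that the target point $\ell_j \cdot m(\ell) + O(1)$ sits $O(1)$ away from the mean, exactly where the local limit theorem is sharp. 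Since $|\cT| = O(1)$ as $n \to \infty$, aggregating the $O(1)$ slack across pools causes no quantitative issue.
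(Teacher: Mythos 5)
Your proposal is correct and takes essentially the same route as the paper: the paper's one-line proof invokes ``a repeated application of the local limit theorem,'' and your write-up simply spells out what that means — independent pool matchings in the configuration model, a multivariate hypergeometric occupancy vector of dimension $h(t)-1$ per pool, a local CLT yielding $\Theta(n^{-(h(t)-1)/2})$ per pool, and the product $\Theta(n^{-(k|\cL|-|\cT|)/2})$ — together with the observation that the first assertion then follows from \Lem~\ref{Lemma_switch} by conditioning. Your added care about verifying the bulk hypotheses ($N_t=\Theta(n)$ via \Lem~\ref{Lemma_p}, $m(\ell)=\Theta(n)$ via feasibility, $t$ bounded away from $0,1$) is a welcome clarification of material the paper leaves implicit.
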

\begin{proof}
Since the total number $\abs\cL$ of clause types is bounded,
the assertion follows from  a repeated application of \Lem~\ref{lem:locallimit} (the local limit theorem).
\qed\end{proof}

\noindent
Thus, we have reduced the proof of \Lem~\ref{Lemma_first} to the computation of $\hat\pr\brk{S|B}$.
The benefit of the probability space $\hat\Omega$ is that $S,B$ can be decomposed easily
into independent events.
Indeed, for any $\ell\in\cL$ and any $i\in\brk{m(\ell)}$ we have
	$$\hat\pr\brk{S_i(\ell)}=1-\prod_{j=1}^k1-q_{\ell,j},$$
because the $\hat\SIGMA_{ij}(\ell)$ are independent.
Moreover, 
due to independence and because $\vec m$ is feasible,
	$$\frac1n\ln\hat\pr\brk{S(\ell)}=\frac1n\sum_{i\in\brk{m(\ell)}}\ln\hat\pr\brk{S_i(\ell)}\sim\gamma_\ell\ln\brk{1-\prod_{j=1}^k1-q_{\ell,j}}$$
and thus
	\begin{equation}\label{eqProbSat}
	\frac1n\ln\hat\pr\brk{S}\sim\sum_{\ell\in\cL}\gamma_\ell\ln\brk{1-\prod_{j=1}^k1-q_{\ell,j}}.
	\end{equation}
Similarly,
	\begin{equation}\label{eqB}
	\frac1n\ln\hat\pr\brk{B}=\frac1n\sum_{\ell\in\cL}\ln\hat\pr\brk{B(\ell)}=\frac1n\sum_{\ell\in\cL}\sum_{j=1}^k\ln\hat\pr\brk{B(\ell,j)}.
	\end{equation}

A further benefit of the space $\hat\Omega$ is that we are free to choose the vector $\vec q$ as we please
	(subject only to the condition that $\hat\pr\brk B>0$).
To facilitate the computation of $\hat\pr\brk{S|B}$, we are going to choose $\vec q$ such that
	\begin{equation}\label{eqqgoal}
	\hat\pr\brk{B|S}=\exp(o(n)).
	\end{equation}
For if~(\ref{eqqgoal}) holds, then
	$$\hat\pr\brk{S|B}=\frac{\hat\pr\brk S}{\hat\pr\brk B}\cdot\exp(o(n)),$$
where $\hat\pr\brk S$, $\hat\pr\brk B$ can be calculated rather easily via~(\ref{eqProbSat}) and~(\ref{eqB}).
Thus, as a next step we need to find $\vec q$ such that~(\ref{eqqgoal}) is true.
To this end, we define
	\begin{equation}\label{eqtheExpecationWorksOut}
	\hat q_{\ell,j}=\hat\Erw\brk{\hat\SIGMA_{ij}(\ell)|S_i(\ell)}=\frac{q_{\ell,j}}{1-\prod_{l=1}^k1-q_{\ell,l}}\qquad\qquad(\ell\in\cL,j\in\brk k).
	\end{equation}

\begin{lemma}\label{Lemma_fixedpoint}
There exists $\vec q$ such that
	$\hat q_{\ell,j}=\ell_j$ for all $\ell=(\ell_1,\ldots,\ell_k)\in\cL,j\in\brk k$.
Furthermore, this $\vec q$ satisfies
	\begin{equation}\label{eqqofchoice}
	q_{\ell,j}=\ell_j-2^{-k-1}+\tilde O(2^{-3k/2}).
	\end{equation}
\end{lemma}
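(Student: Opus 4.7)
The equation $\hat q_{\ell,j}=\ell_j$ rearranges to
\begin{equation*}
q_{\ell,j}=\ell_j\bc{1-\prod_{l=1}^k(1-q_{\ell,l})},\qquad j\in\brk k,
\end{equation*}
and crucially the system for different $\ell$'s is decoupled: the $k$ unknowns $q_{\ell,1},\ldots,q_{\ell,k}$ involve only the components of $\ell$ itself. So the plan is to fix $\ell=(\ell_1,\ldots,\ell_k)\in\cL$ and find a fixed point of the map $G:[0,1]^k\to[0,1]^k$ with coordinates $G_j(q)=\ell_j\bc{1-\prod_{l=1}^k(1-q_l)}$.

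The first step will be to control $DG$. The partial derivatives are $\partial G_j/\partial q_i=\ell_j\prod_{l\neq i}(1-q_l)$, and since the construction~(\ref{XeqpBP}) of $p_{\mathrm{BP}}$ forces $\ell_l=\frac12+\tilde O(2^{-k/2})$ (so that $\prod_{l\neq i}(1-q_l)=\tilde O(2^{-k})$ whenever $q\in[0.4,0.6]^k$), every entry of $DG$ on this box is $O(2^{-k})$. Hence the operator norm of $DG$ is $O(k2^{-k})<1/2$ for $k$ large, so $G$ is a strict contraction on $[0.4,0.6]^k$ and maps this box into itself. Banach's fixed point theorem, or equivalently \Lem~\ref{Lemma_inverseFunction} applied after precomposing with a linear map that turns $DG$ at $\ell$ into the identity, then yields a unique fixed point $q^*=(q_{\ell,1}^*,\ldots,q_{\ell,k}^*)$ near the starting point $q^{(0)}:=\ell$.

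For the quantitative estimate~(\ref{eqqofchoice}), I would just iterate once from $q^{(0)}=\ell$. Writing $\ell_l=\frac12+\eta_l$ with $\eta_l=\tilde O(2^{-k/2})$, a direct expansion gives
\begin{equation*}
\prod_{l=1}^k(1-\ell_l)=2^{-k}\prod_{l=1}^k(1-2\eta_l)=2^{-k}+\tilde O(2^{-3k/2}),
\end{equation*}
so $G(q^{(0)})_j=\ell_j-2^{-k-1}+\tilde O(2^{-3k/2})$. In particular $\norm{G(q^{(0)})-q^{(0)}}_\infty=O(2^{-k})$, and the contraction bound on $DG$ then gives
\begin{equation*}
\norm{q^*-G(q^{(0)})}_\infty\leq O(k2^{-k})\cdot O(2^{-k})=O(k4^{-k})=o(2^{-3k/2}),
\end{equation*}
which combined with the previous display yields $q_{\ell,j}^*=\ell_j-2^{-k-1}+\tilde O(2^{-3k/2})$, as desired.

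The only real obstacle is bookkeeping: I have to verify that the iterate stays inside $[0.4,0.6]^k$ so that the contraction estimate on $DG$ applies throughout, and that the $\tilde O(2^{-3k/2})$ error from expanding $\prod(1-2\eta_l)$ really is uniform over the allowed range of $\ell\in\cL$. Both follow immediately from the fact that $p_{\mathrm{BP}}$ takes values in $[\frac12-\tilde O(2^{-k/2}),\frac12+\tilde O(2^{-k/2})]$, so I expect no serious difficulty beyond the calculations already sketched.
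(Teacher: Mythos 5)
Your proposal is correct in substance and arrives at the same conclusion, but it does take a somewhat different route from the paper. The paper views the problem as inverting the map $\vec q\mapsto(\hat q_{\ell,j})_{\ell,j}$ directly, computes $D\hat q$, observes it is close to $\id$, and invokes \Lem~\ref{Lemma_inverseFunction}; the quantitative estimate~(\ref{eqqofchoice}) is then asserted without detail. You instead reformulate $\hat q_{\ell,j}=\ell_j$ as a fixed-point equation $q=G(q)$ with $G_j(q)=\ell_j\bc{1-\prod_{l}(1-q_l)}$, which cleanly decouples the $\ell$'s and gives a Jacobian $\partial G_j/\partial q_i=\ell_j\prod_{l\neq i}(1-q_l)$ that is noticeably simpler to control than $D\hat q$. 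Running one step of the iteration from $q^{(0)}=\ell$ and then bounding the tail via the contraction constant produces the estimate~(\ref{eqqofchoice}) explicitly — a nice improvement in transparency over the paper's terse ``follows from the inverse function theorem.''

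One inaccuracy you should fix: the claim that $\prod_{l\neq i}(1-q_l)=\tilde O(2^{-k})$ \emph{for all} $q\in[0.4,0.6]^k$ is false — on that box the product can be as large as $0.6^{k-1}=2^{-0.737(k-1)}$, which vastly exceeds $2^{-k}$. On $[0.4,0.6]^k$ you only get $\|DG\|_\infty\leq k\cdot 0.6^{k-1}=\exp(-\Omega(k))<1$, which is enough for the contraction/existence part of the argument, but not for the $O(k4^{-k})$ tail bound. The resolution, which you hint at but should make explicit, is a two-step argument: first use the coarse contraction bound on a generous box to conclude $\|q^*-q^{(0)}\|_\infty=O(2^{-k})$; this forces every coordinate of $q^*$ and of the segment joining $q^{(0)}$ to $q^*$ to lie in $\frac12+\tilde O(2^{-k/2})$; and only \emph{on this thin slab} does $\prod_{l\neq i}(1-q_l)=2^{-(k-1)}(1+\tilde O(2^{-k/2}))^{k-1}=\tilde O(2^{-k})$ hold, which then licenses the sharper estimate $\|q^*-G(q^{(0)})\|_\infty\leq\tilde O(k2^{-k})\cdot O(2^{-k})=o(2^{-3k/2})$. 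With that correction the proof is complete.
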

\begin{proof}
For any $\ell,j$ we have
	\begin{eqnarray*}
	\frac{\partial\hat q_{\ell,j}}{\partial q_{\ell,j}}&=&\frac{1-(1-2q_{\ell,j})\prod_{l\neq j}1-q_{\ell,l}}{\bc{1-\prod_l1-q_{\ell,l}}^2}
				,\\
	\frac{\partial\hat q_{\ell,j}}{\partial q_{\ell,h}}&=&-\frac{q_{\ell,j}\prod_{l\neq h}1-q_{\ell,l}}{\bc{1-\prod_l1-q_{\ell,l}}^2}
		\qquad\qquad\qquad(h\neq j).
	\end{eqnarray*}
Hence, for $k$ large enough and $0.01<q_j<0.99$ for all $j$, the $k\times k$ matrix $D\hat q$ is close to $\id$.
In particular, this is true for $q_j$ close to $1/2$.
Therefore, the assertion follows from the inverse function theorem
	(\Lem~\ref{Lemma_inverseFunction}). 
\qed\end{proof}

\begin{corollary}\label{Cor_fixedpoint}
With $\vec q$ from \Lem~\ref{Lemma_fixedpoint} we have
$\hat\pr\brk{B|S}=\Theta(n^{-k\abs\cL/2})=\exp(o(n))$ and thus~(\ref{eqqgoal}).
\end{corollary}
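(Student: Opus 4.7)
\emph{Proof plan.} The structure of the problem allows an elegant reduction. Observe that $S=\bigcap_{\ell,i}S_i(\ell)$ is an intersection of events, each depending on a disjoint coordinate block $(\hat\SIGMA_{ij}(\ell))_{j\in\brk k}$. Since under $\hat\pr$ all coordinates are mutually independent, conditioning on $S$ preserves the independence of these blocks. Hence for each $\ell\in\cL$ the $k$-dimensional vectors $X_i^{(\ell)}:=(\hat\SIGMA_{ij}(\ell))_{j\in\brk k}$, $i\in\brk{m(\ell)}$, become i.i.d.\ under $\hat\pr\brk{\,\cdot\,|\,S}$, and the collections for distinct $\ell$ remain independent. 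Since $B=\bigcap_\ell B(\ell)$ and $B(\ell)$ depends only on the $X_i^{(\ell)}$, we obtain
\[
\hat\pr\brk{B\,|\,S}=\prod_{\ell\in\cL}\hat\pr\brk{B(\ell)\,|\,S(\ell)}.
\]

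For a fixed $\ell=(\ell_1,\ldots,\ell_k)$, the crucial point is that by the choice of $\vec q$ from \Lem~\ref{Lemma_fixedpoint} and the definition~(\ref{eqtheExpecationWorksOut}), the conditional mean $\hat\Erw\brk{X_{ij}^{(\ell)}\,|\,S_i(\ell)}=\hat q_{\ell,j}$ equals exactly $\ell_j$. Thus the event $B(\ell)$ pins the sum $\sum_{i\in\brk{m(\ell)}}X_i^{(\ell)}$ to an $O(1)$-neighborhood of its mean $m(\ell)\cdot(\ell_1,\ldots,\ell_k)$. Applying the $k$-dimensional analogue of \Lem~\ref{lem:locallimit} for sums of i.i.d.\ lattice-valued random vectors therefore yields
\[
\hat\pr\brk{B(\ell)\,|\,S(\ell)}=\Theta\bc{m(\ell)^{-k/2}}=\Theta(n^{-k/2}),
\]
provided the conditional covariance matrix $\mathrm{Cov}\bc{X_i^{(\ell)}\,|\,S_i(\ell)}$ is non-singular. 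To verify this, note that by~(\ref{eqqofchoice}) each $q_{\ell,j}$ lies within $\tilde O(2^{-k/2})$ of $\ell_j$, which by~(\ref{XeqpBP}) is bounded away from $0$ and $1$; so the unconditional diagonal covariance has determinant bounded below by a positive constant depending only on $k$. Since $\hat\pr\brk{S_i(\ell)}=1-\prod_j(1-q_{\ell,j})=1-O(2^{-k})$, the conditioning perturbs the covariance by $O(2^{-k})$, which leaves it non-singular for $k$ large enough.

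Multiplying over the finite index set $\cL$ (bounded in cardinality as $n\to\infty$) gives $\hat\pr\brk{B\,|\,S}=\Theta(n^{-k\abs\cL/2})=\exp(o(n))$. Combined with $\hat\pr\brk{S\,|\,B}=\hat\pr\brk{S}\cdot\hat\pr\brk{B\,|\,S}/\hat\pr\brk{B}$, this yields the first assertion of~(\ref{eqqgoal}); the second follows since $k$ and $\abs\cL$ are $O(1)$ as $n\to\infty$. The main obstacle here is invoking the $k$-dimensional local limit theorem with the correct constant, which requires verifying aperiodicity and non-degeneracy of the conditional law; both, however, follow routinely from the fact that the coordinates $q_{\ell,j}$ are bounded away from $0$ and $1$ and the conditioning event $S_i(\ell)$ has probability close to $1$.
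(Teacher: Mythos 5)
Your proposal is correct and follows essentially the same route as the paper's proof: both hinge on the fact that the fixed-point equation defining $\vec q$ forces $\hat\Erw\brk{\hat\SIGMA_{ij}(\ell)\mid S_i(\ell)}=\ell_j$, and then invoke a local limit theorem to convert ``conditioned at the mean'' into the $\Theta(n^{-1/2})$-per-constraint estimate. You make explicit the block-independence across $\ell$ and the use of a $k$-dimensional local CLT (with the non-degeneracy check), whereas the paper simply writes ``a repeated application of \Lem~\ref{lem:locallimit}''; this is a more careful phrasing of the same argument rather than a genuinely different one.
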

\begin{proof}
Equation~(\ref{eqtheExpecationWorksOut}) shows that
for the vector $\vec q$ from \Lem~\ref{Lemma_fixedpoint} we have $\hat\Erw\brk{\hat\SIGMA_{ij}(\ell)|S_i(\ell)}=\ell_j$ for all $\ell,j,i$.
Therefore, a repeated application of \Lem~\ref{lem:locallimit} yields
	$\hat\pr\brk{B|S}=\Theta(n^{-k\abs\cL/2})=\exp(o(n))$.
\qed\end{proof}

\medskip\noindent
{\bf\em From this point on we fix $\vec q$ as in \Lem~\ref{Lemma_probS}.}

\begin{lemma}\label{Lemma_probS}
Letting
	\begin{equation}\label{eqdefSigma}
	\Sigma=\frac1{km}\sum_{x\in V}(1-2\pd(x))(d_x-d_{\neg x}),
	\end{equation}
we have
	$
	\frac1n\ln\hat\pr\brk{S}
		=-\ln2+2^{-k}\brk{\rho-\frac{\ln2}2-k\ln2}-k\Sigma\ln2+\tilde O(2^{-3k/2}).
	$
\end{lemma}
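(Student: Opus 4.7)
By~(\ref{eqProbSat}), the task reduces to evaluating
\[
\frac{1}{n}\ln\hat\pr[S]\sim\sum_{\ell\in\cL}\gamma_\ell\ln\Bigl[1-\prod_{j=1}^k(1-q_{\ell,j})\Bigr]
\]
to additive precision $\tilde O(2^{-3k/2})$. By Lemma~\ref{Lemma_fixedpoint}, $q_{\ell,j}=\ell_j-2^{-k-1}+\tilde O(2^{-3k/2})$, so $1-q_{\ell,j}=\tfrac12-\delta_{\ell,j}+2^{-k-1}+\tilde O(2^{-3k/2})$ with $\delta_{\ell,j}:=\ell_j-\tfrac12$. The truncation in the definition~(\ref{XeqpBP}) of $p_{\mathrm{BP}}$ forces $|\delta_{\ell,j}|=\tilde O(2^{-k/2})$.

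The first step is to expand the product. With $\alpha_{\ell,j}:=2\delta_{\ell,j}-2^{-k}=\tilde O(2^{-k/2})$ and $E_s^{(\ell)}$ the $s$th elementary symmetric function of $\alpha_{\ell,1},\ldots,\alpha_{\ell,k}$,
\[
\prod_{j=1}^k(1-q_{\ell,j})=2^{-k}\bigl[1-E_1^{(\ell)}+E_2^{(\ell)}+\tilde O(2^{-3k/2})\bigr].
\]
Symmetric polynomials of index $s\ge 3$ contribute only $\tilde O(2^{-3k/2})$, and the $\tilde O(2^{-3k/2})$ slack in each $q_{\ell,j}$ perturbs the product by at most $\tilde O(k\cdot 2^{-k}\cdot 2^{-3k/2})=\tilde O(2^{-5k/2})$. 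Composing with $\ln(1-x)=-x-\tfrac12 x^2+O(x^3)$ at $x=\tilde O(2^{-k})$,
\[
\ln\Bigl[1-\prod_{j=1}^k(1-q_{\ell,j})\Bigr]=-2^{-k}\bigl[1-E_1^{(\ell)}+E_2^{(\ell)}\bigr]-\tfrac12 2^{-2k}+\tilde O(2^{-5k/2}),
\]
the cubic log term contributing only $O(2^{-3k})$.

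The second step averages over $\ell$. Since the types of literals in a random clause of $\PHId$ are asymptotically i.i.d.\ from $\pi$ (as in the proof of Lemma~\ref{Lemma_gammaell}), we have $\gamma_\ell\sim r\prod_{j=1}^k\pi(\ell_j)$ and the coordinates $\ell_j$ act as $k$ independent draws from $\pi$. Using $p(\neg x)=1-p(x)$,
\[
\sum_{t\in\cT}\pi(t)(2t-1)=\frac{1}{km}\sum_{x\in V}(d_x-d_{\neg x})(2p(x)-1)=-\Sigma,
\]
whence $\sum_\ell\gamma_\ell E_1^{(\ell)}=-rk(\Sigma+2^{-k})$ and $\sum_\ell\gamma_\ell E_2^{(\ell)}=r\binom{k}{2}(\Sigma+2^{-k})^2$. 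The estimate $|\Sigma|=O(2^{-k})$, immediate from the truncation in~(\ref{XeqpBP}) together with Lemma~\ref{Lemma_tame}, makes the $E_2$ contribution only $\tilde O(2^{-k})$, so after the outer $2^{-k}$ factor it falls into the error. Substituting $r=2^k\ln 2-\rho$ into
\[
-r\cdot 2^{-k}+2^{-k}\sum_\ell\gamma_\ell E_1^{(\ell)}-\tfrac12 r\cdot 2^{-2k}
\]
yields $-\ln 2+2^{-k}[\rho-\tfrac12\ln 2-k\ln 2]-k\Sigma\ln 2$, matching the claim.

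The principal technical challenge is tight error accounting rather than any individual computation. Because the leading per-clause contribution is of order $2^{-k}$ while $\sum_\ell\gamma_\ell=r\sim 2^k\ln 2$, any uncontrolled term of size exceeding $\tilde O(2^{-5k/2})$ per summand would pollute the $\tilde O(2^{-3k/2})$ remainder. This forces us to retain both $E_2^{(\ell)}$ and the quadratic term in $\ln(1-x)$, and to verify that the Lemma~\ref{Lemma_fixedpoint} slack, cubic terms in both expansions, and secondary cross-terms such as $2^{-k}\cdot k\rho\Sigma$ are all absorbed into $\tilde O(2^{-3k/2})$ after summation.
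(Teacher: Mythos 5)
Your proposal is correct and follows essentially the same path as the paper's proof: start from~(\ref{eqProbSat}), insert the value $q_{\ell,j}=\ell_j-2^{-k-1}+\tilde O(2^{-3k/2})$ from Lemma~\ref{Lemma_fixedpoint}, expand the logarithm to second order, average over $\ell$ using the product structure $\gamma_\ell\sim r\prod_j\pi(\ell_j)$, relate $\sum_t\pi(t)(2t-1)$ to $-\Sigma$, exploit $|\Sigma|=O(2^{-k})$, and substitute $r=2^k\ln 2-\rho$. The only difference is presentational: you expand $\prod_j(1-\alpha_{\ell,j})$ into elementary symmetric polynomials and then average, whereas the paper first factors $\sum_\ell\prod_j\pi(\ell_j)(1-q_{\ell,j})$ into the $k$-th power $\bigl[\sum_t\pi(t)(1-t+2^{-k-1})\bigr]^k$ and then expands binomially; these are dual bookkeeping conventions for the same calculation (your $\alpha_{\ell,j}$ also conveniently bundles the $2^{-k-1}$ shift and the $\delta_{\ell,j}$ deviation, which the paper tracks in two separate steps). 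The error accounting and the resulting leading terms match the paper's.
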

\begin{proof}
Starting from~(\ref{eqProbSat}), we obtain
	\begin{eqnarray}\label{eqWeNeedThis2}
	\frac1n\ln\hat\pr\brk{S}&\sim&\sum_{\ell\in\cL}\gamma_\ell\ln\brk{1-\prod_{j=1}^k1-q_{\ell,j}}\\
		&=&-\sum_{\ell\in\cL}\gamma_\ell\brk{\bc{\prod_{j=1}^k1-q_{\ell,j}}+\frac12\bc{\prod_{j=1}^k1-q_{\ell,j}}^2+\tilde O(8^{-k})},\nonumber
	\end{eqnarray}
where we used the approximation $\ln(1+x)=x-\frac12x^2+O(x^3)$. Thus, \Lem~\ref{Lemma_gammaell} yields
	\begin{eqnarray*}
	\frac1n\ln\hat\pr\brk{S}
		&=&-\sum_{\ell\in\cL}\gamma_\ell\brk{\bc{\prod_{j=1}^k1-q_{\ell,j}}+\frac12\cdot 4^{-k}+\tilde O(2^{-5k/2})}\\
		&=&-\frac{r}2\cdot 4^{-k}+\tilde O(2^{-3k/2})-r\sum_{\ell\in\cL}
				\prod_{j=1}^k\pi(\ell_j)(1-q_{\ell,j}).
	\end{eqnarray*}
Further, by~(\ref{eqqofchoice})
	\begin{eqnarray*}
	\frac1n\ln\hat\pr\brk{S}
		&=&-\frac{r}2\cdot 4^{-k}+\tilde O(2^{-3k/2})-r\sum_{\ell\in\cL}
				\prod_{j=1}^k\pi(\ell_j)(1-\ell_j+2^{-k-1})\\
		&=&-\frac{r}2\cdot 4^{-k}+\tilde O(2^{-3k/2})-r\brk{\sum_{t\in\cT}
				\pi( t)(1- t+2^{-k-1})}^k\\
		&=&-\frac{r}2\cdot 4^{-k}+\tilde O(2^{-3k/2})-r\brk{2^{-k-1}+\sum_{t\in\cT}\pi(t)(1-t)}^k\\
		&=&-\frac{r}2\cdot 4^{-k}-kr\cdot4^{-k}+\tilde O(2^{-3k/2})-r\brk{\sum_{t\in\cT}\pi(t)(1-t)}^k\\
		&=&-\frac{r}2\cdot 4^{-k}-kr\cdot4^{-k}+\tilde O(2^{-3k/2})-r\brk{
			\frac12-\sum_{t\in\cT}
				\pi(t)\bc{t-\frac12}}^k.
	\end{eqnarray*}
Now,
	\begin{eqnarray*}
	\sum_{ t\in\cT}\pi(t)\bc{t-\frac12}&=&
		\sum_{x\in V}\frac{d_x}{km}\bc{p(x)-\frac12}+\frac{d_{\neg x}}{km}\bc{\frac12-p(x)}\\
		&=&\frac1{km}\sum_{x\in V}\bc{d_x-d_{\neg x}}\bc{p(x)-\frac12}=-\Sigma/2.
	\end{eqnarray*}
Hence,
	\begin{eqnarray*}
	\frac1n\ln\hat\pr\brk{S}&=&-\frac{r}2\cdot 4^{-k}-kr\cdot4^{-k}-r\bcfr{1+\Sigma}2^k+\tilde O(2^{-3k/2})\\
		&=&-r\cdot 2^{-k}-\frac{r}2\cdot 4^{-k}-kr\cdot4^{-k}-kr\Sigma 2^{-k}+\tilde O(2^{-3k/2}).
	\end{eqnarray*}
Plugging in $r=2^k\ln2-\rho$, we get
	\begin{eqnarray*}
	\frac1n\ln\hat\pr\brk{S}
		&=&-\ln2+2^{-k}\brk{\rho-\frac{\ln2}2-k\ln2}-k\Sigma\ln2+\tilde O(2^{-3k/2}),
	\end{eqnarray*}
as claimed.
\qed\end{proof}

\begin{lemma}
We have
	$\frac1n\ln\hat\pr\brk B=-\frac{k\ln2}{2^{k+1}}+\tilde O(2^{-3k/2}).$
\end{lemma}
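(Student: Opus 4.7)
The plan is to decompose $\hat\pr\brk B$ into coordinatewise independent factors, apply the binomial local limit theorem to each, and Taylor expand around the chosen point.

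First, by the independence of the entries of $\hat\SIGMA$ and~(\ref{eqB}),
\[
  \frac1n\ln\hat\pr\brk B = \frac1n\sum_{\ell\in\cL}\sum_{j=1}^{k}\ln\hat\pr\brk{B(\ell,j)}.
\]
For each $\ell,j$ the event $B(\ell,j)$ constrains the sum $\sum_{i\leq m(\ell)}\hat\SIGMA_{ij}(\ell)$ of i.i.d.\ $\Be(q_{\ell,j})$ variables to lie within $O(1)$ of $m(\ell)\ell_j$. Applying \Lem~\ref{Lemma_binomial}, whose rate function is $\psi(p,q)=-D(q\|p)$ with $D$ the Kullback--Leibler divergence, therefore gives
\[
 \frac1n\ln\hat\pr\brk{B(\ell,j)} = -\gamma_\ell\, D(\ell_j\|q_{\ell,j}) + O(n^{-1}\ln n),
\]
where $m(\ell)/n = \gamma_\ell + O(n^{-1/3})$ by feasibility of $\vec m$.

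Next I would substitute the choice of $\vec q$ from \Lem~\ref{Lemma_fixedpoint}, namely $q_{\ell,j} = \ell_j - 2^{-k-1} + \tilde O(2^{-3k/2})$. The Taylor expansion $D(p\|p-\delta) = \frac{\delta^2}{2p(1-p)} + O(\delta^3)$ then yields, uniformly in $\ell,j$,
\[
 D(\ell_j\|q_{\ell,j}) = \frac{2^{-2k-2}}{2\,\ell_j(1-\ell_j)} + \tilde O(2^{-5k/2}).
\]
Using the identity $\sum_{\ell\in\cL}\gamma_\ell f(\ell_j) = r\sum_{t\in\cT}\pi(t)f(t)$, which is an immediate consequence of \Lem~\ref{Lemma_gammaell}, summing first over $\ell$ and then over $j$ gives
\[
\sum_{\ell,j}\gamma_\ell D(\ell_j\|q_{\ell,j}) = kr\cdot 2^{-2k-2}\cdot\sum_{t\in\cT}\frac{\pi(t)}{2t(1-t)} + \tilde O(2^{-3k/2}).
\]
Since the truncation in~(\ref{XeqpBP}) forces $|t-\tfrac12|\leq \tilde O(2^{-k/2})$ for every $t\in\cT$, we have $\frac1{2t(1-t)} = 2 + \tilde O(2^{-k})$ uniformly. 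Combined with $r = 2^k\ln2-\rho$, this collapses to $\frac{k\ln2}{2^{k+1}} + \tilde O(2^{-3k/2})$, giving the claim after negation.

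The main obstacle is keeping the error accounting tight: each subleading term in the Taylor expansion of $D$ and each deviation of $\ell_j(1-\ell_j)$ from $\tfrac14$ gets multiplied by $kr = \tilde O(2^k)$ after the double sum. Two facts save us. First, the cap in~(\ref{XeqpBP}) controls $(t-\tfrac12)^2$ by $\tilde O(2^{-k})$, so the correction $\frac1{2t(1-t)}-2$ contributes only $\tilde O(2^{-k})$ per clause type, i.e., $\tilde O(k\cdot 2^{-2k})$ after the multiplication by $r$. Second, the cubic remainder $O(\delta^3) = O(2^{-3k})$ and the cross-term from $(q_{\ell,j}-\ell_j)^2 = 2^{-2k-2} + \tilde O(2^{-5k/2})$ both stay within $\tilde O(2^{-3k/2})$ after the same multiplication. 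Hence the error budget is respected and the lemma follows.
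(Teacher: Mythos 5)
Your proof is correct and follows essentially the same route as the paper's: decompose via~(\ref{eqB}), apply the binomial rate function from \Lem~\ref{Lemma_binomial} with the $\vec q$ of \Lem~\ref{Lemma_fixedpoint}, Taylor-expand around $\tfrac12$, and sum using $\sum_{\ell,j}m(\ell)/n = kr$. The only cosmetic differences are that you phrase $\psi$ as a negative KL divergence with denominator $\ell_j(1-\ell_j)$ rather than the paper's $q_{\ell,j}(1-q_{\ell,j})$ (a distinction absorbed by the $\tilde O$), and you route the sum through $\gamma_\ell$ instead of $m(\ell)/n$ directly; neither changes the substance.
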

\begin{proof}
Due to~(\ref{eqB}) we just need to estimate $\ln\hat\pr\brk{B(\ell,j)}$ for any $\ell=(\ell_1,\ldots,\ell_k)\in\cL$ and $j\in\brk k$.
By construction,
	$$\hat\pr\brk{B(t,\ell)}=\pr\brk{\Bin(m(\ell),q_{\ell,j})=\ell_j m(\ell)}.$$
By \Lem~\ref{Lemma_fixedpoint} we have $q_{\ell,j}=\ell_j-2^{-k-1}+\tilde O(2^{-3k/2})=\frac12+\tilde O(2^{-k/2})$.
Hence, using \Lem~\ref{Lemma_binomial}, we find
	\begin{eqnarray}\label{eqWeNeedThis1}
	\frac1{m(\ell)}\ln\hat\pr\brk{B(\ell,j)}&\sim&\psi(q_{\ell,j},\ell_j)\\
		&=&-\frac{(q_{\ell,j}-\ell_j)^2}{2q_{\ell,j}}-\frac{(\ell_j-q_{\ell,j})^2}{2(1-q_{\ell,j})}+O_k(8^{-k})\nonumber\\
		&=&-\frac{(q_{\ell,j}-\ell_j)^2}{2}\bc{\frac1{q_{\ell,j}}+\frac1{1-q_{\ell,j}}}+O_k(8^{-k})\nonumber\\
		&=&-\bc{2+\tilde O(2^{-k/2})}(q_{\ell,j}-\ell_j)^2\nonumber\\
		&=&-\bc{2+\tilde O(2^{-k/2})}(2^{-k-1}+\tilde O(2^{-3k/2}))^2=-\bc{\frac12+\tilde O(2^{-k/2})}2^{-2k}.\nonumber
	\end{eqnarray}
Hence, (\ref{eqB}) yields
	\begin{eqnarray*}
	\frac1n\ln\hat\pr\brk{B}
		&=&-\sum_{\ell\in\cL}\sum_{j=1}^k\frac{m(\ell)}n\cdot\brk{\frac12\cdot2^{-2k}+\tilde O(2^{-5k/2})}\\
		&=&-kr\cdot\brk{\frac12\cdot2^{-2k}+\tilde O(2^{-5k/2})}
		=-\frac{k\ln2}{2^{k+1}}+\tilde O(2^{-3k/2}),
	\end{eqnarray*}
as claimed.
\qed\end{proof}

\begin{remark}\label{Remark_weNeedThis}
In the second moment calculation we will need to know that
	$$\ln\hat\pr\brk{S|B}=
			\sum_{\ell\in\cL}m(\ell)\brk{\ln\bc{1-\prod_{j=1}^k1-q_{\ell,j}}-\sum_{j=1}^k\psi(q_{\ell,j},\ell_j)}$$
which follows from (\ref{eqWeNeedThis2}) and~(\ref{eqWeNeedThis1}).
\end{remark}

\begin{corollary}\label{Cor_firstCalc}
Let $\delta,\delta'>0$ be such that
	\begin{eqnarray*}
	\sum_{x\in V}\bc{\pd(x)-\frac12}^2&=&\frac{(1+\delta)km}{2^{2k+2}},\\
	\Sigma&=&(1+\delta')2^{-k}\qquad\mbox{with $\Sigma$ from~(\ref{eqdefSigma}).}
	\end{eqnarray*}
Then with $r=2^{-k}\ln2-c$ we have
	$$\frac{\ln\abs{\cH(p)}+\ln\hat\pr\brk{S|B}}n=2^{-k}\brk{\rho-\frac{\ln2}2}+O\bcfr{k(\delta+\delta')}{2^k}+\tilde O(2^{-3k/2}).$$
\end{corollary}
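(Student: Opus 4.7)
The plan is to assemble three asymptotic expansions that are already at our disposal. Since $\ln\hat\pr\brk{S|B}=\ln\hat\pr\brk S-\ln\hat\pr\brk B$, the left-hand side of the claim decomposes as
\[
\frac1n\brk{\ln\abs{\cH(p)}+\ln\hat\pr\brk{S}-\ln\hat\pr\brk B},
\]
so I would just plug in Corollary~\ref{Cor_entropy}, Lemma~\ref{Lemma_probS}, and the preceding lemma estimating $\ln\hat\pr\brk B$, and then substitute the two hypotheses parametrized by $\delta,\delta'$. The whole content lies in two pairwise cancellations between the leading $O(k/2^k)$ terms.

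First I would insert $\sum_{x\in V}(p(x)-\tfrac12)^2=(1+\delta)km/2^{2k+2}$ into Corollary~\ref{Cor_entropy}. Using $km/n=kr$ and $r=2^k\ln2-\rho$ with $\rho=O(1)$, the entropy becomes $\ln2-(1+\delta)k\ln2/2^{k+1}+\tilde O(2^{-3k/2})$, because the $\rho$-dependent correction is of order $k\rho/4^k=\tilde O(2^{-3k/2})$. The $\ln\hat\pr\brk B$ estimate contributes $+k\ln2/2^{k+1}$, which cancels the $-k\ln2/2^{k+1}$ coming from the entropy up to the residual $-\delta k\ln2/2^{k+1}=O(k\delta/2^k)$; this is the first error term in the statement.

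Next I would insert $\Sigma$ into Lemma~\ref{Lemma_probS}: the term $-k\Sigma\ln2$ becomes $(1+\delta')k\ln2/2^k$ (with the sign convention consistent with the definition $\Sigma=\frac1{km}\sum_x(1-2p(x))(d_x-d_{\neg x})$, which is negative for $p=p_{\mathrm{BP}}$). Inside the bracket of Lemma~\ref{Lemma_probS} this combines with the $-k\ln2\cdot 2^{-k}$ there to leave $\delta' k\ln2/2^k=O(k\delta'/2^k)$, the second stated error term. Simultaneously, the leading $-\ln2$ of Lemma~\ref{Lemma_probS} cancels the $+\ln2$ from Corollary~\ref{Cor_entropy}, so the only main term that survives is $2^{-k}\brk{\rho-\ln2/2}$, as required.

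There is no substantive obstacle: the derivation is a mechanical bookkeeping of three Taylor expansions. The two points that require mild care are (i) tracking the sign of $\Sigma$ so that the combination $-k\Sigma\ln2-k\ln2\cdot2^{-k}$ really collapses to $O(k\delta'/2^k)$, and (ii) verifying that every subleading residual -- the $\tilde O(2^{-3k/2})$ errors in Corollary~\ref{Cor_entropy} and Lemma~\ref{Lemma_probS}, the $O(k\rho/4^k)$ from the substitution $km/n=kr$, and the analogous error in the $\ln\hat\pr\brk B$ estimate -- is uniformly $\tilde O(2^{-3k/2})$, which is the coarsest of the three scales and therefore absorbs the others since $\rho=O(1)$.
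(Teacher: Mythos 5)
Your proposal is correct and follows the paper's proof exactly: decompose $\ln\hat\pr\brk{S|B}=\ln\hat\pr\brk{S}-\ln\hat\pr\brk{B}+o(n)$ (valid because $\vec q$ was chosen so that $\hat\pr\brk{B|S}=\exp(o(n))$, Corollary~\ref{Cor_fixedpoint}), substitute the estimates from Corollary~\ref{Cor_entropy}, Lemma~\ref{Lemma_probS}, and the $\ln\hat\pr\brk{B}$ lemma, and plug in the $\delta,\delta'$-parameterizations to observe the cancellation of the $\pm\ln2$ and $\pm k\ln2/2^{k+1}$ terms. You were also right to flag the sign of $\Sigma$: consistency with Lemma~\ref{Lemma_first} and with the expansion in Lemma~\ref{Lemma_probS} requires $\Sigma=-(1+\delta')2^{-k}$, and the positive sign printed in the corollary's hypothesis is a typo.
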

\begin{proof}
By the above,
	\begin{eqnarray*}
	\frac1n\ln\hat\pr\brk S&=&-\ln2+2^{-k}\bc{\rho-\frac{\ln2}2-k\ln2}-k\Sigma\ln2+\tilde O(2^{-3k/2})\\
			&=&-\ln2+2^{-k}\bc{\rho-\frac{\ln2}2}+\frac{k\delta'\ln2}{2^k}+\tilde O(2^{-3k/2}),\\
	\frac1n\ln\hat\pr\brk B&=&-\frac{k\ln2}{2^{k+1}}+\tilde O(2^{-3k/2}),\\
	\frac1n\ln\abs{\cH(p)}&=&\ln2-\frac2n\sum_{x\in V}\bc{\pd(x)-\frac12}^2
		=\ln2-\frac{k\ln2}{2^{k+1}}-\frac{\delta k\ln2}{2^{k+1}}.
	\end{eqnarray*}
Summing up yields the result.
\qed\end{proof}

\medskip
\noindent\emph{Proof of \Lem~\ref{Lemma_first}.}
\Lem~\ref{Lemma_first} is a direct consequence of Corollaries~\ref{Cor_entropy},
	\ref{Cor_switch}, \ref{Cor_fixedpoint} and~\ref{Cor_firstCalc}.
\qed

\subsection{Proof of \Lem~\ref{XLemma_theLocalCluster}}

Assume that $\vec m$ is feasible.
Let $\cZ$ denote the number of good $p$-satisfying assignments.

\begin{proposition}\label{Prop_firstGood}
Let $\vec d$ be chosen from $\vec D$ and let $\vec m$ be chosen from $\vec M_{\vec d}$.
Then
	$\Erw\brk{\cZ(\PHIdm)}\sim\Erw\brk{Z(\PHIdm)}$  \whp\ 
\end{proposition}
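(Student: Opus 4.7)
The plan is a first-moment reduction to a truncated pair-counting bound. By the definition of goodness, any $\sigma \in \cS_p(\PHIdm) \setminus \cS_{p,\mathrm{good}}(\PHIdm)$ has $|\cC_\sigma(\PHIdm)| > \Erw[Z_p(\PHIdm)]$, giving the pointwise bound
$$\vecone\{\sigma \in \cS_p \setminus \cS_{p,\mathrm{good}}\} \;\leq\; \frac{|\cC_\sigma(\PHIdm)|}{\Erw[Z_p(\PHIdm)]}\, \vecone\{\sigma \in \cS_p(\PHIdm)\}.$$
Summing over $\sigma$ and taking the expectation over $\PHIdm$ yields
$$\Erw[Z(\PHIdm) - \cZ(\PHIdm)] \;\leq\; \frac{1}{\Erw[Z_p(\PHIdm)]} \sum_{\sigma,\tau \in \{0,1\}^V} \vecone\{\dist(\sigma,\tau)/n \notin I\}\, \pr[\sigma \in \cS_p(\PHIdm),\ \tau \in \cS(\PHIdm)],$$
with $I = [\tfrac12 - k^22^{-k/2},\, \tfrac12 + k^22^{-k/2}]$. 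It thus suffices to show that, whp over $\vec d,\vec m$, the right-hand sum is $o(\Erw[Z_p(\PHIdm)]^2)$.

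To estimate $\pr[\sigma \in \cS_p,\ \tau \in \cS]$ at overlap $\alpha = \dist(\sigma,\tau)/n$, I would extend the auxiliary-space trick of Lemma~\ref{Lemma_switch} to ordered pairs of assignments: replace each Bernoulli coordinate $\hat\SIGMA_{ij}(\ell)$ by a $\{0,1\}^2$-valued pair $(\hat\SIGMA^{(1)}_{ij}(\ell), \hat\SIGMA^{(2)}_{ij}(\ell))$ encoding the truth values of the $j$th literal of the $i$th type-$\ell$ clause under $\sigma$ and $\tau$ separately. The four distribution parameters per clause type are fixed by Lagrange conditions enforcing (a) the $\pd$-judicious marginals on $\sigma$ and (b) the prescribed coordinate-wise overlap, whose solvability follows from Lemma~\ref{Lemma_inverseFunction} exactly as in Lemma~\ref{Lemma_fixedpoint}. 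A repeated application of the local limit theorem (Lemma~\ref{lem:locallimit}) then gives an explicit rate function $\phi(\alpha)$ with
$$\sum_{\dist(\sigma,\tau) = \alpha n} \pr[\sigma \in \cS_p,\ \tau \in \cS] \;=\; \exp\!\bigl(\phi(\alpha)\, n + o(n)\bigr).$$
At $\alpha = \tfrac12$ the two copies decouple, and combining the first-moment expansion of Section~5 with Lemma~\ref{XLemma_wmaj} (which lets one replace $\Erw[|\cS|]$ by $\Erw[Z_p]\exp(o(n))$ at the balanced point) yields $\exp(\phi(\tfrac12) n) \leq \exp(o(n))\,\Erw[Z_p]^2$.

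What remains---and is the crux---is the analytic claim that $\phi$ is strictly concave on a neighbourhood of $\tfrac12$ with $\phi(\alpha) - \phi(\tfrac12) \leq -c(\alpha - \tfrac12)^2$ for a constant $c > 0$ uniform in $k$, together with a global gap $\phi(\alpha) - \phi(\tfrac12) = -\Omega(2^{-k})$ outside the neighbourhood. Granted this, the contribution from overlaps with $|\alpha - \tfrac12| > k^22^{-k/2}$ is bounded by $(n+1)\exp(-\Omega(k^42^{-k}) n)\,\Erw[Z_p]^2 = o(\Erw[Z_p]^2)$, which finishes the proof. The main obstacle is establishing this quadratic decay in the asymmetric $\pd$-setting: because the judicious conditioning is type-dependent and is imposed only on the first copy, the Hessian of $\phi$ at $\alpha = \tfrac12$ is a block matrix indexed by clause types---rather than the $2 \times 2$ matrix handled directly in~\cite{yuval,nae}---whose positive-definiteness must be verified via the chain rule (Lemma~\ref{Lemma_chainrule}) and a careful bookkeeping of constraint degrees of freedom, checking that no degeneracy arises from nearly collinear clause types. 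The surrounding geometric framework---reducing cluster-size control to an overlap second-moment calculation---parallels the strategy of~\cite{Barriers} explicitly invoked by the proposition.
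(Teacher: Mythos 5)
Your Markov-type pointwise bound $\vecone\{|\cC_\sigma|>\Erw[Z_p]\}\le|\cC_\sigma|/\Erw[Z_p]$ reduces the problem to controlling the \emph{expected total cluster size}, and this reduction is too lossy at small Hamming distances. Concretely, consider distances $\dist(\sigma,\tau)\approx 2^{-k}n$. For a fixed $\sigma\in\cH_p(\vec d)$, the expected number of $\tau\in\cS$ at such a distance is of order
$$\binom{n}{2^{-k}n}\cdot\pr\brk{\tau\in\cS\,|\,\sigma\in\cS}\approx\exp\brk{(1+k\ln2)2^{-k}n}\cdot\exp\brk{-k\ln2\cdot2^{-k}n}=\exp\brk{2^{-k}n},$$
since the $k\ln2$ contributions from the entropy $h(2^{-k})$ and from the conditional-probability decay $\bc{\frac{1-2^{1-k}+2^{-k}(1-x)^k}{1-2^{-k}}}^m$ cancel to leading order. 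Because $\Erw\brk{Z_p}=\exp\bc{2^{-k}\bc{\rho-\tfrac{\ln2}2+o_k(1)}n}=\exp\bc{(\ln2+o_k(1))2^{-k}n}$ and $1>\ln2$, the expected cluster size at distance $\approx2^{-k}n$ already \emph{exceeds} $\Erw\brk{Z_p}$ by a factor $\exp\bc{\Omega(2^{-k}n)}$, so the right-hand sum in your bound is not $o(\Erw[Z_p]^2)$. The expectation is driven up by a tiny fraction of $\sigma$ whose near-clusters are exponentially larger than typical, and Markov cannot see this. Correspondingly, the claim that $\phi(\alpha)-\phi(\tfrac12)=-\Omega(2^{-k})$ holds globally outside a neighbourhood of $\tfrac12$ is false: $\phi$ has a local maximum near $\alpha\approx 2^{-k}$ where it rises above the target threshold $\tfrac2n\ln\Erw[Z_p]$ -- exactly the familiar small-overlap `hump'.

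The paper does not attempt a pair-count bound at small distances. It splits $[0,1]\setminus I$ into a `far' range $[k2^{-k},1]\setminus I$ and a `near' range $[0,2^{-0.99k})$. The far range is treated by \Lem~\ref{Lemma_off}, a first-moment pair bound in the spirit of what you propose (and whose case analysis starts precisely at $x=k2^{-k}$, i.e.\ beyond the hump). The near range is handled by \Lem~\ref{Lemma_frozen} via a rigidity argument: for all but an $\exp\bc{-\Omega(nk^3/2^k)}$-fraction of $\sigma\in\cS$, all but $(1+o_k(1))2^{-k}n$ variables are $2^{-0.99k}$-rigid, so the number of $\tau\in\cS$ with $\dist(\sigma,\tau)<2^{-0.99k}n$ is deterministically at most $2^{(1+o_k(1))2^{-k}n}<\Erw[Z_p]$. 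This is a typicality statement about the \emph{actual} (not expected) cluster size, obtained from the self-contained/dense-set machinery of \Sec~\ref{Sec_frozen} following \cite{Barriers}. Without a substitute for \Lem~\ref{Lemma_frozen}, the proposed argument does not close the near range and the proof does not go through.
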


The proof of \Prop~\ref{Prop_firstGood} is based on three lemmas. 

\begin{lemma}\label{Lemma_dmanneal}
Let $\vec d$ be chosen from $\vec D$ and let $\vec m$ be chosen from $\vec M_{\vec d}$.
\begin{enumerate}
\item Let $\cE$ be an event such that
		$\pr\brk{\PHI\in\cE}=o(1)$.
	Then
		$\pr\brk{\PHIdm\in\cE}=o(1)$.
\item For any random variable $X\geq0$ and any $\eps>0$ we have
		$\pr_{\vec d,\vec m}\brk{\Erw\brk{X(\PHId)}>\Erw\brk{X(\PHI)}/\eps}\leq\eps.$
\end{enumerate}
\end{lemma}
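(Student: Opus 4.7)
The plan is to exploit the defining property of $\vec D$ and $\vec M_{\vec d}$, recorded already in \Sec~\ref{XSec_theRandomVar}: the composite experiment of first drawing $\vec d$ from $\vec D$, then $\vec m$ from $\vec M_{\vec d}$, and finally sampling $\PHIdm$ uniformly among all $\vec d$-compatible formulas whose clause-type counts match $\vec m$, reproduces exactly the uniform distribution of $\PHI$. Once this is invoked, both claims reduce to a single application of the tower property, mirroring verbatim the proof of \Lem~\ref{Lemma_danneal} but applied to the two-layer randomization in $(\vec d, \vec m)$ rather than in $\vec d$ alone.

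Concretely, the identity in distribution $\PHI \stackrel{d}{=} \PHIdm$ (under $\vec d \sim \vec D$, $\vec m \sim \vec M_{\vec d}$) yields, for any event $\cE$ and any nonnegative random variable $X$,
\[
\pr[\PHI \in \cE] \;=\; \Erw_{\vec d,\vec m}\bigl[\pr[\PHIdm \in \cE]\bigr]
\quad\text{and}\quad
\Erw[X(\PHI)] \;=\; \Erw_{\vec d,\vec m}\bigl[\Erw[X(\PHIdm)]\bigr],
\]
where the inner $\pr$ and $\Erw$ on the right are taken with respect to the random formula with $\vec d, \vec m$ fixed. Under the natural reading of $\pr[\PHIdm \in \cE]$ as the joint probability over $\vec d, \vec m$, and the formula, the first identity immediately gives $\pr[\PHIdm \in \cE] = \pr[\PHI \in \cE] = o(1)$, which is the first claim.

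For the second claim I would apply Markov's inequality to the nonnegative random variable $F(\vec d, \vec m) := \Erw[X(\PHIdm)]$. The second tower identity says $\Erw_{\vec d, \vec m}[F] = \Erw[X(\PHI)]$, and Markov then gives
\[
\pr_{\vec d,\vec m}\!\left[F(\vec d,\vec m) > \Erw[X(\PHI)]/\eps\right] \;\leq\; \frac{\Erw_{\vec d,\vec m}[F(\vec d,\vec m)]}{\Erw[X(\PHI)]/\eps} \;=\; \eps,
\]
which is the stated bound. There is no real obstacle here: the entire argument is a one-step reduction via the tower property, and the only substantive point is the compatibility of the two-step sampling with the uniform distribution, which was baked into the definitions of $\vec D$ and $\vec M_{\vec d}$ from the outset.
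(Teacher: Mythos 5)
Your argument for part~2 matches the paper's: the composite experiment $(\vec d,\vec m)\mapsto\PHIdm$ reproduces $\PHI$, so $\Erw_{\vec d,\vec m}\brk{\Erw\brk{X(\PHIdm)}}=\Erw\brk{X(\PHI)}$, and Markov applied to the nonnegative random variable $\Erw\brk{X(\PHIdm)}$ gives the claimed bound. (You also rightly read the $\PHId$ inside the statement of part~2 as a typo for $\PHIdm$.)

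For part~1, however, you have read the assertion as a statement about the \emph{joint} probability over $(\vec d,\vec m)$ and the formula, in which case the tower identity alone suffices and no Markov is needed. That is not the intended reading. The paper's notational convention (set out before \Lem~\ref{Lemma_danneal}) is that $\pr\brk{\cdot}$ denotes the probability over the formula \emph{conditional} on $(\vec d,\vec m)$, with $\pr_{\vec d,\vec m}$ reserved for the outer randomness; and the analogous statement in \Lem~\ref{Lemma_danneal} is phrased explicitly as ``\whp\ $\vec d$ is such that $\pr\brk{\PHId\in\cE}=o(1)$.'' So part~1 here should be understood as ``\whp\ over $(\vec d,\vec m)$, the conditional probability $\pr\brk{\PHIdm\in\cE}$ is $o(1)$.'' Under that reading you do need one more step: Markov's inequality applied to the nonnegative $(\vec d,\vec m)$-measurable random variable $\pr\brk{\PHIdm\in\cE}$, whose expectation is $o(1)$ by your tower identity. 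This is exactly the move the paper's proof alludes to by saying ``a similar application of Markov's inequality as in the proof of \Lem~\ref{Lemma_danneal},'' and it is also the same move you already make in part~2, so the gap is easy to close.
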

\begin{proof}
This follows from a similar application of Markov's inequality as in the proof of \Lem~\ref{Lemma_danneal}.
\qed\end{proof}

\begin{lemma}\label{Lemma_frozen}
With the assumptions of \Prop~\ref{Prop_firstGood}
the random variable 
	$$\Zgood=\abs{\cbc{\sigma\in\cS(\PHIdm):\abs{\cbc{\tau\in\cS(\PHIdm):\dist(\sigma,\tau)<2^{-0.99k}n}}\leq \Erw\brk{Z(\PHIdm)}}}$$
satisfies
	$\Erw\brk{\Zgood(\PHIdm)}\sim\Erw\brk{Z(\PHIdm)}$ \whp
\end{lemma}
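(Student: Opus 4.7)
\emph{Proof plan.} The plan is to apply Markov's inequality to bound $\Erw\brk{Z-\Zgood}$ by a first-moment estimate on pairs of close $p$-judicious satisfying assignments. Setting $B_\sigma := \cbc{\tau \in \cS_p(\PHIdm) : \dist(\sigma,\tau) < 2^{-0.99k}n}$, the definition of $\Zgood$ gives
$$Z - \Zgood = \sum_{\sigma \in \cS_p(\PHIdm)} \vecone_{\abs{B_\sigma} > \Erw\brk{Z}} \leq \frac{1}{\Erw\brk{Z}} \sum_{\sigma \in \cS_p(\PHIdm)} \abs{B_\sigma},$$
and decomposing by $d = \dist(\sigma,\tau)$ yields
$$\Erw\brk{Z-\Zgood} \leq \frac{1}{\Erw\brk{Z}} \sum_{d=0}^{\lfloor 2^{-0.99k}n\rfloor} F(d), \qquad F(d) := \Erw\brk{\abs{\cbc{(\sigma,\tau) \in \cS_p(\PHIdm)^2 : \dist(\sigma,\tau)=d}}}.$$
It thus suffices to show $\sum_d F(d) = o(\Erw\brk{Z}^2)$. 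The $d=0$ contribution $F(0) = \Erw\brk{Z}$ is harmless, since $\Erw\brk{Z} = \exp(\Omega(n/2^k))$ by \Prop~\ref{XProp_firstMoment} and hence $F(0) = o(\Erw\brk{Z}^2)$.

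For $1 \leq d \leq 2^{-0.99k}n$ I would estimate $F(d)$ by adapting the first-moment calculation of \Sec~\ref{Sec_first} to pairs. Concretely, extend the auxiliary probability space $\hat\Omega$ of \Lem~\ref{Lemma_switch} by attaching to each clause-literal slot $(\ell,i,j)$ a correlated Bernoulli pair $(\hat\SIGMA^\sigma_{ij}(\ell), \hat\SIGMA^\tau_{ij}(\ell))$ whose joint law mimics the truth values of the slot under a uniform random pair $(\sigma,\tau)\in\cH_p(\vec d)^2$ at Hamming distance $d$. The conditional pair-satisfaction event then factorises over clauses, and a fixed-point argument analogous to \Lem~\ref{Lemma_fixedpoint} (with a Jacobian close to the identity for $d/n$ small) together with a bivariate version of \Lem~\ref{lem:locallimit} applied type-by-type should deliver
$$F(d) \leq \Erw\brk{Z}^2 \cdot \binom{n}{d} \cdot \exp\bc{-\Omega(k d)}.$$
Combined with $\binom{n}{d} \leq (en/d)^d$ and the range $d/n \leq 2^{-0.99k}$, this gives $F(d)/\Erw\brk{Z}^2 \leq \exp(-\Omega(kd))$, so that $\sum_{d\geq 1} F(d) = o(\Erw\brk{Z}^2)$ by geometric summation.

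The main obstacle is executing this pair-level computation cleanly in the $p$-judicious framework. In the symmetric setting of~\cite{Barriers} the analogous rigidity bound follows from a direct combinatorial pair-count that exploits the permutation-symmetry of balanced solutions; here one must carry the $\pd$-marginal and the clause-type constraints through the pair extensions of \Lem s~\ref{Lemma_fixedpoint} and~\ref{Lemma_probS}, including a Taylor expansion of the per-clause pair-satisfaction probability in $d/n$ that now picks up cross terms between the $\sigma$- and $\tau$-columns. Once the pair analogue of \Lem~\ref{Lemma_probS} is in place, \Lem~\ref{Lemma_danneal} transfers the estimate from $\PHI$ to $\PHIdm$ and yields the claimed bound \whp\ over the choice of $\vec d$ and $\vec m$.
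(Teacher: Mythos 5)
Your Markov/pair-count strategy cannot work here, and the key estimate $F(d)\leq\Erw\brk{Z}^2\binom nd\exp(-\Omega(kd))$ combined with $\binom nd\leq(\eul n/d)^d$ does not yield $F(d)/\Erw\brk{Z}^2\leq\exp(-\Omega(kd))$: you have $\binom nd\exp(-\Omega(kd))\leq\exp\bc{d\brk{1+\ln(n/d)-\Omega(k)}}$, and for small $d$ the term $\ln(n/d)$ grows like $\ln n$, which dwarfs the $\Omega(k)$ term. Concretely, a direct computation (the same one underlying \Lem~\ref{Lemma_off}) gives $F(1)/\Erw\brk{Z}^2\sim n\cdot 2^{-k}\to\infty$, so already $\sum_d F(d)\geq F(1)\gg\Erw\brk{Z}^2$, and the bound $\Erw\brk{Z-\Zgood}\leq\frac1{\Erw\brk{Z}}\sum_d F(d)$ delivers nothing. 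This is precisely why \Lem~\ref{Lemma_off} only treats distances $\geq k2^{-k}n$: below that threshold the annealed pair count is dominated by rare ``lottery'' formulas with huge local clusters, so the annealed mean of the cluster size is inflated far beyond the typical value, and a first-moment bound on it is unavoidably too weak. The bound you seek is a high-probability control of the cluster size, which requires a structural argument, not an annealed pair count.

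The paper's proof (\Sec~\ref{Sec_frozen}) is the required structural argument. It defines a variable $x$ as $\xi$-rigid in $(\Phi,\sigma)$ when any $\tau\in\cS(\Phi)$ flipping $x$ must differ from $\sigma$ in $\geq\xi n$ coordinates, and then establishes through \Lem s~\ref{Lem_supporting}--\ref{Lemma_dense} and \Cor s~\ref{Cor_attached}, \ref{Cor_rigid} that, conditionally on $\sigma$ being a $p$-satisfying assignment, \whp\ all but at most $(1+2k^{-2})2^{-k}n$ variables are $2^{-0.99k}$-rigid. Since any $\tau$ with $\dist(\sigma,\tau)<2^{-0.99k}n$ must then agree with $\sigma$ on every rigid variable, the local cluster has size at most $2^{(1+2k^{-2})2^{-k}n}=\exp\bc{(1+o_k(1))2^{-k}n\ln2}$, and this is strictly less than $\Erw\brk{Z(\PHIdm)}=\exp\bc{2^{-k}(\rho-\frac{\ln2}2+o_k(1))n}$ once $\rho>\frac32\ln2$. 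Thus for a typical $p$-satisfying $\sigma$ the cluster is small, so $\sigma$ is good with probability $1-o(1)$, giving $\Erw\brk{\Zgood}\sim\Erw\brk{Z}$. The rigidity machinery (in particular the expansion argument of \Lem~\ref{Lemma_dense}) is exactly the ingredient missing from your plan, and it cannot be replaced by the annealed pair count you propose.
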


\noindent
The proof of \Lem~\ref{Lemma_frozen} can be found in \Sec~\ref{Sec_frozen}.
Moreover, in \Sec~\ref{Sec_off} we prove the following.

\begin{lemma}\label{Lemma_off}
Suppose that $r\leq2^k\ln2$.
Let $\xi=k 2^{-k/2}$.
Let $Z''$ be the number of pairs $(\sigma,\tau)\in\cS(\PHI)^2$
such that
	$$\dist(\sigma,\tau)\in\brk{k2^{-k},1}\setminus\brk{\frac12-\xi,\frac12+\xi}.$$
Then
	$\Erw\brk{Z''}=o(1).$ 
\end{lemma}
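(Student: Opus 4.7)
The plan is a first moment computation on pairs followed by a case analysis of the resulting rate function. For any two assignments $\sigma,\tau\in\cbc{0,1}^V$ with $\dist(\sigma,\tau)=\alpha n$, the probability that both satisfy a uniformly random $k$-clause equals
\[
	p(\alpha)=1-2^{1-k}+\bc{\frac{1-\alpha}{2}}^k
\]
by inclusion-exclusion (a literal is false under both iff the underlying variable lies among the $(1-\alpha)n$ agreement bits, probability $(1-\alpha)/2$ per literal, and the $k$ literals are independent). Hence, with $d=\alpha n$ an integer in the excluded range,
\[
	\Erw\brk{Z''}=\sum_{d}2^n\bink{n}{d}p(d/n)^m,
\]
and Stirling yields $\Erw\brk{Z_{\alpha}''}\leq n^{O(1)}\exp(ng(\alpha))$ with $g(\alpha)=\ln 2+H(\alpha)+r\ln p(\alpha)$, where $H(z)=-z\ln z-(1-z)\ln(1-z)$. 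It suffices to show $\max g(\alpha)\leq -c(k)$ for some $c(k)>0$ over the excluded domain, whence $\Erw\brk{Z''}\leq n^{O(1)}e^{-nc(k)}=o(1)$ for fixed $k$.

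The key region is $\alpha$ close to $\tfrac12$. A direct computation gives $g''(\tfrac12)=-4+o_k(1)$, and, because of the asymmetry of $k$-SAT, $g'(\tfrac12)=-2rk\cdot 4^{-k}/(1-2^{-k})^{2}\sim -2k\ln 2\cdot 2^{-k}\neq 0$. Thus $g$ is strictly concave near $\tfrac12$ with its interior maximum at $\alpha^{*}=\tfrac12-\Theta(k/2^k)$, and $|\alpha^{*}-\tfrac12|\ll\xi=k2^{-k/2}$, so $\alpha^{*}$ lies well \emph{inside} the excluded window $[\tfrac12-\xi,\tfrac12+\xi]$. The value $g(\alpha^{*})=2\ln 2+2r\ln(1-2^{-k})+O(k^{2}/4^k)$ is $O(2^{-k})$ under the hypothesis $r\leq 2^k\ln 2$ (it may be positive, but $\alpha^{*}$ itself is excluded). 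A second-order Taylor expansion around $\alpha^{*}$ then gives, for any $|\alpha-\tfrac12|\geq\xi$ (which forces $|\alpha-\alpha^{*}|\geq(1-o(1))\xi$),
\[
	g(\alpha)\leq g(\alpha^{*})-(2-o(1))(\alpha-\alpha^{*})^{2}\leq O(2^{-k})-(2-o(1))\xi^{2}=-\Omega(k^{2}/2^{k}),
\]
valid throughout a constant-size neighborhood of $\tfrac12$ outside the excluded window.

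For the boundary regimes the argument is pointwise. At $\alpha=k2^{-k}$, using $H(k2^{-k})=k2^{-k}(k\ln 2-\ln k+1)+O(k^{2}/4^k)$ and $p(k2^{-k})=1-2^{-k}-k^{2}/4^k+O(k^{4}/8^k)$, one finds $g(k2^{-k})=-(k\ln k-k+O(1))\cdot 2^{-k}=-\Theta(k\ln k/2^{k})<0$. At $\alpha=1$, $g(1)=\ln 2+r\ln(1-2^{1-k})\leq -\ln 2+O(2^{-k})$, a $\Theta(1)$-negative constant. To interpolate across the interior of $[k2^{-k},\tfrac12-\xi]$, observe that $g'(k2^{-k})\sim -\ln k<0$ (the $H'\sim k\ln 2$ term cancels the $rp'/p\sim -k\ln 2$ term to leading order) while $g'(\alpha)>0$ just below $\alpha^{*}$; hence $g'$ flips sign exactly once in this interval, at an interior \emph{minimum} of $g$. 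So the maximum of $g$ on $[k2^{-k},\tfrac12-\xi]$ is attained at one of the two endpoints, both bounded by $-\Omega(k\ln k/2^{k})$. On $[\tfrac12+\xi,1]$ the derivative $g'=H'(\alpha)+rp'/p$ is negative throughout (since $H'\leq 0$ on this interval while $|rp'/p|=O(k/2^{k})$), so $g$ is monotone decreasing and bounded above by $g(\tfrac12+\xi)=-\Omega(k^{2}/2^{k})$. Combining all pieces, $\max g\leq -\Omega(k\ln k/2^{k})$, and hence $\Erw\brk{Z''}=o(1)$.

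The main obstacle is the left-boundary analysis. For $\alpha\sim 2^{-k}$ the function $g$ is actually strictly positive — this is the ``intra-cluster'' spike of pairs at small distance that the cutoff $k2^{-k}$ is calibrated precisely to exclude — and at $\alpha=k2^{-k}$ itself the leading-order contributions of $\ln 2+H(\alpha)$ and $r\ln p(\alpha)$ cancel exactly. Only the sub-leading correction $-k\ln k\cdot 2^{-k}$ keeps $g$ negative at the cutoff, so one must expand both $H$ and $\ln p$ to the appropriate order and track all error terms carefully. This is where most of the elementary but delicate computation lies.
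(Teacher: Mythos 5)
Your setup and overall strategy coincide with the paper's: you bound $\Erw\brk{Z''}\le n^{O(1)}\exp\bc{n\max_{\alpha}g(\alpha)}$ with $g(\alpha)=\ln 2+H(\alpha)+r\ln p(\alpha)$ and $p(\alpha)=1-2^{1-k}+2^{-k}(1-\alpha)^k$, and the aim is to show $\max g\le -c(k)<0$ over $[k2^{-k},1]\setminus[\tfrac12-\xi,\tfrac12+\xi]$. Your endpoint evaluations are correct and match the paper's: the delicate cancellation at $\alpha=k2^{-k}$ leaving $g(k2^{-k})=-(k\ln k-k+O(1))2^{-k}$; the $-\ln 2+O(2^{-k})$ at $\alpha=1$; monotone decrease on $[\tfrac12+\xi,1]$ since both $H'$ and $q'=r p'/p$ are $\le 0$ there; and the Taylor analysis near $\tfrac12$ that places the stationary point $\alpha^*=\tfrac12-\Theta(k/2^k)$ inside the excluded window and gives $g(\tfrac12\pm\xi)\le -\Omega(k^2/2^k)$.

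The gap is the step ``hence $g'$ flips sign exactly once in this interval'' for $[k2^{-k},\tfrac12-\xi]$. From $g'(k2^{-k})\sim-\ln k<0$ and $g'>0$ at $\tfrac12-\xi$ you only get an \emph{odd} number of sign changes, and passing to ``exactly once'' requires $g'$ to be monotone or unimodal on the interval, which it is not. The second derivative
$g''(x)=-\tfrac{1}{x(1-x)}+\tfrac{rk(k-1)2^{-k}(1-x)^{k-2}}{p(x)}+\ldots$
is very negative at $x=k2^{-k}$, becomes positive around $x\asymp k^{-2}$, and becomes negative again around $x\asymp(\ln k)/k$, so $g'$ is first decreasing, then increasing, then decreasing. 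The sign pattern $-,+,-,+$ for $g'$ --- and hence an interior local maximum of $g$ inside $[k2^{-k},\tfrac12-\xi]$ --- is not ruled out by your endpoint data alone. (It does not in fact happen, but showing that it does not requires either pinning down the sign pattern of $g''$ or a pointwise check of $g$ in the intermediate region, neither of which you supply.)

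The paper avoids this by never invoking a derivative argument below $x=0.01$: its Cases~1 and~2 verify $g(x)<0$ pointwise on $[k2^{-k},(2k)^{-1}]$ and $[(2k)^{-1},0.01]$ by elementary inequalities (e.g.\ $g(x)\le x(1-\ln x-k+k^2x)$ on the first piece), and monotonicity is invoked only on $[0.01,\tfrac12-k^{-2}]$, where it is free because there $|q'(x)|\le kr(1-x)^{k-1}/(2^k-2)=e^{-\Omega(k)}\ll k^{-2}\le H'(x)$. To close your gap you should either import this decomposition or separately establish the $-,+,-$ sign pattern of $g''$ on $[k2^{-k},\tfrac12-\xi]$; otherwise the endpoint computations, while correct, do not by themselves bound $g$ in the interior.
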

Finally, \Prop~\ref{Prop_firstGood} follows immediately from \Lem s~\ref{Lemma_dmanneal}, \ref{Lemma_frozen} and \ref{Lemma_off}.

\subsection{Proof of \Lem~\ref{Lemma_frozen}}\label{Sec_frozen}

Let $\Phi$ be a $k$-CNF and $\sigma\in\cS(\Phi)$.
We say that a variable $x$ is \emph{$\xi$-rigid} in $(\Phi,\sigma)$ if for any $\tau\in\cS(\Phi)$ with $\tau(x)\neq\sigma(x)$
we have $\dist(\sigma,\tau)\geq\xi n$.
Let $\lambda=kr/(2^k-1)$.

\begin{lemma}\label{Lem_supporting}
\begin{enumerate}
\item The expected number of $\sigma\in\cS(\PHI)$ in which
		more than $k^{12}2^{-k}n$ variables support at most 12 clauses is 
			$\leq\exp(-nk^9/2^k)\Erw\abs{\cS(\PHI)}$.
\item The expected number of $\sigma\in\cS(\PHI)$ in which
		more than $(1+1/k^2)2^{-k}n$ variables support no clause at all
		is $\leq\exp(-n/(k^62^k))\Erw\abs{\cS(\PHI)}$.
\end{enumerate}
\end{lemma}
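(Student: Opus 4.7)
My plan is to reduce both statements to a concentration inequality in the planted model via the standard symmetry trick. Writing $N(\sigma)$ for the number of variables of interest (those supporting at most $12$ clauses in part~(1), or no clause at all in part~(2)), linearity of expectation together with the invariance of $\PHI$ under flipping individual variables yields
\begin{equation*}
\Erw\abs{\cbc{\sigma\in\cS(\PHI):N(\sigma)>\tau}}
\;=\;\Erw\abs{\cS(\PHI)}\cdot\pr_{\mathrm{pl}}\brk{N(\vecone)>\tau},
\end{equation*}
where $\pr_{\mathrm{pl}}$ denotes the planted distribution, i.e., the conditional law of $\PHI$ given that the all-true assignment $\vecone$ is satisfying. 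It therefore suffices to bound the right-hand tail by the claimed exponential factor.

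Under $\pr_{\mathrm{pl}}$ the $m$ clauses are i.i.d.\ uniform $k$-clauses conditioned on containing at least one positive literal. A short calculation shows that for any fixed $x\in V$ such a clause has its \emph{support}---the unique variable whose literal is true, if such a variable exists---equal to $x$ with probability exactly $k/(n(2^k-1))$. Consequently the supports of the $m$ clauses form an i.i.d.\ sequence that takes the value $\emptyset$ with probability $1-k/(2^k-1)$ and is uniform on $V$ otherwise. Letting $s_x$ denote the number of clauses supported by $x$, the vector $(s_x)_{x\in V}$ is the occupancy profile of a balls-and-bins experiment with $m$ balls, $n$ ordinary bins, and one trash bin, so $\Erw s_x=\lambda=kr/(2^k-1)$ and the family $(s_x)$ is negatively associated.

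Both statements now reduce to an upper-tail Chernoff bound for $\sum_x\vecone\cbc{s_x=0}$, resp.\ $\sum_x\vecone\cbc{s_x\leq 12}$. With $r=2^k\ln 2-O_k(1)$ we have $\lambda=k\ln 2+o_k(1)$ and $\mathrm e^{-\lambda}=(1+o_k(1))/2^k$, so the Poisson approximation gives $\Erw\sum_x\vecone\cbc{s_x=0}=(1+o_k(1))n/2^k$ and $\Erw\sum_x\vecone\cbc{s_x\leq 12}=c_kk^{12}n/2^k$ with $c_k\sim(\ln 2)^{12}/12!$, a minuscule constant. Hence part~(2) only requires a $(1+k^{-2})$-deviation from a mean of order $n/2^k$, and a Chernoff bound---legitimate either by the negative association of $(s_x)$ or by Poissonizing the number of clauses at a negligible $O(\sqrt n)$ penalty---yields a probability at most $\exp(-\Omega(n/(k^42^k)))$, far below the target $\exp(-n/(k^62^k))$. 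Part~(1) demands exceeding the mean by the (enormous) factor $1/c_k$, and the same Chernoff computation, using $\pr[X>(1+\delta)\mu]\leq(\mathrm e/(1+\delta))^{(1+\delta)\mu}$ for large $\delta$, produces a bound $\exp(-\Omega(k^{12}n/2^k))$ that dwarfs $\exp(-nk^9/2^k)$.

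The main difficulty is the careful bookkeeping of the planted model: writing the $m$ clauses as i.i.d.\ satisfied clauses, verifying that the map ``clause $\mapsto$ supporting variable'' cleanly realizes an i.i.d.\ balls-and-bins experiment, and ensuring that the passage to Poissonization or the appeal to negative association costs only polynomial, and thus subexponentially negligible, factors. Once the balls-and-bins reduction is in place, the remaining Chernoff estimates are routine.
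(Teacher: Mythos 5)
Your proof is correct and takes essentially the same route as the paper's: pass to the planted model (implicitly, via fixing $\sigma=\vecone$ and conditioning on it being satisfying), observe that the per-variable support counts are approximately $\Po(\lambda)$ with $\lambda=kr/(2^k-1)$ and negatively associated, and finish with a Chernoff bound. The paper is terser — it bounds $\pr[\Po(\lambda)\le 12]\le\lambda^{12}e^{-\lambda}\le\frac12 k^{12}2^{-k}$ crudely and then only needs a factor-of-$2$ deviation from $\Bin(n,\frac12 k^{12}2^{-k})$ — whereas you compute the mean more sharply and invoke the large-$\delta$ Chernoff form, but the estimates are interchangeable and both comfortably beat the stated $\exp(-nk^9/2^k)$ and $\exp(-n/(k^6 2^k))$ targets.
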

\begin{proof}
Fix an assignment $\sigma\in\cbc{0,1}^V$, say $\sigma=\vecone$.
Then the number of clauses supported by each $x\in V$ is asymptotically Poisson with mean $\lambda$.
Let $\cE_x$ be the event that $x$ supports no more than 12 clauses.
Then
	$$\pr\brk{\cE_x}\leq\lambda^{12}\exp(-\lambda)\leq \frac12k^{12}2^{-k}.$$
The events $(\cE_x)_{x\in V}$ are negatively correlated.
Therefore, the total number $X$ of variables $x\in V$ for which $\cE_x$ occurs is stochastically
dominated by a binomial variable $\Bin(n,\frac12k^{12}2^{-k})$.
Hence, the first assertion follows from Chernoff bounds.

With respect to the second assertion, let $\cE_x'$ be the event that $x$ supports no clause at all.
Then $\pr\brk{\cE_x}\leq\exp(-\lambda)$.
Using negative correlation and Chernoff bounds once more completes the proof.
\qed\end{proof}

Let us call a set $S\subset V$ \emph{self-contained} if each variable in $S$
supports at least ten clauses that consist of variables in $S$ only.
There is a simple process that yields a (possibly empty) self-contained set $S$.
\begin{enumerate}
\item[$\bullet$] For each variable $x$ that supports at least one clause, choose such a clause $C_x$ randomly.
\item[$\bullet$] Let $R$ be the set of all variables that support at least 12 clauses.
\item[$\bullet$] While there is a variable $x\in R$ that supports fewer than ten clauses $\PHI_i\neq C_x$ 
		that consist of variables of $R$ only, remove $x$ from $R$.
\end{enumerate}
The clauses $C_x$ will play a special role later.

\begin{lemma}\label{Lemma_selfcontained}
The expected number of solutions $\sigma\in\cS(\PHI)$ for which the above process
yields a set $R$ of size $|R|\leq(1-k^{15}/2^k)n$ is bounded by $\exp(-nk^3/2^k)\Erw\abs{\cS(\PHI)}$.
\end{lemma}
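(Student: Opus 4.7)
The plan is to combine the structural observation that each peeled variable leaves at least two ``outgoing'' supporting clauses with a union bound over witness configurations.

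First, apply Lemma~\ref{Lem_supporting}(1) to discard $\sigma\in\cS(\PHI)$ with more than $k^{12}2^{-k}n$ variables supporting fewer than $12$ clauses; their contribution is already $\leq\exp(-nk^9/2^k)\Erw|\cS(\PHI)|$, well below the target. For the remaining \emph{typical} $\sigma$ with $|R|\leq(1-k^{15}/2^k)n$, set $T=V\setminus R$ and let $T_{\mathrm{rem}}\subseteq T$ be the set of variables peeled during the while-loop. Since the variables in $T\setminus T_{\mathrm{rem}}$ are exactly those that never entered the initial $R$, $t:=|T|\geq k^{15}n/2^k$ and $|T_{\mathrm{rem}}|\geq t-k^{12}n/2^k\geq(1-k^{-3})t$.

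The key structural observation is: when $x\in T_{\mathrm{rem}}$ is peeled, at most $9$ of its $\geq 11$ non-$C_x$ supporting clauses consist entirely of current-$R$ variables. Since the current $R$ contains the final $R$, at least two of $x$'s supporting clauses contain another variable in $T$. Furthermore, each clause supports at most one variable under $\sigma$, so the witness clauses for distinct $x\in T_{\mathrm{rem}}$ are automatically disjoint, which is what makes the union bound tight.

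Now I union-bound over witness tuples $(\sigma,T,T_{\mathrm{rem}},f)$, where $f:T_{\mathrm{rem}}\to[m]^2$ specifies a pair of clause indices per peeled variable. Since the probability under a uniformly random $\PHI$ that a given clause simultaneously supports a fixed $x$ under $\sigma$ and contains a further variable in $T$ is at most $k^2t/(2^kn^2)$,
\[
\pr\brk{\sigma\in\cS(\PHI),\ f\text{ valid}}\leq e^{O(n/2^k)}\bc{k^2t/(2^kn^2)}^{2|T_{\mathrm{rem}}|}\pr\brk{\sigma\in\cS(\PHI)}.
\]
Summing $\leq m^{2|T_{\mathrm{rem}}|}$ choices of $f$, $\leq 2^t$ choices of $T_{\mathrm{rem}}$, $\binom{n}{t}\leq(en/t)^t$ choices of $T$, and over $\sigma$ (absorbing $2^n\pr\brk{\sigma\in\cS(\PHI)}=\Erw|\cS(\PHI)|$) gives, with $\alpha=t/n$, $r\leq 2^k\ln 2$, and $|T_{\mathrm{rem}}|\geq(1-k^{-3})t$, a contribution of at most $\Erw|\cS(\PHI)|(Ck^4\alpha)^{\alpha n}$ for an absolute constant $C$. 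For $\alpha\geq k^{15}/2^k$ lying in the regime $\alpha\leq 1/(2Ck^4)$, the base is at most $1/2$, producing a summand $\leq\Erw|\cS(\PHI)|\exp(-k^{16}\ln 2\cdot n/2^{k+1})$, far below $\exp(-k^3n/2^k)\Erw|\cS(\PHI)|$. The residual range $\alpha\in[1/(2Ck^4),1]$ is handled analogously, using the complementary fact that a typical peeled variable has $d_x-10=\Omega(k)$ bad supporting clauses, so the witness can be upgraded to $\Omega(k)$ clauses per peeled variable.

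The main obstacle is the quantitative bookkeeping: the exponent of the per-clause probability must be close to $2t$ rather than $t$, or else the combinatorial cost $\binom{n}{t}m^{2|T_{\mathrm{rem}}|}$ swamps the per-clause gain---with $|T_{\mathrm{rem}}|\geq t/2$ the bound would degrade to $(Ck^2\ln 2)^t$, which grows with $t$ for large $k$. The typicality reduction supplied by Lemma~\ref{Lem_supporting}(1) is exactly what forces $|T_{\mathrm{rem}}|\approx|T|$ and supplies the doubling of the exponent that turns the union bound into a genuine decay estimate.
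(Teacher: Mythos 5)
Your structural observation (each peeled $x$ contributes at least two supporting clauses whose further variables lie in $T=V\setminus R$) is correct, and the union bound you set up does beat the target for small $\alpha=t/n$; in that regime you are close in spirit to what the paper does. But the residual range $\alpha\in\brk{1/(2Ck^4),\,1}$ is \emph{not} ``handled analogously,'' and the gap is genuine. Once you sum over the $m$ choices of clause index, the per-witness factor in your bound is of order $m\cdot\frac{k}{n2^k}\cdot\min\cbc{1,k\alpha}\sim k\ln 2\cdot\min\cbc{1,k\alpha}$; this exceeds $1$ as soon as $\alpha=\Omega(k^{-2})$, so \emph{no} number of witness clauses per peeled variable can make the bound decay there. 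Even in the intermediate window $k^{-4}\leq\alpha\leq k^{-2}$, the number of witnesses you would need per variable blows up as $k^2\alpha\ln2$ approaches $1$, and it is simply false that every peeled variable supports that many clauses --- a peeled $x$ may support exactly $12$. (Also, ``a typical peeled variable'' is not a meaningful notion inside a union bound over deterministic witness configurations.) So the argument already collapses at, say, $\alpha=k^{-3}$, well inside the range you declared handled.

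What rescues the paper's version is a different decomposition: rather than witnessing \emph{all} of $V\setminus R$, it extracts a set $S$ consisting of the \emph{first} $\Theta(k^{15}n/2^k)$ variables removed by the process, taken in peeling order. For $x\in S$ removed at step $i$, when the current set was $R_i$, the witness clauses contain a variable outside $R_i$, hence in $Q\cup\cbc{x_1,\ldots,x_{i-1}}\subseteq Q\cup S$; crucially, $|S\cup Q|=O(k^{15}n/2^k)$ \emph{regardless of how large $V\setminus R$ eventually is}. The per-witness factor then becomes $m\cdot O\bc{k^2|S\cup Q|/(n^22^k)}=\tilde O(k^{17}/2^k)$, which is genuinely small, and the union bound over $\bink{n}{|S|}$ choices of $S$ and over the clause indices gives $\exp(-\Omega(|S|))=\exp(-\Omega(k^{15}n/2^k))$ uniformly in $|R|\leq(1-k^{15}/2^k)n$. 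That restriction to a bounded-size prefix of the peeling order --- decoupling the set in which the witness variable must live from the eventual size of $V\setminus R$ --- is the idea your proposal is missing.
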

\begin{proof}
Let $\sigma\in\cbc{0,1}^V$ be an assignment, say $\sigma=\vecone$.
Let $Q$ be the set of all variables that support fewer than 12 clauses.
By \Lem~\ref{Lem_supporting} we may condition on $|Q|\leq k^{12}2^{-k}n$.
Assume that $|R|\leq(1-k^{15}/2^k)n$.
Then there exists a set $S\subset V\setminus(R\cup Q)$ of size $\frac12k^{15} n/2^k\leq S\leq k^{15} n/2^k$
such that each variable in $S$ supports ten clauses that contain another variable from $S\cup Q$.
With $s=|S|/n$ the probability of this event is bounded by
	\begin{eqnarray*}
	\bink{m}{10sn}\brk{\frac{2^{1-k}}{1-2^{1-k}}\cdot\frac{k^2|S\cup Q|^2}{n^2}}^{10sn}
		&\leq&\brk{4\eul k^2s}^{10sn}.
	\end{eqnarray*}
Hence, the expected number of set $S$ for which the aforementioned event occurs is bounded by
	\begin{eqnarray*}
	\bink ns\brk{4\eul k^2s}^{10sn}&\leq&\brk{\frac\eul s\cdot(4\eul k^2s)^2}^{sn}
		\leq\exp(-sn),
	\end{eqnarray*}
which implies the assertion.
\qed\end{proof}

Let us call a variable $x$ is \emph{attached} if $x$ supports a clause whose other $k-1$ variables belong to $R$.

\begin{corollary}\label{Cor_attached}
The expected number of $\sigma\in\cS(\PHI)$ in which more than
$n/(k^22^k)$ variables $x\not\in R$ that support at least one clause are not attached is bounded by
	$\Erw\abs{\cS(\PHI)}\cdot\exp(-n/(k^62^k))$.
\end{corollary}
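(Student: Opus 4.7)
\emph{Proof proposal.}
The plan is a symmetric first-moment argument combined with a union bound over candidate sets of bad variables, in the same spirit as the preceding two lemmas. By symmetry it suffices to fix $\sigma=\vecone$ and prove that $\pr\brk{B(\vecone,\PHI)>w\mid\vecone\in\cS(\PHI)}\leq\exp(-n/(k^62^k))$, where $B$ counts the bad variables and $w:=\lceil n/(k^22^k)\rceil$. Throughout I work in the planted model obtained by conditioning on $\vecone\in\cS(\PHI)$; in this model each variable $x$ supports a (roughly) $\Po(\lambda)$ number of clauses with $\lambda=kr/(2^k-1)=\Theta(k\ln2)$, and these counts are asymptotically independent across distinct $x$.

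The first step is to invoke Lemma~\ref{Lemma_selfcontained} to restrict to the event $\cE:=\{|V\setminus R|\leq u\}$ with $u:=\lceil k^{15}n/2^k\rceil$; its complement contributes at most $\exp(-\Omega(nk^3/2^k))$, well inside our target. The key observation is that on $\cE$ the definition of \emph{bad} forces every supported clause of $x$ to contain at least one other variable lying in the small set $V\setminus R$.

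The second step is a union bound over size-$w$ candidate sets $W\subset V$. For a fixed $W$, since supports are unique, the clauses supported by distinct $x\in W$ are disjoint; after conditioning on the Poisson supports $\{|S_x|\}_{x\in W}$ the $(k-1)$-tuples of remaining variables of those clauses are (asymptotically) independent uniform $(k-1)$-subsets of $V\setminus\cbc{x}$. Using $|V\setminus R|\leq u$, any single supported clause hits $V\setminus R$ with probability at most $(k-1)u/(n-1)\leq k^{16}/2^k$, so averaging against the Poisson supports gives
\[
\pr\brk{x\text{ bad}\mid\vecone\in\cS,\cE}\leq\sum_{s\geq1}\frac{\lambda^s\eul^{-\lambda}}{s!}\bcfr{k^{16}}{2^k}^s\leq\tilde O(4^{-k}).
\]
Multiplying over $x\in W$ by (near-)independence and then summing over $W$ yields
\[
\pr\brk{B(\vecone,\PHI)>w,\cE}\leq\binom{n}{w}\bc{\tilde O(4^{-k})}^w\leq\bc{\eul k^22^k\cdot\tilde O(4^{-k})}^w=\exp\bc{-\Theta(n/(k2^k))},
\]
which is far stronger than the required $\exp(-n/(k^62^k))$.

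The main technical obstacle is that the events ``clause $C$ of $x\in W$ meets $V\setminus R$'' are not literally independent across $W$, because $R$ is the output of a global peeling procedure that itself involves those very clauses. I would handle this by a two-stage revelation of the random formula: first expose the $O(k|W|)$ clauses supported by $W$, then expose the rest and execute the peeling that defines $R$. Since the peeling is local and $|W|\leq n/(k^22^k)$, re-including the clauses supported by $W$ can enlarge $V\setminus R$ by only $O(k|W|)\ll u$ variables, so from the viewpoint of the clauses supported by $W$ the set $V\setminus R$ is essentially a deterministic set of size at most $u$ and the per-clause events genuinely decouple. Combining this with a Poissonization/switching step in the style of Lemmas~\ref{Lemma_danneal} and \ref{Lemma_dmanneal} turns the heuristic into a rigorous bound and completes the argument.
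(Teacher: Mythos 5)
There is a genuine gap, and it is precisely the dependency issue you flag yourself. The paper's proof does not attempt a two-stage revelation over a candidate bad set $W$; instead it uses the \emph{reserved clauses} $C_x$ set aside in the first bullet of the process preceding \Lem~\ref{Lemma_selfcontained}. Because the peeling that constructs $R$ explicitly disregards the $C_x$'s (``fewer than ten clauses $\PHI_i\neq C_x$''), the set $R$ — hence $F=V\setminus R$ — is a function of the formula that does not depend on the randomness in the $C_x$'s. Conditional on $F$ (and on $|F|\leq nk^{15}/2^k$ via \Lem~\ref{Lemma_selfcontained}), the events ``$C_x$ contains a variable of $F\setminus\{x\}$'' are independent across $x$, each with probability $\leq 3k^{16}/2^k$, and ``$x$ not attached'' implies this event for the single clause $C_x$. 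That gives binomial domination by $\Bin(|F|,3k^{16}/2^k)$ and the claim follows from one Chernoff bound; no union bound over $\binom{n}{w}$ sets is needed.

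Your substitute for this trick — expose the clauses supported by $W$, then the rest, run the peeling, and argue that including $W$'s clauses changes $V\setminus R$ by only $O(k|W|)$ — is not established and is not obviously repairable. The peeling is a cascading process: withholding clauses can trigger removal chains whose length is not a priori bounded by the number of withheld clauses, and in fact your monotonicity direction is backwards (adding clauses can only enlarge the fixed point $R$, hence \emph{shrink} $V\setminus R$, so ``re-including... enlarges $V\setminus R$'' is the wrong sign). More to the point, even a correct quantitative stability bound for the peeling would have to be proved, and it is exactly the kind of argument that the paper avoids by pre-committing one clause $C_x$ per variable and keeping it out of the peeling. Your per-variable bound $\tilde O(4^{-k})$ (requiring \emph{all} of $x$'s supported clauses to hit $F$) and the $\binom{n}{w}$ union bound are fine arithmetic, but they rest on the unproven independence/decoupling step, so the proof as written does not close.
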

\begin{proof}
Let $F=V\setminus R$.
By \Prop~\ref{Lemma_selfcontained} we may assume that
	$|F|\leq nk^{15}/2^k$.
Therefore, for each of the ``special'' clause $C_x$ that we reserved for each $x$ that supports at least one clause
the probability of containing a variable from $F\setminus\cbc x$ is bounded by
	$$(1+o_k(1))k\cdot\frac{|F|}{n}\leq\frac{3k^{16}}{2^k}.$$
Furthermore, these events are independent (because the clauses $C_x$ were disregarded in the construction of $R$).
Hence, the number of variables $x\not\in R$ that support at least one clause but that are not attached
is dominated by $\Bin(|F|,\frac{3k^{16}}{2^k})$.
The assertion thus follows from Chernoff bounds.
\qed\end{proof}

Let us call $S\subset V$ \emph{dense} if each variable in $S$ supports at least ten clauses and at most $2k$ clauses
such that at least ten of them feature another variable from $S$.

\begin{lemma}\label{Lemma_dense}
For $\vec d$ chosen from $\vec D$, $\vec m$ chosen from $\vec M_{\vec d}$ and any $\sigma\in\cbc{0,1}^V$ the following holds \whp\ 
Let $\cA$ be the event that $\sigma$ is a $p$-satisfying assignment of $\PHIdm$.
Then
	\begin{eqnarray*}
	\pr\brk{\mbox{$\PHIdm$ has a dense $S\subset V$, $|S|\leq n2^{-0.99k}$}~|~\cA}=o(1).
	\end{eqnarray*}
\end{lemma}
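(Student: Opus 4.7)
\emph{Plan.} I would prove Lemma~\ref{Lemma_dense} by a first-moment union bound carried out under the conditional law $\pr[\,\cdot\,|\cA]$. The engine is that conditioning on $\cA$ costs at most an overall $(1-2^{-k})^{-m}$ factor and, correspondingly, a $(1+O(2^{-k}))$ factor per clause, so per-clause event probabilities are essentially unchanged. The clauses themselves are coupled only through the global type vector $\vec m$ and the literal-count vector $\vec d$, a coupling that can be removed by exactly the switching argument used in Lemma~\ref{Lemma_switch} and Corollary~\ref{Cor_switch}: passing to the product space $\hat\Omega$ of Section~\ref{Sec_first} loses at most a $\hat\pr[B]^{-1}=\Theta(n^{k|\cL|/2})$ factor, which will be absorbed comfortably by the exponential-in-$k$ tail bound we will derive.

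\emph{Main computation.} Fix a candidate set $S\subset V$ with $|S|=s\leq n2^{-0.99k}$, and for a single clause $C$ of $\PHIdm$ consider the event that $C$ supports some $x\in S$ and has another of its positions occupied by a literal on some $y\in S\setminus\{x\}$. Its probability (under $\hat\pr$) is the product of $\Theta(k/(n2^k))$ for the support of $x$, $\Theta(k(s-1)/n)$ for the extra $S$-literal in one of the remaining $k-1$ positions, and the sum $s$ over the choice of $x$, giving
\[
 O\!\left(\frac{k^2 s^2}{n^2 2^k}\right).
\]
Let $Y$ count such clauses, so $\Erw[Y]\leq O(k^2 s^2 r/(n2^k))=O(k^2 s^2/n)$ using $r\leq 2^k\ln2$. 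Since each clause supports at most one variable, a dense $S$ witnesses $Y\geq 10s$; as $\Erw[Y]$ is far smaller than $10s$ when $s/n\leq2^{-0.99k}$, a standard Chernoff tail bound gives
\[
 \pr[Y\geq 10s]\leq\left(\frac{eC' k^2 s}{10 n}\right)^{10s}
\]
for an absolute $C'>0$.

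\emph{Union bound and main obstacle.} Multiplying by $\binom{n}{s}\leq(en/s)^s$ yields, for any $s\leq n2^{-0.99k}$,
\[
 \pr[\exists\,\text{dense }S,\,|S|=s\mid\cA]\leq\left(\frac{en}{s}\right)^s\!\left(\frac{Ck^2 s}{n}\right)^{10s}\leq\bigl(C'' k^{20}\,2^{-8.91k}\bigr)^s,
\]
where the final step uses $s/n\leq2^{-0.99k}$ to convert the factor $(s/n)^{9s}$ into $2^{-8.91ks}$. For $k$ large the base is below $1/2$, so summing the geometric series over $s\geq 2$ (the density requirement is vacuous for $s=1$) produces a bound of order $k^{40}2^{-17.82k}=o_k(1)=o(1)$. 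The main obstacle is the transfer from $\pr[\,\cdot\,|\cA]$ to $\hat\pr$: this is exactly why the per-configuration bound must be exponentially small in $k$ to absorb the polynomial-in-$n$ penalty from Corollary~\ref{Cor_fixedpoint}. Beyond that, the remaining work is bookkeeping via Lemmas~\ref{Lemma_tame} and~\ref{Lemma_degs}, which guarantee that the variables involved have generic degrees so that the $\pd$-type labels behave typically and the degree/type constraints change the per-clause estimates by at most an $O(1)$ factor.
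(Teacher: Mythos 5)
The overall shape of your argument---a first-moment union bound over sets $S$, a per-clause estimate of order $k^2 s/n$, and a geometric sum with base $\exp(-\Omega(k))$---matches what the paper does. But the mechanism you propose for handling the conditioning on $\cA$ does not work, and that is not a peripheral issue but the crux of the lemma.

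You propose to remove the coupling induced by $\vec d$, $\vec m$ by passing to the product space $\hat\Omega$ via Lemma~\ref{Lemma_switch} and Corollary~\ref{Cor_switch}, paying only a $\hat\pr[B]^{-1}=\Theta(n^{k|\cL|/2})$ penalty. However, the elements of $\hat\Omega$ defined in Section~\ref{Sec_first} are tuples of \emph{truth values} $(\hat\SIGMA_{ij}(\ell))$---they record, for each clause slot, whether the literal there is true under $\sigma$, and nothing else. The event ``$\PHIdm$ contains a dense set $S$'' is a statement about which \emph{variables} occupy which slots, i.e., about the bipartite incidence structure of the formula, which is simply not present in $\hat\Omega$. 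The switching argument computes $\pr[\cS(\sigma)\cap\cB(\sigma)]$ precisely because both of those events are functions of truth values alone; it has no bearing on structural events like denseness. So the transfer you invoke is a category error, not a quantitative loss you can absorb. The paper's proof instead stays in the conditional configuration model: it conditions on the specific clauses that each $x\in S$ supports, and then bounds, for each such clause, the probability that one of its $\sigma$-false slots is matched to a clone of a variable in $S$. That probability is controlled by $\Vol(V_t\cap\sigma^{-1}(0)\cap S)/\Vol(V_t\cap\sigma^{-1}(0))$, a quantity determined by $\vec d$ alone, and the negative correlation of the matching does the rest.

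A second, smaller gap: your per-clause estimate $O(k^2 s^2/(n^2 2^k))$ tacitly assumes $\Vol(S)=O(kr|S|)$, which Lemma~\ref{Lemma_degs} does \emph{not} guarantee for small $|S|$---the correct bound is $\Vol(S)\leq 10|S|\max\{kr,\ln(n/|S|)\}$, and the $\ln(n/|S|)$ term dominates when $|S|$ is polylogarithmic. The paper tracks this factor explicitly in the exponent and splits the sum over $s$ at $\ln(n)/n$; your ``bookkeeping via Lemmas~\ref{Lemma_tame} and~\ref{Lemma_degs}'' remark waves at this issue but your displayed computation does not incorporate it, so the claimed geometric bound $\bigl(C''k^{20}2^{-8.91k}\bigr)^s$ is not justified uniformly down to $s=2$. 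Both issues would need to be repaired before the argument stands.
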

\begin{proof}
We may assume that $\vec d$ satisfies~(\ref{eqdegs}); we emphasize that this is a property of $\vec d$ only,
	regardless of $\vec m$ or the event $\cA$.
Let $\cD(S)$ be the event that $S\subset V$ is dense.
We may fix (i.e., condition on) the specific clauses supported by each variable $x\in S$.
Let $x\in S$ and let $i\in\brk m$ be the index of a clause supported by $x$.
Let $\ell$ be the type of clause $i$.
For each $t\in\cT$ let $V_t$ be the set of literals $l$ of type $t$.
Then the probability that clause $i$ contains another variable from $S$ is bounded by
	$$\sum_{j\in\brk k}\frac{\Vol(V_{\ell_j}\cap\sigma^{-1}(0)\cap S)}{\Vol(V_{\ell_j}\cap\sigma^{-1}(0))}
		\leq k\max_{t\in\cT}\bcfr{\Vol(V_t\cap\sigma^{-1}(0)\cap S)}{\Vol(V_t\cap\sigma^{-1}(0))}.$$
Since $|V_t|\geq n2^{-0.8k}$ for all $t$ \whp\ by \Lem~\ref{Lemma_p}, we have
	$\Vol(V_t)\geq\frac13 kr|V_t|\geq 30\cdot2^{0.2k}n$.
Furthermore, $\Vol(V_t\cap\sigma^{-1}(0))\geq\frac13\Vol(V_t)$ by the choice of $p(t)$.
Hence, (\ref{eqdegs}) yields
	$$\frac{\Vol(V_t\cap S\cap\sigma^{-1}(0))}{\Vol(V_t\cap\sigma^{-1}(0))}\leq
		\frac{\Vol(S)}{\frac13\Vol(V_t)}\leq
		\frac{\max\cbc{kr,\ln(n/|S|)}|S|}{2^{0.2k}n}.$$
Due to negative correlation, in total we obtain
	\begin{eqnarray*}
	\pr\brk{\PHIdm\in\cD(S)|\cA}&\leq&\bink{2k}{10}^{|S|}\cdot\bcfr{k\max\cbc{kr,\ln(n/|S|)}|S|}{2^{0.2k}n}^{10|S|}.
	\end{eqnarray*}
(The factor $\bink{2k}{10}^{|S|}$ accounts for the number of ways to choose $10$ out of the at most $2k$ clauses
that each variable in $S$ supports.)

For $0<s\leq 1/k^5$ let $X_s$ be the number of sets $S$ of size $|S|=sn$ for which $\cD(S)$ occurs.
Then
	\begin{eqnarray*}
	\Erw\brk{X_s|\cA}&\leq&\bink{n}{sn}
		\bink{2k}{10}^{|S|}\bcfr{k\max\cbc{kr,\ln(n/|S|)}|S|}{2^{0.2k}n}^{10|S|}
		\leq\brk{\frac{\eul\bc{k^2\max\cbc{kr,-\ln(s)}s}^{10}}{s4^{k}}}^{sn}\\
		&=&\brk{\frac{\eul k^{20}\max\cbc{s^9(kr)^{10},s^9\ln^{10}(s)}}{4^{k}}}^{sn}.
	\end{eqnarray*}
There are two cases to consider.
First, if $s\leq\ln(n)/n$, then the term in the brackets is clearly $o(1)$.
Second, if $s\geq\ln(n)/n$, then we have the following bound.
Since $s\leq s_{\max}=2^{-0.99k}$ and as $x\mapsto x^9\ln^{10}x$ is monotonically increasing for $x<0.1$,
we have
	$$s^9\ln^{10}(s)\leq s_{\max}^9\ln^{10}s_{\max}\leq s_{\max}^9(kr)^{10}\leq 2^{10k-8.91k}=2^{1.09k}.$$
Hence, the entire bracket is bounded by $2^{-k/2}$.
Summing over all possible $s$ and using Markov's inequality completes the proof.
\qed\end{proof}

Let us call a variable $x\in V$ \emph{$\xi$-rigid} in  $\sigma\in\cS(\Phi)$
if for any $\tau\in\cS(\Phi)$ with $\tau(x)\neq\sigma(x)$ we have $\dist(\sigma,\tau)\geq\xi n$.

\begin{corollary}\label{Cor_rigid}
\Whp\ for $\vec d$ chosen from $\vec D$ and for $\vec m$ chosen from $\vec M_{\vec d}$  the following is true.
Let $\sigma\in\cbc{0,1}^V$ and 
let $\cA$ be the event that $\sigma$ is a $p$-satisfying assignment of $\PHIdm$.
Moreover, let $Y$ be the number of variables that are not $2^{-0.99k}$-rigid.
Then 
	$$\pr\brk{Y(\PHIdm)\leq (1+2k^{-2})2^{-k}n~|~\cA}=1-o(1).$$
\end{corollary}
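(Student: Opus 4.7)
The plan is to exploit the self-contained core $R$ constructed just before Lemma~\ref{Lemma_selfcontained}: I will argue that every $x \in R$ is $2^{-0.99k}$-rigid, that every attached $x \notin R$ is also $2^{-0.99k}$-rigid, and then count the remaining (non-supporting or non-attached) variables directly via Lemma~\ref{Lem_supporting}(2) and Corollary~\ref{Cor_attached}. Throughout I condition on $\cA$ together with the high-probability conclusions of Lemmas~\ref{Lem_supporting}, \ref{Lemma_selfcontained}, \ref{Lemma_dense} and Corollary~\ref{Cor_attached}; transferring the first three of those from the unconditional $\PHI$-setting to the conditional $\PHIdm$-setting is handled by Lemma~\ref{Lemma_dmanneal}(2) combined with the symmetry of all assignments sharing the $p$-marginals of $\sigma$.

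The key step is to show that every $x \in R$ is $2^{-0.99k}$-rigid. Suppose toward contradiction that some $x \in R$ is not, witnessed by a $\tau \in \cS(\PHIdm)$ with $\tau(x) \neq \sigma(x)$ and $|S| \leq 2^{-0.99k} n$, where $S := \{y \in V : \sigma(y) \neq \tau(y)\}$. For any $y \in S \cap R$, self-containment guarantees that $y$ supports at least ten clauses whose other $k-1$ variables all lie in $R$; in each of those clauses the only literal made true by $\sigma$ is $y$'s, so in order for $\tau$ to satisfy it some other variable of the clause must also be flipped, which necessarily lies in $S \cap R$. Hence every $y \in S \cap R$ supports at least ten clauses each featuring another member of $S \cap R$. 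After removing the negligibly many variables that support more than $2k$ clauses (which we may do by Poisson concentration on $\Po(\lambda)$ with $\lambda = kr/(2^k-1) = O(k)$), this exhibits $S \cap R$ as a dense set of size at most $n 2^{-0.99k}$, contradicting Lemma~\ref{Lemma_dense}.

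Rigidity of attached variables outside $R$ now follows immediately: if $x \notin R$ is attached via a supported clause $C$ whose other $k-1$ variables all lie in $R$, then flipping $x$ would break $C$ and could only be repaired by flipping one of those $R$-variables, which by the previous paragraph requires Hamming distance at least $2^{-0.99k} n$. The non-rigid variables are therefore contained in the union of (i) variables supporting no clause at all, of which Lemma~\ref{Lem_supporting}(2) yields at most $(1 + k^{-2}) 2^{-k} n$, and (ii) variables that support at least one clause but are not attached, of which Corollary~\ref{Cor_attached} yields at most $n/(k^2 2^k) = k^{-2} 2^{-k} n$. Summing gives $Y \leq (1 + 2 k^{-2}) 2^{-k} n$ whp given $\cA$, as desired.

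The main obstacle is the dense-set contradiction in the second paragraph: one must force the ``other flipped variable'' in each supported clause into $R$ rather than merely into $V$, which is precisely what self-containment is designed to deliver and what allows the invocation of Lemma~\ref{Lemma_dense}. A secondary technical point is the clean transfer of the ``expected-number-of-bad-$\sigma$'' bounds of the supporting lemmas into per-$\sigma$ conditional probabilities under $\cA$; this is routine via Lemma~\ref{Lemma_dmanneal}(2) once one notes that Proposition~\ref{XProp_firstMoment} guarantees $\pr[\cA] \geq \exp(-O(n/2^k))$, comfortably absorbing the much smaller prefactors $\exp(-\Omega(n/(k^c 2^k)))$ supplied by those lemmas.
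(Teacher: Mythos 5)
Your proof mirrors the paper's almost exactly: the rigidity of $R$-variables and attached variables is established via the self-contained structure of $R$ and Lemma~\ref{Lemma_dense} (your dense-set contradiction is precisely what the paper compresses into ``$\Delta$ is dense by the construction of $R$''), and the non-rigid count follows from Lemma~\ref{Lem_supporting}(2) and Corollary~\ref{Cor_attached}. One caveat on the transfer remark: $\pr\brk{\cA}$ is $\exp(-\Theta(n))$, not $\exp(-O(n/2^k))$; what actually makes the transfer go through is that the relevant ratio $\Erw\abs{\cS(\PHI)}/\Erw\brk{Z_p(\PHIdm)}$ equals $\exp(O(n2^{-k}k^{-9}))$ by the sharper $O(k^{-9})$ error in Proposition~\ref{Prop_firstMoment}, which is dominated by the $\exp(-\Omega(n2^{-k}k^{-6}))$ prefactors of Lemma~\ref{Lem_supporting} and Corollary~\ref{Cor_attached} --- a comparison of $k$-polynomial factors, not of orders of magnitude in $2^{-k}$.
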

\begin{proof}
Let $\xi=2^{-0.99k}$.
We condition on the event $\cA$.
Consider a variable $z$ that is either attached or in $R$.
Let $\tau\in\cS(\PHIdm)$ be such that $\tau(z)\neq\sigma(x)$ and $\dist(\sigma,\tau)< n/2^{0.99k}$.
Because $z$ is attached or in $R$, the set
	$$\Delta=\cbc{x\in R:\tau(x)\neq\sigma(x)}$$
is non-empty.
Moreover, $\Delta$ is dense by the construction of $R$.
Thus, \Lem~\ref{Lemma_dense} shows that $\dist(\sigma,\tau)\geq\abs{\Delta}\geq n/2^{0.99k}$ \whp\
Hence, \whp\ all $z$ that are either attached or in $R$ are $\xi$-rigid.

Further, let $\cR$ be the event that 
\begin{itemize}
\item no more than $(1+1/k^2)2^{-k}n$ variables support no clause at all and
\item at most
		$n/(k^22^k)$ variables $x\not\in R$ that support at least one clause are not attached
\end{itemize}
Then \Lem~\ref{Lem_supporting} and \Cor~\ref{Cor_attached} imply 
together with \Prop~\ref{Prop_firstMoment} that
	$$\pr\brk{\PHIdm\in\cR~|~\cA}=1-o(1).$$
Hence, the total number of vertices that either do not support a clause or that are not attached is bounded by 
	$(1+2/k^2)2^{-k}n$
\qed\end{proof}

\noindent\emph{Proof of \Lem~\ref{Lemma_frozen}.}
Suppose that $r=2^k\ln2-c$.
By \Prop~\ref{Prop_firstMoment}  we have
	$$\frac1n\ln\Erw\brk{Z(\PHIdm)}\geq 2^{-k}\bc{c-\frac{\ln2}2+o_k(1)}$$
\whp\
Now, assume that in $\sigma\in\cS(\PHIdm)$ all but at most
$(1+2k^{-2})2^{-k}n$ variables are $\xi$-rigid with $\xi=2^{-0.99k}$.
If $\tau\in\cS(\PHIdm)$ is such that $\dist(\sigma,\tau)\leq\xi n$, then $\sigma,\tau$ agree on all
$\xi$-rigid variables of $\sigma$.
Hence,
	$$\frac1n\ln\cbc{\tau\in\cS(\PHIdm):\dist(\sigma,\tau)\leq\xi n}\leq(1+2k^{-2})2^{-k}=(1+o_k(1))2^{-k}\ln2.$$
As $c-\frac{\ln2}2+o_k(1)>(1+o_k(1))\ln2$ for $c>\frac32\ln2+\eps$ and $k$ large enough, the assertion follows.
\qed

\subsection{Proof of \Lem~\ref{Lemma_off}}\label{Sec_off}

By Markov's inequality, it suffices to bound the expected number of paris
$(\sigma,\tau)\in\cS(\PHI)$ at the given Hamming distances.
More precisely, let $\cZ_x$ be the number of pairs $(\sigma,\tau)\in\cS(\PHI)$ such that $\dist(\sigma,\tau)/n=x$.
Let $h(x)=-x\ln x-(1-x)\ln(1-x)$ and set
	$$q(x)=r\cdot\ln\bc{1-2^{1-k}+2^{-k}(1-x)^k}.$$
Then
	\begin{equation}\label{eqoff1}
	\frac1n\ln\Erw\brk{\cZ_x}\leq \ln2+h(x)+q(x).
	\end{equation}
We consider several cases.
\begin{description}
\item[Case 1: $k2^{-k}\leq x\leq(2k)^{-1}$.]
	We have
		\begin{eqnarray*}
		h(x)+q(x)+\ln2&\leq&\ln2+x(1-\ln x)+r\bc{-2^{1-k}+2^{-k}(1-x)^k}\\
			&\leq&\ln2+x(1-\ln x)+2^k\ln2\bc{-2^{1-k}+2^{-k}(1-x)^k}+c2^{1-k}\quad\mbox{[as $r=2^k\ln2-c$]}\\
			&\leq&x(1-\ln x)-\ln2+(1-x)^k\ln2\\
			&\leq&x(1-\ln x)-\ln2+(1-kx+k^2x^2)\ln2\\
			&\leq&x(1-\ln x)-kx+k^2x^2=x\brk{1-\ln x-k+k^2x}.
		\end{eqnarray*}
	If $k2^{-k}\leq x\leq k^{-2}$, then $1-\ln x-k+k^2x\leq1-\ln k+1<0$.
	Moreover, if $k^{-2}\leq x\leq (2k)^{-1}$, then
		$1-\ln x-k+k^2x\leq1+2\ln k-\frac34k<0$.
\item[Case 2: $(2k)^{-1}<x<0.01$.]
	We have
		\begin{eqnarray*}
		h(x)+q(x)+\ln2&\leq&\ln2+x(1-\ln x)+r\bc{-2^{1-k}+2^{-k}(1-x)^k}\\
			&\leq&\ln2+x(1-\ln x)-\frac{r}{2^{k-1}}+\frac r{2^k}\exp(-kx)\\
			&\leq&x(1-\ln x)-\ln2+\frac c{2^{k-1}}+\exp(-kx)\ln2\\
			&\leq&x(1-\ln x)+\frac c{2^{k-1}}+(\exp(-1/2)-1)\ln2\\
		\end{eqnarray*}
	The last expression is negative for $x<0.05$ (and $k$ not too small).
\item[Case 3: $0.01<x<\frac12-k2^{-k/2}$.]
	We have
		\begin{eqnarray*}
		h'(x)&=&-\ln x+\ln(1-x),\\
		q'(x)&=&-\frac{kr(1-x)^{k-1}}{2^k-2+(1-x)^k}\geq -\frac{kr(1-x)^{k-1}}{2^k-2}=\exp(-\Omega(k)).
		\end{eqnarray*}
	Hence, for $0.01\leq x<\frac12-k^{-2}$ we have
		$h'(x)+q'(x)>0$.
	Thus, $h(x)+q(x)+\ln2$ is monotonically increasing in this interval.
	Now, let $x=\frac12-\eps$ for $k^{-2}\leq\eps\leq k2^{-k/2}$.
	Then
		\begin{eqnarray*}
		h(x)&=&\ln2-2\eps^2+O(\eps^3),\\
		q(x)&=&(2^k\ln2-c)\bc{-2^{1-k}+2^{1-2k}+2^{-k}\bc{\frac12-\eps}^k+\tilde O(8^{-k}}\\
			&=&-2\ln2+2^{1-k}\bc{c+\ln2}+\tilde O(4^{-k}).
		\end{eqnarray*}
	Consequently,
		\begin{eqnarray*}
		h(x)+q(x)+\ln2&=&-2\eps^2+O(\eps^3)+O(2^{-k})<0.
		\end{eqnarray*}
\item[Case 4: $\frac12+k2^{-k/2}\leq x<1$.]
	The function $h(x)$ satisfies $h(1-y)=h(y)$ for $0<y<1/2$.
	Furthermore, $q(x)$ is monotonically decreasing.
	Therefore, for any $x\geq \frac12+k2^{-k/2}$ we have
		$$\ln2+h(x)+q(x)\leq \ln2+h\bc{\frac12-k2^{-k}}+q\bc{\frac12-k2^{-k}}<0.$$
\end{description}
In each case we have $\ln2+h(x)+q(x)<0$.
Thus, the assertion follows from~(\ref{eqoff1}) and Markov's inequality.

\section{The second moment}\label{Sec_secondMoment}

\emph{Throughout this section we assume that 
	 $r=2^{-k}\ln2-\rho$ with $\rho=\frac32\ln2-\eps_k$  for some sequence $\eps_k=o_k(1)$ that tends to $0$ sufficiently slowly.
	We also assume that $k\geq k_0$ for a large enough constant $k_0>3$.
	We let $\vec d$ denote a signed degree sequence $\vec d$ chosen from $\vec D$ and we let $\vec m$ denote
	a vector chosen from $\vec M_{\vec d}$.
	By \Lem~\ref{Lemma_Gamma} we may assume that 
		$|m(\ell)-\gamma_\ell n|\leq n^{2/3}$ for all $\ell$.
	Let $\SIGMA,\TAU\in\cbc{0,1}^V$ denote a pair of assignments chosen uniformly and independently from the set of all assignments
	with $p$-marginals.
	Finally, let $\xi=k2^{-k/8}$. 
	}

\subsection{Outline}
\label{ssec:secMomOutline}

The \bemph{overlap} of two assignments $\sigma,\tau\in\cbc{0,1}^V$ is the vector
	$$\cO(\sigma,\tau)=
		\brk{\frac1{km \pi(t)}\sum_{l\in L:\pd(l)=t}
			 d_{l}\cdot\vecone_{\sigma(l)=1}\cdot\vecone_{\tau(l)=1}}_{t\in\cT}.$$
In words, $\cO(\sigma,\tau)$ captures the fraction of occurrences of literals of each type $ t$ that are
true under both $\sigma,\tau$.
Since $\SIGMA,\TAU$ are independent and have $p$-marginals, we have
	$$\Erw\brk{\cO(\SIGMA,\TAU)}=\brk{t^2}_{t\in\cT}.$$
Set $\cO^*=\brk{t^2}_{t\in\cT}$.

Let $Z''$ be the number of pairs $(\sigma,\tau)$ of $\pd$-judicious satisfying assignments of $\PHIdm$ 
such that
	\begin{equation}\label{eqdistCritical}
	\dist(\sigma,\tau)\in\brk{\frac12-k^22^{-k/2},\frac12+k^22^{-k/2}}.
	\end{equation}
Moreover, let 
	$Z'$ 
be the number of pairs $(\sigma,\tau)$ of $\pd$-judicious  satisfying assignments of $\PHIdm$ such that 
	$$\norm{\cO(\sigma,\tau)-\cO^*}_\infty\leq\xi.$$

\begin{proposition}\label{Prop_in}
\Whp\ $\vec d,\vec m$ are such that
	$\Erw\brk{Z''(\PHIdm)}\leq \Erw\brk{Z'(\PHIdm)}+o(1)
			.$
\end{proposition}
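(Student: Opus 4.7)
The plan is to split $Z'' = Y_{\mathrm{near}} + Y_{\mathrm{far}}$, where $Y_{\mathrm{near}}$ counts the pairs contributing to $Z''$ with $\|\cO(\sigma,\tau)-\cO^*\|_\infty\leq\xi$ and $Y_{\mathrm{far}}$ counts those with $\|\cO(\sigma,\tau)-\cO^*\|_\infty>\xi$. Any pair counted by $Y_{\mathrm{near}}$ is automatically counted by $Z'$, so $Y_{\mathrm{near}}\leq Z'$ deterministically. It therefore suffices to show that $\Erw\brk{Y_{\mathrm{far}}(\PHIdm)}=o(1)$ \whp\ over the choice of $\vec d,\vec m$.

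To bound $\Erw\brk{Y_{\mathrm{far}}}$, I would stratify by the discretized overlap vector $\cO$. Each entry $\cO_t$ lies on a lattice of size $O(n)$ and the vector has bounded dimension $|\cT|$, so there are only $n^{O_k(1)}$ admissible values of $\cO$. It thus suffices to prove that for each admissible $\cO$ with $\|\cO-\cO^*\|_\infty>\xi$, the expected number of pairs $(\sigma,\tau)$ of $\pd$-judicious satisfying assignments with overlap exactly $\cO$ and Hamming distance in the critical range is $n^{-\omega(1)}$. Factor this expectation as (pairs $(\sigma,\tau)$ with $\pd$-marginals and overlap $\cO$ respecting the Hamming constraint) times (probability that a fixed such pair is jointly $\pd$-judicious satisfying). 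The first factor is a multinomial entropy quantity: for each literal type $t$ one counts how to split the $km\pi(t)$ occurrences into the four joint truth states (true-true, true-false, false-true, false-false) with totals determined by $\cO_t$ and the $\pd$-marginals. The second factor is handled by the obvious extension of the probability space $(\hat\Omega,\hat\pr)$ of \Sec~\ref{Sec_first}: attach a pair of Bernoulli marks to each clause-literal, insist that every clause contain a $\sigma$-true \emph{and} a $\tau$-true literal, and tune the Bernoulli parameters via a vector-valued analog of \Lem~\ref{Lemma_fixedpoint} so that the conditioning on the joint marginals contributes only a polynomial factor.

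Combining the two factors yields $\frac1n\ln\Erw\brk{\text{pairs with overlap }\cO}=F(\cO)+o(1)$ for an explicit smooth function $F$. Because $\cO^*$ is the expected overlap of an independent pair with $\pd$-marginals, $F$ is stationary at $\cO^*$, and $F(\cO^*)=\frac2n\ln\Erw\brk{Z(\PHIdm)}=O(2^{-k})$ by \Prop~\ref{XProp_firstMoment}. A Taylor expansion using \Lem~\ref{Lemma_chainrule} gives $F(\cO)=F(\cO^*)+\frac12(\cO-\cO^*)^\top H(\cO-\cO^*)+O(\|\cO-\cO^*\|^3)$, and the key claim is that the Hessian $H$ is negative definite uniformly in $k$, i.e., $-H\succeq cI$ for some constant $c>0$ independent of $k$. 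Granting this, for $\|\cO-\cO^*\|_\infty>\xi=k2^{-k/8}$ one obtains $F(\cO)\leq O(2^{-k})-\Omega(k^22^{-k/4})=-\Omega(2^{-k/4})$, easily enough to absorb the polynomial number of admissible $\cO$ in a union bound and conclude $\Erw\brk{Y_{\mathrm{far}}}=o(1)$.

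The main obstacle is verifying this uniform negative-definiteness of $H$ at $\cO^*$. It is precisely the property that pins down the Belief-Propagation map $p_{\mathrm{BP}}$: a generic marginal map would leave a flat direction along which the second moment degenerates, and exactly this degeneracy is what forced the factor-$k$ loss in~\cite{yuval}. Establishing the bound will require a careful expansion of the joint-satisfiability probability, combined with the strict concavity of the four-state entropy at its symmetric point, and this is where the choice of $p=p_{\mathrm{BP}}$ in Section~\ref{XSec_theRandomVar} is essential. The Hamming-distance restriction in the definition of $Z''$ plays only a mild auxiliary role, excluding the degenerate regimes $\sigma\equiv\tau$ and $\sigma\equiv\neg\tau$ where the local quadratic expansion of $F$ would otherwise break down.
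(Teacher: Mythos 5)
Your decomposition $Z'' = Y_{\mathrm{near}} + Y_{\mathrm{far}}$ with $Y_{\mathrm{near}} \leq Z'$ is the correct reduction, but the route you take to bound $\Erw[Y_{\mathrm{far}}]$ is essentially a preview of the \emph{central-region} machinery (the apparatus behind Propositions~\ref{Prop_centre}, \ref{Prop_Pcentre} and~\ref{Prop_entropy}), and it is much heavier than what the paper actually uses here. The paper's proof of Proposition~\ref{Prop_in} relies only on two crude global estimates. First, \Lem~\ref{Lemma_toverlap} gives an Azuma-type bound on the \emph{number} of pairs with deviant per-type overlap, namely $|H''|\leq 4^n\exp[-\Omega(\xi^2 n(t))]\leq 4^n\exp[-\Omega(k^2 2^{-k})n]$. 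Second, \Lem~\ref{Lemma_crude} gives a bound on the joint satisfiability probability that is \emph{uniform in the overlap vector} $\cO$, using only the fact that the aggregate overlap $\bar\cO$ is pinned to $\tfrac14+\tilde O(2^{-k/2})$ by the Hamming constraint (\Lem~\ref{Lemma_totaloverlap}): $\pr[\sigma,\tau\in\cS(\PHId)]\leq 4^{-n}\exp[O(k2^{-k})n]$. Multiplying, $\exp[-\Omega(k^2 2^{-k})n + O(k2^{-k})n]=o(1)$, with no stratification over $\cO$, no fixed-point lemma for the pair, and no Hessian computation needed. The asymmetry $k^2$ vs.\ $k$ in the exponent is what makes the crude bound suffice.

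Your route also has a genuine gap. The local Taylor expansion $F(\cO)=F(\cO^*)+\tfrac12(\cO-\cO^*)^{\top}H(\cO-\cO^*)+O(\|\cO-\cO^*\|^3)$ only controls $F$ in a neighborhood of $\cO^*$ where the cubic remainder is dominated by the quadratic term. But the Hamming constraint in the definition of $Z''$ only fixes the \emph{aggregate} overlap $\bar\cO$; the per-type entries $\cO_t$ can deviate by order $1$ from $t^2$. So the far region $\|\cO-\cO^*\|_\infty>\xi$ extends well beyond the reach of a local quadratic expansion, and the conclusion $F(\cO)\leq O(2^{-k})-\Omega(\|\cO-\cO^*\|_\infty^2)$ does not follow from the local expansion alone. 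To close that gap you would need a \emph{global} concavity or large-deviation bound on the number of pairs at a given overlap, which is exactly what the paper's \Lem~\ref{Lemma_toverlap} (Azuma) or the generating-function machinery in \Sec~\ref{sec:enumeration} provides. A secondary issue: your claim that the uniform negative-definiteness of $H$ is ``precisely the property that pins down the BP map'' is not right for this part. The dominant negative term in $H$ comes from the four-state entropy and is $\Theta(1)$-strong for any map $p$ bounded away from $\{0,1\}$; the role of $p_{\mathrm{BP}}$ is to make the first moment large and the expansion centered at $\cO^*$ work out in the central region, not to enforce concavity in the far region.
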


\noindent
The proof of \Prop~\ref{Prop_in} can be found in \Sec~\ref{Sec_in}.
Let $Z$ denote the number of $p$-satisfying assignments of $\PHId$.
Furthermore, let $\cZ$ signify the number of good $p$-satisfying assignments of $\PHId$.
In \Sec~\ref{Sec_centre} we are going to establish the following.

\begin{proposition}\label{Prop_centre}
\Whp\ $\vec d,\vec m$ are such that
	$\Erw\brk{Z'(\PHIdm)}\leq
			C\cdot \Erw\brk{Z(\PHIdm)}^2.$
\end{proposition}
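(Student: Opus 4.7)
The plan is to apply the Laplace method to the decomposition
\[
\Erw[Z'(\PHIdm)] = \sum_{\cO\,:\,\|\cO - \cO^*\|_\infty \leq \xi} N(\cO) \cdot P(\cO),
\]
where $N(\cO)$ counts pairs $(\sigma,\tau)\in\cH_p(\vec d)^2$ with $\cO(\sigma,\tau)=\cO$, and $P(\cO)$ is the probability that such a fixed pair is \emph{jointly} $\pd$-judicious and satisfying in $\PHIdm$. The target is to show that this sum is dominated by its value at the product overlap $\cO^*$, where it factors cleanly into $\Erw[Z(\PHIdm)]^2$ up to a constant multiplicative factor $C=C(k)$.

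First I would compute $P(\cO)$ by setting up a joint analogue of the auxiliary space $\hat\Omega$ used in Lemma~\ref{XLemma_probability}. For each clause type $\ell\in\cL$, index $i\in\brk{m(\ell)}$, and position $j\in\brk k$, I introduce a joint pair $(\hat\SIGMA_{ij}(\ell),\hat\TAU_{ij}(\ell))$ whose four-outcome distribution has parameters chosen so that, after conditioning on both clauses being satisfied, the conditional marginal of each coordinate at position $j$ recovers $\ell_j$ and the conditional type-$t$ overlap recovers $\cO_t$. The existence and local uniqueness of such parameters follows from Lemma~\ref{Lemma_inverseFunction} applied near the product point corresponding to $\cO^*$, in exact parallel with Lemma~\ref{Lemma_fixedpoint}. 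For $N(\cO)$, since $\cO^* = \Erw_{\SIGMA,\TAU}[\cO(\SIGMA,\TAU)]$ by the definition of $p$-marginals, the local limit theorem (Lemma~\ref{lem:locallimit}) gives
\[
N(\cO) = |\cH_p(\vec d)|^2 \cdot \Theta(n^{-|\cT|/2}) \exp[n\,E(\cO)],
\]
where $E(\cO)$ has $E(\cO^*)=0$ and a negative-definite Hessian at $\cO^*$.

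Next I would Taylor-expand $F(\cO) := \frac1n \ln[N(\cO) P(\cO)]$ around $\cO^*$ using Lemma~\ref{Lemma_chainrule}. The first-order term vanishes because $\cO^*$ is a critical point of both $E$ and $\frac1n \ln P$, and the second-order term takes the form $-\frac12 (\cO - \cO^*)^T H (\cO - \cO^*)$ with $H$ the sum of the entropic Hessian from $E$ and the ``energetic'' Hessian from $\frac1n \ln P$. Once negative definiteness of $H$ is established (see below), Laplace summation bounds
\[
\sum_{\cO\,:\,\|\cO - \cO^*\|_\infty \leq \xi} N(\cO) P(\cO) \leq C' \cdot n^{|\cT|/2} \cdot N(\cO^*)\, P(\cO^*).
\]
At $\cO^*$ the joint distribution on the pair version of $\hat\Omega$ degenerates to the product of two independent copies of the single-assignment space, so $P(\cO^*)$ factors as $\pr[\sigma\in\cS_p(\PHIdm)]^2$. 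Combined with $N(\cO^*) = \Theta(n^{-|\cT|/2})\,|\cH_p(\vec d)|^2$, this delivers $N(\cO^*) P(\cO^*) \cdot n^{|\cT|/2} = \Theta(1) \cdot \Erw[Z(\PHIdm)]^2$, which is the claimed bound.

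The hard part will be verifying strict negative definiteness of $H$. The entropic piece is negative definite (from the local limit theorem), but the energetic piece is positive definite: pairs with overlap exceeding $\cO^*$ are positively correlated, making joint satisfaction more likely. The precise tuning between these opposing contributions is exactly what the Belief Propagation-motivated choice $p_{\mathrm{BP}}$ in~(\ref{XeqpBP}) is designed to deliver; under $r < \rBP$, the combined $|\cT|\times|\cT|$ Hessian should have eigenvalues bounded below by a positive quantity depending only on $k$. Concretely, I would Taylor-expand $\ln P$ in the joint parameters $q_{\ell,j,\cO}$ from the inverse-function step, combine with the entropic Hessian via Lemma~\ref{Lemma_chainrule}, and reduce negative definiteness to an explicit linear-algebra check involving only $k$ and the weights $\pi(t)$ from Lemma~\ref{Lemma_gammaell}. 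This is the rigorous counterpart of the physicists' replica-symmetric stability analysis, and it is precisely the step that fails for $r > \rBP$: the Hessian becomes singular at the replica-symmetry-breaking threshold, beyond which the present argument cannot proceed.
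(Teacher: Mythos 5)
Your high-level strategy --- decompose $\Erw[Z']$ over overlap profiles, Taylor-expand around $\cO^*$, and sum by Laplace --- is broadly the paper's approach (\Sec~\ref{Sec_centre}), and your observation that at $\cO^*$ the joint auxiliary space degenerates to a product of two independent single-assignment copies is exactly part~3 of \Prop~\ref{Prop_Pcentre}. But two substantive misconceptions in your ``hard part'' paragraph would have caused trouble.

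First, the Hessian comparison is not a delicate balance. The paper establishes (\Prop~\ref{Prop_calculus2}, \Cor~\ref{Cor_omegacentre}) that the second derivatives of the ``energetic'' factor $\frac1n\ln P$ are of size $\tilde O(4^{-k})$, whereas the entropic Hessian coming from the enumeration of pairs with a given overlap (\Prop~\ref{Prop_entropy}) is of size $\Omega_k(1)$ per coordinate. This exponential-in-$k$ scale separation makes definiteness immediate; there is no ``explicit linear-algebra check'' to run and no cancellation to tune, and the precise choice $p=p_{\mathrm{BP}}$ plays no role in it. What $p_{\mathrm{BP}}$ buys is a large enough \emph{first} moment near the critical density (\Lem~\ref{XLemma_probability}), not a favorable second-moment Hessian.

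Second, and relatedly, the constraint $r<\rBP$ does not enter \Prop~\ref{Prop_centre} at all. The overlap-concentration argument sketched above holds for every density in the regime considered. The place where $\rho>\frac32\ln 2$ is really used is \Lem~\ref{Lemma_frozen}, where one shows that typical cluster sizes are at most $\Erw[Z]$ --- i.e.\ it enters the reduction from $Z_p$ to $Z_{p,\mathrm{good}}$, not the second-moment overlap computation. Your picture of the Hessian ``becoming singular at the replica-symmetry-breaking threshold'' is a physics heuristic that does not correspond to any singularity in this proof's structure.

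A smaller technical point: $\pr[\cS\mid\cdot]$ depends on the full $|\cL|\times k$ overlap \emph{matrix} $\omega$, not only on the $|\cT|$-dimensional vector $\cO$. If you condition on $\cO$ alone, the auxiliary-space parameters $q^{ab}_{\ell,j}$ you propose are underdetermined (after symmetrization you have $2k|\cL|$ free parameters against only $k|\cL|+|\cT|$ constraints), so the inverse-function step as you describe it does not produce a unique tilt. The paper resolves this by first conditioning on the full matrix $\omega$ (\Prop~\ref{Prop_Pcentre}), then showing $\omega$ concentrates around a canonical value $\bar\omega$ given $\cO$ (\Prop~\ref{Prop_omegacentre}), and only then summing over $\cO$. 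Your one-level expansion could in principle absorb this marginalization, but you would have to account for it explicitly.
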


\begin{corollary}\label{Cor_centre}
\Whp\ $\vec d,\vec m$ are such that
	$\Erw\brk{\cZ^2(\PHIdm)}\leq C'\cdot \Erw\brk{\cZ(\PHIdm)}^2.$
\end{corollary}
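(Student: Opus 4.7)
The plan is to bound $\cZ^2(\PHIdm)$ pointwise by dichotomizing pairs of good assignments according to their Hamming distance, and then to chain the bounds of Propositions~\ref{Prop_in} and~\ref{Prop_centre} together with Lemma~\ref{XLemma_theLocalCluster}. Fix $\vec d,\vec m$ as in the hypothesis. Every good $p$-judicious satisfying assignment is in particular a $p$-judicious satisfying assignment, so every ordered pair $(\sigma,\tau)\in\cS_{p,\mathrm{good}}(\PHIdm)^2$ with $\dist(\sigma,\tau)/n\in[1/2-k^22^{-k/2},1/2+k^22^{-k/2}]$ contributes to $Z''(\PHIdm)$. On the other hand, if the Hamming distance of such a pair lies outside that band, then by the very definition of $\cC_\sigma(\PHIdm)$ we have $\tau\in\cC_\sigma(\PHIdm)$ (since $\tau\in\cS(\PHIdm)$). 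Consequently,
$$\cZ^2(\PHIdm)\leq Z''(\PHIdm)+\sum_{\sigma\in\cS_{p,\mathrm{good}}(\PHIdm)}|\cC_\sigma(\PHIdm)|.$$

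Next I would invoke the goodness criterion~(\ref{Xeqgood}), which yields $|\cC_\sigma(\PHIdm)|\leq \Erw[Z_p(\PHIdm)]$ for each $\sigma\in\cS_{p,\mathrm{good}}(\PHIdm)$. Note that, for fixed $\vec d,\vec m$, the quantity $\Erw[Z_p(\PHIdm)]$ is deterministic. Hence taking expectations gives
$$\Erw[\cZ^2(\PHIdm)]\leq \Erw[Z''(\PHIdm)]+\Erw[\cZ(\PHIdm)]\cdot\Erw[Z_p(\PHIdm)].$$
Now I would apply Proposition~\ref{Prop_in} to get $\Erw[Z''(\PHIdm)]\leq\Erw[Z'(\PHIdm)]+o(1)$, followed by Proposition~\ref{Prop_centre} to obtain $\Erw[Z'(\PHIdm)]\leq C\cdot\Erw[Z_p(\PHIdm)]^2$; both hold \whp\ over $\vec d,\vec m$. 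Finally, Lemma~\ref{XLemma_theLocalCluster} gives $\Erw[\cZ(\PHIdm)]\sim\Erw[Z_p(\PHIdm)]$ \whp, so $\Erw[Z_p(\PHIdm)]\leq 2\Erw[\cZ(\PHIdm)]$ whp. Combining,
$$\Erw[\cZ^2(\PHIdm)]\leq 4C\cdot\Erw[\cZ(\PHIdm)]^2+o(1)+2\Erw[\cZ(\PHIdm)]^2.$$

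By Proposition~\ref{XProp_firstMoment}, $\Erw[\cZ(\PHIdm)]$ is exponentially large in $n$ \whp, so the additive $o(1)$ term is swallowed by an arbitrarily small multiplicative factor of $\Erw[\cZ(\PHIdm)]^2$. This yields $\Erw[\cZ^2(\PHIdm)]\leq C'\cdot\Erw[\cZ(\PHIdm)]^2$ for a suitable $C'=C'(k)$. There is really no obstacle in this derivation since all the heavy lifting has already been done; the only points requiring a little care are to observe that every out-of-band partner of a good $\sigma$ automatically lies in $\cC_\sigma(\PHIdm)$ (so no second-moment-style computation is needed for those pairs), and to justify the absorption of the $o(1)$ using the first moment lower bound.
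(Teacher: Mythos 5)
Your proof is correct and follows essentially the same decomposition as the paper: split ordered pairs of good $p$-judicious satisfying assignments into in-band pairs (handled by $Z''\leq Z'+o(1)$ via \Prop~\ref{Prop_in} and $\Erw[Z']\leq C\Erw[Z_p]^2$ via \Prop~\ref{Prop_centre}) and out-of-band pairs (handled pointwise via the goodness criterion~(\ref{Xeqgood})), then use \Lem~\ref{XLemma_theLocalCluster} and the exponential first-moment lower bound to absorb constants and the additive $o(1)$. The only cosmetic difference is that you spell out the intermediate bound $\sum_{\sigma\in\cS_{p,\mathrm{good}}}\abs{\cC_\sigma}\leq\cZ\cdot\Erw[Z_p]$ explicitly, whereas the paper compresses this into a one-line bound on its auxiliary count $Y$; incidentally you also use the threshold $k^2 2^{-k/2}$ consistently with the cluster definition, which the paper's displayed inequality garbles as $k^2 2^{-k}$.
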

\begin{proof}
Let $Y$ be the number of pairs $(\sigma,\tau)$ of good $p$-satisfying assignments of $\PHIdm$
such that
	\begin{equation}\label{eqCorCentre0}
	\dist(\sigma,\tau)\not\in\brk{\frac12-k^22^{-k},\frac12+k^22^{-k}}.
	\end{equation}
By definition, for any good $\sigma$ there are at most $\Erw\brk{Z(\PHIdm)}$ $p$-satisfying $\tau$ such that~(\ref{eqCorCentre0}) holds.
Therefore,
	\begin{equation}\label{eqCorCentre1}
	\Erw\brk{Y(\PHIdm)}\leq 
			\Erw\brk{Z(\PHIdm)}^2.
	\end{equation}
Combining~(\ref{eqCorCentre1}) with \Prop~\ref{Prop_in} and~\ref{Prop_centre}, we obtain for $\vec d$ chosen from $\vec D$ \whp
	\begin{eqnarray}\nonumber
	\Erw\brk{\cZ^2(\PHIdm)}&\leq&\Erw\brk{(Y+Z'')(\PHIdm)}\\
		&\leq& \Erw\brk{(Y+Z')(\PHIdm)}+o(1)\leq(C+1)\Erw\brk{Z(\PHIdm)}^2+o(1).
		\label{eqCorCentre2}
	\end{eqnarray}
By \Prop~\ref{Prop_firstMoment} we have $\Erw\brk{Z(\PHIdm)}=\exp(\Omega(n))$.
Furthermore, \Prop~\ref{Prop_firstGood} yields $\Erw\brk{Z(\PHIdm)}\sim\Erw\brk{\cZ(\PHIdm)}$.
Consequently, (\ref{eqCorCentre2}) implies
	$\Erw\brk{\cZ^2(\PHIdm)}\leq(C+2)\Erw\brk{\cZ(\PHIdm)}^2$, as desired.
\qed\end{proof}

\noindent
The second part of \Thm~\ref{XThm_secondPlain} follows directly from \Cor~\ref{Cor_centre}.

\subsection{Proof of \Prop~\ref{Prop_in}}\label{Sec_in}

We begin by relating the overlap to the Hamming distance.

\begin{lemma}\label{Lemma_totaloverlap}
\Whp\ $\vec d,\vec m$ are such that
for all pairs $\sigma,\tau\in\cbc{0,1}^V$ satisfying~(\ref{eqdistCritical}) we have
	$$\bar\cO(\sigma,\tau)=\frac1{km}\sum_{l\in L}d_l\vecone_{\sigma(l)=1}\vecone_{\tau(l)=1}
			=\frac14+\tilde O(2^{-k/2}).$$
\end{lemma}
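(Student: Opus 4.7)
The plan is to reduce $\bar\cO(\sigma,\tau)$ to a combination of degree sums that can be controlled either by the $p$-marginal property or by uniform Chernoff concentration. Write $\Vol^+(T) = \sum_{x \in T}(d_x + d_{\neg x})$ for $T \subset V$, and $D(\sigma,\tau) = \{x \in V : \sigma(x) \neq \tau(x)\}$. The first step is the elementary identity
\[ \sum_{l \in L} d_l \vecone_{\sigma(l)=1}\vecone_{\tau(l)=1} = \frac{1}{2}\biggl[\sum_{l \in L} d_l \sigma(l) + \sum_{l \in L} d_l \tau(l) - \Vol^+(D(\sigma,\tau))\biggr], \]
which follows by decomposing each sum according to the four possible values of $(\sigma(x),\tau(x)) \in \{0,1\}^2$ and matching contributions: the left-hand side contributes $d_x$ iff $\sigma(x)=\tau(x)=1$ and $d_{\neg x}$ iff $\sigma(x)=\tau(x)=0$, and the cases where $\sigma(x) \neq \tau(x)$ are exactly what $\Vol^+(D)$ subtracts.

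Next I would pin down the two linear sums via the $p$-marginal property: since $\sum_{l: p(l)=t}(\sigma(l)-t) d_l = O(1)$ for every type $t \in \cT$ and $|\cT| = O(\sqrt{k 2^k \ln k})$, summing over types gives $\sum_l d_l \sigma(l) = \sum_l d_l p(l) + O(|\cT|)$. Proposition~\ref{Prop_firstMoment}(3) then identifies this with $km \cdot (1/2 + O(2^{-k}))$, and similarly for $\tau$. So only $\Vol^+(D(\sigma,\tau))$ remains to be controlled, and the hypothesis provides $|D(\sigma,\tau)|/n = 1/2 + O(k^2 2^{-k/2})$.

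The heart of the argument is a uniform degree-sum concentration: whp over $\vec d$ chosen from $\vec D$, every $T \subset V$ with $\bigl||T|/n - 1/2\bigr| \leq k^2 2^{-k/2}$ satisfies $\Vol^+(T) = |T| \cdot kr \cdot (1 + \tilde O(2^{-k/2}))$. To prove this I would first analyse the uniform random formula $\PHI$, where for any fixed such $T$, $\Vol^+_{\PHI}(T) \sim \Bin(km, |T|/n)$ has mean $\mu = \Theta(n k 2^k)$; a Chernoff bound shows that for any relative error $\epsilon \geq C 2^{-k/2}/\sqrt k$ with $C$ sufficiently large, the failure probability is at most $\exp(-\Omega(\epsilon^2 \mu)) \leq 2^{-n(1+\delta)}$ for some $\delta>0$ provided $k \geq k_0$. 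The union bound over the at most $O(n) \cdot \binom{n}{\lfloor n/2 \rfloor} = 2^{n + O(\log n)}$ admissible sets $T$ therefore closes, and Lemma~\ref{Lemma_danneal} transfers the conclusion to $\vec d \sim \vec D$ at the cost of a harmless factor $\Theta(\sqrt n)$.

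Applying this to $T = D(\sigma,\tau)$ yields $\Vol^+(D)/km = 1/2 + \tilde O(2^{-k/2})$, and substituting into the identity of the first step delivers $\bar\cO(\sigma,\tau) = \frac{1}{4} + \tilde O(2^{-k/2})$, as claimed. The main obstacle is the uniform concentration step: one needs the per-set Chernoff exponent to exceed the $n \ln 2$ entropy of the family of admissible $T$, which is comfortable here but essentially uses the standing assumption that $k$ is a sufficiently large absolute constant; all other steps are either elementary bookkeeping or direct invocations of earlier results.
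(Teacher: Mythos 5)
Your decomposition identity $\sum_{l}d_l\vecone_{\sigma(l)=1}\vecone_{\tau(l)=1}=\frac12\bigl[\sum_l d_l\sigma(l)+\sum_l d_l\tau(l)-\Vol^+(D(\sigma,\tau))\bigr]$ is correct and takes a genuinely different route from the paper. The paper instead writes $\bar\cO$ directly as $\frac1{km}\sum_{x:\sigma(x)=\tau(x)}\frac{d_x+d_{\neg x}}2$ plus an error of order $\frac1{km}\sum_x|d_x-d_{\neg x}|$, and controls both pieces deterministically from Lemma~\ref{Lemma_tame}: part~(2) kills the asymmetry term, and part~(3) (the top $n$ literal degrees sum to $\frac{km}2(1+\tilde O(2^{-k/2}))$, combined with Lemma~\ref{Lemma_degs} to absorb the $O(k^22^{-k/2}n)$ excess/deficit literals) pins down the degree mass of any variable set of size $\sim n/2$, with no fresh union bound. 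Your uniform Chernoff concentration for $\Vol^+(T)$ is also sound — the exponent $\Theta(\epsilon^2 nk2^k)$ already beats the $n\ln 2$ entropy at $\epsilon=2^{-k/2}$, so you don't even need the extra $\sqrt k$ margin — but it re-proves something the paper extracts from an existing lemma. (Also, since the paper defines $\vec D$ to \emph{be} the law of $\vec d(\PHI)$, the transfer via Lemma~\ref{Lemma_danneal} is superfluous.)

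There is, however, a genuine gap in your treatment of the two linear sums. You invoke the $p$-marginal property and Proposition~\ref{Prop_firstMoment}(3) to get $\frac1{km}\sum_l d_l\sigma(l)=\frac12+O(2^{-k})$, but the lemma is stated for \emph{all} $\sigma,\tau\in\{0,1\}^V$ satisfying~(\ref{eqdistCritical}), with no $p$-marginal hypothesis, and $p$-marginality is not implied by a constraint on $\dist(\sigma,\tau)$ alone. As written your argument proves a strictly weaker statement. The fix is easy and does not need $p$-marginals: write $\sum_l d_l\sigma(l)=\sum_x d_{\neg x}+\sum_x(d_x-d_{\neg x})\sigma(x)$. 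By Lemma~\ref{Lemma_tame}(2), $\sum_x|d_x-d_{\neg x}|=\tilde O(2^{k/2})n=\tilde O(2^{-k/2})km$, so both $\sum_x d_{\neg x}$ and the correction term are within $\tilde O(2^{-k/2})km$ of $km/2$, yielding $\frac1{km}\sum_l d_l\sigma(l)=\frac12+\tilde O(2^{-k/2})$ for \emph{every} $\sigma$. Since the target error is $\tilde O(2^{-k/2})$ anyway, this weaker control on the linear sums still closes the budget.
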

\begin{proof}
By \Lem~\ref{Lemma_tame} \whp
	\begin{eqnarray*}
	\cO&=&\frac1{km}\sum_{x\in V}\frac{d_x}2\vecone_{\sigma(x)=\tau(x)}+O(|d_x^+-d_x^-|)\\
		&=&\tilde O(2^{-k/2})+\frac1{km}\sum_{x\in V}\frac{d_x}2\vecone_{\sigma(x)=\tau(x)}\\
		&=&\tilde O(2^{-k/2})+\frac1{km}\sum_{x\in\cM}\frac{d_x}2
			=\frac14+\tilde O(2^{-k/2}),
	\end{eqnarray*}
as claimed.
\qed\end{proof}

\begin{lemma}\label{Lemma_crude}
\Whp\ $\vec d,\vec m$ are such that
for any $\sigma,\tau\in\cbc{0,1}^V$ that satisfy~(\ref{eqdistCritical}) and that have $p$-marginals
we have
	$$\frac1n\ln\pr\brk{\sigma,\tau\in\cS(\PHId)}\leq-2\ln2+O(k2^{-k}).$$
\end{lemma}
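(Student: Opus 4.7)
The plan is to bound $\pr[\sigma,\tau\in\cS(\PHId)]$ by a clause-wise analysis in the clone-matching description of $\PHId$, and then to combine the per-clause probabilities across the $m$ clauses using the same $\hat\Omega$/local-limit machinery that powered the first-moment calculation. First I would set $F_\sigma = \sum_{l:\sigma(l)=0} d_l$ and define $F_\tau, F_{\sigma,\tau}$ analogously, counting clones falsified by $\sigma$, by $\tau$, and by both, respectively. Since both $\sigma$ and $\tau$ have $p$-marginals, Proposition~\ref{Prop_firstMoment}(3) yields $F_\sigma/(km) = F_\tau/(km) = 1/2 + O(2^{-k})$. A direct inclusion-exclusion combined with Lemma~\ref{Lemma_totaloverlap} (which states $\bar\cO(\sigma,\tau) = 1/4 + \tilde O(2^{-k/2})$) then gives $F_{\sigma,\tau}/(km) = 1/4 + \tilde O(2^{-k/2})$.

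Second, in the clone-matching model the probability that all $k$ slots of a fixed clause are matched to clones drawn from a set of volume $F$ equals $(F/(km))^k(1 + O(k^2/m))$. Applying this to the three bad events ``all $k$ slots false under $\sigma$'', ``all false under $\tau$'', and ``all false under both'', and invoking inclusion-exclusion, I would obtain a per-clause good probability of at most $1 - 2^{1-k} + O(k\cdot 4^{-k})$. To convert this per-clause bound into an estimate of the joint probability across all $m$ clauses, I would adapt the $\hat\Omega$ construction used around Lemma~\ref{Lemma_switch} to pairs: introduce independent Bernoulli pairs $(\hat\SIGMA_{ij}(\ell), \hat\TAU_{ij}(\ell))$ at every position $(\ell,i,j)$ with a joint law whose conditional distribution given the marginal-matching event reproduces the true joint law of $(\sigma,\tau)$ on $\PHId$, and apply Lemma~\ref{lem:locallimit} to absorb the conditioning into a factor $\exp(o(n))$. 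Taking logs, multiplying by $m = rn$, and substituting $r = 2^k\ln 2 - O(1)$ would then yield $\tfrac1n\ln\pr[\sigma,\tau\in\cS(\PHId)] \leq -2\ln 2 + O(k 2^{-k})$.

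The main technical obstacle is the pair version of the $\hat\Omega$ construction: one must verify, via an inverse-function-theorem argument analogous to Lemma~\ref{Lemma_fixedpoint}, that a suitable joint law on Bernoulli pairs exists whose $\hat\pr[\cdot \mid B]$-marginals match the prescribed single- and two-point statistics, and that $\hat\pr[B]$ remains polynomially small in $n$. Both should go through with only cosmetic modifications, since the target marginals $(1/2 + O(2^{-k}),\, 1/2+O(2^{-k}),\, 1/4+\tilde O(2^{-k/2}))$ are well-separated from the degenerate boundary. Since only a crude bound is claimed here, one could instead bypass $\hat\Omega$ entirely via a sequential clone-exposure argument, bounding $\pr[\text{clause } i\text{ good}\mid\text{clauses } 1,\ldots,i-1\text{ good}]$ by the unconditional per-clause probability plus an $O(k/m)$ correction per step and multiplying; the cumulative correction in the exponent is $o(n)$, well within the $O(k 2^{-k})$ error budget.
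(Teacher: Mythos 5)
Your proposal follows essentially the same route as the paper: both pass to a product space of independent Bernoulli pairs $(\hat\SIGMA_{ij},\hat\TAU_{ij})$, condition on the empirical-frequency event $B$ (absorbed as an $\exp(o(n))$ factor via the local limit theorem), and compute the satisfaction probability per clause by inclusion--exclusion using $q^{11}\approx\frac14$, $q^{11}+q^{10}\approx\frac12$ from \Prop~\ref{Prop_firstMoment}(3) and \Lem~\ref{Lemma_totaloverlap}. One small simplification the paper uses that you over-anticipate: for this crude bound there is no fixed-point/inverse-function step as in \Lem~\ref{Lemma_fixedpoint} --- the parameters $q^{ab}$ are simply set equal to the prescribed empirical joint frequencies, which makes $\hat\pr[\cdot\mid B]$ exactly the required conditional law with no equation to solve, and the clause-type index $\ell$ can be dropped entirely.
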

\begin{proof}
Much as in the first moment calculation in \Sec~\ref{Sec_first}, here it is convenient to work with a different probability space.
Namely, we let $\hat\Omega$ be the set of all vectors $(\hat\sigma_{ij},\hat\tau_{ij})_{i\in\brk m,j\in\brk k}$
of $0/1$ pairs.
We define a probability distribution on $\hat\Omega$ 
in which the \emph{pairs} $(\hat\sigma_{ij},\hat\tau_{ij})_{i\in\brk m,j\in\brk k}$ are mutually independent random variables.
For any $i\in\brk m,j\in\brk k$ we let 
	$\hat\pr\brk{(\hat\sigma_{ij},\hat\tau_{ij})=(a,b)}=q^{ab}$,
where the parameters $q^{ab}$ are chosen so that the following equations hold:
	\begin{eqnarray*}
	q^{11}&=&\bar\cO(\sigma,\tau),\\
	q^{10}&=&q^{01},\\
	q^{11}+q^{10}&=&\frac1{km}\sum_{l\in L}d_l\vecone_{\sigma(l)=1},\\
	\sum_{a,b=0}^1q^{ab}&=&1.
	\end{eqnarray*}
Let $(\hat\SIGMA,\hat\TAU)$ denote a random pair chosen from this distribution.

\Prop~\ref{Prop_firstMoment} and \Lem~\ref{Lemma_totaloverlap} ensure that \whp\ $\vec d$ is such that
	\begin{equation}\label{eqcrude1}
	q^{11}=\frac14+\tilde O(2^{-k/2}),\qquad q^{11}+q^{10}=\frac12+O(2^{-k}).
	\end{equation}
Thus, we may assume that~(\ref{eqcrude1}) holds.

Let $B$ be the event that 
	$$\sum_{i,j}\hat\SIGMA_{ij}=\sum_{l\in L}d_l\vecone_{\sigma(l)=1},\quad
		\sum_{i,j}\hat\TAU_{ij}=\sum_{l\in L}d_l\vecone_{\tau(l)=1}\quad\mbox{and}\quad
			\sum_{i,j}\hat\SIGMA_{ij}\hat\TAU_{ij}
				=km\bar\cO(\sigma,\tau).$$
In addition, let $S$ be the event that $\max_{j\in\brk k}\hat\SIGMA_{ij}=\max_{j\in\brk k}\hat\TAU_{ij}$ for all $i\in\brk m$.
We claim that
	\begin{equation}\label{eqcrude2}
	\pr\brk{\sigma,\tau\in\cS(\PHId)}=\pr\brk{S|B}. 
	\end{equation}
Indeed, any $d$-compatible formula $\Phi$ induces a pair $(\hat\sigma|_\Phi,\hat\tau|_\Phi)\in\hat\Omega$
defined by $\hat\sigma_{ij}|_\Phi=\sigma(\Phi_{ij})$, $\hat\tau_{ij}|_\Phi=\tau(\Phi_{ij})$.
Clearly, the distribution of the random pair $(\hat\sigma|_{\PHId},\hat\tau|_{\PHId})$ is identical to the distribution of $(\hat\SIGMA,\hat\TAU)$ given $B$.

Due to independence, the probability of the event $S$ is easy to compute.
Indeed, with $q=q^{10}+q^{11}$ inclusion/exclusion yields
	$$\hat\pr\brk{S}=\brk{1-2q^k+(1-2q+q^{11})^k}^m$$
Furthermore, $\hat\pr\brk{B}=\exp(o(n))$ by the local limit theorem for the multinomial distribution.
Hence, (\ref{eqcrude2}) yields
	\begin{eqnarray*}
	\frac1n\ln\pr\brk{\sigma,\tau\in\cS(\PHId)}&=&
	\frac1n\ln\hat\pr\brk{S|B}\leq o(1)+\frac1n\ln\frac{\hat\pr\brk{S}}{\hat\pr\brk B}=o(1)+\frac1n\ln\hat\pr\brk{S}\\
		&\sim&r\ln\brk{1-2q^k+(1-2q+q^{11})^k}\leq-r\brk{2q^k-(1-2q+q^{11})^k}.
	\end{eqnarray*}
Using~(\ref{eqcrude1}) and simplifying completes the proof.
\qed\end{proof}

\begin{lemma}\label{Lemma_toverlap}
Let $\lambda>2^{-k}$ and $t\in\cT$.
For $\vec d$ chosen from $\vec D$ the following is true \whp\
Let $\cH''$ be the set of all pairs $\sigma,\tau\in\cbc{0,1}^V$ such that
	$|\cO_t(\sigma,\tau)-1/4|>\lambda.$
Then 
	$$\abs{\cH''}\leq4^n\exp\brk{-\frac{\lambda^2n(t)}{18}}.$$
\end{lemma}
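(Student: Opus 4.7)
\smallskip
\noindent\textbf{Proof plan.}
The statement is a purely combinatorial concentration bound and I would derive it from Hoeffding's inequality applied to a uniformly random pair of assignments. Draw $\SIGMA,\TAU\in\cbc{0,1}^V$ independently and uniformly at random; then $\abs{\cH''}=4^n\cdot\pr\brk{\abs{\cO_t(\SIGMA,\TAU)-1/4}>\lambda}$, and $\Erw\brk{\cO_t(\SIGMA,\TAU)}=1/4$ because $\pr\brk{\SIGMA(l)=\TAU(l)=1}=1/4$ for every literal $l$. Hence the task reduces to showing
\[
\pr\brk{\abs{\cO_t(\SIGMA,\TAU)-1/4}>\lambda}\;\leq\;\exp\brk{-\lambda^2 n(t)/18}.
\]

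The decisive observation is that $km\pi(t)\cdot\cO_t(\SIGMA,\TAU)$ decomposes as a sum of independent contributions indexed by \emph{variables} rather than by literals. Writing $L(t)=\cbc{l\in L:\pd(l)=t}$, set
\[
Z_x=d_x\vecone_{x\in L(t)}\vecone_{\SIGMA(x)=\TAU(x)=1}+d_{\neg x}\vecone_{\neg x\in L(t)}\vecone_{\SIGMA(\neg x)=\TAU(\neg x)=1}.
\]
Each $Z_x$ is a function of the single coordinate pair $(\SIGMA(x),\TAU(x))$, so the family $(Z_x)_{x\in V}$ is mutually independent, and $\sum_x Z_x=km\pi(t)\cdot\cO_t(\SIGMA,\TAU)$. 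Because the events $\cbc{\SIGMA(x)=\TAU(x)=1}$ and $\cbc{\SIGMA(\neg x)=\TAU(\neg x)=1}$ are mutually exclusive, $Z_x$ takes values in $[0,M_x]$ with $M_x=\max\cbc{d_x\vecone_{x\in L(t)},\,d_{\neg x}\vecone_{\neg x\in L(t)}}$, and a straightforward per-variable check gives $\sum_{x\in V}M_x^2\leq\sum_{l\in L(t)}d_l^2$.

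Now apply Hoeffding's inequality to $\sum_x Z_x$ with deviation $\lambda\cdot km\pi(t)$, which yields an upper bound of $2\exp\bigl[-2\lambda^2(km\pi(t))^2/\sum_x M_x^2\bigr]$, and then invoke part (2) of \Prop~\ref{Prop_firstMoment} to control $\sum_{l\in L(t)}d_l^2$ in terms of $km\pi(t)$ and $n(t)$. The resulting exponent is a constant multiple of $\lambda^2 n(t)$; together with $\lambda>2^{-k}$ and $n(t)\geq 2^{-3k/4}n$ from \Lem~\ref{Lemma_p}, one has $\lambda^2 n(t)\to\infty$, which absorbs the Hoeffding factor of $2$ into the denominator $18$. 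Multiplying the resulting probability bound by the prefactor $4^n$ gives the asserted bound on $\abs{\cH''}$.

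The computation is essentially routine; the only subtle point is the case where both $x$ and $\neg x$ belong to $L(t)$, which can happen only in the bulk regime $\pd(x)=\pd(\neg x)=1/2$. In that case the two indicator products defining $Z_x$ are not independent, but they are mutually \emph{exclusive}, so the range of $Z_x$ is still bounded by $\max(d_x,d_{\neg x})$ rather than the crude $d_x+d_{\neg x}$—and it is precisely this tighter range bound that makes $\sum_x M_x^2\leq\sum_{l\in L(t)}d_l^2$ tight enough for the stated $\lambda^2 n(t)$ scaling in the exponent.
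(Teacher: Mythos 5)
Your proof is correct and takes essentially the same route as the paper: the paper applies Azuma's bounded-differences inequality to $\cO_t(\SIGMA'',\TAU'')$ viewed variable-by-variable (with Lipschitz constants $\tfrac{2d_{l_0}}{km\pi(t)}$), then invokes part~2 of \Prop~\ref{Prop_firstMoment}, exactly as you do; swapping Azuma for Hoeffding on the explicit independent decomposition $\sum_x Z_x$ is a cosmetic change, since the coordinates are independent anyway. One small overstatement in your final remark: using the crude range $Z_x\in[0,d_x+d_{\neg x}]$ instead of $[0,M_x]$ only costs a factor of $2$ in $\sum_x M_x^2$, which is absorbed into the constant $18$ just as easily, so the tighter range bound is not actually needed for the claimed scaling.
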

\begin{proof}
Let $\SIGMA'',\TAU''\in\cbc{0,1}^V$ be chosen uniformly and independently.
Then $\Erw\brk{\cO_t(\SIGMA'',\TAU'')}=\frac14$.
Furthermore, $\cO_t$ satisfies the following Lipschitz condition.
\begin{quote}
If $\sigma',\tau'',\sigma'',\tau''\in\cbc{0,1}^V$ are such that
	there is a literal $l_0$ with $\cT(l_0)=t$ such that
		$\sigma''(l)=\sigma'(l),\tau''(l)=\tau'(l)$ for all $l\not\in\cbc{l_0,\neg l_0}$,
then
	$$\abs{\cO_t(\sigma'',\tau'')-\cO_t(\sigma',\tau')}\leq\frac{2d_{l_0}}{km\pi(t)}.$$
\end{quote}
Therefore, by Azuma's inequality for any $\lambda>0$ we have
	\begin{eqnarray*}
	\pr\brk{\abs{\cO_t(\SIGMA'',\TAU'')-\frac14}>\lambda}
		\leq\exp\brk{-\frac{\lambda^2(km\pi(t))^2}{9\sum_{l\in\cL:\cT(l)=t}d_l^2}}
			\leq\exp\brk{-\frac{\lambda^2n(t)}{18}},
	\end{eqnarray*}
where the last step follows from part 2 of \Prop~\ref{Prop_firstMoment}.
\qed\end{proof}

\noindent\emph{Proof of \Prop~\ref{Prop_in}.}
Let $H''$ be the set of pairs $(\sigma,\tau)$ such that
\begin{itemize}
\item $\sigma,\tau$ satisfy~(\ref{eqdistCritical}) and have $p$-marginals, and
\item $\norm{\cO(\sigma,\tau)-\cO^*}_{\infty}>\xi$.
\end{itemize}
Then by \Lem~\ref{Lemma_toverlap}
	and the second part of \Prop~\ref{Prop_firstMoment}
	 \whp\ (over the choice of $\vec d$) we have
	\begin{eqnarray}\label{eqtoverlap1}
	\abs{H''}&\leq&4^n\exp\brk{-\frac{\xi^2n(t)}{36}}\leq4^n\exp\brk{-\frac{k^2n}{36\cdot 2^k}}.
	\end{eqnarray}
Furthermore, by \Lem~\ref{Lemma_crude} \whp\ (again over the choice of $\vec d$) we have
	\begin{eqnarray}\label{eqtoverlap2}
	\pr\brk{\sigma,\tau\in\cS(\PHId)}&\leq&4^{-n}\exp\brk{\frac{O(k)}{2^k}}\qquad\mbox{for any }(\sigma,\tau)\in H''.
	\end{eqnarray}
Combining~(\ref{eqtoverlap1}) and~(\ref{eqtoverlap2}), we obtain that \whp\ $\vec d$ is such that
	\begin{eqnarray*}
	\Erw\brk{(Z''-Z')(\PHId)}&\leq&\sum_{(\sigma,\tau)\in H''}\pr\brk{\sigma,\tau\in\cS(\PHId)}
			\leq\abs{H''}4^{-n}\exp\brk{\frac{O(k)}{2^k}}=o(1).
	\end{eqnarray*}
Therefore, the definition of the distribution $\vec M_{\vec d}$ entails that \whp\ $\vec d$ is such that
	$$\Erw_{\vec m}\brk{\Erw\brk{(Z''-Z')(\PHIdm)}}=\Erw\brk{(Z''-Z')(\PHId)}=o(1).$$
Thus, the assertion follows from Markov's inequality.
\qed

\section{Proof of \Prop~\ref{Prop_centre}}\label{Sec_centre}

\emph{We keep the notation and the assumptions of \Sec~\ref{Sec_secondMoment}.
	}

\subsection{Overview}

For two assignments $\sigma,\tau$ and a formula $\Phi$ with signed degree distribution $\vec d$ we 
define a matrix
	$$\omega(\sigma,\tau,\Phi)=(\omega_{\ell,j}(\sigma,\tau,\Phi))_{\ell\in\cL,j\in\brk k}$$
by letting
	$\omega_{\ell,j}(\sigma,\tau,\Phi)$ be equal to the fraction of clauses of type $\ell$
		whose $j$th literal is true under both $\sigma,\tau$.
We call $\omega_{\ell,j}(\sigma,\tau,\Phi)$ the \bemph{overlap matrix} of $\sigma,\tau$ in $\Phi$.
Recalling that $\SIGMA,\TAU$ denote two independent uniformly distributed assignments with $p$-marginals,
we define $\vec\omega=\omega(\SIGMA,\TAU,\PHIdm)$; thus, $\vec\omega$ is a random matrix.
We use the symbol $\omega$ to denote (fixed, non-random) matrices $\omega=(\omega_{\ell,j})_{\ell\in\cL,j\in\brk k}$ with entries in $\brk{0,1}$.
Furthermore, we let $\omega_\ell=(\omega_{\ell,j})_{j\in\brk k}$ denote the $\ell$-row of such a matrix $\omega$.
Finally, let $\omega^*=(\omega^*_{\ell,j})$ be the  matrix with entries $\omega^*_{\ell,j}=\ell_j^2$ for all $\ell,j$.

In addition, let $\cS(\ell)$ be the event that both $\SIGMA,\TAU$ satisfy all clauses of type $\ell$ of $\PHIdm$.
Let $\cS=\bigcap_{\ell\in\cL}\cS(\ell)$.
Further, let $\cB(\ell,j)$ be the event that under both
	$$\frac1{m(\ell)}\sum_{i\in M_{\PHIdm}(\ell)}\SIGMA(\PHI_{\vec d,\vec m,i,j})\TAU(\PHI_{\vec d,\vec m,i,j})\doteq \ell_j,$$
i.e., the fraction of clauses of type $\ell$ whose $j$th literal is true equals $\ell_j+O(1/n)$.
Let
	$$\cB=\bigcap_{\ell\in\cL,\,j\in\brk k}\cB(\ell,j).$$
In \Sec~\ref{Sec_Pcentre} we are going to prove the following.

\begin{proposition}\label{Prop_Pcentre}
\Whp\ $\vec d,\vec m$ are such that the following holds.
Let $\cL'\subset\cL$ be a set of clause types and let $\cS'=\bigcap_{\ell\in\cL'}\cS(\ell)$.
\begin{enumerate}
\item For all $\omega=(\omega_{\ell,j})$ such that
	$|\omega_{\ell,j}-\omega_{\ell,j}^*\big|\leq k^{-12}$
for all $\ell\in\cL',j\in\brk k$
 we have the bound
	\begin{eqnarray*}
	\pr\brk{\cS'|\vec\omega\doteq\omega,\,\cB}
	&\leq&
			\pr\brk{\cS'|\vec\omega\doteq\omega^*,\,\cB}
			\exp\brk{\tilde O(4^{-k})\sum_{\ell\in\cL'}m(\ell)\norm{\omega_{\ell}-\omega_{\ell}^*}_2^2}.
	\end{eqnarray*}
\item We have
			\begin{eqnarray*}
		\pr\brk{\cS|\vec\omega\doteq\omega^*,\,\cB}
		&\leq&\pr\brk{\cS'|\vec\omega\doteq\omega^*,\,\cB}
			\exp\brk{-\Theta(2^{-k})\sum_{\ell\not\in\cL'}m(\ell)}. 
	\end{eqnarray*}
\item 
	For any assignment $\sigma$ with $p$-marginals we have
		\begin{eqnarray*}
		\pr\brk{\cS|\vec\omega\doteq\omega^*,\,\cB}&\leq&O(1)\cdot\pr\brk{\sigma\in\cS_p(\PHIdm)|\sigma\mbox{ is $p$-judicious}}^2.
		\end{eqnarray*}
\end{enumerate}
\end{proposition}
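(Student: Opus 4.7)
The plan is to extend the fictitious product-space calculation from \Sec~\ref{Sec_first} to pairs of assignments. For each $\ell\in\cL$, $i\in\brk{m(\ell)}$, $j\in\brk k$ I introduce mutually independent joint Bernoulli pairs $(\hat\SIGMA_{ij}(\ell),\hat\TAU_{ij}(\ell))$ with joint distribution $(q^{ab}_{\ell,j})_{a,b\in\cbc{0,1}}$, to be fixed later. Writing $S_i(\ell)=\cbc{\max_j\hat\SIGMA_{ij}(\ell)=1}$ and $T_i(\ell)=\cbc{\max_j\hat\TAU_{ij}(\ell)=1}$, and letting $\hat{\cB}$, $\hat\Omega(\omega)$ denote the events in this space corresponding respectively to $\cB$ and $\cbc{\vec\omega\doteq\omega}$ in the original space, the argument of \Lem~\ref{Lemma_switch} extends to give
\[
\pr\brk{\cS'\mid\vec\omega\doteq\omega,\,\cB}
=\hat\pr_{\vec q}\Big[\bigcap_{\ell\in\cL'}\bigcap_{i\in\brk{m(\ell)}}\bc{S_i(\ell)\cap T_i(\ell)}\,\Big|\,\hat{\cB}\cap\hat\Omega(\omega)\Big]
\]
for \emph{any} choice of $\vec q$. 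Via an inverse function argument paralleling \Lem~\ref{Lemma_fixedpoint}, I obtain parameters $\vec q^*(\omega)$ such that conditional on $S_i(\ell)\cap T_i(\ell)$, the pair satisfies $\hat\Erw\brk{\hat\SIGMA_{ij}}=\hat\Erw\brk{\hat\TAU_{ij}}=\ell_j$ and $\hat\Erw\brk{\hat\SIGMA_{ij}\hat\TAU_{ij}}=\omega_{\ell,j}$. With this choice, repeated applications of \Lem~\ref{lem:locallimit} show that $\hat\pr_{\vec q^*}\brk{\hat{\cB}\cap\hat\Omega(\omega)\mid\bigcap_{\ell\in\cL'}\cS(\ell)}$ and $\hat\pr_{\vec q^*}\brk{\hat{\cB}\cap\hat\Omega(\omega)}$ are both $\Theta(n^{-d/2})$ for the same integer $d$, so
\begin{equation}\label{eq:proposalkey}
\pr\brk{\cS'\mid\vec\omega\doteq\omega,\,\cB}
=\Theta(1)\cdot\prod_{\ell\in\cL'}\bc{\hat\pr_{\vec q^*(\omega)}\brk{S_1(\ell)\cap T_1(\ell)}}^{m(\ell)}.
\end{equation}

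For Part 1, I define $G_\ell(\omega_\ell)=\ln\hat\pr_{\vec q^*(\omega)}\brk{S_1(\ell)\cap T_1(\ell)}$ and Taylor-expand around $\omega^*_\ell$. At $\omega_\ell=\omega^*_\ell$ the parameters $\vec q^*(\omega^*)$ factor as the product of two independent copies of the single-assignment saddle point from \Lem~\ref{Lemma_fixedpoint}. Together with the Legendre-duality identity for exponential families this forces $\nabla G_\ell(\omega^*_\ell)$ to vanish: the chain-rule terms from $d\vec q^*/d\omega$ (computed via \Lem~\ref{Lemma_chainrule}) absorb the explicit first-order $\omega$-dependence because of the first-order optimality built into the definition of $\vec q^*(\omega)$. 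The Hessian of $G_\ell$ is of order $\tilde O(4^{-k})$, since the only $\omega$-sensitive piece of $\hat\pr_{\vec q}\brk{S_1\cap T_1}=1-\hat\pr_{\vec q}\brk{S_1^c}-\hat\pr_{\vec q}\brk{T_1^c}+\hat\pr_{\vec q}\brk{S_1^c\cap T_1^c}$ is the last term and $\hat\pr_{\vec q^*(\omega^*)}\brk{S_1^c\cap T_1^c}=\tilde O(4^{-k})$. Combining these estimates and summing over $\ell\in\cL'$ plugs into~(\ref{eq:proposalkey}) to yield the claimed quadratic bound.

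Parts 2 and 3 follow from evaluating the right-hand side of~(\ref{eq:proposalkey}) at $\omega=\omega^*$. For Part 2 the decoupling of the pairs $(\hat\SIGMA_{ij},\hat\TAU_{ij})$ gives $\hat\pr_{\vec q^*(\omega^*)}\brk{S_1(\ell)\cap T_1(\ell)}=\hat\pr_{\vec q^*(\omega^*)}\brk{S_1(\ell)}^2$, and a direct calculation mirroring \Lem~\ref{Lemma_probS} shows that this equals $1-\Theta(2^{-k})$ uniformly in $\ell\in\cL$; the product of the $\ell\notin\cL'$ factors then contributes the exponential penalty $\exp\brk{-\Theta(2^{-k})\sum_{\ell\notin\cL'}m(\ell)}$. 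For Part 3, the same independence factors $\hat\pr_{\vec q^*(\omega^*)}\brk{\cS\mid\hat{\cB}\cap\hat\Omega(\omega^*)}$ as the square of the single-assignment conditional probability $\hat\pr\brk{S\mid B}$ from \Sec~\ref{Sec_first}, with matching local-limit prefactors (the two-assignment conditioning dimension is exactly twice the single-assignment one, so the polynomial ratios cancel up to $O(1)$); \Rem~\ref{Remark_weNeedThis} then identifies $\hat\pr\brk{S\mid B}$ with $(1+o(1))\pr\brk{\sigma\in\cS_p(\PHIdm)\mid\sigma\text{ is $p$-judicious}}$, completing Part~3.

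The principal obstacle will be the vanishing of $\nabla G_\ell(\omega^*_\ell)$ in Part 1. The derivative of $G_\ell$ decomposes into an explicit $\omega$-piece (holding $\vec q^*$ fixed) plus chain-rule contributions from $d\vec q^*/d\omega$, and one must verify that these cancel at $\omega=\omega^*$ using the first-order optimality encoded by the defining equations of $\vec q^*(\omega)$. Without this cancellation a linear Taylor term of order $\tilde O(2^{-k})\norm{\omega_\ell-\omega^*_\ell}_1$ would remain, too coarse to fit inside the $\tilde O(4^{-k})\norm{\omega_\ell-\omega^*_\ell}_2^2$ bound of Part~1.
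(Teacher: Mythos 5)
The key identity (eq:proposalkey) is false, and this vitiates the rest of your Part~1 argument. You claim that with the choice $\vec q^*(\omega)$ that fixes the \emph{conditional} (given $S'$) moments, \emph{both} $\hat\pr_{\vec q^*}\brk{\hat\cB\cap\hat\Omega(\omega)\mid S'}$ and the \emph{unconditional} $\hat\pr_{\vec q^*}\brk{\hat\cB\cap\hat\Omega(\omega)}$ are $\Theta(n^{-d/2})$. Only the first is. Under $\hat\pr_{\vec q^*}$ the unconditional means are $q_{\ell,j}^*$ and $q^{11,*}_{\ell,j}$, which differ from the targets $\ell_j$ and $\omega_{\ell,j}$ by $\Theta(2^{-k})$; requiring the empirical frequencies to hit the targets therefore incurs an exponentially small factor $\exp\!\brk{\sum_{\ell\in\cL'} m(\ell)\,\psi_\ell(\omega)}$ coming from the large-deviation rate of the binomial counts. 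The correct log-probability is thus $\sum_\ell m(\ell)\,\cP_\ell(\omega)$ with $\cP_\ell=\phi_\ell-\psi_\ell$ where $\phi_\ell=\ln\hat\pr\brk{S_1(\ell)\cap T_1(\ell)}$ is your $G_\ell$ and $\psi_\ell$ collects the subtracted rate-function terms (the paper's \Lem~\ref{Lemma_Pl}). Your $G_\ell$ is only half the function.

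This is not a harmless omission. Your $G_\ell=\phi_\ell$ is \emph{not} stationary at $\omega^*$: the dominant dependence on $\omega$ runs through $q^{11}$ into the product $\prod_j(1-2q_{\ell,j}+q^{11}_{\ell,j})$, and $\partial\phi_\ell/\partial q^{11}_{\ell,h}\approx 4^{-(k-1)}$ while $\partial q^{11}_{\ell,h}/\partial\omega_{\ell,h}\approx 1$, so $\nabla\phi_\ell(\omega^*_\ell)=\Theta(4^{-k})\neq 0$. The stationarity needed for the quadratic Taylor bound comes precisely from the cancellation of $\nabla\phi_\ell$ against $\nabla\psi_\ell$ at $\omega^*$; this is the content of \Prop~\ref{Prop_calculus1}, which the paper proves via a concentration-of-overlap argument (Azuma plus a counting identity for the uniform measure on $B'$), not via a generic Legendre-duality identity. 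Without $\psi_\ell$ the cancellation never appears, leaving you with exactly the linear term of order $\tilde O(2^{-k})\norm{\omega_\ell-\omega^*_\ell}_1$ that you flag at the end as the possible obstruction — it is indeed the obstruction, and your current formulation cannot remove it. The Hessian bound also needs the $\psi_\ell$ piece: the naive second derivatives of $\psi_\ell$ in $(q^{11},\omega)$ are $O(1)$, not $O(4^{-k})$, and it is only after composing with the ``benign'' change of variables and exploiting the cancellation structure (conditions T2, T3 in the tame/benign machinery) that the composite Hessian becomes $\tilde O(4^{-k})$. Parts~2 and~3 stated alone are less dangerous because they evaluate at $\omega=\omega^*$, but they are derived from the same faulty identity, so they too need the corrected $\cP_\ell$, after which Part~3 matches \Rem~\ref{Remark_weNeedThis} as you intend.
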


For $\omega=(\omega_{\ell,j})$ define $\cO(\omega)\in\brk{0,1}^\cT$ by letting
	$$\cO_t(\omega)=\sum_{\ell\in\cL}\sum_{j\in\brk k:\ell_j=t}\frac{m(\ell)\omega_{\ell,j}}{km\pi(t)}.$$
We also let $\bar\omega$ denote the matrix with entries $\bar\omega_{\ell,j}=\cO_{\ell_j}(\omega)$ for all $\ell,j$.
We say that $\omega$  is \bemph{compatible} with $\cO\in\brk{0,1}^\cT$ if $\cO=\cO(\omega)$.
In \Sec~\ref{Sec_omegacentre} we are going to prove the following.

\begin{proposition}\label{Prop_omegacentre}
\Whp\ $\vec d$ has the following property.
For any $\omega=(\omega_{\ell,j})$ such that $\norm{\cO(\omega)-\frac14\vecone}_\infty\leq2\xi$ 
we have
	\begin{eqnarray*}
	\pr\brk{\vec\omega\doteq\omega|\cO(\vec\omega)\doteq\cO(\omega),\,\cB}&\leq&
			\pr\brk{\vec\omega\doteq{\bar\omega}|\cO(\vec\omega)\doteq\cO(\omega),\,\cB}\exp\brk{
				-\Omega_k\bc{1}\cdot\sum_{\ell\in\cL}m(\ell)\norm{\omega_\ell-\bar\omega_\ell}_2^2}.
	\end{eqnarray*}
\end{proposition}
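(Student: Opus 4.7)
My plan is to reduce \Prop~\ref{Prop_omegacentre} to a per-type contingency-table calculation and then identify $\bar\omega$ as the entropy-maximiser of the conditional distribution.

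First I will fix a type $t\in\cT$ and analyse the joint counts $n^{ab}_{\ell,j}$ of clause positions $(\ell,j)$ with $\ell_j=t$ whose attached clone has $(\SIGMA,\TAU)$-value $(a,b)$. Conditional on $\cB$, the row and column sums $n^{11}_{\ell,j}+n^{10}_{\ell,j}=n^{11}_{\ell,j}+n^{01}_{\ell,j}=m(\ell)\ell_j$ are pinned at every position, so the four entries at each $(\ell,j)$ are determined by the single free parameter $n^{11}_{\ell,j}=m(\ell)\omega_{\ell,j}$. Further conditioning on $\cO(\vec\omega)\doteq\cO(\omega)$ pins the aggregate $\sum_{\ell_j=t}n^{11}_{\ell,j}$ to $km\pi(t)\cO_t(\omega)$. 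Because the random matching of clones to clause positions is independent across the types $t\in\cT$, the probability on the left-hand side factorises over $t$, and within each type standard matching enumeration yields
\[
\pr\brk{\vec\omega\doteq\omega\,\big|\,\cO(\vec\omega)\doteq\cO(\omega),\,\cB}
\;\propto\;\prod_{t\in\cT}\prod_{(\ell,j):\ell_j=t}\binom{m(\ell)}{n^{11}_{\ell,j},\,n^{10}_{\ell,j},\,n^{01}_{\ell,j},\,n^{00}_{\ell,j}},
\]
up to a normalisation depending only on the fixed marginals and the fixed aggregate. Applying Stirling's formula converts each $\log$-multinomial into a per-position entropy $m(\ell)\cdot H(p^{ab}_{\ell,j})$ plus a $\log n$ correction.

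Second, I will identify the maximiser of this entropy by a Lagrangian/symmetry argument. The derivative of $\log\binom{m(\ell)}{n^{11}_{\ell,j},\ldots}$ with respect to $n^{11}_{\ell,j}$, with the per-position marginals fixed, equals $\log\bc{n^{00}_{\ell,j}n^{11}_{\ell,j}/(n^{01}_{\ell,j}n^{10}_{\ell,j})}$, so the Lagrangian first-order condition for the aggregate constraint demands this quantity to be constant in $(\ell,j)$ with $\ell_j=t$; combined with the marginals and the aggregate, this pins $\omega_{\ell,j}=\cO_t(\omega)=\bar\omega_{\ell,j}$. The second derivative in $n^{11}_{\ell,j}$ equals $-\sum_{a,b}(n^{ab}_{\ell,j})^{-1}$, and a Taylor expansion at the maximiser yields a per-position penalty $c\cdot m(\ell)(\omega_{\ell,j}-\bar\omega_{\ell,j})^2$ with $c>0$. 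Crucially $c=\Omega_k(1)$: since $\ell_j=\tfrac12+\tilde O(2^{-k/2})$ by the construction~(\ref{XeqpBP}) of $p_{\mathrm{BP}}$ and $\cO_t(\omega)=\tfrac14+O(\xi)$ with $\xi=o_k(1)$ by hypothesis, all four ratios $n^{ab}_{\ell,j}/m(\ell)$ stay bounded away from $0$ and $1$ by constants independent of $k$. For large deviations, strict concavity of the multinomial entropy on the bounded parameter interval $\omega_{\ell,j}\in[\max\{0,2\ell_j-1\},\ell_j]$ extends the same quadratic lower bound.

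Summing these per-position penalties over all $(\ell,j)$ and all $t\in\cT$ produces precisely the claimed factor $\exp\brk{-\Omega_k(1)\sum_{\ell}m(\ell)\|\omega_\ell-\bar\omega_\ell\|_2^2}$; the polynomial corrections from Stirling and from the local limit theorem applied to the normalisations are negligible and absorbed into the $\Omega_k(1)$ exponent. The main obstacle will be the careful bookkeeping of the contingency-table reduction and verifying that the curvature constant $c$ does not deteriorate with $k$; both ultimately come down to the $\Omega_k(1)$ separation of $n^{ab}_{\ell,j}/m(\ell)$ from $0$ and $1$, which follows from the $p_{\mathrm{BP}}$-types being within $\tilde O(2^{-k/2})$ of $\tfrac12$ and from the hypothesis $\norm{\cO(\omega)-\tfrac14\vecone}_\infty\leq 2\xi$.
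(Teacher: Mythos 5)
Your proposal is correct and reaches the same conclusion, but it travels a genuinely different route from the paper's. You do a direct combinatorial enumeration of the matching (contingency tables over each type $t$), invoke Stirling to turn the product of multinomial coefficients into an entropy, and then run a Lagrangian first-order analysis to exhibit $\bar\omega$ as the maximizer, with a second-order Taylor expansion supplying the quadratic penalty and an explicit check that the curvature constant is $\Omega_k(1)$. The paper instead works entirely inside the auxiliary product space $(\hat\Omega,\hat\pr)$ that was already set up for the second moment: by \emph{choosing} $q^{11}_{\ell,j}=\bar\omega_{\ell,j}$ and $q_{\ell,j}=\ell_j$, the conditional probability of interest translates (Fact~\ref{Fact_omegacentre}) to the $\hat\pr$-space; the local limit theorem then shows $\hat\pr[B\mid\hat\OMEGA\doteq\omega]\le O(1)\cdot\hat\pr[B\mid\hat\OMEGA\doteq\bar\omega]$ in one stroke, because under $\hat\OMEGA\doteq\bar\omega$ the conditional means of the $B$-constraints sit exactly at the required values; the remaining ratio $\hat\pr[\hat\OMEGA\doteq\omega]/\hat\pr[\hat\OMEGA\doteq\bar\omega]$ is a ratio of probabilities for the independent binomials $\Bin(m(\ell),\bar\omega_{\ell,j})$, and Chernoff gives the $\exp(-\Omega_k(1)\sum m(\ell)\|\omega_\ell-\bar\omega_\ell\|_2^2)$ penalty immediately. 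In effect, the paper makes $\bar\omega$ the maximizer by fiat through the choice of reference measure and never needs to run the Lagrangian; your argument instead derives the maximality of $\bar\omega$ from first principles. Your primal/counting picture is more transparent about why $\bar\omega$ is distinguished, but requires more bookkeeping (Stirling error terms, separate handling of large and small deviations, per-type normalisation constants); the paper's change-of-measure/Chernoff route is shorter precisely because the $\hat\pr$ machinery and \Lem~\ref{lem:locallimit} are already available from the first-moment analysis.
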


\noindent
Recall that $\cO^*=(t^2)_{t\in\cT}$.
In \Sec~\ref{Cor_omegacentre} we will prove the following.

\begin{corollary}\label{Cor_omegacentre}
\Whp\ $\vec d,\vec m$ are such that the following holds.
For any $\cO=(\cO_t)_{t\in\cT}$ such that
	$\norm{\cO-\frac14\vecone}_\infty\leq 2\xi$
we have
	\begin{eqnarray*}
	\pr\brk{\cS|\cO(\SIGMA,\TAU)\doteq\cO,\,\cB}
			&\leq&O(1)\cdot\pr\brk{\cS|\OMEGA\doteq\omega^*
				,\,\cB}
				\exp\brk{n\cdot\tilde O(2^{-k})\sum_{ t\in\cT}\pi( t)(\cO_{ t}-\cO_{ t}^*)^2}.
	\end{eqnarray*}
\end{corollary}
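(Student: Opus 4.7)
The plan is to expand
\[
\pr\brk{\cS\mid\cO(\SIGMA,\TAU)\doteq\cO,\,\cB}
\;=\;\sum_{\omega:\,\cO(\omega)\doteq\cO}
\pr\brk{\cS\mid\OMEGA\doteq\omega,\,\cB}\cdot
\pr\brk{\OMEGA\doteq\omega\mid\cO(\SIGMA,\TAU)\doteq\cO,\,\cB},
\]
and exploit the division of labour provided by the two preceding propositions. Proposition~\ref{Prop_omegacentre} tells us that, conditionally on $\cO$ and $\cB$, the law of $\OMEGA$ is Gaussian-like around the point $\bar\omega$ (defined by $\bar\omega_{\ell,j}=\cO_{\ell_j}$), with inverse variance of order $\Omega_k(1)\cdot m(\ell)$ per coordinate. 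Proposition~\ref{Prop_Pcentre}(1) tells us that $\pr\brk{\cS\mid\OMEGA\doteq\omega,\,\cB}$ grows at most by $\exp\brk{\tilde O(4^{-k})\sum_\ell m(\ell)\norm{\omega_\ell-\omega^*_\ell}_2^2}$ as $\omega$ drifts away from $\omega^*$. The idea is that the target correction $\exp\brk{n\tilde O(2^{-k})\sum_t\pi(t)(\cO_t-\cO^*_t)^2}$ will come entirely from the \emph{deterministic} shift $\bar\omega-\omega^*$ of the Gaussian mean away from the reference.

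First I would restrict attention to the bulk $\cbc{\omega:\norm{\omega-\omega^*}_\infty\leq k^{-12}}$. The hypothesis $\norm{\cO-\frac14\vecone}_\infty\leq 2\xi$, combined with $\omega^*_{\ell,j}=\ell_j^2=\frac14+\tilde O(2^{-k/2})$ (since all $\ell_j\in\cT$ are $\frac12+\tilde O(2^{-k/2})$), gives $\norm{\bar\omega-\omega^*}_\infty=\tilde O(2^{-k/8})\ll k^{-12}$; Proposition~\ref{Prop_omegacentre} then shows that the conditional mass on $\cbc{\omega:\norm{\omega-\bar\omega}_\infty>k^{-13}}$ is $\exp(-\Omega(n))$, so the complement is negligible. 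On the bulk, Proposition~\ref{Prop_Pcentre}(1) applies and gives
\[
\pr\brk{\cS\mid\OMEGA\doteq\omega,\,\cB}\;\leq\;\pr\brk{\cS\mid\OMEGA\doteq\omega^*,\,\cB}\exp\brk{\tilde O(4^{-k})\sum_\ell m(\ell)\norm{\omega_\ell-\omega^*_\ell}_2^2}.
\]
I would then split the exponent via $\norm{\omega_\ell-\omega^*_\ell}_2^2\leq 2\norm{\omega_\ell-\bar\omega_\ell}_2^2+2\norm{\bar\omega_\ell-\omega^*_\ell}_2^2$, producing a "constant" piece (depending only on $\cO$) and a "variable" piece.

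The constant piece evaluates to the target factor. Indeed $\sum_\ell m(\ell)\norm{\bar\omega_\ell-\omega^*_\ell}_2^2=\sum_j\sum_\ell m(\ell)(\cO_{\ell_j}-\cO^*_{\ell_j})^2$, and since the $j$-th summand depends on $\ell$ only through $\ell_j$, swapping sums and using $\sum_{\ell:\ell_j=t}m(\ell)\sim m\pi(t)$ gives $\sim km\sum_t\pi(t)(\cO_t-\cO^*_t)^2$. Multiplying by $\tilde O(4^{-k})$ and using $km=\Theta(k2^kn)$ reproduces exactly $n\tilde O(2^{-k})\sum_t\pi(t)(\cO_t-\cO^*_t)^2$ in the exponent. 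For the variable piece, I would use Proposition~\ref{Prop_omegacentre} to rewrite the weight $\pr\brk{\OMEGA\doteq\omega\mid\cO,\cB}$ as $\pr\brk{\OMEGA\doteq\bar\omega\mid\cO,\cB}\cdot\exp\brk{-\Omega_k(1)\sum_\ell m(\ell)\norm{\omega_\ell-\bar\omega_\ell}_2^2}$, so that after multiplying with $\exp\brk{2\tilde O(4^{-k})\sum_\ell m(\ell)\norm{\omega_\ell-\bar\omega_\ell}_2^2}$ the net exponent still decays at rate $\Omega_k(1)-\tilde O(4^{-k})=\Omega_k(1)$. A standard Gaussian partition-function estimate — each of the $k|\cL|$ coordinates contributing a sum $\sim\int\exp(-c m(\ell)x^2)\,dx\cdot m(\ell)\cdot n$ of order $\Theta(n^{1/2})$ over the natural discretization of step $\Theta(1/n)$ — then shows that the tilted weighted sum is $O(1)$.

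The main obstacle is the last bookkeeping step: matching the normalization $\pr\brk{\OMEGA\doteq\bar\omega\mid\cO,\cB}=\Theta(n^{-k|\cL|/2})$ (which follows from the local limit theorem, Lemma~\ref{lem:locallimit}, applied coordinatewise) against the lattice sum of the tilted Gaussian, and verifying that the factor of $2$ coming from the triangle inequality does not destroy positivity of $\Omega_k(1)-\tilde O(4^{-k})$ for large $k$. Both are routine once one tracks which constants depend on $k$ and which are absolute, but the accounting is delicate and is the only place where the argument really needs Proposition~\ref{Prop_omegacentre} to be more than a qualitative statement.
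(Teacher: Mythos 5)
Your overall architecture is the right one and matches the paper closely in spirit: expand over $\omega$, compare each term to the reference $\omega^*$ via the quadratic bound of \Prop~\ref{Prop_Pcentre}(1), split $\norm{\omega_\ell-\omega_\ell^*}_2^2$ by the triangle inequality into the deterministic drift $\norm{\bar\omega_\ell-\omega_\ell^*}_2^2$ and the fluctuation $\norm{\omega_\ell-\bar\omega_\ell}_2^2$, evaluate the drift term to $km\sum_t\pi(t)(\cO_t-\cO_t^*)^2$ exactly as you do, and absorb the fluctuation term into the Gaussian decay from \Prop~\ref{Prop_omegacentre} together with the local-limit normalization. That part is fine.

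The gap is in the very first step, where you restrict to the bulk $\norm{\omega-\omega^*}_\infty\leq k^{-12}$ and argue that the remaining conditional mass is ``$\exp(-\Omega(n))$, so the complement is negligible.'' Negligible relative to what? The quantity you are bounding is $\pr\brk{\cS\,|\,\cO(\SIGMA,\TAU)\doteq\cO,\,\cB}$, and the right-hand side of the corollary is of order $\pr\brk{\cS\,|\,\OMEGA\doteq\omega^*,\,\cB}$, which by \Prop~\ref{Prop_Pcentre}(3) and \Cor~\ref{Cor_switch} is roughly $\exp(-2n\ln 2+O(n2^{-k}))$. On the tail you only use $\pr\brk{\cS\,|\,\OMEGA\doteq\omega,\,\cB}\leq1$, so you need the conditional mass outside the bulk to be \emph{smaller than} $\exp(-2n\ln 2)$. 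But \Prop~\ref{Prop_omegacentre} only gives decay at rate $\Omega_k(1)\sum_\ell m(\ell)\norm{\omega_\ell-\bar\omega_\ell}_2^2$, and if $\omega$ leaves the bulk in a single clause type $\ell_0$, this is only $\Omega_k(1)\,m(\ell_0)k^{-24}$. Since $m(\ell_0)/n\sim\gamma_{\ell_0}\sim r\prod_j\pi(\ell_{0,j})$ and the individual $\pi(t)$ are only bounded below by $\Omega(2^{-3k/4})$ (\Lem~\ref{Lemma_p}, \Lem~\ref{Lemma_degs}), the worst case $\gamma_{\ell_0}$ is doubly exponentially small in $k$. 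The resulting tail rate is then far below $2\ln 2$, so the tail is in fact \emph{much larger} than the RHS, and the bulk restriction does not close.

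This is precisely why the paper does not restrict $\omega$ to a bulk and instead performs an $\omega$-dependent relaxation of the event: for each $\omega$ it sets $\cL'=\cL'(\omega)=\cbc{\ell:\,|\omega_{\ell,j}-\tfrac14|\leq\xi\ \forall j}$ and works with $\cS'=\bigcap_{\ell\in\cL'}\cS(\ell)\supseteq\cS$. On $\cL'$, \Prop~\ref{Prop_Pcentre}(1) applies (you never need the hypothesis for $\ell\notin\cL'$). The price of dropping the constraints for $\ell\notin\cL'$ is controlled by \Prop~\ref{Prop_Pcentre}(2), which costs only $\exp\brk{\Theta(2^{-k})\sum_{\ell\notin\cL'}m(\ell)}$, and this is dominated by the Gaussian decay $\exp\brk{-\Omega_k(1)\sum_{\ell\notin\cL'}m(\ell)\norm{\omega_\ell-\bar\omega_\ell}_2^2}$ because for $\ell\notin\cL'$ one has $\norm{\omega_\ell-\bar\omega_\ell}_2^2\gtrsim\xi^2=k^22^{-k/4}\gg k\cdot2^{-k}$. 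Thus the paper never needs a trivial bound of $1$ anywhere; every $\omega$ is treated with the same quadratic mechanism, and the $\cL'$ device is the one extra idea your proposal is missing. Note that you also never invoke \Prop~\ref{Prop_Pcentre}(2) — that part exists precisely to pay for this relaxation, so its absence from your argument is a diagnostic sign of the gap.
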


\noindent
In \Sec~\ref{Sec_Bprob} we will show the following.

\begin{proposition}\label{Prop_Bprob}
There exists a constant $\eta>0$ such that 
\whp\ $\vec d,\vec m$ are such that the following holds.
For all $\cO$ with
	$\norm{\cO-\frac14\vecone}_\infty\leq 2\xi$
we have
	\begin{eqnarray*}
	\pr\brk{\cB|\cO(\SIGMA,\TAU)\doteq\cO}
			&\leq&\eta\cdot \pr\brk{\cB|\cO(\SIGMA,\TAU)\doteq\cO^*}.
	\end{eqnarray*}
Furthermore,
	$\pr\brk{\cB|\cO(\SIGMA,\TAU)=\cO^*}=\Theta(n^{\abs\cT-k\abs\cL}).$
\end{proposition}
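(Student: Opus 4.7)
The plan is to lift the single-assignment first-moment machinery of \Sec~\ref{Sec_first} (specifically \Lem~\ref{Lemma_switch}, \Lem~\ref{Lemma_fixedpoint} and \Cor~\ref{Cor_switch}) to the joint law of the pair $(\SIGMA,\TAU)$. First I would observe that, given $\vec d,\vec m$, the conditional law of $\PHIdm$ given $(\SIGMA,\TAU)$ depends on the pair only through the per-type counts of literal clones in each of the four joint states, so $\pr\brk{\cB\mid\cO(\SIGMA,\TAU)\doteq\cO}$ equals the common value of $\pr\brk{\cB\mid\SIGMA=\sigma,\TAU=\tau}$ for any fixed $\sigma,\tau$ with $\pd$-marginals and overlap $\doteq\cO$. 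For any such fixed pair, $\cB$ becomes a question about the random matching of clones to clause positions: for every type $t\in\cT$ the $km\pi(t)$ clones split into the four categories $(a,b)\in\cbc{0,1}^2$ with counts $a^t_{11}=\cO_t\,km\pi(t)$, $a^t_{10}=a^t_{01}=(t-\cO_t)km\pi(t)$ and $a^t_{00}=(1-2t+\cO_t)km\pi(t)$, and $\cB$ requires that in each position group $(\ell,j)$ with $\ell_j=t$ the counts of $\sigma$- and $\tau$-true clones each equal $\ell_j m(\ell)$.

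Next, mirroring \Lem~\ref{Lemma_switch}, I would set up an auxiliary product space $\hat\Omega$ whose elements are collections $(\hat\SIGMA_{ij}(\ell),\hat\TAU_{ij}(\ell))$ of independent $\cbc{0,1}^2$-valued random vectors with per-position law $\hat\pr\brk{(\hat\SIGMA_{ij}(\ell),\hat\TAU_{ij}(\ell))=(a,b)}=q^{ab}_{\ell_j}$. Choosing $q^{11}_t=\cO_t$, $q^{10}_t=q^{01}_t=t-\cO_t$, $q^{00}_t=1-2t+\cO_t$ makes the per-position means compatible with both the $\pd$-marginals and the overlap $\cO$. Letting $\hat C$ denote the event that the per-type $(a,b)$-counts match the prescribed $a^t_{ab}$ and $\hat B$ the analog of $\cB$, the same switching argument as in the proof of \Lem~\ref{Lemma_switch} then gives $\pr\brk{\cB\mid\sigma,\tau}=\hat\pr\brk{\hat B\mid\hat C}$.

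The core of the proof is then a multidimensional local limit theorem computation obtained by iterating \Lem~\ref{lem:locallimit} exactly as in the proof of \Cor~\ref{Cor_switch}. The event $\hat C$ imposes $3\abs\cT$ independent constraints (four counts per type minus the trivial total), contributing $\Theta(n^{-3\abs\cT/2})$. The event $\hat B$ imposes $2k\abs\cL$ constraints, but for each type $t$ and each of $\SIGMA,\TAU$ the sum over $(\ell,j)$ with $\ell_j=t$ of the corresponding judiciousness constraint is fixed by $\hat C$; hence exactly $2\abs\cT$ linear relations are redundant and $\hat B$ adds $2k\abs\cL-2\abs\cT$ fresh independent constraints. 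The LLT then yields $\pr\brk{\cB\mid\cO}=\hat\pr\brk{\hat B\mid\hat C}=\Theta(n^{\abs\cT-k\abs\cL})\cdot h(\cO)$ for a smooth positive function $h$ coming from the Gaussian density prefactor. Evaluating at $\cO=\cO^*$ gives the second assertion, and continuity of $h$ over the shrinking neighborhood $\cbc{\cO:\norm{\cO-\tfrac14\vecone}_\infty\leq 2\xi}$ (recall $\xi=k2^{-k/8}=o_k(1)$) furnishes the absolute constant $\eta$ of the first assertion.

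The main technical obstacle is the dimension bookkeeping in the previous paragraph: one must verify that the $2\abs\cT$ relations identified above are the \emph{only} linear dependencies among the $\hat B$-constraints modulo $\hat C$, and that the residual Gaussian covariance is uniformly nondegenerate in $\cO$ over the specified neighborhood. Because the auxiliary variables at positions of different type are independent, the problem decouples into $\abs\cT$ independent LLT subproblems, one per type $t$ on a system of dimension $K_t=\abs{\cbc{(\ell,j):\ell_j=t}}$, which both clarifies the rank computation and reduces uniform nondegeneracy to the observation that each per-position Bernoulli-pair distribution lies within $O(\xi)=o_k(1)$ of a common symmetric reference law, so that the covariance perturbs by $o_k(1)$ and remains strictly positive-definite.
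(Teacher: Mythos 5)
Your proposal is correct, and at the highest level it shares the paper's strategy of reducing $\pr\brk{\cB\mid\cO(\SIGMA,\TAU)\doteq\cO}$ to a local limit computation in an auxiliary product space. But the execution differs in an interesting and arguably cleaner way. The paper works in the auxiliary space with $q^{11}_{\ell,j}=\omega_{\ell,j}$ and $q_{\ell,j}=\ell_j$, estimates $\hat\pr\brk{B_{\ell,j}\mid\hat\OMEGA\doteq\omega}=\Theta(1/n)$ for each pair $(\ell,j)$ by two successive binomial LLTs (number of $(1,0)$'s given the $(1,1)$'s, then number of $(0,1)$'s given both), and then averages over $\omega$ with $\cO(\omega)\doteq\cO$, invoking \Prop~\ref{Prop_omegacentre} to show that the conditional mass of $\hat\OMEGA$ concentrates near $\bar\omega$. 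You instead condition directly on the event $\hat C$ that the four-way $\{0,1\}^2$-counts per type match, note that the shuffle distribution of the truth-value array given $(\sigma,\tau)$ is exactly $\hat\pr\brk{\,\cdot\mid\hat C}$, and then do an explicit rank count: $3|\cT|$ constraints for $\hat C$, $2k|\cL|$ for $\hat B$ with exactly $2|\cT|$ redundancies modulo $\hat C$. This yields $\hat\pr[\hat B\mid\hat C]=\Theta(n^{|\cT|-k|\cL|})\cdot h(\cO)$ directly and makes the source of the $n^{|\cT|}$ factor transparent; the uniformity of $h$ over $\norm{\cO-\frac14\vecone}_\infty\leq 2\xi$ then gives $\eta$. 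In contrast, the paper's proof records only per-$(\ell,j)$ bounds $\Theta(1/n)$ and ends with ``the assertion follows''; passing from those per-pair bounds to the full intersection $\cB$ and obtaining the exponent $|\cT|-k|\cL|$ (rather than the naive $-k|\cL|$ that a direct product would suggest) is left implicit, and your decomposition into $|\cT|$ independent type-wise subproblems with the explicit count of redundant linear constraints is what justifies that step. One small point worth keeping in mind when you flesh this out: the per-type reference laws $q^{ab}_t$ differ across types $t$, so the uniformity-in-$\cO$ argument at the end should be phrased, as you in fact do, per type — the covariance in type class $t$ perturbs by $O(\xi)$ around its value at $\cO^*_t=t^2$, and strict positive-definiteness at each $\cO^*_t$ is what you carry over.
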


\noindent
Recall that $n(t)$ is the number of variables of type $t\in\cT$.
In \Sec~\ref{sec:enumeration} we are going to prove the following.

\begin{proposition}\label{Prop_entropy}
\Whp\ $\vec d,\vec m$ are such that the following holds.
For all vectors 
$\lambda=(\lambda_t)_{t\in\cT}$ with $\norm{\lambda}_\infty\leq1/8$ we have
	\begin{eqnarray*}
	\pr\brk{\forall t\in\cT:|\cO_t(\SIGMA,\TAU)-\cO_t^*|\geq\lambda_t}
		&\leq&
			\exp\brk{-n\cdot\Omega_k(1)\sum_{t\in\cT}\pi(t)\lambda_t^2}.
	\end{eqnarray*}
\end{proposition}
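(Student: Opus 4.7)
The plan is to apply Azuma's inequality to $\cO_t$ for each type separately, exploiting that the uniform measure on $\cH_p(\vec d)$ decouples across type-pairs $\{t,1-t\}\subset\cT$.

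The $\pd$-marginal conditions defining $\cH_p(\vec d)$ split into one linear equation per type $t\in\cT$, and the equation for type $t$ involves only the coordinates of $\sigma$ on $V_t\cup V_{1-t}$, where $V_t=\{x\in V:\pd(x)=t\}$. Blocks $V_t\cup V_{1-t}$ for distinct type-pairs are disjoint, so the uniform distribution on $\cH_p(\vec d)$ factorizes as an independent product (over unordered type-pairs) of uniform measures on the corresponding two-constraint slices. In particular, under the joint uniform distribution on $\cH_p(\vec d)^2$, the restrictions of $(\SIGMA,\TAU)$ to different type-pairs are mutually independent.

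For each $t\in\cT$ I would apply Azuma's inequality to $\cO_t(\SIGMA,\TAU)$ via the Doob martingale that reveals the coordinates of $\SIGMA$ and $\TAU$ in $V_t\cup V_{1-t}$ one at a time. Flipping any single coordinate alters $\cO_t$ by at most $d_l/(km\pi(t))$, where $l$ is the literal of type $t$ at the relevant variable. Using the whp bound
\[
\sum_{l:\pd(l)=t}\frac{d_l^2}{(km\pi(t))^2}=O\bigl(1/(n(t)+n(1-t))\bigr)=O\bigl(1/(n\pi(t))\bigr),
\]
which follows from Cauchy-Schwarz together with the degree concentration of \Lem s~\ref{Lemma_tame}--\ref{Lemma_degs}, Azuma's inequality yields
\[
\pr\bigl[|\cO_t(\SIGMA,\TAU)-\Erw\cO_t|\geq\lambda_t/2\bigr]\leq 2\exp\bigl(-\Omega_k(1)\cdot n\pi(t)\lambda_t^2\bigr).
\]
A short exchangeability argument (within literal types of the same degree signature, combined with the whp regularity of $\vec d$) shows $\Erw\cO_t=t^2+O(2^{-k})$, and this shift is dominated by $\lambda_t$ in all non-trivial cases.

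To pass to the conjunction I would combine block independence across distinct type-pairs with the elementary inequality $\pr[A\cap B]\leq\sqrt{\pr[A]\pr[B]}$ within a single type-pair. Taking the product over type-pairs and summing exponents yields
\[
\pr\bigl[\forall t\in\cT:|\cO_t-\cO_t^*|\geq\lambda_t\bigr]\leq\exp\Bigl(-\tfrac12\Omega_k(1)\cdot n\sum_{t\in\cT}\pi(t)\lambda_t^2\Bigr),
\]
as required. The main obstacle is the increment analysis in the Doob martingale on the constrained slice: the conditional update can exceed the pure Lipschitz constant by an ``average-degree'' correction term of the same order, but this only inflates the implicit constant in the exponent and preserves the $\Omega_k(1)$ scaling, so the argument goes through after absorbing this factor.
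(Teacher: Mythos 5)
Your high-level decomposition — split by type-pairs, observe that the uniform distribution on $\cH_p(\vec d)$ factorizes across $\{t,\neg t\}$-blocks, control each block by a concentration inequality — is the right skeleton and matches the paper's strategy.  But the concentration step hides the main difficulty, and the difficulty is not a constant-factor nuisance as you suggest; it forces the paper to use a completely different tool for part of the argument.

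The measure you need to concentrate under is the uniform distribution on a \emph{weighted} slice: conditioning on $\pd$-marginals means conditioning on $\sum_{l:\pd(l)=t}d_l\vecone_{\sigma(l)=1}=p(t)\pi(t)km$, and within a fixed type $t$ the degrees $d_l$ vary.  This is not a fixed-subset-size (permutation) model, so neither plain Azuma nor the bounded-differences inequality for permutations applies directly, and the Doob martingale on the constrained slice has conditional-expectation increments that are not controlled by the raw Lipschitz constant $d_l/(km\pi(t))$ — they depend on how the remaining degree multiset can realize the residual budget, which can be arbitrarily delicate near the boundary of feasibility.  The paper avoids this in two ways that you would need to reproduce.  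First, it silently refines the definition of ``$\pd$-marginals'' at the start of \Sec~\ref{sec:enumeration} to a \emph{per-signature} constraint: within a signature class $V_s$ all literals have the same degree, so fixing the marginal there really is a fixed-size subset model, and McDiarmid's permutation inequality (\Thm~\ref{thm:McDiarmid}) with the certifiability condition gives exactly the $\exp(-\Omega_k(1)\,n\pi(t)\lambda_t^2)$ rate in \Prop~\ref{prop:toverlap}.  Second, and more importantly, for $t_{1/2}$ the constraint remains genuinely weighted even after this refinement, and there the paper abandons concentration inequalities entirely: it writes the count as a coefficient of a three-variable generating function and extracts that coefficient by a saddle-point argument (\Prop~\ref{prop:coeff_extract_triple} and the page of analysis after it).  Your paragraph asserting that ``the conditional update can exceed the pure Lipschitz constant by an average-degree correction term of the same order'' is precisely the step that would need a proof, and the generating-function machinery in the paper exists because no short martingale argument was available for that block.

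A secondary issue: your estimate $\Erw\cO_t = t^2 + O(2^{-k})$ is much too crude.  The proposition must hold for $\lambda_t$ all the way down to $\Theta(1/\sqrt n)$ (this is the grid scale used in the proof of \Prop~\ref{Prop_centre}), and an $O(2^{-k})$ mean shift would swamp such $\lambda_t$.  Under the correct sampling model (the one where $\TAU$ is a uniform subset of the right size) one gets $\Erw\cO_t = t^2 + O(1/n)$ essentially by exchangeability, so this is likely repairable — but as written it is another gap.
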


\noindent\emph{Proof of \Prop~\ref{Prop_centre}.}
Suppose that $\cO\in\brk{0,1}^\cT$ satisfies $\norm{\cO-\cO^*}_\infty\leq \xi$.
By \Prop~\ref{Prop_Bprob} \whp
	\begin{eqnarray}\nonumber
	\pr\brk{\cS,\cB|\cO(\SIGMA,\TAU)\doteq\cO}&=&
		\pr\brk{\cS|\cB,\cO(\SIGMA,\TAU)\doteq\cO}\pr\brk{\cB|\cO(\SIGMA,\TAU)\doteq\cO}\\
		&\leq&\eta\cdot\pr\brk{\cS|\cB,\cO(\SIGMA,\TAU)\doteq\cO}\pr\brk{\cB|\cO(\SIGMA,\TAU)\doteq\cO^*}.\label{eqProp_centre1}
	\end{eqnarray}
Furthermore, by \Cor~\ref{Cor_omegacentre} \whp
	\begin{eqnarray}\label{eqProp_centre2}
	\pr\brk{\cS|\cO(\SIGMA,\TAU)\doteq\cO,\cB}&\leq&O(1)\cdot\pr\brk{\cS|\OMEGA\doteq\omega^*,\cB}
		\exp\brk{n\tilde O(2^{-k})\sum_{t\in\cT}\pi(t)(\cO_{ t}-\cO_{t}^*)^2}.
	\end{eqnarray}
Combining~(\ref{eqProp_centre1}) and~(\ref{eqProp_centre2}), we see that
	\begin{eqnarray}\nonumber
	\pr\brk{\cS,\cB|\cO(\SIGMA,\TAU)\doteq\cO}&\leq&O(1)\cdot\pr\brk{\cS|\cB,\OMEGA\doteq\omega^*}\cdot\pr\brk{\cB|\cO(\SIGMA,\TAU)\doteq\cO^*}\\
		&&\qquad\qquad\cdot\exp\brk{n\tilde O(2^{-k})\sum_{t\in\cT}\pi(t)(\cO_{t}-\cO_{ t}^*)^2}.
		\label{eqProp_centre3a}
	\end{eqnarray}
For an assignment $\sigma$ with $p$-marginals let
	$$b=\pr\brk{\sigma\mbox{ is $p$-judicious in $\PHIdm$}},\qquad s=\pr\brk{\sigma\in\cS_p(\PHIdm)|\sigma\mbox{ is $p$-judicious in $\PHIdm$}}.$$
Then by part~3 of \Prop~\ref{Prop_Pcentre}, \Cor~\ref{Prop_Bprob} and \Cor~\ref{Cor_switch} we have
	$$\pr\brk{\cS|\cB,\OMEGA\doteq\omega^*}\cdot\pr\brk{\cB|\cO(\SIGMA,\TAU)\doteq\cO^*}\leq O(1)\cdot(bs)^2.$$
Therefore, (\ref{eqProp_centre3a}) yields
	\begin{eqnarray}
	\pr\brk{\cS,\cB|\cO(\SIGMA,\TAU)\doteq\cO}&\leq&O(1)\cdot(bs)^2\exp\brk{n\tilde O(2^{-k})\sum_{t\in\cT}\pi(t)(\cO_{t}-\cO_{ t}^*)^2}.
		\label{eqProp_centre3}
	\end{eqnarray}

For a vector $\lambda=(\lambda_t)_{t\in\cT}$ let
	$$h(\lambda)=\pr\brk{\forall t\in\cT:\abs{\cO_t(\SIGMA,\TAU)-\cO_t^*}\geq\lambda_t}.$$
Moreover, for $c=c(k)>0$ a sufficiently large number let
	$\Lambda=\frac{c}{\sqrt n}\ZZ_{\geq 0}^{\cT}$
be the positive $\cT$-dimensional grid scaled by a factor of $c/\sqrt n$.
In addition, let $h$ be the number of assignments $\sigma$ with $p$-marginals.
Then by \Prop~\ref{Prop_entropy} and~(\ref{eqProp_centre3}) there is a number $\zeta=\zeta(k)>0$ such that
	\begin{eqnarray*}
	\frac{\Erw\brk{Z'(\PHIdm)}}{\Erw\brk{Z(\PHIdm)}^2}&\leq&O(1)\cdot\frac{\Erw\brk{Z'(\PHIdm)}}{(bhs)^2}
		\leq O(1)\cdot\sum_{\lambda\in\Lambda}h(\lambda)\exp\brk{n\tilde O(2^{-k})\sum_{t\in\cT}\pi(t)(\lambda_t+c/\sqrt n)^2}\\
		&\leq&O(1)\cdot\sum_{\lambda\in\Lambda}h(\lambda)\exp\brk{n\tilde O(2^{-k})\sum_{t\in\cT}\pi(t)\lambda_t^2}\\
		&\leq&O(1)\cdot\sum_{\lambda\in\Lambda}\exp\brk{n\sum_{t\in\cT}\pi(t)\lambda_t^2\brk{\tilde O(2^{-k})-\Omega_k(1)}}\\
		&\leq&O(1)\cdot\sum_{\lambda\in\Lambda}\exp\brk{-n\cdot\Omega_k(1)\sum_{t\in\cT}\pi(t)\lambda_t^2}
			\leq O(1)\cdot\sum_{\lambda\in\Lambda}\exp\brk{-\zeta n\norm{\lambda}_2^2}\\
		&\leq&O(1)\cdot\sum_{\vec z\in \ZZ_{\geq 0}^{\cT}}\exp\brk{-\zeta c^2 \norm{\vec z}_2^2}
			=O(1)\brk{\sum_{z=0}^\infty\exp\brk{-\zeta c^2 z^2}}^{\abs\cT}=O(1),
	\end{eqnarray*}
as desired.
\qed

\paragraph{Notation for the proofs of \Prop s~\ref{Prop_Pcentre}--\ref{Prop_Bprob}.}
It will be convenient to work with a different probability space.
Namely, let $\hat\Omega$ be the set of all pairs $(\hat\sigma,\hat\tau)$ of $0/1$ vectors
	$$(\hat\sigma,\hat\tau)=(\hat\sigma_{ij}(\ell),\hat\tau_{ij}(\ell))_{\ell\in\cL,i\in\brk{m(\ell)},j\in\brk k}.$$
Let $B_{\ell,j}\subset\hat\Omega$ be the event that
	$$\frac1{m(\ell)}\sum_{i\in\brk{m(\ell)}}\hat\sigma_{ij}(\ell)\doteq \ell_j\quad\mbox{ and }\quad
			\frac1{m(\ell)}\sum_{i\in\brk{m(\ell)}}\hat\tau_{ij}(\ell)
			\doteq  \ell_j$$
for all $\ell\in\cL,j\in\brk k$.
Let $B_\ell=\bigcap_{j\in\brk k}B_{\ell,j}$ and let $B=\bigcap_{\ell\in\cL}B_{\ell}$.

To define a measure $\hat\pr$ on $\hat\Omega$, 
let $\vec q=(q^{ab}_{\ell,j})_{a,b\in\cbc{0,1},\ell\in\cL,j\in\brk k}$ be a vector with entries in $\brk{0,1}$ such that
	\begin{equation}\label{eqqvector1}
	\sum_{a,b=0}^1q^{ab}_{\ell,j}=1,\qquad q^{01}_{\ell,j}=q^{10}_{\ell,j}
	\end{equation}
for all $\ell,j$.
Define 
	\begin{equation}\label{eqqvector2}
	q_{\ell,j}=q^{11}_{\ell,j}+q^{10}_{\ell,j}
	\end{equation}
so that
	\begin{equation}\label{eqqvector3}
	q^{00}_{\ell,j}=1-2q_{\ell,j}+q^{11}_{\ell,j}.
	\end{equation}
We define a measure $\hat\pr=\hat\pr_{\vec q}$ on $\hat\Omega$ as follows.
\begin{quote}
For any $\ell=(\ell_1,\ldots,\ell_k)\in\cL$, $i\in\brk{m(\ell)}$ and $j\in\brk k$ independently we choose a pair
of values $(\hat\SIGMA_{ij}(\ell),\hat\TAU_{ij}(\ell))\in\cbc{0,1}^2$ such that 
	$$\hat\pr\brk{(\hat\SIGMA_{ij}(\ell),\hat\TAU_{ij}(\ell))=(a,b)}=q^{ab}_{\ell,j}$$
for any $a,b\in\cbc{0,1}$.
\end{quote}
This probability space induces a random matrix $\hat{\OMEGA}=(\hat{\OMEGA}_{\ell,j})_{\ell,j}$ with entries
	$$\hat\OMEGA_{\ell,j}=\frac1{m(\ell)}\sum_{i\in\brk{m(\ell)}}\hat\SIGMA_{ij}(\ell)\hat\TAU_{ij}(\ell).$$

We will use the probability space $(\hat\Omega,\hat\pr)$ several times in the proof of the various propositions below,
with various choices of $\vec q$.

\subsection{Proof of \Prop~\ref{Prop_omegacentre}}\label{Sec_omegacentre}

Consider any $\omega=(\omega_{\ell,j})$ such that
	$\norm{\cO(\omega)-\frac14\vecone}_\infty\leq2\xi$.
We use the probability space $(\hat\Omega,\hat\pr)$ with the vector $\vec q$ defined by 
	$$q^{11}_{\ell,j}=\bar\omega_{\ell,j},\qquad q_{\ell,j}=\ell_j\qquad\mbox{ for all }\ell,j;$$
the remaining entries of $\vec q$ are determined by~(\ref{eqqvector1})--(\ref{eqqvector3}).
Then the following is immediate from the construction.

\begin{fact}\label{Fact_omegacentre}
We have
	$\pr\brk{\vec\omega\doteq\omega|\cO(\vec\omega)\doteq\cO(\omega),\,\cB}=
		\pr\brk{\hat\OMEGA\doteq\omega|\cO(\hat\OMEGA)\doteq\cO(\omega),\,B}.$
\end{fact}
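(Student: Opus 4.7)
\emph{Proof sketch.} The plan is to show that both sides equal the probability of $\vec\omega\doteq\omega$ under the uniform distribution on a single common combinatorial subset of $\hat\Omega$, by coupling the random-formula generation of $\PHIdm$ with the product measure $\hat\pr$ via the natural ``plug-in'' map.

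First, I would observe that the type-overlap is actually a function of the assignments alone: unfolding the definitions gives $\cO_t(\omega(\SIGMA,\TAU,\PHIdm))=(km\pi(t))^{-1}\sum_{l:\pd(l)=t}d_l\vecone_{\SIGMA(l)=\TAU(l)=1}$, because each clone of a type-$t$ literal occupies exactly one slot of $\PHIdm$. Hence the LHS conditioning event $\cO(\vec\omega)\doteq\cO(\omega)$ is a restriction on $(\SIGMA,\TAU)$ only. Combined with the $p$-marginal property of $\SIGMA,\TAU$, this pins down, up to an additive $O(1)$ error, the joint pair-counts $C^{ab}_t$ of type-$t$ clones with $(\SIGMA(l),\TAU(l))=(a,b)$: indeed $C^{11}_t\doteq km\pi(t)\cO_t(\omega)$, $C^{10}_t=C^{01}_t\doteq km\pi(t)(t-\cO_t(\omega))$, and $C^{00}_t\doteq km\pi(t)(1-2t+\cO_t(\omega))$.

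Second, I would fix any admissible $(\sigma_0,\tau_0)$ and analyse the plug-in map $\Phi\mapsto(\hat\sigma_0|_\Phi,\hat\tau_0|_\Phi)\in\hat\Omega$. Since $\PHIdm$ is generated by a uniformly random matching of clones to clause-slots within each literal type, the induced random vector $(\hat\sigma_0|_{\PHIdm},\hat\tau_0|_{\PHIdm})$ is uniformly distributed over those elements of $\hat\Omega$ whose joint type-counts $N^{ab}_t=\sum_{(\ell,i,j):\ell_j=t}\vecone_{\hat\sigma_{ij}(\ell)=a,\hat\tau_{ij}(\ell)=b}$ equal $C^{ab}_t$ for every $t$ and $(a,b)$. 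On the hat side, the chosen $\vec q$ depends on $(\ell,j)$ only through $t=\ell_j$, so the $\hat\pr$-density at $(\hat\sigma,\hat\tau)$ factors as $\prod_t\prod_{a,b}(q^{ab}_t)^{N^{ab}_t}$; consequently $\hat\pr$ conditioned on any event defined purely by $(N^{ab}_t)_{t,a,b}$ is uniform on the corresponding preimage. Now $\cO(\hat\OMEGA)\doteq\cO(\omega)$ fixes $N^{11}_t$, while summing the marginal constraints in $B$ over $(\ell,j)$ with $\ell_j=t$ yields $N^{11}_t+N^{10}_t\doteq t\cdot km\pi(t)$ and the analogous relation for $\TAU$, together pinning down all $N^{ab}_t\doteq C^{ab}_t$. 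Thus both sides reduce to the uniform measure on the \emph{same} combinatorial set $\Omega^*\subset\hat\Omega$ of sequences with joint type-counts $(C^{ab}_t)$ and marginal $(\ell,j)$-counts prescribed by $B$; the probability of $\vec\omega\doteq\omega$ (resp.\ $\hat\OMEGA\doteq\omega$) equals the fraction of elements of $\Omega^*$ whose $(\ell,j)$-entries agree with $\omega_{\ell,j}$ up to $O(1/n)$, and this fraction is the same on both sides. Averaging the LHS over admissible $(\sigma_0,\tau_0)$ preserves the identity because the conditional ratio does not depend on the particular choice.

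The main obstacle is the bookkeeping around the additive $O(1)$ slack in the $\doteq$ conditions. One has to verify that on both sides $\doteq$ refers to the same $O(1/n)$ tolerance in the relevant frequency vectors (equivalently, the same $O(1)$ tolerance in the counts), and that the two descriptions of the admissible joint and marginal count vectors---obtained on the LHS from valid $(\sigma_0,\tau_0)$ and on the RHS from $\cO(\hat\OMEGA)\doteq\cO(\omega)$ together with $B$---produce literally the same set $\Omega^*$ rather than two sets that merely coincide at leading order.
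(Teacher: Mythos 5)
Your argument is correct and is exactly the content the paper has in mind when it declares this Fact ``immediate from the construction'': it is the two-assignment analogue of the plug-in argument in \Lem~\ref{Lemma_switch}, resting on the observations that $\cO(\vec\omega)$ depends on $(\SIGMA,\TAU)$ alone, that the random matching makes the plug-in vector uniform on configurations with the prescribed joint type-counts, and that $\vec q$ depends on $(\ell,j)$ only through $\ell_j$ so that $\hat\pr$ conditioned on count-determined events is uniform as well. The $O(1)$-slack bookkeeping you flag is the only loose end, and it is one the paper itself glosses over; it is harmless for the way the Fact is used.
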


Now,
	\begin{eqnarray}\nonumber
	\pr\brk{\hat\OMEGA\doteq\omega|\cO(\hat\OMEGA)\doteq\cO(\omega),\,B}&=&
		\frac{\pr\brk{\hat\OMEGA\doteq\omega,\,\cO(\hat\OMEGA)\doteq\cO(\omega),\,B}}{\pr\brk{\cO(\hat\OMEGA)\doteq\cO(\omega),\,B}}
			\doteq\frac{\pr\brk{\hat\OMEGA\doteq\omega,\,B}}{\pr\brk{\cO(\hat\OMEGA)\doteq\cO(\omega),\,B}}\\
		&=&\frac{\pr\brk{B|\hat\OMEGA\doteq\omega}}{\pr\brk{\cO(\hat\OMEGA)\doteq\cO(\omega),\,B}}\cdot\pr\brk{\hat\OMEGA\doteq\omega}\nonumber\\
		&\leq&O(1)\cdot
			\frac{\pr\brk{B|\hat\OMEGA\doteq\bar\omega}}{\pr\brk{\cO(\hat\OMEGA)\doteq\cO(\bar\omega),\,B}}\cdot\pr\brk{\hat\OMEGA\doteq\omega},
			\nonumber 
	\end{eqnarray}
The last step follows from the local limit theorem for the multinomial distribution because 
	$$\Erw\brk{\sum_{i\in\brk{m(\ell)}}\hat\SIGMA_{ij}(\ell)\bigg|\hat\OMEGA\doteq\bar\omega}=
		\Erw\brk{\sum_{i\in\brk{m(\ell)}}\hat\TAU_{ij}(\ell)\bigg|\hat\OMEGA\doteq\bar\omega}
			=m(\ell)\cdot \ell_j$$
for all $\ell,j$.
Hence,
	\begin{eqnarray*}\nonumber
	\frac{\pr\brk{\hat\OMEGA\doteq\omega|\cO(\hat\OMEGA)\doteq\cO(\omega),\,B}}{\pr\brk{\hat\OMEGA\doteq\bar\omega|\cO(\hat\OMEGA)\doteq\cO(\omega),\,B}}
		&\leq&\frac{\pr\brk{\hat\OMEGA\doteq\omega}}{\pr\brk{\hat\OMEGA\doteq\bar\omega}}.
			\nonumber 
	\end{eqnarray*}
For each $\ell\in\cL$, $j\in\brk k$ the sum	 $\sum_{i\in\brk{m(\ell)}}\hat\SIGMA_{ij}(\ell)\hat\TAU_{ij}(\ell)$ has a binomial distribution
	$\Bin(m(\ell),\bar\omega_{\ell,j})$.
Furthermore, these random variables are mutually independent.
Therefore, Chernoff bounds yield
	\begin{eqnarray*}
	\frac{\pr\brk{\hat\OMEGA\doteq\omega}}{\pr\brk{\hat\OMEGA\doteq\bar\omega}}&\leq&
		\exp\brk{-\Omega_k(1)\sum_{\ell\in\cL}\sum_{j\in\brk k}m(\ell)(\omega_{\ell,j}-\bar\omega_{\ell,j})^2},
	\end{eqnarray*}
whence the assertion follows.

\subsection{Proof of \Cor~\ref{Cor_omegacentre}}\label{Sec_Cor_omegacentre}

Let $\omega$ be an overlap matrix such that $\cO\doteq\cO(\omega)$.
Let $\cL'=\cL'(\vec\omega)$ be the set of all $\ell\in\cL$ such that $|\omega_{\ell,j}-1/4|\leq\xi$ for all $j\in\brk k$.
Let $\cS'=\bigcap_{\ell\in\cL'}\cS(\ell)$.
Then
	\begin{eqnarray*}
	P(\omega)&=&\pr\brk{\cS',\,\vec\omega\doteq\omega|\cO(\vec\omega)\doteq\cO,\,\cB}\\
		&=&\pr\brk{\cS'|\vec\omega\doteq\omega,\,\cB}
		\cdot
			\pr\brk{\vec\omega\doteq\omega|\cO(\vec\omega)\doteq\cO,\,\cB}.
	\end{eqnarray*}
Let
	\begin{eqnarray*}
	\bar P&=&
			\pr\brk{\cS'|\vec\omega\doteq\omega^*,\,\cB}\cdot
			\pr\brk{\vec\omega\doteq{\bar\omega}|\cO(\vec\omega)\doteq\cO,\,\cB};
	\end{eqnarray*}
observe that $\bar P$ depends on $\cO$ but not on the specific choice of $\omega$.
Then by \Prop s~\ref{Prop_Pcentre} and~\ref{Prop_omegacentre}
	\begin{eqnarray*}
	P(\omega)&\leq&\bar P\cdot\exp\brk{
		\sum_{\ell\in\cL}m(\ell)\brk{\vecone_{\ell\in\cL'}\cdot\tilde O(4^{-k})\norm{\omega_\ell-\omega^*_\ell}_2^2
			-\Omega_k(1)\norm{\bar\omega_\ell-\omega_\ell}_2^2}}\\
		&\leq&\bar P
			\cdot\exp\brk{\sum_{\ell\in\cL}m(\ell)\brk{\vecone_{\ell\in\cL'}\cdot\tilde O(4^{-k})\bc{\norm{\bar\omega_\ell-\omega_\ell}_2^2+
				\norm{\bar\omega_\ell-\omega^*_\ell}_2^2}
			-\Omega_k(1)\norm{\bar\omega_\ell-\omega_\ell}_2^2}}\\
		&\leq&\bar P\cdot\exp\brk{\sum_{\ell\in\cL}m(\ell)\brk{
			\tilde O(4^{-k})
				\norm{\bar\omega_\ell-\omega^*_\ell}_2^2
			-\Omega_k(1)\norm{\bar\omega_\ell-\omega_\ell}_2^2}}.
	\end{eqnarray*}
By the second part of \Prop~\ref{Prop_Pcentre},
	\begin{eqnarray*}
	\frac1n\ln\frac{\pr\brk{\cS'|\vec\omega\doteq\omega^*,\,\cB}}
		{\pr\brk{\cS|\vec\omega\doteq\omega^*,\,\cB}}
		&=&\Theta(2^{-k})\sum_{\ell\not\in\cL'}\frac{m(\ell)}n
			\leq\sum_{\ell\not\in\cL'}\frac{m(\ell)}{kn}\xi^2
		\leq\frac1k\sum_{\ell\not\in\cL'}\frac{m(\ell)}n\norm{\bar\omega_\ell-\omega_\ell}_2^2.
	\end{eqnarray*}
Hence, letting
	$
	\tilde P=	\pr\brk{\cS|\vec\omega\doteq\omega^*,\,\cB}\cdot
			\pr\brk{\vec\omega\doteq{\bar\omega}|\cO(\vec\omega)\doteq\cO,\,\cB},$
we obtain
	\begin{eqnarray*}
	P(\omega)
		&\leq&\tilde P\exp\brk{\tilde O(4^{-k})\sum_{\ell\in\cL}m(\ell)
				\norm{\bar\omega_\ell-\omega^*_\ell}_2^2-\sum_{\ell\in\cL}\Omega_k(1)m(\ell)\norm{\bar\omega_\ell-\omega_\ell}_2^2}.
	\end{eqnarray*}
To proceed, we note that
	\begin{eqnarray*}
	\sum_{\ell\in\cL}m(\ell)\norm{\bar\omega_\ell-\omega_\ell^*}^2&=&
		\sum_{j\in\brk k}\sum_{\ell\in\cL}m(\ell)(\cO_{\ell_j}^*-\cO_{\ell_j})^2\\
		&=&\sum_{t\in\cT}\sum_{\ell\in\cL}\sum_{j\in\brk k}m(\ell)(\cO_{ t}^*-\cO_{ t})^2\cdot\vecone_{\ell_j= t}
			=km\sum_{ t\in\cT}\pi( t)(\cO_{ t}^*-\cO_{ t})^2.
	\end{eqnarray*}
Thus,
	\begin{eqnarray*}
	P(\omega)&\leq& {\tilde P}
		\exp\brk{n\tilde O(2^{-k})\sum_{t\in\cT}\pi( t)(\cO_{ t}^*-\cO_{ t})^2
				-\sum_{\ell\in\cL}\Omega_k(1)m(\ell)\norm{\bar\omega_\ell-\omega_\ell}^2}.
	\end{eqnarray*}
Summing  over all possible overlap matrices $\omega$ of assignments with $p$-marginals, we get
	\begin{eqnarray*}
	P&=&\sum_{\omega:\cO(\omega)\doteq\cO}P(\omega)=
			\pr\brk{\cS'|\cO(\vec\omega)\doteq\cO,\,\cB}\geq \pr\brk{\cS|\cO(\vec\omega)\doteq\cO,\,\cB},
	\end{eqnarray*}
which we can bound by
	\begin{eqnarray*}
	P&\leq&\tilde P\cdot\exp\brk{n\tilde O(2^{-k})\sum_{ t\in\cT}\pi( t)(\cO_{ t}^*-\cO_{ t})^2}
			\sum_{\omega:\cO(\omega)\doteq\cO}\exp\brk{-\sum_{\ell\in\cL}\Omega_k(1)m(\ell)\norm{\bar\omega_\ell-\omega_\ell}_2^2}\\
			&=&\pr\brk{\cS|\vec\omega\doteq\omega^*,\,\cB}
				\exp\brk{n\tilde O(2^{-k})\sum_{t\in\cT}\pi( t)(\cO_{ t}^*-\cO_{ t})^2}\\
			&&\quad\cdot
				\sum_{\omega:\cO(\omega)\doteq\cO}
					\exp\brk{-\sum_{\ell\in\cL}\Omega(1)m(\ell)\norm{\bar\omega_\ell-\omega_\ell}_2^2}
									\pr\brk{\vec\omega\doteq{\bar\omega}|\cO(\vec\omega)\doteq\cO,\,\cB}\\
			&\leq&O(1)\cdot\pr\brk{\cS|\vec\omega\doteq\omega^*,\,\cB}
				\exp\brk{n\tilde O(2^{-k})\sum_{t\in\cT}\pi( t)(\cO_{ t}^*-\cO_{ t})^2},
	\end{eqnarray*}
as desired.

\subsection{Proof of \Prop~\ref{Prop_Bprob}}\label{Sec_Bprob}

Let $\omega$ be such that $\cO\doteq\cO(\omega)$ and $\norm{\omega-\bar\omega}_\infty\leq n^{-1/3}$.
We are going to work with the probability space $(\hat\Omega,\hat\pr)$ defined by letting
	$$q^{11}_{\ell,j}=\omega_{\ell,j},\qquad q_{\ell,j}=\ell_j.$$
We claim that there exist numbers $0<c_k<c_k'$ (independent of $\omega$) such that \whp\ $\vec d$ is such that
	\begin{equation}\label{eqBprob1}
	c_k\leq n\pr\brk{B_{\ell,j}|\hat\OMEGA\doteq\omega}\leq c_k'\qquad\mbox{for all }\ell,j.
	\end{equation}
Indeed, given $\hat\OMEGA_{\ell,j}\doteq\omega_{\ell,j}$ the total number of indices $i\in\brk{m(\ell)}$ such that
	$(\hat\SIGMA_{ij}(\ell),\hat\TAU_{ij}(\ell))=(1,0)$ has distribution
		$$\Bin\bc{(1-\omega_{\ell,j})m(\ell),\frac{\ell_j-\omega_{\ell,j}}{1-\omega_{\ell,j}}}.$$
Therefore, the probability that the total number of such $i$ equals its expectation is in the interval $\brk{c_{k,1}n^{-1/2},c_{k,2}n^{-1/2}}$
for certain $c_{k,2}>c_{k,1}>0$.
Furthermore, given this event, the 
number of $i\in\brk{m(\ell)}$ such that $(\hat\SIGMA_{ij}(\ell),\hat\TAU_{ij}(\ell))=(0,1)$ has distribution
	$$\Bin\bc{(1-\ell_j)m(\ell),\frac{\ell_j-\omega_{\ell,j}}{1-\ell_j}}.$$
Once more, the conditional probability that this random variable equals its expectation lies in 
	$\brk{c_{k,3}n^{-1/2},c_{k,4}n^{-1/2}}$ for certain $c_{k,4}>c_{k,3}>0$.
Hence, setting $c_k=c_{k,1}c_{k,3}$ and $c_k'=c_{k,2}c_{k,4}$, we obtain~(\ref{eqBprob1}).

Summing~(\ref{eqBprob1}) over all (finitely many) possible $\omega$ with $\pr\brk{\omega\doteq\hat\OMEGA}>0$ and $\cO(\omega)\doteq\cO$
and invoking \Prop~\ref{Prop_omegacentre}, we find that \whp\ over the choice of $\vec d$,
	\begin{eqnarray*}
	\pr\brk{B_{\ell,j}|\cO(\hat\OMEGA)\doteq\cO}&=&\sum_{\omega}\pr\brk{B_{\ell,j}|\hat\OMEGA\doteq\omega}\pr\brk{\hat\OMEGA\doteq\omega}\\
		&\leq&o(1/n)+\sum_{\omega:\norm{\omega-\bar\omega}_\infty\leq n^{-1/3}}\pr\brk{B_{\ell,j}|\hat\OMEGA\doteq\omega}\pr\brk{\hat\OMEGA\doteq\omega}\\
		&\leq&o(1/n)+c_k'/n\leq2c_k'/n.
	\end{eqnarray*}
A similar calculation shows $\pr\brk{B_{\ell,j}|\cO(\hat\OMEGA)\doteq\cO}\geq \frac12c_k/n$.
As $c_k,c_k'$ are independent of the specific vector $\cO$, the assertion follows.

\section{Proof of \Prop~\ref{Prop_Pcentre}}\label{Sec_Pcentre}

\emph{We keep the notation and the assumptions of \Sec~\ref{Sec_secondMoment}.
	}

\subsection{Outline}

In \Sec~\ref{Sec_functionF} we will establish the following.

\begin{proposition}\label{Prop_functionF}
There exist $C^2$-functions $\cP_\ell(\cdot)$ that range over matrices $\omega=(\omega_{\ell,j})_{\ell\in\cL,j\in\brk k}$
such that
	$$\norm{\omega_{\ell}-\omega^*_{\ell}}_\infty<k^{-12}\quad\mbox{ for all }\quad\ell\in\cL'$$
with the following properties.
\begin{enumerate}
\item For all such $\omega$ we have
		$$\pr\brk{\cS'|\vec\omega\doteq\omega,\,\cB}=
			\exp\brk{O(1)+\sum_{\ell\in\cL'}m(\ell)\cdot\cP_\ell(\omega_\ell)}.$$
\item For each $\ell$, $\cP_\ell$ is a function of the row $\omega_\ell$ only.
\end{enumerate}
\end{proposition}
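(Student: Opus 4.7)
My plan is to reduce the proposition to a type-by-type application of the local limit theorem on the product probability space $(\hat\Omega,\hat\pr)$ of Section~\ref{Sec_secondMoment}. A first reduction, analogous to Corollary~\ref{Cor_switch}, shows that for any parameter vector $\vec q$ of full support
\[
\pr[\cS'\mid\vec\omega\doteq\omega,\,\cB]=\hat\pr_{\vec q}[\cS'\mid\hat\OMEGA\doteq\omega,\,B],
\]
because conditional on the marginal events the arrangement is uniform over valid placements, independently of $\vec q$. Next, since under $\hat\pr$ the pairs $(\hat\SIGMA_{ij}(\ell),\hat\TAU_{ij}(\ell))$ are mutually independent across the block index $\ell$, and each of the events $\cS(\ell),\hat\OMEGA_\ell\doteq\omega_\ell,B_\ell$ depends only on the $\ell$-block, the conditional probability factorizes:
\[
\pr[\cS'\mid\vec\omega\doteq\omega,\,\cB]=\prod_{\ell\in\cL'}\hat\pr[\cS(\ell)\mid\hat\OMEGA_\ell\doteq\omega_\ell,\,B_\ell].
\]
This already produces the desired structure of a sum over $\ell\in\cL'$ of terms depending only on $\omega_\ell$.

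For fixed $\ell$, I view the $m(\ell)$ type-$\ell$ clauses as iid vectors $X_i=(\hat\SIGMA_{ij}(\ell),\hat\TAU_{ij}(\ell))_{j\in\brk k}\in\cbc{0,1}^{2k}$. Then $\cS(\ell)=\bigcap_i\cbc{X_i\in S\cap T}$, where $S\cap T\subset\cbc{0,1}^{2k}$ consists of the pairs with $\max_j a_j=\max_j b_j=1$, while $\hat\OMEGA_\ell\doteq\omega_\ell$ and $B_\ell$ constrain $\sum_i X_i$ to within $O(1)$ of a target $m(\ell)\,v(\omega_\ell)\in\RR^{3k}$ recording the per-column sums of $a_j$, $b_j$, and $a_jb_j$. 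I now choose $\vec q_\ell=\vec q_\ell(\omega_\ell)$ so that under $\hat\pr_{\vec q_\ell}$ the conditional mean $\hat\Erw[X_1\mid X_1\in S\cap T]$ equals $v(\omega_\ell)$. At $\omega_\ell^*$ this is satisfied to leading order by the independence point $q^{ab}_{\ell,j}\doteq\ell_j^a(1-\ell_j)^{1-a}\ell_j^b(1-\ell_j)^{1-b}$ (a calculation in the spirit of Lemma~\ref{Lemma_fixedpoint}), and Lemma~\ref{Lemma_inverseFunction} then supplies a unique $C^2$ solution $\vec q_\ell$ in a $k^{-12}$-neighborhood because the Jacobian of the conditional-mean map is close to the identity there.

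With this $\vec q_\ell$, Lemma~\ref{lem:locallimit} applied to the iid sum $\sum_i X_i$ under $\hat\pr_{\vec q_\ell}[\cdot\mid\cS(\ell)]$ at its mean $v(\omega_\ell)$ gives $\hat\pr_{\vec q_\ell}[\hat\OMEGA_\ell\doteq\omega_\ell,B_\ell\mid\cS(\ell)]=\Theta(m(\ell)^{-3k/2})$; similarly, taking $\vec q'_\ell$ with $\hat\Erw_{\vec q'_\ell}[X_1]=v(\omega_\ell)$ unconditionally (explicitly $q^{11}_{\ell,j}=\omega_{\ell,j}$, $q_{\ell,j}=\ell_j$) gives $\hat\pr_{\vec q'_\ell}[\hat\OMEGA_\ell\doteq\omega_\ell,B_\ell]=\Theta(m(\ell)^{-3k/2})$. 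Since the $\hat\pr_{\vec q}$-probability of any specific valid arrangement depends on $\vec q$ only through the common combinatorial factor $W(\vec q,\omega_\ell)=\prod_j\prod_{a,b}(q^{ab}_{\ell,j})^{N^{ab}_j(\omega_\ell)}$ (the counts $N^{ab}_j$ being determined by $\omega_\ell$), the ratio of these two LLT estimates rearranges to
\[
\hat\pr[\cS(\ell)\mid\hat\OMEGA_\ell\doteq\omega_\ell,\,B_\ell]=\exp(O(1))\cdot\bigl(\hat\pr_{\vec q_\ell}[X_1\in S\cap T]\bigr)^{m(\ell)}\cdot\frac{W(\vec q'_\ell,\omega_\ell)}{W(\vec q_\ell,\omega_\ell)}.
\]
Taking logarithms, dividing by $m(\ell)$, and absorbing the additive $O(1)/m(\ell)$ error into the prefactor defines $\cP_\ell(\omega_\ell)$. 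It is $C^2$ in $\omega_\ell$ because $\vec q_\ell,\vec q'_\ell$ are $C^2$ (inverse function theorem), $\hat\pr_{\vec q}[X_1\in S\cap T]$ is polynomial in $\vec q$, and $\log W$ depends smoothly on $\omega_\ell$ via $N^{ab}_j$ and $\log q^{ab}_{\ell,j}$.

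The principal obstacle is the careful verification needed for Lemma~\ref{Lemma_inverseFunction} combined with Lemma~\ref{lem:locallimit}: one must check that $\vec q_\ell$ stays strictly inside the parameter simplex throughout the $k^{-12}$-neighborhood (so the conditional covariance driving the LLT is non-degenerate and the $\Theta(\cdot)$ constants are uniform in $\omega_\ell$) and that the Jacobian of the conditional-mean map remains close to identity there. Aperiodicity of $X_1$ is automatic since its support generates $\ZZ^{3k}$ under the statistic $(a_j,b_j,a_jb_j)_j$.
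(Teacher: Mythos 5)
Your proposal is correct and follows the paper's own strategy closely: reduce to the independent auxiliary space $(\hat\Omega,\hat\pr)$, exploit block-independence across $\ell\in\cL'$, pick $\vec q_\ell$ by the inverse function theorem so that the conditional-given-$\cS(\ell)$ mean hits the target row $\omega_\ell$ (this is exactly what the paper's equations~(\ref{eqq})--(\ref{eqq11}) in \Lem~\ref{Lemma_11fixedpoint} encode), and apply the local limit theorem to get the $\Theta(m(\ell)^{-3k/2})$ normalization. The one difference is how the denominator $\hat\pr_{\vec q_\ell}[\hat\OMEGA_\ell\doteq\omega_\ell,B_\ell]$ is handled: you introduce a second tilt $\vec q'_\ell$ (the unconditional matching point) and transfer via the combinatorial factor $W$, whereas the paper (\Lem~\ref{Lemma_intoBinomials} and \Lem~\ref{Lemma_Pl}) estimates this probability directly via the binomial rate function $\psi$ of \Lem~\ref{Lemma_binomial}. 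The two are algebraically identical — expanding $\frac1{m(\ell)}\ln\frac{W(\vec q'_\ell,\omega_\ell)}{W(\vec q_\ell,\omega_\ell)}$ with $\nu^{11}_j=\omega_{\ell,j}$, $\nu^{10}_j=\nu^{01}_j=\ell_j-\omega_{\ell,j}$, $\nu^{00}_j=1-2\ell_j+\omega_{\ell,j}$ and $q'^{ab}_{\ell,j}=\nu^{ab}_j$ recovers precisely $-\sum_j\brk{\psi(q^{11}_{\ell,j},\omega_{\ell,j})+(1-\omega_{\ell,j})\psi(\cdot,\cdot)}$ — so your $\cP_\ell$ coincides with the paper's. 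The paper's explicit form is not more rigorous, just more convenient for the derivative computations in \Prop s~\ref{Prop_calculus1}--\ref{Prop_calculus2}, which are outside the scope of this proposition.
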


We need to analyse the functions $\cP_\ell$ from \Prop~\ref{Prop_functionF}.
Crucially, $\omega^*$ turns out to be a stationary point.

\begin{proposition}\label{Prop_calculus1}
The differentials of the functions $\cP_\ell$ from \Prop~\ref{Prop_functionF} satisfy $D\cP_\ell\bc{\omega_\ell^*}=0$
	for all $\ell$.
\end{proposition}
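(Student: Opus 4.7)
My plan is to extract an explicit formula for $\cP_\ell$ from the proof of Proposition~\ref{Prop_functionF} and then show that its gradient vanishes at $\omega^*_\ell$ by exploiting a \emph{product-form} structure at that point. Following the first-moment strategy of Section~\ref{Sec_first}, I would use the auxiliary probability space $(\hat\Omega, \hat\pr)$ of Section~\ref{Sec_secondMoment} and choose the joint parameters $\vec q(\omega_\ell) = (q^{ab}_{\ell,j})$ as the unique solution near $\omega^*$ of the two-dimensional fixed-point system
$$\hat\Erw[\hat\SIGMA_{ij}(\ell) \mid S_i(\ell)] = \ell_j, \qquad \hat\Erw[\hat\SIGMA_{ij}(\ell)\hat\TAU_{ij}(\ell) \mid S_i(\ell)] = \omega_{\ell,j}.$$
Existence, uniqueness and $C^1$-smoothness of $\vec q(\omega_\ell)$ would follow from the inverse function theorem (Lemma~\ref{Lemma_inverseFunction}), exactly as in Lemma~\ref{Lemma_fixedpoint}. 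A local-limit argument paralleling Corollary~\ref{Cor_switch} then identifies
$$m(\ell)\,\cP_\ell(\omega_\ell) = m(\ell) \ln \hat\pr_{\vec q(\omega_\ell)}[S_i(\ell)] + O(\log n),$$
with $\hat\pr_{\vec q}[S_i(\ell)] = 1 - 2\prod_j(1-q^{\sigma}_{\ell,j}) + \prod_j q^{00}_{\ell,j}$ in the notation of~(\ref{eqqvector1})--(\ref{eqqvector3}).

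The key structural observation is that at $\omega^*_{\ell,j} = \ell_j^2$ the fixed-point system admits the \emph{product} solution $q^{ab}_{\ell,j}(\omega^*) = \hat q^a_j \hat q^b_j$, where $\hat q^1_j$ is precisely the one-dimensional fixed point of Lemma~\ref{Lemma_fixedpoint}. Indeed, under such a product measure $\hat\SIGMA$ and $\hat\TAU$ are independent; since $S_i(\ell) = S^\sigma_i \cap S^\tau_i$ with $S^\sigma_i$ depending only on $\hat\SIGMA$, conditioning preserves independence, whence $\hat\Erw[\hat\SIGMA_{ij}\hat\TAU_{ij} \mid S_i] = \hat\Erw[\hat\SIGMA_{ij} \mid S^\sigma_i]^2 = \ell_j^2$ by~(\ref{eqtheExpecationWorksOut}). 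Local uniqueness then forces $\vec q(\omega^*)$ to equal this product. At $\omega^*$ the second-moment problem thus factorizes into two copies of the first-moment problem, which is the analytic counterpart of the replica-symmetric picture.

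With $\vec q(\omega_\ell)$ smoothly defined near $\omega^*$, differentiating $\cP_\ell$ via the chain rule produces a sum of terms $\partial \ln \hat\pr[S_i(\ell)]/\partial q^{ab}_{\ell,j}$ weighted by the sensitivities $\partial q^{ab}_{\ell,j}/\partial \omega_{\ell,j_0}$ obtained by implicitly differentiating the two fixed-point equations. To show the sum vanishes, the cleanest route is to recast $\vec q(\omega_\ell)$ variationally as the critical point of an auxiliary functional whose first-order conditions reproduce the fixed-point relations, and invoke the envelope theorem; the residual partial derivative at $\omega^*$ then vanishes by the $\sigma \leftrightarrow \tau$ symmetry $q^{10}_{\ell,j} = q^{01}_{\ell,j}$ combined with the product structure. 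The main obstacle is precisely this last computation: one must verify that the variational formulation is well-posed (no hidden contribution from the $\cB$-conditioning) and that the four entries $(a,b) \in \{0,1\}^2$ contribute in matched pairs that cancel as the symmetry predicts. Given the explicit product structure at $\omega^*$, this should reduce to a short (if fiddly) calculation once the framework is in place.
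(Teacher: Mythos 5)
Your central structural observation is correct and is indeed used by the paper (in Section~\ref{Sec_PcentreComplete}): at $\omega=\omega^*$ the fixed-point system has the product solution $q^{11}_{\ell,j}=q_{\ell,j}^2$ with $q_{\ell,j}$ the first-moment fixed point of \Lem~\ref{Lemma_fixedpoint}, and under this measure $\hat\SIGMA,\hat\TAU$ decouple. But your route from there to $D\cP_\ell(\omega^*_\ell)=0$ has two gaps.

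First, the identification $m(\ell)\cP_\ell(\omega_\ell)=m(\ell)\ln\hat\pr_{\vec q(\omega_\ell)}\brk{S_i(\ell)}+O(\log n)$ is false. At the fixed point $\vec q(\omega)$ the conditioning probability $\hat\pr\brk{B',C'\mid S'}$ is indeed $n^{-O(1)}$, but the \emph{unconditional} $\hat\pr\brk{B',C'}$ is exponentially small of order $\exp(-\Theta(4^{-k})m(\ell))$, since $q^{11}_{\ell,j}$ and $q_{\ell,j}$ differ from $\omega_{\ell,j}$ and $\ell_j$ by $\Theta(2^{-k})$. So $\cP_\ell$ carries the additional $-\sum_j\brk{\psi(q^{11}_{\ell,j},\omega_{\ell,j})+(1-\omega_{\ell,j})\psi(\cdot,\cdot)}$ terms (as in \Lem~\ref{Lemma_Pl}), which are $O(4^{-k})$ in $k$ but $\Theta(1)$ in $n$ and cannot be discarded. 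Any honest computation of $D\cP_\ell(\omega^*)$ has to account for their $\omega$-derivatives and for the implicit $\omega$-dependence of $\vec q$ through them.

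Second, the envelope-theorem step is exactly where you stop. For that step you would need $\vec q(\omega)$ to be a stationary point, in $\vec q$, of the \emph{full} expression $\phi_\ell(\vec q)-\sum_j\psi_{\ell,j}(\omega,\vec q)$ (so that the $\partial\vec q/\partial\omega$ contributions drop out), and then a separate verification that the explicit $\partial/\partial\omega$ vanishes at $\omega^*$. Neither of these is checked; you remark that it ``should reduce to a short (if fiddly) calculation'', which is precisely the part that needs to be done. The paper avoids all of this: it exploits that $\hat\pr\brk{\cdot\mid B'}$ is uniform on $B'$ (because the $\vec q(\omega^*)$ tilting makes the measure constant on $B'$), so that the mass function $H(\omega)$ of $\hat\OMEGA$ under $\hat\pr\brk{\cdot\mid B'}$ and the reweighted mass function $H(\omega)\exp(n\cP_\ell(\omega))$ under $\hat\pr\brk{\cdot\mid S_\ell,B'}$ both concentrate at $\omega^*$ (\Lem~\ref{Lemma_peak}). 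Concentration of $H$ at $\omega^*$ forces $\frac1nD\ln H(\omega^*)=o(1)$, and concentration of the reweighted measure at the same point then forces $D\cP_\ell(\omega^*)=0$ by contradiction, without ever differentiating the binomial rate functions or the implicit map $\omega\mapsto\vec q(\omega)$. Your approach could in principle be made to work, but it requires the computation you left out; the paper's concentration argument gets stationarity ``for free'' from the very choice of $\vec q$ that makes the conditional law uniform.
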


The proof of \Prop~\ref{Prop_calculus1} can be found in \Sec~\ref{Sec_calculus1}.
Furthermore, in \Sec~\ref{Sec_calculus2} we derive the following bound on the second
derivatives of $\cP_\ell$.

\begin{proposition}\label{Prop_calculus2}
The functions $\cP_\ell$ from \Prop~\ref{Prop_functionF} have the following property.
For any $j,j',\ell$ we have
		$$\frac{\partial^2\cP_\ell}{\partial\omega_{\ell,j}\partial\omega_{\ell,j'}}\leq\tilde O(4^{-k})$$
	on the entire domain of $\cP_\ell$.
\end{proposition}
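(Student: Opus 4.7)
The plan is to identify $\cP_\ell$ explicitly and bound its Hessian directly, exploiting the smallness of the ``collision'' products that govern simultaneous clause satisfaction. Retracing the argument behind \Prop~\ref{Prop_functionF}, we pass to the probability space $(\hat\Omega,\hat\pr)$ and choose $\vec q$ (via the inverse function theorem \Lem~\ref{Lemma_inverseFunction}, in the spirit of \Lem~\ref{Lemma_fixedpoint}) so that, conditional on a single clause of type $\ell$ being simultaneously satisfied, $\hat\Erw[\hat\SIGMA_{ij}\hat\TAU_{ij}\mid S_i(\ell)]=\omega_{\ell,j}$ and $\hat\Erw[\hat\SIGMA_{ij}\mid S_i(\ell)]=\hat\Erw[\hat\TAU_{ij}\mid S_i(\ell)]=\ell_j$. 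A local-limit-theorem computation then yields
$$m(\ell)\cdot\cP_\ell(\omega_\ell)=m(\ell)\ln P_1(\omega_\ell)+R_\ell(\omega_\ell),$$
with
$$P_1(\omega_\ell)=1-2\prod_{j=1}^k(1-\ell_j)+\prod_{j=1}^k(1-2\ell_j+\omega_{\ell,j})$$
the probability that a single clause of type $\ell$ is satisfied by both copies, and $R_\ell$ a $C^2$ remainder of size $O(\log n)$ coming from the LCLT prefactors. Its contribution to $\partial^2\cP_\ell$ is $O((\log n)/n)$, negligible against the target $\tilde O(4^{-k})$, so it suffices to bound the Hessian of $\ln P_1$.

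Next we collect uniform size estimates on the admissible domain $|\omega_{\ell,j}-\omega_{\ell,j}^*|<k^{-12}$. Because $p=p_{\mathrm{BP}}$ satisfies $p(z)=\tfrac12+\tilde O(2^{-k/2})$, each $\ell_j\in\cT$ obeys $\ell_j=\tfrac12+\tilde O(2^{-k/2})$, so $\omega_{\ell,j}^*=\ell_j^2=\tfrac14+\tilde O(2^{-k/2})$ and
$$1-2\ell_j+\omega_{\ell,j}=(1-\ell_j)^2+(\omega_{\ell,j}-\omega_{\ell,j}^*)=\tfrac14+\tilde O(2^{-k/2}).$$
Hence any product of $k$, $k-1$, or $k-2$ such factors is $\tilde O(4^{-k})$; in particular $P_1(\omega_\ell)=1-\tilde O(2^{-k})$ is bounded away from $0$.

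Setting $g(\omega_\ell)=\prod_{j=1}^k(1-2\ell_j+\omega_{\ell,j})$, multilinearity in $\omega_\ell$ gives $\partial_{\omega_{\ell,j}}g=\prod_{j''\neq j}(1-2\ell_{j''}+\omega_{\ell,j''})=\tilde O(4^{-k})$ and, for $j\neq j'$, $\partial_{\omega_{\ell,j}}\partial_{\omega_{\ell,j'}}g=\prod_{j''\notin\{j,j'\}}(1-2\ell_{j''}+\omega_{\ell,j''})=\tilde O(4^{-k})$, while the diagonal second derivatives vanish. Because $P_1=1-2\prod_j(1-\ell_j)+g$ and the middle summand is constant in $\omega_\ell$, the same bounds transfer to $P_1$. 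The quotient rule then yields
$$\partial_{\omega_{\ell,j}}\partial_{\omega_{\ell,j'}}\ln P_1=\frac{\partial_{\omega_{\ell,j}}\partial_{\omega_{\ell,j'}}P_1}{P_1}-\frac{(\partial_{\omega_{\ell,j}}P_1)(\partial_{\omega_{\ell,j'}}P_1)}{P_1^2}=\tilde O(4^{-k})-\tilde O(16^{-k})=\tilde O(4^{-k})$$
uniformly on the domain, which combined with the preceding paragraph delivers the claimed bound on $\partial^2\cP_\ell$.

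The main obstacle will be the first step, namely pinning down the precise form of $\cP_\ell$ and verifying that the LCLT remainder $R_\ell$, which depends smoothly on $\omega_\ell$ via the implicit $\vec q(\omega_\ell)$, contributes only $O(\log n)$ to $m(\ell)\cP_\ell$ instead of something linear in $m(\ell)$. This forces one to differentiate the implicit equations defining $\vec q$, but \Lem~\ref{Lemma_fixedpoint} tells us that the relevant Jacobian is close to the identity for large $k$, which keeps the bookkeeping tractable.
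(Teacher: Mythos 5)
The decomposition of $\cP_\ell$ you start from is not the one that comes out of the paper's construction, and the discrepancy is fatal. By Lemma~\ref{Lemma_Pl}, the function $\cP_\ell$ is not simply $\ln$ of the two-copy satisfaction probability plus a polynomial LCLT prefactor; it is
\[
\cP_\ell(\omega)=\ln\Bigl[1-2\textstyle\prod_{j}(1-q_{\ell,j})+\prod_{j}(1-2q_{\ell,j}+q^{11}_{\ell,j})\Bigr]
-\sum_{j}\Bigl[\psi(q^{11}_{\ell,j},\omega_{\ell,j})+(1-\omega_{\ell,j})\,\psi\bigl(\cdots\bigr)\Bigr],
\]
where $\vec q=\vec q(\omega)$ solves the implicit equations (\ref{eqq})--(\ref{eqq11}). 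The $\psi$ terms are large-deviation rate functions arising from $\hat\pr[B'\cap C']$; they are \emph{not} absorbed into an $O(\log n)$ LCLT prefactor. Since $q^{11}_{\ell,j}=\omega_{\ell,j}+O(2^{-k})$, each such term is of order $\tilde O(4^{-k})$ per clause, i.e.\ $\Theta(m(\ell)\cdot 4^{-k})$ in total --- precisely the same order as the Hessian bound you are trying to prove. Your claim that the remainder $R_\ell$ is ``of size $O(\log n)$'' confuses the polynomial prefactor in the local limit theorem (which is indeed $O(\log n)$ after taking logs) with the exponential rate itself. Throwing these terms away and differentiating only $\ln P_1$ is not a valid reduction.

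The consequence is worse than an overlooked bookkeeping item: the $\psi$ terms have second derivatives of order $\Theta(1)$ when differentiated naively (e.g.\ $\partial^2\psi/\partial y_2^2=-1/y_2-1/(1-y_2)\approx -16/3$ near $y_2=1/4$), so bounding their Hessian by $\tilde O(4^{-k})$ is genuinely nontrivial. What saves the day in the paper's proof is a cancellation that only becomes visible after composing $\psi$ with the implicit map $\omega\mapsto(q^{11}_{\ell,j}(\omega),\omega_{\ell,j})$: since $\psi$ vanishes identically on the diagonal $y_1=y_2$ and the composite map stays within $\tilde O(2^{-k})$ of that diagonal, the Hessian of the composite picks up a quadratic gain. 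The paper formalises this through the notions of $\eps$-tame and $\eps$-benign maps (Lemmas~\ref{Lemma_psiIsTame}, \ref{Lemma_zetaIsBenign}, \ref{Lemma_tameAndBenign}), combined with the derivative estimates for $\vec q(\omega)$ from \Lem~\ref{Lemma_qdiff}. Your proposal contains none of this machinery, and there is no shortcut: the portion of the argument you skip is exactly where the work lies. (A smaller inconsistency: you write $P_1$ with the bare parameters $\ell_j,\omega_{\ell,j}$ while claiming to choose $\vec q$ so that the \emph{conditional} means given $S_i(\ell)$ match; under that choice $\hat\pr[S_i(\ell)]$ involves $q_{\ell,j},q^{11}_{\ell,j}$ and not $\ell_j,\omega_{\ell,j}$, so you would also need the chain rule and \Lem~\ref{Lemma_qdiff} for the $\phi_\ell$ part, not just for the remainder.)
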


\begin{corollary}\label{Cor_calculus}
For any $\omega$ in the domain of $\cP$ we have
	$$\cP_\ell(\omega_\ell)\leq\cP(\omega_\ell^*)+\tilde O(4^{-k})\norm{\omega_\ell-\omega_\ell^*}_2^2.$$
\end{corollary}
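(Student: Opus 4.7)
\emph{Proof proposal.} The plan is to obtain Corollary~\ref{Cor_calculus} as a direct second-order Taylor expansion of $\cP_\ell$ around the stationary point $\omega_\ell^*$, using the two preceding propositions as the only inputs.

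First I would recall that by \Prop~\ref{Prop_functionF}, each $\cP_\ell$ is a $C^2$ function of $\omega_\ell\in\RR^k$ alone, and its domain contains the $\ell_\infty$-ball of radius $k^{-12}$ around $\omega_\ell^*$. By \Prop~\ref{Prop_calculus1}, the gradient $D\cP_\ell(\omega_\ell^*)$ vanishes. By \Prop~\ref{Prop_calculus2}, every second partial derivative of $\cP_\ell$ is bounded by $\tilde O(4^{-k})$ on the whole domain. These are exactly the ingredients required for a Taylor remainder argument.

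Next I would apply Taylor's theorem with second-order integral (or Lagrange) remainder to $\cP_\ell$ around $\omega_\ell^*$: for every $\omega_\ell$ in the domain, there exists $\tilde\omega_\ell$ on the segment joining $\omega_\ell^*$ and $\omega_\ell$ such that
\begin{equation*}
\cP_\ell(\omega_\ell)\;=\;\cP_\ell(\omega_\ell^*)\;+\;D\cP_\ell(\omega_\ell^*)\bc{\omega_\ell-\omega_\ell^*}\;+\;\tfrac12\sum_{j,j'\in\brk k}\frac{\partial^2\cP_\ell}{\partial\omega_{\ell,j}\partial\omega_{\ell,j'}}\bigg|_{\tilde\omega_\ell}\!\!\bc{\omega_{\ell,j}-\omega_{\ell,j}^*}\bc{\omega_{\ell,j'}-\omega_{\ell,j'}^*}.
\end{equation*}
The first-order term vanishes by \Prop~\ref{Prop_calculus1}. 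To bound the quadratic term I would use \Prop~\ref{Prop_calculus2} to estimate every Hessian entry by $\tilde O(4^{-k})$, and then apply the elementary inequality $\sum_{j,j'}|v_jv_{j'}|=(\sum_j|v_j|)^2\leq k\norm v_2^2$ (Cauchy--Schwarz) to the vector $v=\omega_\ell-\omega_\ell^*$. Since the factor of $k$ is absorbed by the $\tilde O$ notation, this yields
$$\tfrac12\sum_{j,j'\in\brk k}\frac{\partial^2\cP_\ell}{\partial\omega_{\ell,j}\partial\omega_{\ell,j'}}\bigg|_{\tilde\omega_\ell}\!\!\bc{\omega_{\ell,j}-\omega_{\ell,j}^*}\bc{\omega_{\ell,j'}-\omega_{\ell,j'}^*}\;\leq\;\tilde O(4^{-k})\norm{\omega_\ell-\omega_\ell^*}_2^2,$$
which combined with the vanishing of the linear term gives exactly the claimed estimate.

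There is essentially no obstacle here, because the real work has been deferred to \Prop s~\ref{Prop_calculus1} and~\ref{Prop_calculus2}: the former ensures we are expanding at a critical point so that the first-order contribution disappears, while the latter provides a uniform bound on the Hessian throughout the whole domain (not just at $\omega_\ell^*$), so that the Lagrange remainder can be controlled for every $\omega_\ell$ of interest without having to worry about the location of the intermediate point $\tilde\omega_\ell$. The only minor care to be taken is the bookkeeping of the factor of $k$ from the dimension, which harmlessly enters the polylogarithmic-in-$k$ overhead hidden in the $\tilde O$ symbol.
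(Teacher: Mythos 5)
Your proposal is correct and follows exactly the route the paper takes: the paper's proof of this corollary is a one-line appeal to \Prop s~\ref{Prop_calculus1}, \ref{Prop_calculus2} and Taylor's formula, and you have simply spelled out the Lagrange remainder and the Cauchy--Schwarz step that absorbs the dimension factor $k$ into the $\tilde O$ notation.
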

\begin{proof}
This follows directly from \Prop s~\ref{Prop_calculus1} and~\ref{Prop_calculus2} and Taylor's formula.
\qed\end{proof}

\medskip\noindent
Finally, in \Sec~\ref{Sec_PcentreComplete} we will show of  \Prop~\ref{Prop_Pcentre} 
follows from \Prop~\ref{Prop_functionF} and \Cor~\ref{Cor_calculus}.

\subsection{Proof of \Prop~\ref{Prop_functionF}}\label{Sec_functionF}

To construct the functions $\cP_\ell$, we are going to work with the probability space $(\hat\Omega,\hat\pr)$ from \Sec~\ref{Sec_centre} once more;
we are going to define the vector $\vec q$ that determines the measure $\hat\pr$ so as to facilitate the definition of $\cP_\ell$ in due course.
Fix $\omega=(\omega_{\ell,j})_{\ell\in\cL,j\in\brk k}$
such that
	$\norm{\omega_{\ell}-\omega^*_{\ell}}_\infty<k^{-12}$ for all $\ell\in\cL'$.
Let 
$B'=\bigcap_{\ell\in\cL'}B_{\ell}$.
Further, for $\ell\in\cL$ and $j\in\brk k$ let $C_{\ell,j}$ be the event that $\hat\OMEGA_{\ell,j}\doteq\omega_{\ell,j}$.
Let $C_{\ell}=\bigcap_{j\in\brk k}C_{\ell,j}'$ and let $C'=\bigcap_{\ell\in\cL'}C_{\ell}'$.
Finally, 
let $S'=\bigcap_{\ell\in\cL'}S(\ell)$.
The following two facts are direct consequences of the definition of $\hat\pr$.

\begin{fact}
If $\vec q$ is such that 
$\hat\pr\brk{B'\cap C'}>0$, then
 $\hat\pr\brk{\cdot|B'\cap C'}$ is the uniform distribution
over the set $B'\cap C'$.
\end{fact}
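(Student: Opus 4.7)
The plan is to exploit the product structure of $\hat\pr$. Since the pairs $(\hat\SIGMA_{ij}(\ell),\hat\TAU_{ij}(\ell))$ are mutually independent under $\hat\pr$, for any atom $(\hat\sigma,\hat\tau)\in\hat\Omega$ we may write
$$
\hat\pr\brk{\cbc{(\hat\sigma,\hat\tau)}}=\prod_{\ell\in\cL,i\in\brk{m(\ell)},j\in\brk k}q^{\hat\sigma_{ij}(\ell),\hat\tau_{ij}(\ell)}_{\ell,j}=\prod_{\ell,j}\prod_{a,b\in\cbc{0,1}}\bc{q^{ab}_{\ell,j}}^{N^{ab}_{\ell,j}},
$$
where $N^{ab}_{\ell,j}=\abs{\cbc{i\in\brk{m(\ell)}:(\hat\sigma_{ij}(\ell),\hat\tau_{ij}(\ell))=(a,b)}}$ counts the occurrences of each pair type at each coordinate $(\ell,j)$. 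In particular, the atomic mass depends on the configuration only through this matrix of pair-type counts.

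Next I would show that on the event $B'\cap C'$ the four counts $(N^{ab}_{\ell,j})_{a,b\in\cbc{0,1}}$ are determined (up to the $O(1)$ slack absorbed by $\doteq$) for every $(\ell,j)$ with $\ell\in\cL'$. Indeed, $B_{\ell,j}$ fixes the two row marginals $N^{11}_{\ell,j}+N^{10}_{\ell,j}\doteq\ell_j m(\ell)$ and $N^{11}_{\ell,j}+N^{01}_{\ell,j}\doteq\ell_j m(\ell)$; $C_{\ell,j}$ fixes the joint count $N^{11}_{\ell,j}\doteq\omega_{\ell,j}m(\ell)$; and the trivial identity $\sum_{a,b}N^{ab}_{\ell,j}=m(\ell)$ then pins down $N^{00}_{\ell,j}$. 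Consequently, every atom in $B'\cap C'$ carries the same mass under $\hat\pr$, which is to say that $\hat\pr\brk{\cdot\mid B'\cap C'}$ is uniform on this set. I do not anticipate any real obstacle: the statement really is just the product structure of $\hat\pr$ combined with the fact that each $2\times 2$ contingency table has only one degree of freedom, which the three linear constraints coming from $B_{\ell,j}$ and $C_{\ell,j}$ consume — which is exactly why the authors bill the fact as a direct consequence of the definition of $\hat\pr$.
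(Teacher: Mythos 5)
Your plan --- factorize the atomic mass over coordinates $(\ell,i,j)$ and observe that each $2\times 2$ contingency table $(N^{ab}_{\ell,j})_{a,b}$ is pinned down by the three linear constraints coming from $B_{\ell,j}$, $C_{\ell,j}$ and the trivial sum-to-$m(\ell)$ identity --- is the right idea and is evidently what the paper's phrase ``direct consequence of the definition of $\hat\pr$'' is gesturing at. However, as written there is a gap: your argument only accounts for the factors indexed by $\ell\in\cL'$. The atomic mass $\prod_{\ell\in\cL}\prod_{j\in\brk k}\prod_{a,b}(q^{ab}_{\ell,j})^{N^{ab}_{\ell,j}}$ runs over \emph{all} $\ell\in\cL$, while $B'=\bigcap_{\ell\in\cL'}B_\ell$ and $C'=\bigcap_{\ell\in\cL'}C_\ell$ impose no constraint whatsoever on the coordinates with $\ell\notin\cL'$. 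For those $\ell$ the counts $N^{ab}_{\ell,j}$ vary freely over $B'\cap C'$, so the factor $\prod_{a,b}(q^{ab}_{\ell,j})^{N^{ab}_{\ell,j}}$ is not constant unless $q^{ab}_{\ell,j}=1/4$ for every $a,b$. For a generic $\vec q$ the conditional law on $B'\cap C'$ is therefore still a nontrivial product measure in those unconstrained directions, not the uniform distribution.

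This gap is partly inherited from the statement of the Fact itself: \Lem~\ref{Lemma_11fixedpoint} only specifies $\vec q$ on $\cL'$, so the entries for $\ell\notin\cL'$ are left open, and the Fact holds verbatim only under the (natural but unstated) convention that $q^{ab}_{\ell,j}=1/4$ there --- or, alternatively, if one reads the assertion as uniformity of the \emph{marginal} on the $\cL'$-coordinates. Either reading suffices downstream, since $S'$, $B'$ and $C'$ are all $\cL'$-measurable and hence Fact~\ref{Fact_connection} and \Lem~\ref{Lemma_Pl} only probe that marginal. Your proof should state this restriction explicitly rather than conclude uniformity over all of $B'\cap C'$; as it stands, the sentence ``every atom in $B'\cap C'$ carries the same mass'' is false for the $\ell\notin\cL'$ factors.
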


\begin{fact}\label{Fact_connection}
Suppose that $\vec q$ is such that 
the conditional distribution $\hat\pr\brk{\cdot|B'\cap C'}$ is uniform.
Then
	$\hat\pr\brk{S'|B',\,C'}=\pr\brk{\cS'|\vec\omega\doteq\omega,\,\cB}.$
\end{fact}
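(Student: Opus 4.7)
The plan is to exhibit a direct correspondence between the formula-plus-assignment picture and the configurations in $\hat\Omega$, and then show that both conditional distributions are uniform over the same underlying set. Concretely, for every $\vec d$-compatible formula $\Phi$ with $m_\Phi(\ell)=m(\ell)$ for all $\ell\in\cL$ and every pair of assignments $\sigma,\tau\in\{0,1\}^V$ with $p$-marginals, define the plug-in vector
\[
(\hat\sigma|_\Phi,\hat\tau|_\Phi)=\bc{\sigma(\Phi_{\vec d,\vec m,i,j}),\tau(\Phi_{\vec d,\vec m,i,j})}_{\ell\in\cL,\,i\in M_{\Phi}(\ell),\,j\in[k]}\in\hat\Omega.
\]
Under this map, by definition of $\cS'$, $\cB$, and the matrix $\vec\omega$, the event $\cS'$ pulls back to $S'$, the event $\cB$ (restricted to $\cL'$) pulls back to $B'$, and the event $\vec\omega\doteq\omega$ (restricted to $\cL'$) pulls back to $C'$. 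Since $S'$, $B'$ and $C'$ are measurable with respect to only the $\cL'$-coordinates, and since the events $\cB$ and $\vec\omega\doteq\omega$ factorise across clause types, we may from the outset project everything to $\cL'$.

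Next I would argue that, on the formula side, the distribution of $(\hat\sigma|_\Phi,\hat\tau|_\Phi)$ conditioned on $\cB\cap\{\vec\omega\doteq\omega\}$ is uniform over $B'\cap C'$ (viewed as the set of $\cL'$-projections). Represent $\PHIdm$ in the configuration model: each literal $l$ has $d_l$ clones, each slot $(\ell,i,j)$ must receive a clone of a literal of type $\ell_j$, and a uniform random formula corresponds to a uniform random perfect matching respecting these type constraints. Fixing $\SIGMA=\sigma$ and $\TAU=\tau$, each clone carries the truth-value pair $(\sigma(l),\tau(l))\in\{0,1\}^2$. The conditioning $B'\cap C'$ fixes, for every $(\ell,j)$ with $\ell\in\cL'$, the number of slots receiving clones of each of the four truth-value types $(a,b)$. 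For any two configurations in $B'\cap C'$ with the same projection to $\cL'$-coordinates, the number of completions by a valid matching is identical — it depends only on these counts and on the number of clones of each $(l,a,b)$-subtype, not on how the values are distributed among individual slots. Hence the induced distribution on the $\cL'$-coordinates is uniform over $B'\cap C'$.

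By the hypothesis of the fact, $\hat\pr\brk{\cdot|B'\cap C'}$ is also uniform on $B'\cap C'$. Since $S'$ depends only on the $\cL'$-coordinates, both $\pr\brk{\cS'|\vec\omega\doteq\omega,\,\cB}$ and $\hat\pr\brk{S'|B',\,C'}$ equal the ratio $|S'\cap B'\cap C'|/|B'\cap C'|$, which gives the claimed equality. The main obstacle is simply verifying the uniformity of the pushforward measure, i.e.\ checking that the multiplicity of each configuration under the clone-matching representation is constant across $B'\cap C'$; this boils down to a routine product-of-factorials count and is the only nontrivial combinatorial content of the proof.
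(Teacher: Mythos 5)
Your proof is essentially correct and fills in exactly the exchangeability argument the paper leaves implicit (the paper states both facts in this block as ``direct consequences of the definition of $\hat\pr$'', so there is no written proof to compare against; the closest analogue is the paper's proof of Lemma~\ref{Lemma_switch}, which uses the same plug-in map and the same uniformity-of-the-plug-in-vector observation for a single assignment). The key combinatorial point you isolate---that in the configuration model the number of perfect matchings extending a given slot arrangement is a product of factorials $\prod_t\prod_{a,b}(n_t^{ab})!$ that depends only on the clone-count profile, not on how the $(a,b)$-values are placed among the slots---is indeed what makes both conditional distributions uniform over the same set, and it is the only nontrivial step.

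One small logical ordering issue: in your first paragraph you invoke ``the events $\cB$ and $\vec\omega\doteq\omega$ factorise across clause types'' to justify projecting to $\cL'$ \emph{before} you give the configuration-model / exchangeability argument, but that factorisation is itself a consequence of the exchangeability argument (as events on formulas, $\cB$ and $\vec\omega\doteq\omega$ for different $\ell$ are correlated because they draw from shared literal-type pools; they only decouple after one passes to the uniform measure on slot arrangements with prescribed counts). The clean order is: (i) establish via exchangeability that, given $\cB\cap\{\vec\omega\doteq\omega\}$ over \emph{all} $\ell$, the induced distribution on slot arrangements is uniform over the full constrained set $B\cap C$; (ii) observe that $B\cap C$ is a product set over $\ell\in\cL$, so the $\cL'$-marginal is uniform over $B'\cap C'$ and independent of the $\cL\setminus\cL'$ coordinates; (iii) since $S'$ depends only on $\cL'$-coordinates, both sides equal $|S'\cap B'\cap C'|/|B'\cap C'|$. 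Your write-up has all the ingredients; just swap the order so the factorisation is a conclusion rather than a premise.
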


Thus, our goal is pick $\vec q$  such that $\hat\pr\brk{S'|B',\,C'}$ is easy to compute.
Roughly speaking, we are going to accomplish this by choosing $\vec q$ so that $\hat\pr\brk{B',\,C'|S'}$ is as big as possible.
To implement this, we first need to determine the unconditional probabilities $\hat\pr\brk{S'}$, $\hat\pr\brk{B',C'}$ as functions of $\vec q$.

\begin{lemma}\label{Lemma_SATsmm}
Suppose that
$\vec q$ is such that
 $q_{\ell,j}\in(0,1)$ for all $\ell\in\cL'$, $j\in\brk k$. Then
	\begin{equation}\label{eqSatProb}
	\hat\pr\brk{S_i(\ell)}=1-2\prod_{j=1}^k(1-q_{\ell,j})+\prod_{j=1}^k(1-2q_{\ell,j}+q^{11}_{\ell,j})
	\end{equation}
for all $\ell\in\cL'$, $i\in\brk{ m(\ell)}$,
and
	\begin{eqnarray*}
	\frac1n\ln\hat\pr\brk{S'}
		&=&\sum_{\ell\in\cL}\frac{m(\ell)}n\ln\brk{1-2\prod_{j=1}^k(1-q_{\ell,j})+\prod_{j=1}^k(1-2q_{\ell,j}+q^{11}_{\ell,j})}.
	\end{eqnarray*}
\end{lemma}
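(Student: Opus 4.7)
The lemma is a direct computation, so the plan is essentially bookkeeping around the independence built into the space $(\hat\Omega,\hat\pr)$.

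First I would rewrite $S_i(\ell)$ via a two-term inclusion--exclusion. Let $A_i=\cbc{\max_{j\in\brk k}\hat\SIGMA_{ij}(\ell)=0}$ and $B_i=\cbc{\max_{j\in\brk k}\hat\TAU_{ij}(\ell)=0}$ denote the events that $\SIGMA$, resp.\ $\TAU$, falsifies clause $i$ of type $\ell$. Then $S_i(\ell)$ is the complement of $A_i\cup B_i$, so
$$\hat\pr\brk{S_i(\ell)}=1-\hat\pr\brk{A_i}-\hat\pr\brk{B_i}+\hat\pr\brk{A_i\cap B_i}.$$

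Next, because the pairs $(\hat\SIGMA_{ij}(\ell),\hat\TAU_{ij}(\ell))$ are mutually independent across $j\in\brk k$ by the construction of $\hat\pr$, each of the three probabilities on the right factorises over $j$. From (\ref{eqqvector1})--(\ref{eqqvector3}) I would read off the marginal $\hat\pr\brk{\hat\SIGMA_{ij}(\ell)=0}=q^{00}_{\ell,j}+q^{01}_{\ell,j}=1-q_{\ell,j}$ and, using $q^{01}_{\ell,j}=q^{10}_{\ell,j}$, the identical marginal $\hat\pr\brk{\hat\TAU_{ij}(\ell)=0}=1-q_{\ell,j}$, while the joint probability is $\hat\pr\brk{\hat\SIGMA_{ij}(\ell)=0,\,\hat\TAU_{ij}(\ell)=0}=q^{00}_{\ell,j}=1-2q_{\ell,j}+q^{11}_{\ell,j}$. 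Substituting $\hat\pr\brk{A_i}=\prod_j(1-q_{\ell,j})=\hat\pr\brk{B_i}$ and $\hat\pr\brk{A_i\cap B_i}=\prod_j(1-2q_{\ell,j}+q^{11}_{\ell,j})$ into the inclusion--exclusion identity produces the displayed formula for $\hat\pr\brk{S_i(\ell)}$. The hypothesis $q_{\ell,j}\in(0,1)$ just guarantees that this number is strictly positive, so that the logarithm in the second assertion is well-defined.

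For the second claim, I would observe that as $(\ell,i)$ ranges over $\cbc{(\ell,i):\ell\in\cL',\,i\in\brk{m(\ell)}}$, the events $S_i(\ell)$ depend on pairwise disjoint coordinates of the vector $(\hat\SIGMA,\hat\TAU)$, hence are mutually independent under $\hat\pr$. Therefore $\hat\pr\brk{S'}=\prod_{\ell\in\cL'}\hat\pr\brk{S_i(\ell)}^{m(\ell)}$; taking $\frac1n\ln$ of both sides yields the claimed identity (with the sum running over $\ell\in\cL'$, which I take to be the intended range in the displayed formula).

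No real obstacle is anticipated here: independence across $(\ell,i,j)$ is baked into the definition of $\hat\pr$, and the only analytic ingredient is a two-term inclusion--exclusion together with one application of the bookkeeping identities (\ref{eqqvector1})--(\ref{eqqvector3}); the lemma is essentially a convenient reformulation that will feed into the construction of the functions $\cP_\ell$ of \Prop~\ref{Prop_functionF}.
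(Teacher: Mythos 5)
Your proof is correct and follows essentially the same route as the paper: a two-term inclusion--exclusion on the events that $\hat\SIGMA$ (resp.\ $\hat\TAU$) falsifies clause $i$ of type $\ell$, combined with the independence across $j$ baked into $\hat\pr$, gives~(\ref{eqSatProb}), and independence across $(\ell,i)$ then yields the product formula for $\hat\pr\brk{S'}$. You also correctly note that the sum in the second display should range over $\cL'$ rather than $\cL$, consistent with the definition $S'=\bigcap_{\ell\in\cL'}S(\ell)$.
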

\begin{proof}
The first statement follows by inclusion/exclusion.
The probability that $\max_{j\in\brk k}\hat\SIGMA_{ij}(\ell)=0$ equals $\prod_{j=1}^k(1-q_{\ell,j})$ as the 
components $\hat\SIGMA_{ij}(\ell)$ are the results of independent $\Be(q_{\ell,j})$ experiments.
For the event $\max_{j\in\brk k}\hat\TAU_{ij}(\ell)=0$ we get the exact same expression.
Furthermore, the probability of $\max_{j\in\brk k}\hat\SIGMA_{ij}(\ell)=\max_{j\in\brk k}\hat\TAU_{ij}(\ell)=0$ 
equals $\prod_{j=1}^k(1-2q_{\ell,j}+q^{11}_{\ell,j})$.
To see this, note that for each individual $j$ we have
	$$\pr\brk{\hat\SIGMA_{ij}(\ell)=\hat\TAU_{ij}(\ell)=0}=1-2q_{\ell,j}+q^{11}_{\ell,j}$$
by inclusion/exclusion, and these events are independent for $j\in\brk k$.
The second one is due to independence over $\ell$ and $i$.
\qed\end{proof}

\begin{lemma}\label{Lemma_intoBinomials}
For any $\vec q$ and any $\ell,j$ we have
	\begin{equation}\label{eqPrCellj}
	\hat\pr\brk{C_{\ell,j}}=\hat\pr\brk{\Bin(m(\ell),q^{11}_{\ell,j})=\omega_{\ell,j}m(\ell)+O(1)}.
	\end{equation}
Furthermore, if $q^{11}(\ell,j)<1$ then
	\begin{eqnarray*}
	\hat\pr\brk{B_{\ell,j}|C_{\ell,j}}&=&\Theta(n^{-1/2})\cdot\hat\pr\brk{\Bin\bc{(1-\omega_{\ell,j})m(\ell),\frac{1-2q_{\ell,j}+q^{11}_{\ell,j}
				}{1-q^{11}_{\ell,j}}}=m(\ell)(1-2\ell_j+\omega_{\ell,j})}.
	\end{eqnarray*}
\end{lemma}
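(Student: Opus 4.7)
\medskip
\noindent\emph{Proof proposal for \Lem~\ref{Lemma_intoBinomials}.}
The plan is to exploit the independence of the random pairs
$(\hat\SIGMA_{ij}(\ell),\hat\TAU_{ij}(\ell))_{i\in\brk{m(\ell)}}$ across $i$ (for fixed $\ell,j$),
which reduces everything to standard calculations with binomial/trinomial distributions.

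For the first claim, observe that the event $C_{\ell,j}=\{\hat\OMEGA_{\ell,j}\doteq\omega_{\ell,j}\}$
depends only on the random variables $\hat\SIGMA_{ij}(\ell)\hat\TAU_{ij}(\ell)$, $i\in\brk{m(\ell)}$.
These are Bernoulli variables of success probability
$\hat\pr\brk{\hat\SIGMA_{ij}(\ell)=\hat\TAU_{ij}(\ell)=1}=q^{11}_{\ell,j}$
that are mutually independent across $i$. Since $\hat\OMEGA_{\ell,j}\cdot m(\ell)$ is precisely their sum,
its distribution is $\Bin(m(\ell),q^{11}_{\ell,j})$, and requiring this sum to lie within $O(1)$ of $\omega_{\ell,j} m(\ell)$ yields~(\ref{eqPrCellj}) directly.

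For the second claim, I would condition on $C_{\ell,j}$ and decompose the event $B_{\ell,j}$
into two simpler pieces. Given $C_{\ell,j}$, the number $N_{00}$ of indices $i$ with
$(\hat\SIGMA_{ij}(\ell),\hat\TAU_{ij}(\ell))=(0,0)$ and the number $N_{01}$ of indices with value $(0,1)$
are distributed as follows: the non-$(1,1)$ positions — of which there are $(1-\omega_{\ell,j})m(\ell)+O(1)$ —
each fall into $\{(0,0),(0,1),(1,0)\}$ independently with conditional probabilities
$q^{00}_{\ell,j}/(1-q^{11}_{\ell,j})$, $q^{01}_{\ell,j}/(1-q^{11}_{\ell,j})$, $q^{10}_{\ell,j}/(1-q^{11}_{\ell,j})$.
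On the event $C_{\ell,j}$, the event $B_{\ell,j}$ translates to
$N_{00}\doteq(1-2\ell_j+\omega_{\ell,j})m(\ell)$ together with $N_{01}\doteq(\ell_j-\omega_{\ell,j})m(\ell)$
(which then forces $N_{10}$ to the same value).
I would thus factor
\[
\hat\pr\brk{B_{\ell,j}\,|\,C_{\ell,j}}=\hat\pr\brk{N_{00}\doteq(1-2\ell_j+\omega_{\ell,j})m(\ell)\,|\,C_{\ell,j}}\cdot
\hat\pr\brk{N_{01}\doteq(\ell_j-\omega_{\ell,j})m(\ell)\,|\,C_{\ell,j},N_{00}\doteq(1-2\ell_j+\omega_{\ell,j})m(\ell)}.
\]
The first factor is, by construction, exactly the binomial probability appearing on the right-hand side
(grouping $(0,1)$ and $(1,0)$ together as the ``not-$(0,0)$'' outcome yields the stated parameters, up to an $O(1)$ shift in the number of trials that does not affect the probability asymptotically).

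For the second factor, given $N_{00}$ and $N_{11}=\omega_{\ell,j}m(\ell)+O(1)$,
the remaining $2(\ell_j-\omega_{\ell,j})m(\ell)+O(1)$ positions each take the value $(0,1)$ or $(1,0)$ independently
with conditional probability $q^{01}_{\ell,j}/(q^{01}_{\ell,j}+q^{10}_{\ell,j})=\tfrac12$, where we used the
symmetry assumption $q^{01}_{\ell,j}=q^{10}_{\ell,j}$ from~(\ref{eqqvector1}).
Consequently $N_{01}$ has a $\Bin(\cdot,1/2)$ distribution whose mean is precisely $(\ell_j-\omega_{\ell,j})m(\ell)$,
and the local limit theorem (\Lem~\ref{lem:locallimit}) applied to this binomial gives
that the probability of landing within $O(1)$ of the mean is $\Theta(n^{-1/2})$. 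Multiplying the two factors yields the claim.

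The main bookkeeping obstacle is making sure that the $O(1)$ slack hidden in the ``$\doteq$'' notation is compatible across the two factors --- i.e.\ that the $O(1)$ shifts in the numbers of trials in the two binomials produce only a bounded multiplicative error, which follows from the ratio estimate $\pr\brk{\Bin(N,p)=k}/\pr\brk{\Bin(N\pm O(1),p)=k\pm O(1)}=1+O(1/N)$ in the bulk of the distribution. Apart from that, the proof is routine.
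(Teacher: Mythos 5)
Your proposal is correct and essentially identical to the paper's own argument: you decompose the non-$(1,1)$ positions after conditioning on $C_{\ell,j}$, compute the binomial probability of $N_{00}$ hitting its target, and then observe that the remaining $(0,1)/(1,0)$ positions are symmetric Bernoulli trials whose equal split costs $\Theta(n^{-1/2})$. The only cosmetic difference is that you invoke the local limit theorem for the final $\Theta(n^{-1/2})$ factor whereas the paper cites Stirling's formula directly.
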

\begin{proof}
Recall that $C_{\ell,j}$ is the event that
	$$\sum_{i\in\brk{m(\ell)}}\hat\SIGMA_{ij}(\ell)\cdot\hat\TAU_{ij}(\ell)=\omega_{\ell,j}m(\ell)+O(1).$$
By construction, the random variables $\hat\SIGMA_{ij}(\ell)\cdot\hat\TAU_{ij}(\ell)$ are independent $\Be(q^{11}_{\ell,j})$ variables,
and thus their sum has distribution $\Bin(m(\ell),q^{11}_{\ell,j})$.
Hence we get~(\ref{eqPrCellj}).

Furthermore, once we condition on the event $C_{\ell,j}$, the remaining $(1-\omega_{\ell,j})m(\ell)$ pairs
	$(\hat\SIGMA_{ij}(\ell),\hat\TAU_{ij}(\ell))$ are chosen conditional on the outcome being different from $(1,1)$.
Hence, by construction each such pair takes the value $(0,0)$ with probability 
	$\frac{1-2q_{\ell,j}+q^{11}_{\ell,j}}{1-q^{11}_{\ell,j}}$ independently (with the numerator resulting from~(\ref{eqqvector3})).
In effect, the probability that the total number of $(0,0)$s equals $m(\ell)(1-2\ell_j+\omega_{\ell,j})$ is just
	$$\pr\brk{\Bin\bc{(1-\omega_{\ell,j})m(\ell),\frac{1-2q_{\ell,j}+q^{11}_{\ell,j}
				}{1-q^{11}_{\ell,j}}}=m(\ell)(1-2\ell_j+\omega_{\ell,j})+O(1)}.$$
Now, given that both this event and $C_{\ell,j}$ occur, the remaining $2(\ell_j-\omega)m(\ell)$ pairs $(\hat\SIGMA_{ij}(\ell),\hat\TAU_{ij}(\ell))$
come up either $(1,0)$ or $(0,1)$ with probability $1/2$.
By Stirling's formula, the probability that both outcomes occur an equal number of times is $\Theta(n^{-1/2})$.
\qed\end{proof}

Note that
	\begin{eqnarray}\label{eqprB'C'independent}
	\hat\pr\brk{B', C'}&=&\prod_{\ell\in\cL'}\hat\pr\brk{B(\ell)\cap C(\ell)}=\prod_{\ell\in\cL'}\prod_{j=1}^k
			\hat\pr\brk{B(t_j,\ell)\cap C(t_j,\ell)}
	\end{eqnarray}
because under $\hat\pr$ the components of the vector $(\hat\SIGMA_{ij}(\ell),\hat\TAU_{ij}(\ell))_{\ell,i,j}$ are independent.

\begin{lemma}\label{Lemma_11fixedpoint}
There exists a vector $\vec q$ such that
	\begin{eqnarray}		\label{eqq}
	\ell_j&=&\frac{q_{\ell,j}-(q_{\ell,j}-q^{11}_{\ell,j})\prod_{h\neq j}(1-q_{\ell,h})
			}{1-2\prod_{h=1}^k(1-q_{\ell,h})+\prod_{h=1}^k(1-2q_{\ell,h}+q^{11}_{\ell,h})},\\
	\omega_{\ell,j}&=&\frac{q^{11}_{\ell,j}}{1-2\prod_{h=1}^k(1-q_{\ell,h})+\prod_{h=1}^k(1-2q_{\ell,h}+q^{11}_{\ell,h})}.
		\label{eqq11}
	\end{eqnarray}
for all $\ell\in\cL',j\in\brk k$.
This vector $\vec q$ satisfies 
	$$q_{\ell,j}=\ell_j-2^{-k-1}+\tilde O(2^{-3k/2})\mbox{ and }q^{11}_{\ell,j}=\omega_{\ell,j}+O(2^{-k}).$$
\end{lemma}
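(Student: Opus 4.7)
The equations (\ref{eqq}) and (\ref{eqq11}) decouple across $\ell\in\cL'$: for each fixed $\ell$ one has a system of $2k$ equations in the $2k$ unknowns $q_{\ell,1},\ldots,q_{\ell,k},q^{11}_{\ell,1},\ldots,q^{11}_{\ell,k}$. The plan is to establish existence and the asymptotic estimates in one stroke by applying the inverse function theorem (Lemma~\ref{Lemma_inverseFunction}) to the $C^2$ map $F:\RR^{2k}\to\RR^{2k}$ whose first $k$ components are the right-hand sides of (\ref{eqq}) and last $k$ components those of (\ref{eqq11}), anchored at the natural guess $u=(\ell_1,\ldots,\ell_k,\omega_{\ell,1},\ldots,\omega_{\ell,k})$.

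\emph{Evaluating $F$ at $u$.} Write $A=\prod_{h}(1-\ell_h)$ and $A_j=A/(1-\ell_j)$. Using the hypothesis $\omega_{\ell,h}=\ell_h^2+O(k^{-12})$ one checks that $1-2\ell_h+\omega_{\ell,h}=(1-\ell_h)^2+O(k^{-12})$, so the common denominator satisfies $D=(1-A)^2+O(4^{-k})$ while the $j$-th numerator of (\ref{eqq}) collapses to $\ell_j(1-A)+O(2^{-k}k^{-12})$. Hence $F_j(u)=\ell_j/(1-A)+O(4^{-k})=\ell_j+\ell_jA+O(4^{-k})$. Plugging in $\ell_j=\tfrac12+\tilde O(2^{-k/2})$ and $A=2^{-k}(1+\tilde O(2^{-k/2}))$ gives $F_j(u)-\ell_j=2^{-k-1}+\tilde O(2^{-3k/2})$, and the analogous calculation for the second block yields $F_{k+j}(u)-\omega_{\ell,j}=2A\omega_{\ell,j}+O(4^{-k})=O(2^{-k})$. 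In particular, $\norm{F(u)-u}_2=O(\sqrt k\cdot 2^{-k})$.

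\emph{Jacobian close to identity.} Quotient-rule computations, using once more that $A$, $A_j$, and $B_j=\prod_{h\ne j}(1-2q_{\ell,h}+q^{11}_{\ell,h})$ are each $O(2^{-k})$ on a ball $B_\lambda(u)$ of fixed small radius $\lambda$, show that every diagonal entry of $DF(q)$ equals $1+O(2^{-k})$ while every off-diagonal entry is $O(2^{-k})$. Hence $\norm{DF(q)-\id}_{op}=O(k\cdot 2^{-k})<1/6$ throughout $B_\lambda(u)$ for $k$ large enough, and the Hessian entries of $F$ are uniformly bounded on this ball.

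\emph{Inversion and extraction of the estimates.} Consider the rescaled map $\tilde F=[DF(u)]^{-1}F$, which satisfies $D\tilde F(u)=\id$ and $\norm{D\tilde F(q)-\id}\le 1/3$ on $B_\lambda(u)$. Its value at $u$ differs from the target $y=[DF(u)]^{-1}u$ by $\norm{y-\tilde F(u)}_2=\norm{[DF(u)]^{-1}(u-F(u))}_2=O(\sqrt k\cdot 2^{-k})<\lambda/2$, so Lemma~\ref{Lemma_inverseFunction} produces a unique $\vec q\in B_\lambda(u)$ with $\tilde F(\vec q)=y$, equivalently $F(\vec q)=u$. Taylor-expanding $F$ at $u$ and using the boundedness of the Hessian together with $\norm{\vec q-u}=O(2^{-k})$ gives $\vec q-u=-(F(u)-u)+O(4^{-k})$; reading off the first $k$ coordinates yields $q_{\ell,j}-\ell_j=-2^{-k-1}+\tilde O(2^{-3k/2})$, and the last $k$ coordinates give $q^{11}_{\ell,j}-\omega_{\ell,j}=O(2^{-k})$, as required.

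The main technical chore is the Jacobian estimate of the second step: the quotient rule produces four blocks of partial derivatives, each of which must be carefully bookkept to isolate an $O(2^{-k})$ correction to either $0$ (off-diagonal) or $1$ (diagonal), and the bound must be verified uniformly on the entire ball on which the inverse function theorem is applied. The rest of the argument, including the sharpness of the $\tilde O(2^{-3k/2})$ error term (which comes from the cross term $\tfrac12\cdot 2^{-k}\cdot\tilde O(2^{-k/2})$ in $\ell_jA$, while the higher-order Taylor remainder is a negligible $O(4^{-k})$), is essentially automatic.
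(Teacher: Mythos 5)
Your proposal follows exactly the route the paper intends: the paper's one-line proof is ``apply the inverse function theorem as in \Lem~\ref{Lemma_fixedpoint},'' and you simply supply the omitted details (set up the $2k$-dimensional map $F$ per $\ell$, evaluate at the anchor $u=(\ell_\cdot,\omega_\cdot)$, verify $DF\approx\id$, rescale, invoke \Lem~\ref{Lemma_inverseFunction}, then Taylor-expand). One algebraic slip worth flagging: after correctly computing that the numerator of $F_j$ at $u$ is $\ell_j(1-A)+\epsilon_jA_j$ with $|\epsilon_jA_j|=O(2^{-k}k^{-12})$ (coming from $\omega_{\ell,h}=\ell_h^2+O(k^{-12})$), you then write ``Hence $F_j(u)=\ell_j/(1-A)+O(4^{-k})$,'' but this $O(2^{-k}k^{-12})$ term does not shrink upon dividing by $D\approx1$, so the correct statement is $F_j(u)=\ell_j/(1-A)+O(2^{-k}k^{-12})$. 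Accordingly the stated error $\tilde O(2^{-3k/2})$ in $q_{\ell,j}$ is exact only at $\omega=\omega^*$; for general $\omega$ in the domain one only gets $q_{\ell,j}=\ell_j-2^{-k-1}+O(2^{-k}k^{-12})$. This imprecision mirrors the lemma's own statement and is harmless, since the downstream applications (\Lem~\ref{Lemma_qdiff2}, \Lem~\ref{Lemma_zetaIsBenign}) only use $|q_{\ell,j}-\ell_j|=\tilde O(2^{-k})$ and $|q^{11}_{\ell,j}-\omega_{\ell,j}|=\tilde O(2^{-k})$.
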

\begin{proof}
This follows from applying the inverse function theorem in a similar way as in the proof of \Lem~\ref{Lemma_fixedpoint}.
\qed\end{proof}

\medskip\noindent
{\bf\em In the rest of this section, we fix $\vec q$ as in \Lem~\ref{Lemma_11fixedpoint}.}

\begin{lemma}\label{Lemma_Pl}
Let
	\begin{eqnarray*}
	\cP_\ell(\omega)&=&
		\ln\brk{1-2\prod_{j=1}^k(1-q_{\ell,j})+\prod_{j=1}^k(1-2q_{\ell,j}+q^{11}_{\ell,j})}\\
		&&-\sum_{j\in\brk k}\brk{\psi(q^{11}_{\ell,j},\omega_{\ell,j})+
				(1-\omega_{\ell,j})\psi\bc{\frac{1-2q_{\ell,j}+q^{11}_{\ell,j}}{1-q^{11}_{\ell,j}},
						\frac{1-2\ell_j+\omega_{\ell,j}}{1-\omega_{\ell,j}}}}.
	\end{eqnarray*}
Furthermore, let
	\begin{equation}\label{eqPomega}
	\cP(\omega)=\sum_{\ell\in\cL'}\frac{m(\ell)}n\cP_\ell(\omega).
	\end{equation}
Then
	$$\hat\pr\brk{S'|B',C'}=\exp\brk{n \cP(\omega)+O(1)}.$$
\end{lemma}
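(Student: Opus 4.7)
The plan is to apply Bayes' rule in the form
$$\hat\pr\brk{S'\,|\,B',C'} \;=\; \frac{\hat\pr\brk{S'}\cdot\hat\pr\brk{B',C'\,|\,S'}}{\hat\pr\brk{B',C'}}$$
and then to evaluate each of the three factors separately. The factor $\hat\pr\brk{S'}$ is supplied directly by Lemma~\ref{Lemma_SATsmm} and will account for the first (logarithmic) summand of each $\cP_\ell$. For the denominator $\hat\pr\brk{B',C'}$, the decomposition~(\ref{eqprB'C'independent}) expresses the joint probability as a product over $(\ell,j)\in\cL'\times\brk k$ of factors $\hat\pr\brk{B_{\ell,j}\cap C_{\ell,j}}$. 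I would then combine Lemma~\ref{Lemma_intoBinomials} with Lemma~\ref{Lemma_binomial} to extract
$$\hat\pr\brk{B_{\ell,j}\cap C_{\ell,j}} \;=\; \Theta(n^{-3/2})\exp\brk{m(\ell)\brk{\psi(q^{11}_{\ell,j},\omega_{\ell,j}) + (1-\omega_{\ell,j})\psi\bc{\tfrac{1-2q_{\ell,j}+q^{11}_{\ell,j}}{1-q^{11}_{\ell,j}},\tfrac{1-2\ell_j+\omega_{\ell,j}}{1-\omega_{\ell,j}}}}},$$
so that the exponential part of $\hat\pr\brk{B',C'}$ matches exactly the negative second summand of $\cP_\ell$, while the polynomial pre-factor is $\Theta\bc{n^{-3k|\cL'|/2}}$.

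The decisive step is to verify that $\hat\pr\brk{B',C'\,|\,S'}$ carries the same polynomial pre-factor $\Theta\bc{n^{-3k|\cL'|/2}}$, so that in the Bayes ratio the pre-factors cancel and only an $\exp(O(1))$ multiplicative slack remains. This is precisely what the choice of $\vec q$ from Lemma~\ref{Lemma_11fixedpoint} is tailored to deliver: equations~(\ref{eqq}) and~(\ref{eqq11}) are the identities
$$\hat\Erw\brk{\hat\SIGMA_{ij}(\ell)\,|\,S_i(\ell)} = \hat\Erw\brk{\hat\TAU_{ij}(\ell)\,|\,S_i(\ell)} = \ell_j, \qquad \hat\Erw\brk{\hat\SIGMA_{ij}(\ell)\hat\TAU_{ij}(\ell)\,|\,S_i(\ell)} = \omega_{\ell,j},$$
so that conditional on $S'$ the $3k$ sums whose values are pinned down by $B_\ell\cap C_\ell$ lie exactly at their conditional means. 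Since conditional on $S'$ the tuples $(\hat\SIGMA_{ij}(\ell),\hat\TAU_{ij}(\ell))_{j\in\brk k}$ are i.i.d.\ over $i\in\brk{m(\ell)}$ and independent across $\ell$, a multivariate local limit theorem (in the spirit of Corollaries~\ref{Cor_switch} and~\ref{Cor_fixedpoint}) then yields $\hat\pr\brk{B',C'\,|\,S'}=\Theta\bc{n^{-3k|\cL'|/2}}$, and assembling the three factors produces $\exp\brk{n\cP(\omega)+O(1)}$ with $\cP$ as in~(\ref{eqPomega}).

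The main obstacle will be justifying the multivariate local CLT with the sharp $n^{-3k/2}$ normalisation per clause type: this requires showing that the conditional covariance matrix of $(\hat\SIGMA_{ij}(\ell),\hat\TAU_{ij}(\ell),\hat\SIGMA_{ij}(\ell)\hat\TAU_{ij}(\ell))_{j\in\brk k}$ given $S_i(\ell)$ is non-singular, and verifying an aperiodicity condition on the underlying lattice law. Fortunately, Lemma~\ref{Lemma_11fixedpoint} places $q_{\ell,j}\approx\tfrac12$ and $q^{11}_{\ell,j}\approx\tfrac14$ in the interior of their ranges, so every atom of the conditional distribution of $(\hat\SIGMA_{ij}(\ell),\hat\TAU_{ij}(\ell))_j$ given $S_i(\ell)$ carries probability bounded away from $0$ and $1$. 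This allows a perturbation argument around the fully symmetric reference $\omega^*$, at which the covariance structure decouples approximately into positive-definite $3\times 3$ blocks indexed by $j\in\brk k$, and the required non-singularity is manifest; the aperiodicity is automatic from the lattice support being the full integer grid.
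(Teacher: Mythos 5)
Your proposal follows essentially the same route as the paper's own proof: the Bayes decomposition $\hat\pr[S'|B',C']=\hat\pr[S']\hat\pr[B',C'|S']/\hat\pr[B',C']$, with $\hat\pr[S']$ supplied by \Lem~\ref{Lemma_SATsmm}, the denominator factorized via~(\ref{eqprB'C'independent}) and evaluated through \Lem~\ref{Lemma_intoBinomials} together with the binomial rate function, and the key cancellation of the $\Theta(n^{-3k|\cL'|/2})$ prefactors achieved precisely because the $\vec q$ from \Lem~\ref{Lemma_11fixedpoint} fixes the conditional means at the pinned values, so the local limit theorem applies at its center. The only thing you do slightly differently is to spell out the non-degeneracy and aperiodicity conditions for the multivariate local CLT, which the paper leaves implicit; this is a welcome elaboration, not a departure.
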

\begin{proof}
The choice of $\vec q$ ensures that for any $\ell$ and $j$,
	\begin{eqnarray}\label{eqLLTsmm1}
	\hat\Erw\brk{\sum_{i\in\brk{m(\ell)}}\hat\SIGMA_{ij}(\ell)\cdot\hat\TAU_{ij}(\ell)\bigg|S'}
		&=&\frac{m(\ell)q^{11}_{\ell,j}}{1-2\prod_{h=1}^k(1-q_{\ell,h})+\prod_{h=1}^k(1-2q_{\ell,h}+q^{11}_{\ell,h})}
			=\omega_{\ell,j}m(\ell);
	\end{eqnarray}
indeed, by~(\ref{eqSatProb}) the denominator in the middle term equals the probability of the event $S_i(\ell)$.
Furthermore, by construction for any $i,j,\ell$ we have
	$$\hat\pr\brk{\hat\SIGMA_{ij}(\ell)=1,\hat\TAU_{ij}(\ell)=0,S_i(\ell)}=q^{10}_{\ell,j}\bc{1-\prod_{h\neq j}(1-q_{\ell,j})}
		=(q_{\ell,j}-q^{11}_{\ell,j})\bc{1-\prod_{h\neq j}(1-q_{\ell,j})}.$$
As a consequence, (\ref{eqq}) ensures that
	\begin{eqnarray}\label{eqLLTsmm2}
	\hat \Erw\brk{\sum_{i\in\brk{m(\ell)}}\hat\SIGMA_{ij}(\ell)|S'}
		&=&\frac{q_{\ell,j}-(q_{\ell,j}-q^{11}_{\ell,j})\prod_{h\neq j}(1-q_h)
			}{1-2\prod_{h=1}^k(1-q_{\ell,h})+\prod_{h=1}^k(1-2q_{\ell,h}+q^{11}_{\ell,h})}=\ell_jm(\ell).
		\end{eqnarray}
By inclusion/exclusion, we obtain from~(\ref{eqLLTsmm1}) and~(\ref{eqLLTsmm2}) that
	\begin{eqnarray}\label{eqLLTsmm3}
	\hat \Erw\brk{\sum_{i\in\brk{m(\ell)}}(1-\hat\SIGMA_{ij}(\ell))\cdot(1-\hat\TAU_{ij}(\ell))\bigg|S'}
		&=&(1-2\ell_j+\omega_{\ell,j})m(\ell).
		\end{eqnarray}
Due to~(\ref{eqLLTsmm1}) and~(\ref{eqLLTsmm3}), 
a repeated application of \Lem~\ref{lem:locallimit} (the local limit theorem) yields
	\begin{eqnarray}\label{eqLLTsmm4}
	\hat\pr\brk{B',C'|S'}&=&\Theta(n^{-3k|\cL'|/2}).
	\end{eqnarray}

Invoking \Lem~\ref{Lemma_intoBinomials}
and using the large deviations principle for the binomial distribution (\Lem~\ref{Lemma_binomial}),
we can easily determine the \emph{unconditional} probability of $B'\cap C'$: we have
	\begin{eqnarray*}
	\hat\pr\brk{B',C'}&=&\prod_{\ell,j}\hat\pr\brk{C'_{\ell,j}}\hat\pr\brk{B'_{\ell,j}|C'_{\ell,j}}\\
		&\hspace{-2cm}=&\hspace{-1cm}\Theta(n^{-k|\cL'|/2})\prod_{\ell,j}\hat\pr\brk{\Bin(m(\ell),q^{11}_{\ell,j})=\omega_{\ell,j}m(\ell)}\\
		&&\qquad\qquad\qquad\qquad		\cdot\hat\pr\brk{\Bin\bc{(1-\omega_{\ell,j})m(\ell),\frac{1-2q_{\ell,j}+q^{11}_{\ell,j}
				}{1-q^{11}_{\ell,j}}}=m(\ell)(1-2\ell_j+\omega_{\ell,j})}\\
		&\hspace{-2cm}=&\hspace{-1cm}\Theta(n^{-3k|\cL'|/2})
			\exp\brk{\sum_{\ell,j}m(\ell)\brk{\psi(q^{11}_{\ell,j},\omega_{\ell,j})+
				(1-\omega_{\ell,j})\psi\bc{\frac{1-2q_{\ell,j}+q^{11}_{\ell,j}}{1-q^{11}_{\ell,j}},
						\frac{1-2\ell_j+\omega_{\ell,j}}{1-\omega_{\ell,j}}}}}.
	\end{eqnarray*}
Thus, 
	\begin{eqnarray*}
	\ln\hat\pr\brk{S'|B',C'}&=&\ln\bcfr{\hat\pr\brk{S'}\hat\pr\brk{B',C'|S'}}{\hat\pr\brk{B',C'}}\\
		&=&O(1)+\ln\hat\pr\brk{S'}\\
		&&\quad	-\sum_{\ell,j}m(\ell)\brk{\psi(q^{11}_{\ell,j},\omega_{\ell,j})+
				(1-\omega_{\ell,j})\psi\bc{\frac{1-2q_{\ell,j}+q^{11}_{\ell,j}}{1-q^{11}_{\ell,j}},
						\frac{1-2\ell_j+\omega_{\ell,j}}{1-\omega_{\ell,j}}}}.
	\end{eqnarray*}
The assertion follows by plugging in the expression for $\pr\brk{S'}$ from \Lem~\ref{Lemma_SATsmm}.
\qed\end{proof}

Finally, \Prop~\ref{Prop_functionF} follows from Fact~\ref{Fact_connection} and \Lem~\ref{Lemma_Pl}.

\subsection{Proof of \Prop~\ref{Prop_calculus1}}\label{Sec_calculus1}

We start with the following observation.

\begin{lemma}\label{Lemma_peak}
Let $\vec q$ be the solution to~(\ref{eqq}) and (\ref{eqq11}) for $\omega=\omega^*$.
There is $\gamma=\gamma(k)>0$ such that for any $\eps>0$ and any $\ell\in\cL'$ we have
	\begin{eqnarray}\label{eqLemmapeak1}
	\hat\pr\brk{\norm{\hat{\vec\omega}_\ell-\omega^*_\ell}_\infty>\eps\,|\,S_\ell,\,B_\ell}&\leq&\exp(-\gamma\eps^2n+o(n))\qquad\mbox{ and}\\
	\hat\pr\brk{\norm{\hat{\vec\omega}_\ell-\omega^*_\ell}_\infty>\eps\,|\,B_\ell}&\leq&\exp(-\gamma\eps^2n+o(n)).
		\label{eqLemmapeak2}
	\end{eqnarray}
\end{lemma}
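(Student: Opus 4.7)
The plan is to exploit the fact that $\vec q$ was chosen so that $\omega^*_{\ell,j}$ is precisely the conditional mean of $\hat\SIGMA_{ij}(\ell)\hat\TAU_{ij}(\ell)$ given the clause-satisfaction event $S_i(\ell)$, and then apply Chernoff concentration to the empirical mean $\hat\OMEGA_{\ell,j}$. Indeed, equation~(\ref{eqq11}) with $\omega=\omega^*$, combined with Lemma~\ref{Lemma_SATsmm}, rewrites as $\hat\Erw[\hat\SIGMA_{ij}(\ell)\hat\TAU_{ij}(\ell)\mid S_i(\ell)]=\omega^*_{\ell,j}$. Since the tuples $(\hat\SIGMA_{ij}(\ell),\hat\TAU_{ij}(\ell))_{j\in[k]}$ are mutually independent across $i$ under $\hat\pr$ and each $S_i(\ell)$ depends only on clause $i$, the clauses remain conditionally independent given $S_\ell$. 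Hence, for each fixed $j$, $m(\ell)\hat\OMEGA_{\ell,j}$ is conditionally $\Bin(m(\ell),\omega^*_{\ell,j})$, and a Chernoff bound followed by a union bound over $j\in[k]$ yields
\begin{equation*}
\hat\pr\bigl[\norm{\hat\OMEGA_\ell-\omega^*_\ell}_\infty>\eps\,\big|\,S_\ell\bigr]\leq 2k\exp\bigl(-2\eps^2 m(\ell)\bigr).
\end{equation*}
Feasibility of $\vec m$ gives $m(\ell)=\Theta_k(n)$, producing an exponent of the claimed form $-\gamma_0(k)\eps^2 n$.

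To incorporate the conditioning on $B_\ell$, I would use the crude bound $\hat\pr[\,\cdot\,\mid S_\ell, B_\ell]\leq \hat\pr[\,\cdot\,\mid S_\ell]/\hat\pr[B_\ell\mid S_\ell]$ and estimate the denominator via the multidimensional local limit theorem (Lemma~\ref{lem:locallimit}) applied to the $2k$ sums $(\sum_i \hat\SIGMA_{ij}(\ell),\sum_i \hat\TAU_{ij}(\ell))_{j\in[k]}$ under $\hat\pr(\cdot\mid S_\ell)$, which are mutually independent across $i$ because the pairs $(\hat\SIGMA_{ij}(\ell),\hat\TAU_{ij}(\ell))$ are. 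Crucially, equation~(\ref{eqq}) was designed so that the target values $\ell_j m(\ell)$ prescribed by $B_\ell$ coincide with the conditional expectations of those sums, placing the LLT at its mode and yielding $\hat\pr[B_\ell\mid S_\ell]=\Theta(n^{-k})$. This contributes only $O(\log n)=o(n)$ to the exponent and establishes~(\ref{eqLemmapeak1}).

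For~(\ref{eqLemmapeak2}) the scheme is analogous but without the $S_\ell$ conditioning. Under $\hat\pr$ alone, $m(\ell)\hat\OMEGA_{\ell,j}$ is $\Bin(m(\ell),q^{11}_{\ell,j})$ and by Lemma~\ref{Lemma_11fixedpoint} we have $q^{11}_{\ell,j}=\omega^*_{\ell,j}+O(2^{-k})$. For $\eps$ larger than twice this shift, a Chernoff bound centred at $q^{11}_{\ell,j}$ translates into a concentration bound around $\omega^*_{\ell,j}$ with at most half the Chernoff exponent; for $\eps$ below that scale the right-hand side $\exp(-\gamma\eps^2n+o(n))$ is vacuously at least one once $\gamma=\gamma(k)$ is chosen sufficiently small. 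The removal of the $B_\ell$ conditioning uses the same LLT trick with $\hat\pr[B_\ell]=\Theta(n^{-k})$. The main technical point is uniform applicability of the LLT, which is secured by Lemma~\ref{Lemma_11fixedpoint}: all the relevant $\vec q$-parameters lie in a compact subinterval of $(0,1)$ because the $p$-types $\ell_j$ for $\ell\in\cL'$ are close to $1/2$, and the corresponding probability generating functions are therefore aperiodic with bounded-away-from-zero variance.
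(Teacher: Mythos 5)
Your argument for (\ref{eqLemmapeak1}) is essentially the paper's proof: condition on $S_\ell$, observe that the clauses remain independent and that $\omega^*_{\ell,j}$ is the conditional mean of $\hat\SIGMA_{ij}(\ell)\hat\TAU_{ij}(\ell)$ by the choice (\ref{eqq11}), apply a standard concentration inequality (you use Chernoff, the paper uses Azuma — either works), and absorb the factor $\hat\pr[B_\ell\mid S_\ell]^{-1}=\exp(o(n))$ using the fact that (\ref{eqq}) places the LLT at its mode. That part is fine.

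The proof of (\ref{eqLemmapeak2}), however, has a genuine gap. You bound $\hat\pr[\,\cdot\mid B_\ell]\leq\hat\pr[\,\cdot\,]/\hat\pr[B_\ell]$ and assert $\hat\pr[B_\ell]=\Theta(n^{-k})$, but this is false under the measure $\hat\pr=\hat\pr_{\vec q}$ you are working with. By Lemma~\ref{Lemma_11fixedpoint}, $q_{\ell,j}=\ell_j-2^{-k-1}+\tilde O(2^{-3k/2})$, so $\hat\Erw\bigl[\sum_i\hat\SIGMA_{ij}(\ell)\bigr]=q_{\ell,j}m(\ell)$ is displaced from the target $\ell_j m(\ell)$ prescribed by $B_\ell$ by $\Theta(2^{-k})m(\ell)=\Theta_k(n)$, i.e.\ by $\Theta_k(2^{-k}\sqrt n)$ standard deviations. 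Hence $\hat\pr[B_\ell]=\exp(-\Theta_k(4^{-k})n)$ is exponentially small in $n$, not polynomial. Dividing by it injects a term of order $\Theta_k(4^{-k})n$ into the exponent, which for $\eps$ smaller than $\Theta(2^{-k})$ overwhelms the Chernoff gain and leaves the bound vacuous. Your proposed remedy — declare the claim vacuous for small $\eps$ by choosing $\gamma$ small — does not work either: for any fixed $\eps>0$ and $\gamma>0$, $\exp(-\gamma\eps^2n+o(n))\to 0$ as $n\to\infty$, so the lemma genuinely asserts something at every fixed scale $\eps$, and indeed it is invoked with small fixed $\eps$ in the proof of Proposition~\ref{Prop_calculus1}. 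The paper sidesteps this by exploiting the fact that since the solution $\vec q$ for $\omega=\omega^*$ satisfies $q^{11}_{\ell,j}=q_{\ell,j}^2$, the conditional law $\hat\pr[\,\cdot\mid B_\ell]$ is the \emph{uniform} distribution on $B_\ell$ and is therefore unchanged if one replaces $\vec q$ by $\vec q'$ with $q'_{\ell,j}=\ell_j$ (and $q'^{11}=q'^2$). Under $\hat\pr_{\vec q'}$ the means match the $B_\ell$-targets exactly, so $\hat\pr_{\vec q'}[B_\ell]=\exp(o(n))$, and also $\hat\Erw_{\vec q'}[\hat\OMEGA_{\ell,j}]=\ell_j^2=\omega^*_{\ell,j}$ so no shift correction is needed. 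This change of reference measure is the key idea you are missing.
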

\begin{proof}
Equation~(\ref{eqLLTsmm4}) from the proof of \Lem~\ref{Lemma_Pl} shows that
	\begin{equation}\label{eqpeak1}
	\hat\pr\brk{B_\ell|S_\ell}=\exp(o(n)).
	\end{equation}
Therefore, it is going to be sufficient to estimate $\hat\pr\brk{\norm{\hat{\vec\omega}_\ell-\omega^*_\ell}_\infty>\eps\,|\,S_\ell}$.
If we just condition on the event $S_\ell$, then the $k$-tuples 
	$(\hat\SIGMA_{ij}(\ell),\hat\TAU_{ij}(\ell))_{j\in\brk k}$
of $0/1$ pairs are mutually independent for all $i\in\brk{m(\ell)}$.
Furthermore, given $S_\ell$ modifying just one such $k$-tuple can alter any entry $\hat{\vec\omega}_{\ell,j}$ by at
most $c/n$, for some number $c=c(k)>0$.
Therefore, Azuma's inequality yields
	\begin{equation}\label{eqpeak2}
	\hat\pr\brk{|\hat\OMEGA_{\ell,j}-\Erw\brk{\hat\OMEGA_{\ell,j}}|>\eps|S_\ell}\leq2\exp(-\gamma\eps^2n),
	\end{equation}
for some $\gamma=\gamma(k)>0$.
Since~(\ref{eqq11}) ensures that $\hat\Erw\brk{\hat\OMEGA_\ell|S_\ell}=\omega_\ell^*$,
(\ref{eqLemmapeak1}) follows from~(\ref{eqpeak1}), (\ref{eqpeak2}) and the union bound.

To obtain (\ref{eqLemmapeak2}), let $\vec q'$ be the vector with entries $\vec q'_{\ell,j}=p(\ell_j)$ for all $\ell,j$.
Then 
	\begin{equation}\label{eqpeak3}
	\hat\pr_{\vec q'}\brk{B_\ell}=\exp(o(n)).
	\end{equation}
Furthermore, applying Azuma's inequality just as in the previous paragraph, we find that
	\begin{equation}\label{eqpeak4}
	\hat\pr_{\vec q'}\brk{|\hat\OMEGA_{\ell,j}-\Erw\brk{\hat\OMEGA_{\ell,j}}|>\eps}\leq2\exp(-\gamma\eps^2n)
	\end{equation}
for some $\gamma=\gamma(k)>0$.
Moreover, $\hat\Erw_{\vec q'}\brk{\hat\OMEGA_\ell}=\omega_\ell^*$ by the choice of $\vec q'$.
Thus, 
(\ref{eqLemmapeak2}) follows from~(\ref{eqpeak3}), (\ref{eqpeak4}) and the union bound.
\qed\end{proof}

\medskip\noindent
\emph{Proof of \Prop~\ref{Prop_calculus1}.}
Let $\ell\in\cL'$.
Let $\vec q$ be the solution to~(\ref{eqq}) and (\ref{eqq11}) for $\omega=\omega^*$.
Then $\hat\pr\brk{\,\cdot\,|B'}$ is the uniform  distribution over pairs $(\hat\sigma,\hat\tau)\in\Omega$
such that $(\hat\sigma,\hat\tau)\in B'$.
Indeed, for $\omega=\omega^*$ the solution $\vec q$ to~(\ref{eqq}) and (\ref{eqq11}) satisfies
$q^{11}_{\ell,j}=q_{\ell,j}^2$ for all $\ell,j$.
Therefore, for any $(\hat\sigma,\hat\tau)\in\Omega$ we have
	\begin{equation}\label{eqPropcalculus1}
	\hat\pr\brk{\hat\SIGMA=\hat\sigma,\,\hat\TAU=\hat\tau}=q_{\ell,j}^{\sum_{\ell,i,j}\hat\sigma_{i,j}(\ell)+\hat\tau_{i,j}(\ell)}
			(1-q_{\ell,j})^{km-\sum_{\ell,i,j}\hat\sigma_{i,j}(\ell)+\hat\tau_{i,j}(\ell)}
	\end{equation}
Since the sums $\sum_{\ell,i,j}\hat\sigma_{i,j}(\ell)+\hat\tau_{i,j}(\ell)$ coincide for all $\hat\sigma,\hat\tau\in B'$,
(\ref{eqPropcalculus1}) shows that
$\hat\pr\brk{\,\cdot\,|B'}$ is uniform.

Let $H(\omega)$ be the number of pairs $(\hat\sigma,\hat\tau)\in\hat\Omega$
such $(\hat\sigma,\hat\tau)\in B'$ and $\hat\OMEGA(\hat\sigma,\hat\tau)=\omega$.
We claim that 
	\begin{equation}\label{eqPropcalculus2}
	\frac1nD\ln H(\omega^*)=o(1).
	\end{equation}
This can be verified either by representing $H(\omega)$ as a product of binomial coefficients and applying Stirlings formula or,
alternatively, by using~(\ref{eqLemmapeak2}).
Indeed, assume that (\ref{eqPropcalculus2}) is false.
Then for small enough $\eps>0$ there is $\delta>0$ such that for some $\omega'$ with $\norm{\omega'-\omega^*}_\infty\sim\eps$ we have
	\begin{equation}\label{eqPropcalculus3}
	\ln H(\omega')\geq\delta n+\max_{\omega:\norm{\omega-\omega^*}_\infty<\eps/2}\ln H(\omega)
	\end{equation}
(with both $\eps,\delta$ possibly dependent on $k$ but not on $n$).
Letting
	$$\bar H=\sum_{(\hat\sigma,\hat\tau)\in B'}H(\hat\omega(\hat\sigma,\hat\tau)),$$
we obtain from~(\ref{eqLemmapeak2}) that
	\begin{eqnarray}\nonumber
	1&\sim&\hat\pr\brk{\norm{\hat\OMEGA-\omega^*}_\infty<\eps/2|B'}
		=\frac1{\bar H}
			\sum_{(\hat\sigma,\hat\tau)\in B'}\vecone_{\norm{\hat\omega(\hat\sigma,\hat\tau)-\omega^*}_\infty<\eps/2}\cdot
					H_{\hat\omega(\hat\sigma,\hat\tau)}\\
		&=&\exp(o(n))\cdot\max_{\omega:\norm{\omega-\omega^*}_\infty<\eps/2}H(\omega)/\bar H.
			\label{eqPropcalculus4}
	\end{eqnarray}
However, combining~(\ref{eqPropcalculus3}) and~(\ref{eqPropcalculus4}) we get
	\begin{eqnarray*}
	\hat\pr\brk{\norm{\hat\OMEGA-\omega^*}_\infty>\eps/2|B'}&\geq&H(\omega')/\bar H
		\geq\exp(\delta n)\max_{\omega:\norm{\omega-\omega^*}_\infty<\eps/2}H(\omega)/\bar H\\
		&\geq&\exp(\delta n-o(n))\hat\pr\brk{\norm{\hat\OMEGA-\omega^*}_\infty<\eps/2|B'}
			>1,
	\end{eqnarray*}
which is a contradiction.
Hence, (\ref{eqPropcalculus2}) follows.
	
Now, assume for contradiction that $D\cP_\ell(\omega^*)\neq0$.
Because the function $\cP_\ell(\,\cdot\,)$ remains fixed as $n\ra\infty$, there exists a fixed
	$\eps'>0$ such that $\norm{D\cP_\ell(\omega^*)}_\infty>\eps'$.
Therefore, (\ref{eqPropcalculus2}) entails that for any $\eps>0$ small enough exist $\omega'$, $\delta>0$ such that
	$\norm{\omega'-\omega^*}_\infty\sim\eps$ and
	\begin{eqnarray}
	\ln H(\omega')+n\cdot\cP_\ell(\omega')
		&\geq&\delta n+\max_{\omega:\norm{\omega-\omega^*}_\infty<\eps/2}
			\ln H(\omega)+n\cdot\cP_\ell(\omega),
			\label{eqPropcalculus5}
	\end{eqnarray}
with $\eps,\delta$ independent of $n$.
Let
	$$\bar H_\ell=\sum_{(\hat\sigma,\hat\tau)\in B'}H(\hat\omega(\hat\sigma,\hat\tau))\exp\brk{n\cP_\ell(\hat\omega(\hat\sigma,\hat\tau))}.$$
Then by~(\ref{eqLemmapeak1}),
		\begin{eqnarray}\nonumber
	1&\sim&\hat\pr\brk{\norm{\hat\OMEGA_\ell-\omega^*_\ell}_\infty<\eps/2|S_\ell,B'}\\
		&=&\frac1{\bar H}
			\sum_{(\hat\sigma,\hat\tau)\in B'}\vecone_{\norm{\hat\omega(\hat\sigma,\hat\tau)-\omega^*}_\infty<\eps/2}\cdot
					H_{\hat\omega(\hat\sigma,\hat\tau)}\exp\brk{n\cP_\ell(\hat\omega(\hat\sigma,\hat\tau))+O(1)}\nonumber\\
		&=&\exp(o(n))\cdot\max_{\omega:\norm{\omega-\omega^*}_\infty<\eps/2}H(\omega)\exp(n\cP_\ell(\omega))/\bar H_\ell.
			\label{eqPropcalculus6}
	\end{eqnarray}
However, combining~(\ref{eqPropcalculus5}) and~(\ref{eqPropcalculus6}) we get
	\begin{eqnarray*}
	\hat\pr\brk{\norm{\hat\OMEGA_\ell-\omega_\ell^*}_\infty>\eps/2|S_\ell,B'}&\geq&\frac{H(\omega')\exp\brk{n\cP_\ell(\omega')+O(1)}}{\bar H_\ell}\\
		&\geq&\exp(\delta n)\max_{\omega:\norm{\omega-\omega^*}_\infty<\eps/2}H(\omega)\exp(n\cP_\ell(\omega))/\bar H_\ell\\
		&\geq&\exp(\delta n-o(n))\hat\pr\brk{\norm{\hat\OMEGA-\omega^*}_\infty<\eps/2|S_\ell,B'}
			>1.
	\end{eqnarray*}
This contradiction shows that $D\cP_\ell(\omega^*)=0$ for all $\ell$.
\qed

\subsection{Proof of \Prop~\ref{Prop_calculus2}}\label{Sec_calculus2}

We need to compute the second derivative of $\cP_\ell$.
In particular, we also need to differentiate $\vec q=\vec q(\omega)$ 
the solution to~(\ref{eqq})--(\ref{eqq11}).
Furthermore, we fix some type $\ell\in\cL$ for the rest of this section.
Let $\cW_\ell$ denote the set of all vectors $\omega_\ell$ such that
	$\abs{\omega_{\ell,j}-\frac14}\leq k^{-4}$ for all $j\in\brk k$.
In \Sec~\ref{Sec_qdiff} we are going to establish the following.

\begin{lemma}\label{Lemma_qdiff}
On $\cW_\ell$ we have
	\begin{eqnarray*}
	\frac{\partial q^{11}_{\ell,h}}{\partial\omega_{\ell,i}}=\vecone_{h=i}
			+\tilde O(2^{-k}),&&
		\frac{\partial^2 q^{11}_{\ell,h}}{\partial\omega_{\ell,i}\partial\omega_{\ell,j}}=\tilde O(2^{-k}),\\
	\frac{\partial q_{\ell,h}}{\partial\omega_{\ell,i}}=\tilde O(2^{-k}),&&
		\frac{\partial^2 q_{\ell,h}}{\partial\omega_{\ell,i}\partial\omega_{\ell,j}}=\tilde O(2^{-k}).
	\end{eqnarray*}
for any $h,i,j\in\brk k$.
\end{lemma}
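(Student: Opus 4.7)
My plan is to carry out implicit differentiation on the defining relations (\ref{eqq})--(\ref{eqq11}). Regard these as a smooth map $G_\ell:\RR^{2k}\to\RR^{2k}$, $(\vec q_\ell,\vec q^{11}_\ell)\mapsto(\ell,\omega_\ell)$, so that on the domain on which $G_\ell$ is invertible (which contains $\cW_\ell$ by \Lem~\ref{Lemma_11fixedpoint}) the Jacobian of $(\vec q_\ell,\vec q^{11}_\ell)$ in $\omega_\ell$ is simply the last $k$ columns of $(DG_\ell)^{-1}$. The key observation is that throughout $\cW_\ell$ we have $q_{\ell,h}=\frac12+\tilde O(2^{-k/2})$ and $q^{11}_{\ell,h}=\frac14+\tilde O(2^{-k/2})$, so
$$D=1-2\Pi+\Sigma,\qquad \Pi=\prod_h(1-q_{\ell,h})=\tilde O(2^{-k}),\qquad \Sigma=\prod_h(1-2q_{\ell,h}+q^{11}_{\ell,h})=\tilde O(4^{-k}),$$
and analogously $\Pi_j=\prod_{h\neq j}(1-q_{\ell,h})=\tilde O(2^{-k})$.

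First I would rewrite the system in the polynomial form $D\cdot\ell_j=q_{\ell,j}-(q_{\ell,j}-q^{11}_{\ell,j})\Pi_j$ and $D\cdot\omega_{\ell,j}=q^{11}_{\ell,j}$, and differentiate in each $q_{\ell,i}$ and $q^{11}_{\ell,i}$. A routine Leibniz computation shows that $\partial D/\partial q_{\ell,i}$, $\partial D/\partial q^{11}_{\ell,i}$ and $\partial\Pi_j/\partial q_{\ell,i}$ are each $\tilde O(2^{-k})$, and hence
$$\frac{\partial G^{(2)}_{\ell,j}}{\partial q^{11}_{\ell,i}}=\vecone_{i=j}+\tilde O(2^{-k}),\qquad \frac{\partial G^{(2)}_{\ell,j}}{\partial q_{\ell,i}}=\tilde O(2^{-k}),$$
and similarly $\partial G^{(1)}_{\ell,j}/\partial q_{\ell,i}=\vecone_{i=j}+\tilde O(2^{-k})$, $\partial G^{(1)}_{\ell,j}/\partial q^{11}_{\ell,i}=\tilde O(2^{-k})$. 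Thus $DG_\ell=\id+E$ with $\norm{E}=\tilde O(2^{-k})$, so the Neumann series gives $(DG_\ell)^{-1}=\id-E+O(E^2)$. Reading off the last $k$ columns delivers the claimed first-derivative bounds on $\partial q^{11}_{\ell,h}/\partial\omega_{\ell,i}$ and $\partial q_{\ell,h}/\partial\omega_{\ell,i}$.

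For the second-order statements I would differentiate once more, starting from the identity $DG_\ell\cdot\partial_{\omega_{\ell,i}}\vec q=e_i$ which yields
$$\partial_{\omega_{\ell,j}}\partial_{\omega_{\ell,i}}\vec q=-(DG_\ell)^{-1}\cdot\bc{\partial_{\omega_{\ell,j}}(DG_\ell)}\cdot\partial_{\omega_{\ell,i}}\vec q.$$
The factor $(DG_\ell)^{-1}$ is $\id+\tilde O(2^{-k})$ by the first step, and $\partial_{\omega_{\ell,j}}(DG_\ell)$ is a chain-rule expression (\Lem~\ref{Lemma_chainrule}) involving second partials of $G_\ell$ in $(\vec q_\ell,\vec q^{11}_\ell)$ multiplied by the first partials $\partial\vec q/\partial\omega_\ell$ already controlled above. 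A direct check shows that every second partial of $G^{(1)}_\ell$ and $G^{(2)}_\ell$ carries a prefactor of at least $\Pi/D$, $\Pi_j/D$ or $\Sigma/D$, each of which is $\tilde O(2^{-k})$; combined with the $O(1)$ first derivatives, this gives the desired $\tilde O(2^{-k})$ bound on all second partials.

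The main work is pure bookkeeping: one must verify term by term that every contribution to $DG_\ell$ and to $D^2G_\ell$ that is not part of the identity block carries at least one factor of $\Pi/D$, $\Pi_j/D$ or $\Sigma/D$. No new ideas are required beyond the inverse-function-theorem setup already used for \Lem~\ref{Lemma_11fixedpoint}, and uniformity over $\cW_\ell$ is automatic since $D,\Pi,\Pi_j,\Sigma$ are all analytic near the base point $(\vec q^*,(\vec q^*)^2)$ and shift by only $\tilde O(k^{-4})$ across $\cW_\ell$.
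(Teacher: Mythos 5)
Your proposal takes essentially the same route as the paper: view the system~(\ref{eqq})--(\ref{eqq11}) as a map $\vec q\mapsto(\ell,\omega_\ell)$, show its Jacobian equals $\id$ plus an error matrix whose entries carry factors of $\Pi$, $\Pi_j$ or $\Sigma$ and are hence $\tilde O(2^{-k})$, invert, and then bound the second derivatives by differentiating the inverse once more (the paper does this by differentiating the matrix-inversion operator via Cramer's rule---\Lem~\ref{Lemma_determinants}---while you use the equivalent implicit-differentiation identity $\partial^2\vec q=-(DG)^{-1}(\partial DG)\partial\vec q$; both amount to the same bookkeeping). One small slip: on $\cW_\ell$ you only have $q^{11}_{\ell,h}=\frac14+O(k^{-4})$ (since $|\omega_{\ell,j}-\frac14|\leq k^{-4}$ and $q^{11}_{\ell,j}=\omega_{\ell,j}+O(2^{-k})$ by \Lem~\ref{Lemma_11fixedpoint}), not $\frac14+\tilde O(2^{-k/2})$; this does not affect the argument, since the key quantities $\Pi,\Pi_j=\tilde O(2^{-k})$ and $\Sigma=O(4^{-k})$ retain their size under the weaker bound.
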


We split the function $\cP_\ell$ into a sum of various contributions: let
	\begin{eqnarray*}
	\phi_\ell(q)&=&\ln\brk{1-2\prod_{j=1}^k(1-q_{\ell,j})+\prod_{j=1}^k(1-2q_{\ell,j}+q^{11}_{\ell,j})}\quad\mbox{and}\\
	\psi_\ell(\omega,q)&=&\sum_{j\in\brk k}\psi_{\ell,j}(\omega,q)+\tilde\psi_{\ell,j}(\omega,q)\qquad\mbox{with}\\
	\psi_{\ell,j}(\omega,q)&=&\psi(q^{11}_{\ell,j},\omega_{\ell,j}),\\
	\tilde\psi_{\ell,j}(\omega,q)&=&
		(1-\omega_{\ell,j})\psi\bc{\frac{1-2q_{\ell,j}+q^{11}_{\ell,j}}{1-q^{11}_{\ell,j}},
						\frac{1-2\ell_j+\omega_{\ell,j}}{1-\omega_{\ell,j}}}.
	\end{eqnarray*}

\begin{lemma}\label{Lemma_qdiff2}
On $\cW_\ell$ we have
	$$\frac{\partial^2\phi_\ell(\vec q)}{\partial \omega_{\ell,h}\,\partial \omega_{\ell,j}}\leq\tilde O(4^{-k})\qquad\mbox{for all }h,j\in\brk k.$$
\end{lemma}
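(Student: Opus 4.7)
The plan is to apply the chain rule (Lemma~\ref{Lemma_chainrule}) to the composition $\phi_\ell = f \circ \vec q$, where $f(\vec q) = \ln\bc{1 - 2A(q) + B(q)}$ with $A(q) = \prod_{j}(1-q_{\ell,j})$, $B(q) = \prod_{j}(1-2q_{\ell,j}+q^{11}_{\ell,j})$, and where the map $\omega_\ell \mapsto \vec q(\omega_\ell) = (q_{\ell,j}, q^{11}_{\ell,j})_{j}$ is the implicit solution of~(\ref{eqq})--(\ref{eqq11}). The first and second partial derivatives of the inner map are already estimated in Lemma~\ref{Lemma_qdiff}, so the task reduces to bounding the first two partials of $f$ in the $q_{\ell,j}$- and $q^{11}_{\ell,j}$-variables on $\cW_\ell$.

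First I would record the size estimates. On $\cW_\ell$, Lemma~\ref{Lemma_11fixedpoint} together with~(\ref{XeqpBP}) gives $q_{\ell,j} = \tfrac12 + \tilde O(2^{-k/2})$ and $q^{11}_{\ell,j} = \tfrac14 + O(k^{-4})$, so $A = \Theta(2^{-k})$, $B = \Theta(4^{-k})$ and $D := 1 - 2A + B = 1 - O(2^{-k})$. Because $A$ and $B$ factorize over $j$, logarithmic differentiation shows that every partial derivative of $A$ (of any order, in either $q_{\ell,h}$ or $q^{11}_{\ell,h}$) is $O(2^{-k})$ and every partial derivative of $B$ is $O(4^{-k})$. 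A direct calculation starting from $\partial f/\partial q_{\ell,h} = (-2\,\partial_h A + \partial_h B)/D$ and $\partial f/\partial q^{11}_{\ell,h} = (\partial_h B)/D$ then yields, uniformly on $\cW_\ell$,
\begin{equation*}
\abs{\frac{\partial f}{\partial q_{\ell,h}}} = O(2^{-k}), \qquad \abs{\frac{\partial f}{\partial q^{11}_{\ell,h}}} = O(4^{-k}),
\end{equation*}
\begin{equation*}
\abs{\frac{\partial^2 f}{\partial q_{\ell,h}\partial q_{\ell,i}}} = O(2^{-k}), \qquad \abs{\frac{\partial^2 f}{\partial q_{\ell,h}\partial q^{11}_{\ell,i}}} = O(4^{-k}), \qquad \abs{\frac{\partial^2 f}{\partial q^{11}_{\ell,h}\partial q^{11}_{\ell,i}}} = O(4^{-k}).
\end{equation*}
The key observation is that every second partial of $f$ picks up at least one factor of $A$ or $B$, while $D^{-1} = 1 + O(2^{-k})$ contributes only a harmless $1 + o_k(1)$.

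Feeding these estimates together with Lemma~\ref{Lemma_qdiff} into Lemma~\ref{Lemma_chainrule} produces two groups of contributions. The ``second-derivative of $\vec q$'' group consists of terms of the form (first partial of $f$)$\cdot \tilde O(2^{-k})$, each at most $\tilde O(4^{-k})$. The ``product of first-derivatives of $\vec q$'' group pairs a second partial of $f$ against two Jacobian entries; the only pairings that avoid an additional $\tilde O(2^{-k})$ factor are those which land on the indicator parts $\vecone_{h=i}$ of $\partial q^{11}_{\ell,h}/\partial\omega_{\ell,i}$ in Lemma~\ref{Lemma_qdiff}, but those force both slots of the second partial of $f$ to be $q^{11}$-variables, and such partials are already $O(4^{-k})$. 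Every other pairing is $\tilde O(8^{-k})$ per term. Summing the $O(k)$ resp.\ $O(k^2)$ terms still gives $\partial^2\phi_\ell/\partial\omega_{\ell,h}\partial\omega_{\ell,j} = \tilde O(4^{-k})$, as required.

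The main obstacle is the case analysis in the last step: one has to verify that in every pairing where the indicator part of Lemma~\ref{Lemma_qdiff} contributes a factor of $1$ (rather than $\tilde O(2^{-k})$), the accompanying second partial of $f$ already absorbs a factor of $4^{-k}$. Once this bookkeeping is in place, the differentiation of $A$, $B$ and $\ln D$ is routine because $A$ and $B$ are products and $D = 1 + o_k(1)$.
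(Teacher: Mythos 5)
Your proposal is correct and follows the same route as the paper: both reduce the problem to bounding the first and second partials of $\phi_\ell$ in the $q_{\ell,j}$- and $q^{11}_{\ell,j}$-variables, then combine these with Lemma~\ref{Lemma_qdiff} via Lemma~\ref{Lemma_chainrule}. Your bounds for the mixed partials are slightly sharper than those recorded in the paper (you get $O(4^{-k})$ where the paper records $\tilde O(2^{-k})$ for $\partial^2\phi_\ell/\partial q^{11}_{\ell,j}\partial q_{\ell,h}$), and you spell out the chain-rule bookkeeping that the paper leaves to the reader; neither difference changes the argument. One minor caveat: the aside that ``every partial derivative of $A$ (of any order) is $O(2^{-k})$'' is an overstatement (an order-$m$ derivative of $A$ is $\Theta(2^{-(k-m)})$), but since only first and second partials are used, this does not affect the proof.
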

\begin{proof}
By \Lem~\ref{Lemma_11fixedpoint} for all $\omega\in\cW_\ell$ we have 
 $|q_{\ell,j}-1/2|\leq 1/k^2$ and $|q_{\ell,j}^{11}-1/4|\leq 1/k^2$ for all $j\in\brk k$.
For such vectors $\vec q_\ell$ we obtain the bounds 
	\begin{eqnarray*}
	\frac{\partial\phi_\ell}{\partial q_{\ell,j}},
		\frac{\partial^2\phi_\ell}{\partial q_{\ell,j}\partial q_{\ell,h}},\frac{\partial^2\phi_\ell}{\partial q_{\ell,j}^{11}\partial q_{\ell,h}}
			&=&\tilde O(2^{-k}),\\
		\frac{\partial\phi_\ell}{\partial q_{\ell,j}^{11}},
			\frac{\partial^2\phi_\ell}{\partial q_{\ell,j}^{11}\partial q_{\ell,h}^{11}}&=&\tilde O(4^{-k})
	\end{eqnarray*}
for all $i,j,h\in\brk k$.
Therefore, the assertion follows from \Lem~\ref{Lemma_chainrule} (the chain rule) and \Lem~\ref{Lemma_qdiff}.
\qed\end{proof}

Let $\eps>0$.
We say that $\Psi\in C^2((0,1)^2,\RR)$ is \emph{$\eps$-tame} on $\cY\subset(0,1)^2$ if  the following conditions hold:
	\begin{description}
	\item[T1.] For all $y\in(0,1)$ we have $\Psi(y,y)=0$.
	\item[T2.] On $\cY$  we have
			$\abs{\sum_{i=1}^2\frac{\partial^2\Psi}{\partial z_i\partial z_j}}\leq\eps$ for any $j=1,2$.
	\item[T3.] On $\cY$  we have
		$\abs{\sum_{i,j=1}^2\frac{\partial^2\Psi}{\partial z_i\partial z_j}}\leq\eps^2$.
	\item[T4.] On $\cY$  we have
			$|\frac{\partial^2\Psi}{\partial z_i\partial z_j}|\leq 100$ for any $i,j=1,2$.
	\end{description}

Let $f:(0,1)^k\ra\RR^2$, $(z_1,\ldots,z_k)\mapsto(f_1(z_1,\ldots,z_k),f_2(z_1,\ldots,z_k))$ be a $C^2$-function.
We say that $f$ is \emph{$\eps$-benign} on $\cW$ if the following statements are true  on $\cW$:
\begin{description}
\item[B1.] $\abs{\frac{\partial f_1}{\partial z_1}-\frac{\partial f_2}{\partial z_1}}<\eps$.
\item[B2.] $\abs{\frac{\partial f_i}{\partial z_j}}<\eps$ for any $1<j\leq k$ and $i=1,2$ and $\abs{\frac{\partial f_i}{\partial z_1}}\leq100$.
\item[B3.] $\abs{\frac{\partial^2 f_i}{\partial z_h\partial z_j}}<\eps$ for any $i$ and $(h, j)\neq(1,1)$.
\item[B4.] 	$\abs{\frac{\partial^2 f_1}{\partial z_1^2}-\frac{\partial^2 f_2}{\partial z_1^2}}<\eps$
		and $|\frac{\partial^2 f_1}{\partial z_1^2}|\leq100$.
\end{description}

\begin{lemma}\label{Lemma_tameAndBenign}
There is an absolute constant $C>0$ such that the following is true.
Assume that
$f$ is $\eps$-benign on $\cW$ and that
 $\Psi$ is $\eps$-tame on $f(\cW)$.
Then on $\cW$ we have
	$$\frac{\partial^2 \Psi\circ f}{\partial z_i\partial z_j}\leq C\eps^2\qquad\mbox{for any $i,j\in\brk k$}.$$
\end{lemma}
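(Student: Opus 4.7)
\emph{Proof plan.} I start from the chain rule (\Lem~\ref{Lemma_chainrule}):
\[
\frac{\partial^2(\Psi\circ f)}{\partial z_i\partial z_j}
=\underbrace{\sum_{l=1}^{2}(\partial_{y_l}\Psi)(f)\cdot\partial^2_{z_iz_j}f_l}_{\mathrm{I}}
+\underbrace{\sum_{l,m=1}^{2}(\partial_{y_l}\partial_{y_m}\Psi)(f)\cdot\partial_{z_i}f_l\cdot\partial_{z_j}f_m}_{\mathrm{II}}.
\]
The plan is to bound $\mathrm{I}$ and $\mathrm{II}$ separately by $O(\eps^2)$, in each case by expanding the sum in a symmetric/antisymmetric basis that aligns with the structure of the tameness and benignity hypotheses.

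For $\mathrm{II}$, set $u_l=\partial_{z_i}f_l$, $v_l=\partial_{z_j}f_l$, $h_{lm}=(\partial_{y_l}\partial_{y_m}\Psi)(f)$, and $u_\pm=u_1\pm u_2$, $v_\pm=v_1\pm v_2$.  A direct expansion gives
\[
\sum_{l,m}h_{lm}u_lv_m
=\tfrac14 u_+v_+(h_{11}{+}2h_{12}{+}h_{22})
+\tfrac14(u_+v_-{+}u_-v_+)(h_{11}{-}h_{22})
+\tfrac14 u_-v_-(h_{11}{-}2h_{12}{+}h_{22}).
\]
Now T3 bounds the first coefficient by $\eps^2$, T2 (in the form $|h_{11}+h_{12}|,|h_{22}+h_{12}|\le\eps$) yields $|h_{11}-h_{22}|\le 2\eps$, and T4 bounds the last coefficient by $400$.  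On the other side, B1 forces $|u_-|\le\eps$ when $i=1$, and B2 forces $|u_\pm|=O(\eps)$ when $i\ne 1$, with analogous statements for $v_\pm$, while $|u_+|,|v_+|=O(1)$ only in the worst case $i=j=1$.  A short case distinction on whether $i$ and $j$ equal~$1$ then shows $\mathrm{II}=O(\eps^2)$ throughout.

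For $\mathrm{I}$, write $g_l=(\partial_{y_l}\Psi)(f)$ and $A_l=\partial^2_{z_iz_j}f_l$, and use the identity $g_1A_1+g_2A_2=\tfrac12(g_1+g_2)(A_1+A_2)+\tfrac12(g_1-g_2)(A_1-A_2)$.  T1 forces $(\partial_{y_1}+\partial_{y_2})\Psi\equiv 0$ on the diagonal, and integrating T2 along $y_2$ then yields $|g_1+g_2|\le\eps\,|f_1-f_2|$; B3 and B4 give $|A_1-A_2|=O(\eps)$ in every direction, while $|A_1+A_2|=O(1)$ only when $i=j=1$ and $O(\eps)$ otherwise.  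Combined with the trivial continuity bound $|g_1-g_2|=O(1)$, this delivers $\mathrm{I}=O(\eps^2)$ in all cases $(i,j)\ne(1,1)$.  The main obstacle is the remaining case $i=j=1$, where both $|A_1+A_2|$ and $|g_1-g_2|$ can be of order $1$ and the bound $|g_1+g_2|\le\eps|f_1-f_2|$ is useful only if the diagonal gap $|f_1-f_2|$ itself is $O(\eps)$ on $\cW$.  This geometric fact is not encoded by T1--T4 or B1--B4 alone, but holds in the intended application via \Lem~\ref{Lemma_11fixedpoint} (where $q^{11}_{\ell,j}-\omega_{\ell,j}=O(2^{-k})$) propagated across $\cW$ through B1--B2 from a reference point; it enters either as an implicit hypothesis or by strengthening T1 so that $\partial_{y_1}\Psi$ vanishes on the diagonal, as $\psi$ does.
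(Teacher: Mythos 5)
Your expansion of $\mathrm{II}$ in the $u_\pm,v_\pm$ basis is clean and correct --- it packages the paper's four cases into a single identity --- and your closing observation is genuinely on target: T1 only forces $(\partial_{y_1}+\partial_{y_2})\Psi$ to vanish on the diagonal, so the individual first derivatives $\partial_{y_h}\Psi$ are uncontrolled by T1--T4, and the paper's bald assertion ``by T4 and Taylor's formula we have $\partial_{y_h}\Psi=O_k(\eps)$'' is silently importing two application-specific facts: that for $\psi$ \emph{both} $\partial_{y_1}\psi$ and $\partial_{y_2}\psi$ vanish on the diagonal (stronger than T1), and that the domain $\cY$ of \Lem~\ref{Lemma_psiIsTame} has $|y_1-y_2|\le k^3 2^{-k}=\tilde O(\eps)$. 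You are right that the lemma's stated hypotheses do not suffice for its stated conclusion.

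However, you underestimate how much of the argument this touches, and $\mathrm{I}$ does not close with the bounds you actually write down. In $\mathrm{I}=\tfrac12(g_1+g_2)(A_1+A_2)+\tfrac12(g_1-g_2)(A_1-A_2)$ you take $|g_1-g_2|=O(1)$ and $|A_1-A_2|=O(\eps)$, so the second term is only $O(\eps)$. This contradicts the $O(\eps^2)$ you claim for $(i,j)\ne(1,1)$, and it also spoils $i=j=1$ even \emph{after} one grants $|f_1-f_2|=O(\eps)$, since that fix only controls the $(g_1+g_2)(A_1+A_2)$ term. What is actually required is $|g_1-g_2|=O(\eps)$, equivalently $|\partial_{y_h}\Psi|=O(\eps)$ for each $h$, and getting this needs \emph{both} of the strengthenings you list at the end, not ``either\ldots or'': vanishing of $\partial_{y_h}\Psi$ on the diagonal fixes the Taylor base point, and $|f_1-f_2|=O(\eps)$ makes the Taylor increment the right size. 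Once $|\partial_{y_h}\Psi|=O(\eps)$ is available --- which is exactly the bound the paper's proof uses --- the case $(i,j)\ne(1,1)$ needs no decomposition at all, since $\sum_h g_h A_h=O(\eps)\cdot O(\eps)$ directly by B3; the paper reserves the regrouping $A_1\sum_h g_h+\sum_h g_h(A_h-A_1)$ for the $(1,1)$ case alone, where $\sum_h g_h=O(\eps^2)$ offsets the $O(1)$ factor $A_1$, and $g_2(A_2-A_1)=O(\eps)\cdot O(\eps)$ handles the remainder.
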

\begin{proof}
By \Lem~\ref{Lemma_chainrule} (the chain rule), we have
	\begin{eqnarray*}
	\frac{\partial^2 \Psi\circ f}{\partial z_i\partial z_j}&=&\sum_{h=1}^2\frac{\partial \Psi}{\partial y_h}\frac{\partial^2f_h}{\partial z_i\partial z_j}
		+\sum_{a,b=1}^2\frac{\partial^2\Psi}{\partial y_a\partial y_b}\frac{\partial f_a}{\partial z_i}\frac{\partial f_b}{\partial z_j}.
	\end{eqnarray*}
Since by {\bf T4} and Taylor's formula we have $\frac{\partial \Psi}{\partial y_h}=O_k(\eps)$, {\bf B3} implies that for $(i,j)\neq(1,1)$
	$$\sum_{h=1}^2\frac{\partial \Psi}{\partial y_h}\frac{\partial^2f_h}{\partial z_i\partial z_j}=O_k(\eps^2).$$
Furthermore, as $\frac{\partial \Psi}{\partial y_h}=O_k(\eps)$, {\bf B4} yields
	\begin{eqnarray*}
	\sum_{h=1}^2\frac{\partial \Psi}{\partial y_h}\frac{\partial^2f_h}{\partial z_1^2}
		&=&\frac{\partial^2f_1}{\partial z_1^2}\sum_{h=1}^2\frac{\partial \Psi}{\partial y_h}
			+\sum_{h=1}^2\frac{\partial \Psi}{\partial y_h}\brk{\frac{\partial^2f_h}{\partial z_1^2}-\frac{\partial^2f_1}{\partial z_1^2}}
			=O_k(1)\sum_{h=1}^2\frac{\partial \Psi}{\partial y_h}+O_k(\eps^2)=O_k(\eps^2);
	\end{eqnarray*}
the last step follows from {\bf T2} and Taylor's formula.

To deal with the second sum, we consider four cases.
\begin{description}
\item[Case 1: $i\neq1,j\neq1$.] By {\bf B2} we have $\frac{\partial f_a}{\partial z_i}\frac{\partial f_b}{\partial z_j}\leq O_k(\eps^2)$, and thus
		$$\frac{\partial^2\Psi}{\partial y_a\partial y_b}\frac{\partial f_a}{\partial z_i}\frac{\partial f_b}{\partial z_j}=O_k(\eps^2)$$
		by {\bf T4}.
\item[Case 2: $i=1,j\neq1$.]
	We have
		\begin{eqnarray*}
		\sum_{a,b=1}^2\frac{\partial^2\Psi}{\partial y_a\partial y_b}\frac{\partial f_a}{\partial z_1}\frac{\partial f_b}{\partial z_j}
			&=&\sum_{b=1}^2\frac{\partial f_b}{\partial z_j}\sum_{a=1}^2\frac{\partial^2\Psi}{\partial y_a\partial y_b}\frac{\partial f_a}{\partial z_1}
			\,\stacksign{\mbox{\bf B2}}{=}\,
				\sum_{b=1}^2O_k(\eps)
					\sum_{a=1}^2\frac{\partial^2\Psi}{\partial y_a\partial y_b}\frac{\partial f_a}{\partial z_1}\\
			&\stacksign{\mbox{\bf B1, T4}}{=}&O_k(\eps^2)+\frac{\partial f_1}{\partial z_1}\sum_{b=1}^2O_k(\eps)
					\sum_{a=1}^2\frac{\partial^2\Psi}{\partial y_a\partial y_b}\\
			&\stacksign{\mbox{\bf B2}}{=}&O_k(\eps^2)+\sum_{b=1}^2O_k(\eps)
					\sum_{a=1}^2\frac{\partial^2\Psi}{\partial y_a\partial y_b}
					\,\stacksign{\mbox{\bf T2}}{=}\,O_k(\eps^2).
		\end{eqnarray*}
\item[Case 3: $i\neq1,j=1$.] The same argument as in case~2 applies.
\item[Case 4: $i=j=1$.] 
	We have
		\begin{eqnarray*}
		\sum_{a,b=1}^2\frac{\partial^2\Psi}{\partial y_a\partial y_b}\frac{\partial f_a}{\partial z_1}\frac{\partial f_b}{\partial z_1}
			&=&
			\bcfr{\partial f_1}{\partial z_1}^2\sum_{a,b=1}^2\frac{\partial^2\Psi}{\partial y_a\partial y_b}+
					\sum_{a,b=1}^2\frac{\partial^2\Psi}{\partial y_a\partial y_b}\brk{\frac{\partial f_a}{\partial z_1}\frac{\partial f_b}{\partial z_1}
							-\bcfr{\partial f_1}{\partial z_1}^2}\\
			&\hspace{-4cm}\stacksign{\mbox{\bf B2, T3}}{=}&\hspace{-2cm}O_k(\eps^2)+	
				\sum_{a,b=1}^2\frac{\partial^2\Psi}{\partial y_a\partial y_b}\frac{\partial f_a}{\partial z_1}\brk{\frac{\partial f_b}{\partial z_1}
							-\frac{\partial f_1}{\partial z_1}}
								+\sum_{a,b=1}^2\frac{\partial^2\Psi}{\partial y_a\partial y_b}
										\frac{\partial f_1}{\partial z_1}\brk{\frac{\partial f_a}{\partial z_1}
							-\frac{\partial f_1}{\partial z_1}}\\
			&\hspace{-4cm}\stacksign{\mbox{\bf B1}}{=}&\hspace{-2cm}O_k(\eps^2)+	
				\sum_{b=1}^2O_k(\eps)\sum_{a=1}^2
						\frac{\partial^2\Psi}{\partial y_a\partial y_b}\frac{\partial f_a}{\partial z_1}
								+\sum_{a=1}^2O_k(\eps)\sum_{b=1}^2
										\frac{\partial^2\Psi}{\partial y_a\partial y_b}
										\frac{\partial f_1}{\partial z_1}\\	
			&\hspace{-4cm}\stacksign{\mbox{\bf B1}}{=}&\hspace{-2cm}O_k(\eps^2)
								+\sum_{a=1}^2O_k(\eps)\sum_{b=1}^2
										\frac{\partial^2\Psi}{\partial y_a\partial y_b}
										\frac{\partial f_1}{\partial z_1}
						\,\stacksign{\mbox{\bf B1}}{=}\,O_k(\eps^2)
								+\sum_{a=1}^2O_k(\eps)\sum_{b=1}^2
										\frac{\partial^2\Psi}{\partial y_a\partial y_b}
						\,\stacksign{\mbox{\bf T2}}{=}\,O_k(\eps^2).
		\end{eqnarray*}
\end{description}
Hence, in all cases we obtain a bound of $O_k(\eps^2)$.
\qed\end{proof}

\begin{lemma}\label{Lemma_psiIsTame}
The functions $(y_1,y_2)\mapsto \psi(y_1,y_2)$ and $(y_1,y_2)\mapsto (1-y_1)\psi(y_1,y_2)$ are $\tilde O(2^{-k})$-tame on
	$$\cY=\cbc{(y_1,y_2)\in(0,1)^2:|y_1-y_2|\leq  k^32^{-k},\ \max_{i=1,2}|y_i-1/4|\leq 1/k^2}.$$
\end{lemma}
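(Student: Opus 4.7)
The plan is to verify T1--T4 separately for $\psi$ and for $(1-y_1)\psi$, by direct computation from the explicit formula
\[
\psi(y_1,y_2) = -y_2\ln(y_2/y_1) - (1-y_2)\ln((1-y_2)/(1-y_1))
\]
provided by Lemma~\ref{Lemma_binomial}. The central mechanism is that on the diagonal $y_1=y_2=y$ the function $\psi$ enjoys a surprising amount of vanishing: $\psi$ itself, both first partials, and the ``row'' and ``full'' sums of the Hessian are zero there. These diagonal identities propagate to all of $\cY$ via Taylor expansion in $\delta:=y_2-y_1$ using $|\delta|\le k^3 2^{-k}$, which converts them into bounds of order $|\delta|$ or $\delta^2$, i.e.\ $\tilde O(2^{-k})$ or $\tilde O(4^{-k})$ respectively.

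First I would record the first partials $\psi_1 = y_2/y_1 - (1-y_2)/(1-y_1)$ and $\psi_2 = \ln[y_1(1-y_2)/(y_2(1-y_1))]$ (both vanishing at $y_1=y_2$) and the second partials $\psi_{11} = -y_2/y_1^2 - (1-y_2)/(1-y_1)^2$, $\psi_{12}=\psi_{21}= 1/(y_1(1-y_1))$, $\psi_{22} = -1/(y_2(1-y_2))$. Then T1 and T4 are immediate (the domain $\cY$ keeps all denominators bounded away from zero since $y_i\approx 1/4$). For T2 a direct computation yields $\psi_{11}+\psi_{21} = (y_1-y_2)(1/y_1^2 - 1/(1-y_1)^2)$ and $\psi_{12}+\psi_{22} = (y_2-y_1)(1/(y_1 y_2) - 1/((1-y_1)(1-y_2)))$, both of order $O(\delta) = \tilde O(2^{-k})$. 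For T3 the cleanest route is the explicit identity
\[
\sum_{i,j=1}^2\psi_{ij} \;=\; -(y_1-y_2)^2\!\left[\frac{1}{y_1^2 y_2} + \frac{1}{(1-y_1)^2(1-y_2)}\right],
\]
which is $O(\delta^2) = \tilde O(4^{-k})$ on $\cY$.

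Second, for $\tilde\psi(y_1,y_2) = (1-y_1)\psi(y_1,y_2)$ I would apply the product rule to obtain
\[
\tilde\psi_{11} = -2\psi_1 + (1-y_1)\psi_{11},\quad \tilde\psi_{12}=\tilde\psi_{21} = -\psi_2 + (1-y_1)\psi_{12},\quad \tilde\psi_{22} = (1-y_1)\psi_{22}.
\]
T1 is trivial. The T2 sums come out to $-2\psi_1 - \psi_2 + (1-y_1)(\psi_{11}+\psi_{21})$ and $-\psi_2 + (1-y_1)(\psi_{12}+\psi_{22})$; since each of $\psi_1,\psi_2,\psi_{11}+\psi_{21},\psi_{12}+\psi_{22}$ is $O(\delta)$ from the preceding step, both are $\tilde O(2^{-k})$. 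For T3 the key extra cancellation is $\psi_1+\psi_2 = O(\delta^2)$, which follows by Taylor expanding each partial around the diagonal, $\psi_1 = \delta\,\psi_{12}|_{\mathrm{diag}} + O(\delta^2)$ and $\psi_2 = \delta\,\psi_{22}|_{\mathrm{diag}} + O(\delta^2)$, combined with the diagonal identity $\psi_{12}+\psi_{22}=0$. Then $\sum_{i,j}\tilde\psi_{ij} = -2(\psi_1+\psi_2) + (1-y_1)\sum_{i,j}\psi_{ij} = O(\delta^2)$, and T4 is immediate from compactness.

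The calculation is essentially mechanical, so I anticipate no real mathematical obstacle; the only subtlety is bookkeeping which diagonal identities buy which extra order of $\delta$. The most delicate point is the T3 bound for $\tilde\psi$: naively, applying the product rule produces terms that are $O(\delta)$, and only by grouping them as $\psi_1+\psi_2$ and invoking $\psi_{12}+\psi_{22}|_{\mathrm{diag}}=0$ does one recover the required $\tilde O(4^{-k})$. Once this grouping is identified, the rest of the verification is routine.
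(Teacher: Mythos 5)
Your proof is correct and follows essentially the same route as the paper: compute the first and second partials of $\psi$ from the closed form in \Lem~\ref{Lemma_binomial}, observe that T1 and T4 are immediate on $\cY$, and obtain T2 and T3 by controlling the Hessian in terms of $\delta=y_1-y_2$.

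Two points of comparison are worth noting. First, where the paper establishes T2 and T3 for $\psi$ by computing the third-order partials and Taylor-expanding the second derivatives around the diagonal to first order in $\eps$, you instead derive the exact identities
\[
\psi_{11}+\psi_{12}=(y_1-y_2)\Bigl(\tfrac1{y_1^2}-\tfrac1{(1-y_1)^2}\Bigr),\qquad
\sum_{i,j}\psi_{ij}=-(y_1-y_2)^2\Bigl[\tfrac1{y_1^2y_2}+\tfrac1{(1-y_1)^2(1-y_2)}\Bigr],
\]
which make the required orders in $\delta$ manifest with no Taylor remainder; this is slightly cleaner and makes the quadratic vanishing in T3 self-evident rather than emerging from a cancellation of two Taylor coefficients. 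Second, for the weighted function the paper compresses the verification into ``the fact that $(1-y_2)\psi$ is $\eps$-tame follows from the fact that $\psi$ is,'' which glosses over exactly the point you correctly single out as the only delicate one: the naive product-rule expansion of $\sum_{i,j}\tilde\psi_{ij}$ produces the term $-2(\psi_1+\psi_2)$, which is a priori only $O(\delta)$, and one needs the extra cancellation $\psi_1+\psi_2=O(\delta^2)$ (via $\psi_{12}+\psi_{22}=0$ on the diagonal) to rescue T3. You identify and prove this cancellation explicitly, which is a genuine improvement in rigor over the paper's phrasing.

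One small remark: the lemma as stated concerns $(1-y_1)\psi(y_1,y_2)$, but the paper's own proof carries out the product-rule computation for $(1-y_2)\psi(y_1,y_2)$ — an internal inconsistency in the source. Your computation is for $(1-y_1)\psi$, matching the statement, and the argument goes through identically in either case (the cancellation for the $(1-y_2)$ version uses $\psi_{11}+\psi_{12}=0$ on the diagonal instead). Since neither choice exactly matches the prefactor $(1-\omega_{\ell,j})$ appearing in the definition of $\tilde\psi_{\ell,j}$, the discrepancy is immaterial, but it is worth being aware of when reconciling the lemma with its downstream application.
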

\begin{proof}
It is straightforward to work out the differentials of $\psi$: we have
	\begin{eqnarray*}
	\frac{\partial\psi}{\partial {y_1}}=\frac{y_2}{y_1}-\frac{1-{y_2}}{1-{y_1}},&&
	\frac{\partial\psi}{\partial {y_2}}=-\ln\bcfr{y_2}{y_1}+\ln\bcfr{1-{y_2}}{1-{y_1}},\\
	\frac{\partial^2\psi}{\partial {y_1}^2}=-\frac{y_2}{{y_1}^2}-\frac{1-{y_2}}{(1-{y_1})^2},&&
	\frac{\partial^2\psi}{\partial {y_1}\partial{y_2}}\psi=\frac 1{{y_1}}+\frac{1}{1-{y_1}},\qquad
	\frac{\partial^2\psi}{\partial {y_2}^2}=-\frac1{y_2}-\frac1{1-{y_2}}.
	\end{eqnarray*}

Differentiating once more with respect to $y_1$, we get
	\begin{eqnarray*}
	\frac{\partial^3\psi}{\partial y_1^3}&=&\frac{2y_2}{y_1^3}-\frac{2(1-y_2)}{(1-y_1)^3},\quad
	\frac{\partial^3\psi}{\partial y_1^2\partial y_2}=-\frac 1{y_1^2}+\frac{1}{(1-y_1)^2},\quad
	\frac{\partial^3\psi}{\partial y_1\partial y_2^2}=0.
	\end{eqnarray*}
Therefore, at $y_1=y_2+\eps$ the second derivatives work out to be
	\begin{eqnarray*}\label{eqd2psiperturbed1}
	\frac{\partial^2\psi}{\partial y_1^2}(y_2+\eps,y_2)&=&-\frac1{y_2}-\frac1{1-{y_2}}+2\eps\bc{\frac1{{y_2}^2}-\frac1{(1-{y_2})^2}}+O(\eps^2),\\
	\frac{\partial^2\psi}{\partial y_1\partial {y_2}}(y_2+\eps,y_2)&=&\frac1{y_2}+\frac1{1-{y_2}}+
			\eps\bc{-\frac1{{y_2}^2}+\frac1{(1-{y_2})^2}}+O(\eps^2),\label{eqd2psiperturbed2}\\
	\frac{\partial^2\psi}{\partial {y_2}^2}(y_2+\eps,y_2)&=&-\frac1{y_2}-\frac1{1-{y_2}}.\label{eqd2psiperturbed3}
	\end{eqnarray*}
Hence, $\psi$ is tame.
Furthermore,
differentiating $(y_1,y_2)\mapsto(1-y_2)\psi(y_1,y_2)$ yields
	\begin{eqnarray*}
	\frac{\partial}{\partial y_1}(1-y_2)\psi(y_1,y_2)&=&
		(1-y_2)\frac{\partial}{\partial y_1}\psi(y_1,y_2),\\
	\frac{\partial}{\partial y_2}(1-y_2)\psi(y_1,y_2)&=&
		(1-y_2)\frac{\partial}{\partial y_2}\psi(y_1,y_2)-\psi(y_1,y_2),\\
	\frac{\partial^2}{\partial y_1^2}(1-y_2)\psi(y_1,y_2)&=&
		(1-y_2)\frac{\partial^2}{\partial y_1^2}\psi(y_1,y_2),\\
	\frac{\partial^2}{\partial y_2^2}(1-y_2)\psi(y_1,y_2)&=&
		(1-y_2)\frac{\partial^2}{\partial y_2^2}\psi(y_1,y_2)-2\frac{\partial}{\partial y_2}\psi(y_1,y_2),\\
	\frac{\partial^2}{\partial y_1\partial y_2}(1-y_2)\psi(y_1,y_2)&=&
		(1-y_2)\frac{\partial^2}{\partial y_1\partial y_2}\psi(y_1,y_2)-\frac{\partial}{\partial y_1}\psi(y_1,y_2).
	\end{eqnarray*}
Hence, the fact that $(1-y_2)\psi(y_1,y_2)$ is $\eps$-tame follows from the fact that $\psi$ is.
\qed\end{proof}

\begin{lemma}\label{Lemma_zetaIsBenign}
With $\vec q=\vec q(\omega)$
the functions
	\begin{eqnarray*}
	\xi_{\ell,j}&:&\omega\mapsto\bc{q_{\ell,j}^{11},\omega_{\ell,j}},\\
	\zeta_{\ell,j}&:&\omega\mapsto
		(\zeta_{1,\ell,j},\zeta_{2,\ell,j})=\bc{\frac{1-2q_{\ell,j}+q^{11}_{\ell,j}}{1-q^{11}_{\ell,j}},\frac{1-2\ell_j+\omega_{\ell,j}}{1-\omega_{\ell,j}}}
	\end{eqnarray*}
are $\tilde O(2^{-k})$-benign on 
$\cW=\cbc{\omega:\norm{\omega-\frac14\vecone}_{\infty}\leq k^{-4}}$.
\end{lemma}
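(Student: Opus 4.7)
The plan is to verify the four conditions \textbf{B1}--\textbf{B4} by direct computation, using \Lem~\ref{Lemma_qdiff} as the only nontrivial input. Throughout I will designate $z_1=\omega_{\ell,j}$ as the distinguished coordinate and $z_2,\ldots,z_k$ as the remaining entries $\omega_{\ell,h}$ with $h\neq j$.

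For $\xi_{\ell,j}$ the two components are $q^{11}_{\ell,j}$ and $\omega_{\ell,j}$, so the required bounds follow immediately from \Lem~\ref{Lemma_qdiff}: that lemma delivers $\partial q^{11}_{\ell,j}/\partial\omega_{\ell,j}=1+\tilde O(2^{-k})$, $\partial q^{11}_{\ell,j}/\partial\omega_{\ell,h}=\tilde O(2^{-k})$ for $h\neq j$, and $\tilde O(2^{-k})$ bounds on every second derivative of $q^{11}_{\ell,j}$. The second component $\omega_{\ell,j}$ has derivative $1$ in $z_1$, zero in every other direction, and vanishing second derivatives; combining the two components yields all four benign conditions for $\xi_{\ell,j}$ at once.

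The real work is for $\zeta_{\ell,j}=(\zeta_{1,\ell,j},\zeta_{2,\ell,j})$. Since $\zeta_{2,\ell,j}$ depends only on $\omega_{\ell,j}$, one obtains directly
\[
\frac{\partial\zeta_{2,\ell,j}}{\partial\omega_{\ell,j}}=\frac{2(1-\ell_j)}{(1-\omega_{\ell,j})^2},\qquad\frac{\partial^2\zeta_{2,\ell,j}}{\partial\omega_{\ell,j}^2}=\frac{4(1-\ell_j)}{(1-\omega_{\ell,j})^3},
\]
with all other partials zero. For $\zeta_{1,\ell,j}$ I would apply the chain rule through the intermediate variables $q_{\ell,j}$ and $q^{11}_{\ell,j}$. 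Direct differentiation produces $\partial\zeta_1/\partial q=-2/(1-q^{11})$ and $\partial\zeta_1/\partial q^{11}=2(1-q)/(1-q^{11})^2$, both $O(1)$ on the relevant range, together with $O(1)$ expressions for the three second partials. For $h\neq j$, \Lem~\ref{Lemma_qdiff} gives $\partial q/\partial\omega_{\ell,h}=\tilde O(2^{-k})$ and $\partial q^{11}/\partial\omega_{\ell,h}=\tilde O(2^{-k})$, immediately yielding \textbf{B2} for $\zeta_1$; the same lemma's second-derivative bounds handle every second partial of $\zeta_{1,\ell,j}$ outside the $(j,j)$-direction, giving \textbf{B3}.

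The delicate step will be matching $\zeta_1$ and $\zeta_2$ along the distinguished direction for \textbf{B1} and \textbf{B4}. Substituting $\partial q^{11}_{\ell,j}/\partial\omega_{\ell,j}=1+\tilde O(2^{-k})$ and $\partial q_{\ell,j}/\partial\omega_{\ell,j}=\tilde O(2^{-k})$ into the chain rule collapses
\[
\frac{\partial\zeta_{1,\ell,j}}{\partial\omega_{\ell,j}}=\frac{2(1-q_{\ell,j})}{(1-q^{11}_{\ell,j})^2}+\tilde O(2^{-k}),
\]
and the asymptotics $q_{\ell,j}=\ell_j-2^{-k-1}+\tilde O(2^{-3k/2})$, $q^{11}_{\ell,j}=\omega_{\ell,j}+O(2^{-k})$ from \Lem~\ref{Lemma_11fixedpoint} then turn the right-hand side into $\partial\zeta_{2,\ell,j}/\partial\omega_{\ell,j}+\tilde O(2^{-k})$, which is \textbf{B1}. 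The analogous substitution in the second-derivative expansion will isolate the dominant contribution $4(1-q_{\ell,j})/(1-q^{11}_{\ell,j})^3$ coming from $(\partial^2\zeta_1/\partial(q^{11})^2)(\partial q^{11}/\partial\omega_{\ell,j})^2$, with every other term visibly carrying a factor of $\partial q/\partial\omega$ or a second derivative of $q,q^{11}$ and hence being $\tilde O(2^{-k})$; this matches $\partial^2\zeta_{2,\ell,j}/\partial\omega_{\ell,j}^2$ to the required precision and yields \textbf{B4}. The main obstacle is purely organizational: one has to expand the chain rule carefully enough that every non-cancelling term inherits a visible $\tilde O(2^{-k})$ factor from \Lem~\ref{Lemma_qdiff}, which is what forces the normalization of the benign condition around the specific reference point dictated by \Lem~\ref{Lemma_11fixedpoint}.
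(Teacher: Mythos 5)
Your proposal is correct and follows essentially the same route as the paper: compute the derivatives of $\zeta_{2,\ell,j}$ directly, compute those of $\zeta_{1,\ell,j}$ by the chain rule through the intermediate variables $q_{\ell,j},q^{11}_{\ell,j}$, invoke Lemma~\ref{Lemma_qdiff} for the derivative bounds on $\vec q(\omega)$, and finish by substituting the asymptotics $q_{\ell,j}=\ell_j+\tilde O(2^{-k})$, $q^{11}_{\ell,j}=\omega_{\ell,j}+\tilde O(2^{-k})$ from Lemma~\ref{Lemma_11fixedpoint} to match the leading terms for \textbf{B1} and \textbf{B4}. The paper's proof is more terse but the decomposition and the inputs are identical.
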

\begin{proof}
The fact that $\xi_{\ell,j}$ is benign follows directly from \Lem~\ref{Lemma_qdiff}.
With respect to $\zeta_{\ell,j}$ we have
	\begin{eqnarray*}
	\frac{\partial\zeta_{2,\ell,j}}{\partial\omega_{\ell,j}}&=&
			\frac{2(1-\ell_j)}{(1-\omega_{\ell,j})^2},
		\qquad\frac{\partial^2\zeta_{2,\ell,j}}{\partial\omega_{\ell,j}^2}=\frac{4(1-\ell_j)}{(1-\omega_{\ell,j})^3},\\
	\frac{\partial\zeta_{2,\ell,j}}{\partial\omega_{\ell,h}}&=&0,
		\qquad\frac{\partial^2\zeta_{2,\ell,j}}{\partial\omega_{\ell,h}\partial\omega_{\ell,i}}=0\qquad
			(h\neq j),\\
	\frac{\partial\zeta_{1,\ell,j}}{\partial\omega_{\ell,j}}&=&
		\frac{(1-q_{\ell,j}^{11})\brk{-2\frac{\partial q_{\ell,j}}{\partial\omega_{\ell,j}}+\frac{\partial q_{\ell,j}^{11}}{\partial\omega_{\ell,j}}}
				+\frac{\partial q_{\ell,j}^{11}}{\partial\omega_{\ell,j}}(1-2q_{\ell,j}+q_{\ell,j}^{11})}
				{(1-q_{\ell,j}^{11})^2}=\frac{2(1-q_{\ell,j})}{(1-q_{\ell,j}^{11})^2}+\tilde O(2^{-k}),\\
	\frac{\partial^2\zeta_{1,\ell,j}}{\partial\omega_{\ell,j}^2}&=&\frac{4(1-q_{\ell,j})}{(1-q_{\ell,j})^4}+\tilde O(2^{-k}),\\
	\frac{\partial\zeta_{1,\ell,j}}{\partial\omega_{\ell,h}}&=&\tilde O(2^{-k}),
		\qquad\frac{\partial^2\zeta_{1,\ell,j}}{\partial\omega_{\ell,h}\partial\omega_{\ell,i}}=\tilde O(2^{-k})\qquad
			(h\neq j).
	\end{eqnarray*}
Since $|q_{\ell,j}-\ell_j|\leq\tilde O(2^{-k})$ and $|q_{\ell,j}^{11}-\omega_{\ell,j}|\leq\tilde O(2^{-k})$ by \Lem~\ref{Lemma_11fixedpoint},
the assertion follows.
\qed\end{proof}

Finally, \Prop~\ref{Prop_calculus2}
follows directly from \Lem s \ref{Lemma_qdiff2}, \ref{Lemma_tameAndBenign}, \ref{Lemma_psiIsTame} and~\ref{Lemma_zetaIsBenign}.

\subsection{Proof of \Lem~\ref{Lemma_qdiff}}\label{Sec_qdiff}

Let
	\begin{eqnarray*}
	P_{\ell,j}&:&\vec q\mapsto\frac{q_{\ell,j}-(q_{\ell,j}-q^{11}_{\ell,j})\prod_{h\neq j}(1-q_{\ell,h})
			}{1-2\prod_{h=1}^k(1-q_{\ell,h})+\prod_{h=1}^k(1-2q_{\ell,h}+q^{11}_{\ell,h})},\\
	\Omega_{\ell,j}&:&\vec q \mapsto\frac{q^{11}_{\ell,j}}{1-2\prod_{h=1}^k(1-q_{\ell,h})+\prod_{h=1}^k(1-2q_{\ell,h}+q^{11}_{\ell,h})}.
	\end{eqnarray*}
A straightforward calculation shows that for $\vec q$ such that $|q_{\ell,j}-1/2|\leq 1/k^2$ and $|q_{\ell,j}^{11}-1/4|\leq 1/k^2$ we have
	\begin{eqnarray*}
	\frac{\partial P_{\ell,j}}{\partial q_{\ell,h}}=\vecone_{j=h}+\tilde O(2^{-k}),&& \frac{\partial P_{\ell,j}}{\partial q^{11}_{\ell,h}}=\tilde O(2^{-k}),\\
	\frac{\partial \Omega_{\ell,j}}{\partial q_{\ell,h}}=\tilde O(2^{-k}),&&
		\frac{\partial \Omega_{\ell,j}}{\partial q^{11}_{\ell,h}}=\vecone_{j=h}+\tilde O(2^{-k})
	\end{eqnarray*}
for any $j,h\in\brk k$.
Let $F:\vec q\mapsto\bink{\bc{P_{\ell,j}(\vec q)}_{j\in\brk k}}{\bc{\Omega_{\ell,j}(\vec q)}_{j\in\brk k}}$.
Then the differential of $F$ satisfies
	\begin{equation}\label{eqDF}
	DF=\brk{\begin{array}{cc}
		{\bc{\bc{\frac{\partial P_{\ell,j}}{\partial q_{\ell,h}}}_{h\in\brk k},\bc{\frac{\partial P_{\ell,j}}{\partial q_{\ell,h}^{11}}}_{h\in\brk k}
			}_{j\in\brk k}}\\
		{\bc{\bc{\frac{\partial P_{\ell,j}}{\partial q_{\ell,h}}}_{h\in\brk k},\bc{\frac{\partial P_{\ell,j}}{\partial q_{\ell,h}^{11}}}_{h\in\brk k}
			}_{j\in\brk k}}
		\end{array}}
			=\id+\tilde O(2^{-k})\vecone,
	\end{equation}
where $\id$ is the matrix with ones on the diagonal and zeros elsewhere, and $\vecone$ signifies the matrix with all entries equal to one.
By the inverse function theorem, we have $D(F^{-1})=(DF)^{-1}$.
Furthermore, by (\ref{eqDF}) and Cramer's rule,
	\begin{equation}\label{eqDFinv}
	(DF)^{-1}=\id+\tilde O(2^{-k})\vecone.
	\end{equation}
Since $\vec q(\omega)$ is the solution to $F(\vec q)=\bink{(p(\ell_j))_{j\in\brk k}}{(\omega_{\ell,j})_{j\in\brk k}}$,
(\ref{eqDFinv}) yields the assertions on the first derivatives
	$\frac{\partial q^{11}_{\ell,h}}{\partial\omega_{\ell,i}}$, $\frac{\partial q_{\ell,h}}{\partial\omega_{\ell,i}}$
 in \Lem~\ref{Lemma_qdiff}.

Proceeding to the second derivative, we highlight the following (folklore) fact.

\begin{lemma}\label{Lemma_determinants}
Let $\eps,\delta=\exp(-\Omega(k))$.
Let $\cA$ be the set of all $k\times k$ matrices $A=(A_{ij})$ such that
	$|A_{ii}-1|<\eps$ for all $i$ and $|A_{ij}|<\delta$ for all $i\neq j$.
Then $A$ is regular and the operator $\inv:A\in\cA\mapsto A^{-1}=(\inv_{st}A)_{s,t=1,\ldots,k}$ satisfies
	$$\frac{\partial\inv_{st}}{\partial a_{ij}}\bigg|_A\leq \tilde O(\delta)-\vecone_{i=j=s=t}(1+\tilde O(\eps))\quad\mbox{for any $i,j,s,t\in\brk k$.}$$
\end{lemma}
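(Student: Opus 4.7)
The plan is to combine the standard identity $\partial (A^{-1})_{st}/\partial a_{ij} = -(A^{-1})_{si}(A^{-1})_{jt}$, obtained by differentiating $AA^{-1}=I$ with respect to $a_{ij}$, with sharp entry-wise bounds on $A^{-1}$: namely $(A^{-1})_{ii} = 1 + \tilde O(\eps)$ on the diagonal and $(A^{-1})_{ij} = \tilde O(\delta)$ off the diagonal. Once these bounds are in place, a short case analysis on the quadruple $(i,j,s,t)$ proves the lemma.

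To establish regularity and the entry bounds, I would decompose $A = I + D + N$, where $D$ is diagonal with $|D_{ii}| < \eps$ and $N$ has zero diagonal with $|N_{ij}| < \delta$ for $i \neq j$. Setting $M = I + D$, the matrix $M$ is diagonal and invertible with $(M^{-1})_{ii} = (1+D_{ii})^{-1} = 1 + \tilde O(\eps)$, so that $A^{-1} = (I + M^{-1}N)^{-1} M^{-1}$. Because $\|M^{-1}N\|_\infty \leq (1+O(\eps))(k-1)\delta = \exp(-\Omega(k))$, the Neumann series $(I + M^{-1}N)^{-1} = \sum_{n \geq 0} (-M^{-1}N)^n$ converges absolutely, which gives regularity. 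The $n=0$ term contributes $I$; the $n=1$ term $-M^{-1}N$ has zero diagonal and off-diagonal entries bounded by $(1+O(\eps))\delta = \tilde O(\delta)$; and the tail $\sum_{n\geq 2}$ is dominated entry-wise by $\|M^{-1}N\|_\infty^2 / (1 - \|M^{-1}N\|_\infty) = \tilde O(k^2 \delta^2)$, which is $\tilde O(\delta)$ since $\delta = \exp(-\Omega(k))$ forces $k^2 \delta = o(1)$. Right-multiplication by the diagonal $M^{-1}$ preserves this structure and yields the claimed entry bounds on $A^{-1}$.

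The case analysis is then immediate. If $i = j = s = t$, the derivative equals $-((A^{-1})_{ii})^2 = -(1 + \tilde O(\eps))^2 = -(1 + \tilde O(\eps))$, which agrees with the right-hand side of the claim up to the $\tilde O(\delta)$ slack. If the four indices $(i,j,s,t)$ do not all coincide, then at least one of the pairs $(s,i)$ or $(j,t)$ has distinct entries, so the corresponding factor $(A^{-1})_{si}$ or $(A^{-1})_{jt}$ is off-diagonal of size $\tilde O(\delta)$, while the other factor is bounded by $1 + \tilde O(\eps) = O(1)$. Hence $|\partial (A^{-1})_{st}/\partial a_{ij}| = \tilde O(\delta)$, which is subsumed by the stated upper bound.

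The main technical point is securing the off-diagonal bound $(A^{-1})_{ij} = \tilde O(\delta)$ rather than the weaker $\tilde O(k\delta)$: a crude estimate $\|A^{-1} - I\|_\infty \leq \|A - I\|_\infty = O(\eps + k\delta)$ would lose a full factor of $k$. The decomposition $A = M + N$ isolates the diagonal perturbation in $M$ and ensures that the leading off-diagonal contribution arises from the $n=1$ Neumann term, which carries no factor of $k$, while the tail contribution $\tilde O(k^2\delta^2)$ is rendered negligible by the hypothesis $\delta = \exp(-\Omega(k))$. These two ingredients together are precisely what make the bound $\tilde O(\delta)$ go through.
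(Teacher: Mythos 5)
Your approach differs from the paper's: the paper applies Cramer's rule and differentiates the cofactor formula $\inv_{st}A = (-1)^{s+t}\det A'_{ts}/\det A$ directly, whereas you use the cleaner identity $\partial A^{-1}/\partial a_{ij} = -A^{-1}E_{ij}A^{-1}$ and supply entry-wise bounds on $A^{-1}$ through the decomposition $A = M(I+M^{-1}N)$ with a Neumann series. The Neumann argument is correct and does neatly isolate the diagonal perturbation so that the leading off-diagonal contribution to $A^{-1}$ carries no extra factor of $k$.

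However, there is a genuine gap in your final case analysis. You assert that if the four indices $(i,j,s,t)$ do not all coincide, then at least one of the pairs $(s,i)$ or $(j,t)$ has distinct entries. This is false: take $s=i$, $j=t$ with $i\neq j$, e.g.\ $(i,j,s,t)=(1,2,1,2)$. Then both pairs $(s,i)$ and $(j,t)$ consist of equal entries, the four indices do not all coincide, and your formula gives
$\partial (A^{-1})_{st}/\partial a_{ij} = -(A^{-1})_{si}(A^{-1})_{jt} = -(A^{-1})_{ii}(A^{-1})_{jj} = -(1+\tilde O(\eps))^2$,
which has magnitude $\approx 1$, not $\tilde O(\delta)$. (A sanity check at $A=I$ in $2\times2$: $\partial (A^{-1})_{12}/\partial a_{12}=-1$.) Your stated conclusion ``$|\partial(A^{-1})_{st}/\partial a_{ij}| = \tilde O(\delta)$'' is therefore wrong in this regime. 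The one-sided bound in the lemma does still hold for this case, but only because the derivative is \emph{negative}, so $-(1+\tilde O(\eps))^2\leq\tilde O(\delta)$ trivially; your proof never observes this sign and so does not actually cover the case. The natural fix that your identity suggests is to replace the indicator $\vecone_{i=j=s=t}$ by $\vecone_{s=i,\,j=t}$, yielding the correct two-sided bound $|\partial\inv_{st}/\partial a_{ij}|\leq\tilde O(\delta)+\vecone_{s=i,\,j=t}(1+\tilde O(\eps))$; this is also what the downstream use in Lemma~\ref{Lemma_qdiff} requires. (As an aside, the paper's Cramer's-rule proof has the same blind spot: its asserted $\partial\det A'_{ts}/\partial a_{ij}=\tilde O(\delta)$ for $i\neq j$, $s\neq t$ fails precisely when $i=s$, $j=t$, where the relevant cofactor is a near-identity principal minor.)
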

\begin{proof}
This is a simple consequence of Cramer's rule.
Indeed, let $A_{ij}'$ be the matrix obtained from $A$ by omitting row $i$ and column $j$.
Then
	$$\inv_{st}A=(-1)^{s+t}\frac{\det A_{ts}'}{\det A}.$$
Thus, we need to differentiate $\det A_{ts}'$ and $\det A$.
For any $i\neq j$ we have 
	\begin{eqnarray*}
	\frac{\partial}{\partial a_{ii}}\det A&=&\prod_{h\neq i} a_{hh}+\tilde O(\delta)=1+\tilde O(\eps)+\tilde O(\delta),\quad
	\frac{\partial}{\partial a_{ij}}\det A=\tilde O(\delta).
	\end{eqnarray*}
Similarly, for $i\neq j$ and $s\neq t$ we have
	\begin{eqnarray*}
	\frac{\partial}{\partial a_{ii}}\det A_{tt}'&=&\vecone_{i\neq t}\cdot(1+\tilde O(\eps)),\quad
	\frac{\partial}{\partial a_{ii}}\det A_{ts}'=\tilde O(\delta),\quad
	\frac{\partial}{\partial a_{ij}}\det A_{ts}'=\tilde O(\delta).
	\end{eqnarray*}
Thus, the assertion follows from the quotient rule.
\qed\end{proof}

A direct calculation shows that 
for $\vec q$ such that $|q_{\ell,j}-1/2|\leq 1/k^2$ and $|q_{\ell,j}^{11}-1/4|\leq 1/k^2$ we have
	\begin{eqnarray*}
	\frac{\partial^2 P_{\ell,j}}{\partial q_{\ell,h}\partial q_{\ell,i}},\frac{\partial^2 P_{\ell,j}}{\partial q_{\ell,h}\partial q_{\ell,i}^{11}},
		\frac{\partial^2 P_{\ell,j}}{\partial q_{\ell,h}^{11}\partial q_{\ell,i}^{11}}
		&=&\tilde O(2^{-k}),\\
	\frac{\partial^2 \Omega_{\ell,j}}{\partial q_{\ell,h}\partial q_{\ell,i}},
		\frac{\partial^2 \Omega_{\ell,j}}{\partial q_{\ell,h}\partial q_{\ell,i}^{11}},
			\frac{\partial^2 \Omega_{\ell,j}}{\partial q_{\ell,h}^{11}\partial q_{\ell,i}^{11}}
		&=&\tilde O(2^{-k})
	\end{eqnarray*}
for any $h,i,j\in\brk k$.
Thus, 
	\begin{equation}\label{eqD2Fgeneral}
	\norm{D^2F}_{\infty}\leq\tilde O(2^{-k}).
	\end{equation}
Because by the chain rule
	$D(\inv\circ DF)=(D\inv)\circ(D^2F)$,
		the assertion on the second derivatives follows from 
			\Lem~\ref{Lemma_determinants}, (\ref{eqDFinv}) and (\ref{eqD2Fgeneral}).

\subsection{Completing the proof of \Prop~\ref{Prop_Pcentre}}\label{Sec_PcentreComplete}

The first assertion is a direct consequence of \Prop~\ref{Prop_functionF} and \Cor~\ref{Cor_calculus}.
Similarly, the second assertion follows from \Prop~\ref{Prop_functionF} because
	$\cP_\ell(\omega)\leq-\Omega_k(2^{-k})$ for all $\ell$.

Finally, let $\omega=\omega^*$.
It is straightforward to verify that by letting $q_{\ell,j}$ be as in \Lem~\ref{Lemma_fixedpoint}
and by setting $q_{\ell,j}^{11}=q_{\ell,j}^2$ we obtain the unique solution to~(\ref{eqq})--(\ref{eqq11}).
We need to plug this solution into $\cP(\omega)$:
we have
	\begin{eqnarray}\nonumber
	\ln\brk{1-2\prod_{j=1}^k(1-q_{\ell,j})+\prod_{j=1}^k(1-2q_{\ell,j}+q^{11}_{\ell,j})}&=&
		\ln\brk{1-2\prod_{j=1}^k(1-q_{\ell,j})+\prod_{j=1}^k(1-q_{\ell,j})^2}\\
		&=&2\ln\bc{1-\prod_{j=1}^k1-q_{\ell,j}}.
			\label{eqdismal1}
	\end{eqnarray}
Moreover, 
	\begin{eqnarray}\nonumber
	\psi(q^{11}_{\ell,j},\omega_{\ell,j})&=&\psi(q_{\ell,j}^2,\ell_j^2)=
		-2\ell_j^2\ln\bcfr{\ell_j}{q_{\ell,j}}-(1-\ell_j^2)\ln\bcfr{1-\ell_j^2}{1-q_{\ell,j}^2}\\
		&=&-2\ell_j^2\ln\bcfr{\ell_j}{q_{\ell,j}}-(1-\ell_j^2)\brk{\ln\bcfr{1-\ell_j}{1-q_{\ell,j}}+\ln\bcfr{1+\ell_j}{1+q_{\ell,j}}}.
						\label{eqdismal2}
	\end{eqnarray}
Further,
	\begin{eqnarray}
	(1-\ell_j^2)\psi\bc{\frac{1-2q_{\ell,j}+q_{\ell,j}^{11}}{1-q_{\ell,j}^{11}},\frac{1-2\ell_j+\omega_{\ell,j}}{1-\omega_{\ell,j}}}&=&
		(1-\ell_j^2)\psi\bc{\frac{(1-q_{\ell,j})^2}{1-q_{\ell,j}^2},\frac{(1-\ell_j)^2}{1-\ell_j^2}}\nonumber\\
		&\hspace{-14cm}=&\hspace{-7cm}
			(1-\ell_j^2)\psi\bc{\frac{1-q_{\ell,j}}{1+q_{\ell,j}},\frac{1-\ell_j}{1+\ell_j}}\nonumber\\
		&\hspace{-14cm}=&\hspace{-7cm}-(1-\ell_j)^2\ln\bcfr{1-\ell_j}{1-q_{\ell,j}}
				-(1-\ell_j^2)\ln\bcfr{1+q_{\ell_j}}{1+\ell_j}-2\ell_j(1-\ell_j)\ln\bcfr{\ell_j}{q_{\ell,j}}.
							\label{eqdismal3}
	\end{eqnarray}
Summing up~(\ref{eqdismal1})--(\ref{eqdismal3}), we find
	\begin{eqnarray*}
	\frac{n\cP(\omega)}2&=&\sum_{\ell\in\cL}m(\ell)\brk{\ln\bc{1-\prod_{j=1}^k1-q_{\ell,j}}-\sum_{j\in\brk k}\psi(q_{\ell,j},\ell_j)}.
	\end{eqnarray*}
Therefore, the third assertion follows from Remark~\ref{Remark_weNeedThis}.

\section{Enumeration of Assignments with $p$-Marginals}
\label{sec:enumeration}

In this section we will prove Lemma~\ref{Lemma_entropy} and Proposition~\ref{Prop_entropy}. Before we present the actual details we will introduce an appropriate framework, which will enable us to perform the enumeration of assignments with $p$-marginals, and pairs of such assignments with a given overlap.

In \Sec~\ref{XSec_theRandomVar} we said that an assignment $\sigma\in\cbc{0,1}^V$ has \emph{$\pd$-marginals} if
 for any type $t\in\cT$ we have
	$$\sum_{l\in L:\cT(l)=t}\vecone_{\sigma(l)=1}\cdot \frac{d_l}{km}\doteq p(t)\pi(t).$$
In words, the fraction of literal occurrences of type $t$ that are true under $\sigma$ equals $p(t)$ up to an error of $O(1/n)$. However, due to technical reasons and because it simplifies some of our calculations significantly, we will actually work with a slightly refined definition. Let us say that a signature $(s, d^+,d^-)$ is \emph{good}, if $d^+, d^- < 3kr/4$ and $0 < (d^+ - d)^2 \le 100 k 2^k \ln k$. Instead of requiring that the fraction of literal occurrences of type $t$ equals $p(t)$, we require that this is true for \emph{every good signature}. That is, we say that an assignment $\sigma\in\cbc{0,1}^V$ has \emph{$\pd$-marginals} if for any good $s\in T$
\[
	\sum_{l\in L: T(l)=s}\vecone_{\sigma(l)=1}\cdot \frac{d_l}{km} = p(s) \, \sum_{l\in L: T(l)=s} \frac{d_l}{km}, 
\]
and moreover, that fraction of literal occurrences of all other variables is $1/2$, i.e.,
\[
		\sum_{l\in L: p(l)=1/2}\vecone_{\sigma(l)=1}\cdot \frac{d_l}{km} = \frac12 \, \sum_{l\in L: p(l)=1/2} \frac{d_l}{km}.
\]
We are going to prove Lemma~\ref{Lemma_entropy} and Proposition~\ref{Prop_entropy} with this modified definition.
It is easily checked that this modification does not affect any of the arguments in the previous sections.

Let $s\in T$ be any signature and set $L_s = \{\ell \in L : T(\ell)=s\}$. Moreover, denote by $V_s = \{|\ell| : \ell \in L_t\}$ and observe that $V_s = V_{\neg s}$. 
For any $\sigma \in \{0,1\}^n$ let us denote by the $s$-weight $w_s(\sigma)$ the number of satisfied literal occurrences, where only literals of signature $s$ are considered, i.e.,
\[
	w_s(\sigma) = \sum_{\ell \in L_s} \vecone_{[\sigma(\ell)=1]} d_\ell.
\]
Let us also define similar quantities with respect to the types. Let $t\in \cal T$ and set, as previously, $L_t = \{\ell \in L : {\cal T}(\ell) = t\}$. Denote by $\neg t\in \cal T$ the type satisfying $p(\neg t) = 1 - p(t)$. Note that $\neg t$ exists, and we have $L_{\neg t} = \{\neg \ell : \ell \in L_t\}$. Moreover, note that if $p(t) \neq 1/2$ we have $L_t \cap L_{\neg t} = \emptyset$, and $L_t = L_{\neg t}$ otherwise. Finally, set $V_t = \{|\ell| : \ell \in L_t\} = \{|\ell| : \ell \in L_{\neg t}\}$. In accordance with the case of signatures, let us for any $\sigma \in \{0,1\}^n$ denote by the $t$-weight $w_t(\sigma)$ the number of satisfied literal occurrences, where only literals of type $t$ are considered, i.e.,
\[
	w_t(\sigma) = \sum_{\ell \in L_t} \vecone_{[\sigma(\ell)=1]} d_\ell.
\]
Let $t_{1/2}$ be the type such that $p(t_{1/2}) = 1/2$. Since $L_{t_{1/2}} = L_{\neg t_{1/2}}$ it follows that in this special case
\begin{equation}
\label{eq:wt12}
	w_{t_{1/2}}(\sigma) = \sum_{v \in V_{t_{1/2}}} \vecone_{[\sigma(v)=1]} d_v + \vecone_{[\sigma(v)=0]} d_{\neg v}.
\end{equation}
With the above notation, an assignment $\sigma$ has $p$-marginals if and only if
\[
	\forall s\in T\setminus t_{1/2}: ~ w_s(\sigma) = p(s) \pi(s) km
	\quad \text{ and }\quad
	w_{t_{1/2}}(\sigma) = \frac12 \pi(t_{1/2}) km.
\]
The next proposition is the first step towards the estimation of the total number of assignments with $p$-marginals, c.f.\ Lemma~\ref{Lemma_entropy}. We denote by $H(x) = -x\ln x - (1-x)\ln(1-x)$ the entropy of $x$, and with $[z^n]f(z)$ the $n$-th coefficient in the Taylor series expansion of an analytic function $f$ around 0.
\begin{proposition}
W.h.p.\ $\bf d$ chosen from $\bf D$ has the following property. There is a constant $C > 0$ such that if we denote by $\cal S$ the set of signatures $s\in T$ with the property $p(s) > 1/2$, then
\begin{equation}
\label{eq:H}
	|{\cal H}|
	= (C+o(1)) \, n^{-|{\cal S}|/2} \, \exp\left \{ \sum_{s\in {\cal S}} |V_s|H(p(s))\right\} \cdot [z^{\pi(t_{1/2})km/2}] \prod_{v \in V_{t_{1/2}}}(z^{d_v} + z^{d_{\neg v}}).
\end{equation}
\end{proposition}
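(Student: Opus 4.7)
\emph{Proof strategy.} The plan is to exploit the fact that the $p$-marginal constraints decouple completely across the signature classes $\{V_s\}_{s \in T}$, which partition $V$. I would split the count $|{\cal H}|$ into an independent product of local counts, one per pair of good nontrivial signatures and one for $V_{t_{1/2}}$, then apply Stirling's formula to the binomial contributions.

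First I would establish the decoupling. For a good positive signature $s = (+, d^+, d^-)$, every literal occurrence in $L_s$ is the positive literal of a variable $v \in V_s$ and carries degree exactly $d^+$, so $w_s(\sigma) = d^+ \cdot |\{v \in V_s : \sigma(v) = 1\}|$ and $\pi(s) km = |V_s| d^+$. Hence the constraint $w_s(\sigma) = p(s)\pi(s)km$ reduces to $|\{v \in V_s : \sigma(v) = 1\}| = p(s)|V_s|$, interpreted with the $O(1)$ tolerance implicit in the $p$-marginals definition. The analogous analysis for $s' = (-, d^+, d^-)$ gives $|\{v \in V_s : \sigma(v) = 0\}| = p(s')|V_s| = (1-p(s))|V_s|$, which is the same constraint. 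Indexing pairs by $s \in {\cal S}$ (where $p(s) > 1/2$) therefore yields the product $\prod_{s \in {\cal S}} \binom{|V_s|}{p(s)|V_s|}$ of valid restrictions of $\sigma$ to the variables covered by good nontrivial signatures.

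Second, I would handle $V_{t_{1/2}}$. Rewriting $w_{t_{1/2}}(\sigma)$ via \eqref{eq:wt12}, the constraint $w_{t_{1/2}}(\sigma) = \pi(t_{1/2})km/2$ becomes the linear Diophantine equation $\sum_{v \in V_{t_{1/2}}} X_v = \pi(t_{1/2})km/2$ with $X_v \in \{d_v, d_{\neg v}\}$ encoding $\sigma(v)$. A standard generating-function identity then gives the number of valid assignments on $V_{t_{1/2}}$ as $[z^{\pi(t_{1/2})km/2}] \prod_{v \in V_{t_{1/2}}} (z^{d_v} + z^{d_{\neg v}})$. Multiplying the two contributions and applying $\binom{N}{\alpha N} \sim (2\pi N \alpha(1-\alpha))^{-1/2} \exp(N H(\alpha))$ to each binomial in ${\cal S}$ produces $|{\cal S}|$ factors of $n^{-1/2}$ and the exponential sum $\exp\{\sum_{s \in {\cal S}} |V_s| H(p(s))\}$, while the prefactor $C$ is the product of the Stirling constants $(2\pi p(s)(1-p(s))|V_s|/n)^{-1/2}$, which remains bounded below as each such factor tends to a $k$-dependent positive constant.

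The main obstacle is ensuring that Stirling's approximation is uniformly valid across all $|{\cal S}| = O_k(1)$ signatures, i.e., that $|V_s| = \Omega_k(n)$ w.h.p.\ for every good $s$. This should follow from the Poisson description of $\vec D$ recalled in the proof of \Lem~\ref{Lemma_tame}: under the independent $\Po(kr/2)$ representation of $\vec e$, each variable has signature $(d^+,d^-)$ with a strictly positive $n$-independent probability $c_{d^+,d^-}(k)>0$, so $|V_s|$ concentrates on $c_{d^+,d^-}(k) n$ by a Chernoff bound, combined with the $\Theta(n^{-1/2})$ conditioning on the Poisson sum as in~(\ref{eqDtame1}). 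A secondary subtlety is that $p(s)|V_s|$ need not be an integer, but the $O(1)$ tolerance absorbs the rounding: since $H'(\alpha) = O_k(2^{-k/2})$ near $\alpha = 1/2$, shifting the binomial's lower index by $O(1)$ perturbs $\log\binom{|V_s|}{\cdot}$ only by $o(1)$, leaving $C$ well defined and the exponent unchanged to the accuracy required.
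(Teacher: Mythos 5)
Your proof follows essentially the same route as the paper: decouple the marginal constraints across signature classes, express the $V_s$-contribution as a binomial $\binom{|V_s|}{p(s)|V_s|}$ for each $s\in\mathcal{S}$, isolate the $V_{t_{1/2}}$-contribution as the coefficient of a generating function, and apply Stirling to extract the entropy factor and the $n^{-|\mathcal{S}|/2}$ prefactor. The only differences are cosmetic: you are somewhat more explicit about the reduction $w_s(\sigma)=p(s)\pi(s)km\iff|\{v\in V_s:\sigma(v)=1\}|=p(s)|V_s|$, you spell out why the $\neg s$ constraint is redundant, you flag the integrality/rounding subtlety, and you justify $|V_s|=\Omega_k(n)$ via the conditioned-Poisson description of $\vec D$ (the paper simply asserts $|V_s|=(1+o(1))\alpha_s n$ w.h.p.). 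All of these are details the paper treats implicitly; your argument is correct.
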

\begin{proof}
First of all, note that if for an assignment $\sigma$ and a signature $s\in T$ with $p(s) > 1/2$ we have $w_s(\sigma) = \pi(s)km$, then the fraction of variables in $V_s$ that are set to true is $p(s)$. Thus, the fraction of variables set to false is $1-p(s) = p(\neg s)$, and we infer that
\[
	w_{\neg s}(\sigma)
	= \sum_{\ell \in L_{\neg s}} \vecone_{[\sigma(\ell)=1]} d_\ell
	= \sum_{v \in V_s} \vecone_{[\sigma(v)=0]} d_{\neg v}
	= p(\neg s) \pi(\neg s) km.
\]
Consequently, for any such $s$ the number of partial assignments $\sigma_s:V_s \to \{0,1\}$, with the property that the fraction of satisfied variables is $p(s)$ is
\[
	\binom{|V_s|}{p(s)|V_s|} = \frac1{\sqrt{2\pi p(s)(1-p(s)) |V_s|}}e^{|V_s| H(p(s))}.
\]
Since w.h.p.\ $\bf d$ is such that $|V_s| = (1+o(1))\alpha_s n$ for some $\alpha_s = \alpha_s(k)$, this provides the exponential terms in~\eqref{eq:H}.

It remains to bound the number of partial assignments $\sigma':V_{t_{1/2}} \to \{0,1\}$ such that $w_{t_{1/2}}=\frac12\pi(t_{1/2})km$. Define the generating function
\[
	F(z) = \sum_{\sigma':V_{t_{1/2}} \to \{0,1\}} z^{w_{t_{1/2}}(\sigma')}
\]
By definition, the sought quantity is $[z^{\pi(t_{1/2})km/2}]F(z)$. Moreover, the definition of $F(z)$ and~\eqref{eq:wt12} imply that
\[
	F(z) = \sum_{\sigma':V_{t_{1/2}} \to \{0,1\}} \prod_{v \in V_{t_{1/2}}}(\vecone_{[\sigma'(v)=1]}z^{d_v} + \vecone_{[\sigma'(v)=0]}z^{d_{\neg v}})
\]
The assertion follows.
\qed
\end{proof}

Lemma~\ref{Lemma_entropy} follows immediately from the next statement, which is shown in Section~\ref{ssec:coeff_extract_simple}.
\begin{proposition}
\label{prop:coeff_extract_simple}
W.h.p.\ $\bf d$ chosen from $\bf D$ has the following property. There is a constant $C = C(k) > 0$ such that is we write $N = |V_{t_{1/2}}|$, then
\[
	[z^{\pi(t_{1/2})km/2}] \prod_{v \in V_{t_{1/2}}}(z^{d_v} + z^{d_{\neg v}}) = (C + o(1))N^{-1/2} 2^{N}.
\]
\end{proposition}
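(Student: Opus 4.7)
The idea is to re-read the coefficient extraction as a local probability. Let $(X_v)_{v\in V_{t_{1/2}}}$ be independent with $X_v$ uniform on $\cbc{d_v, d_{\neg v}}$, and set $S=\sum_v X_v$. Then $2^{-N}\prod_v(z^{d_v}+z^{d_{\neg v}})$ is precisely the probability generating function of $S$, whence
\[
[z^{M}]\prod_{v\in V_{t_{1/2}}}(z^{d_v}+z^{d_{\neg v}})=2^N\cdot\pr\brk{S=M},\qquad M:=\pi(t_{1/2})km/2.
\]
A direct computation gives $\Erw[S]=\tfrac12\sum_v(d_v+d_{\neg v})=\tfrac12\Vol(V_{t_{1/2}})=M$, so the coefficient is $2^N$ times the probability that $S$ equals its mean. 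The proposition thus reduces to the local limit estimate $\pr\brk{S=\Erw S}=(C+o(1))N^{-1/2}$.

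Write $a_v:=d_v-d_{\neg v}$. The definition of $V_{t_{1/2}}$ forces $a_v^2>100\,k2^k\ln k$ for $v\in V_{t_{1/2}}$, while \Lem~\ref{Lemma_tame} combined with \Lem~\ref{Lemma_p} gives $\Var(S)=\tfrac14\sum_v a_v^2$ with $\Var(S)/N$ bounded away from both $0$ and $\infty$ by $k$-dependent constants w.h.p. Using the Poisson description of $\vec D$ from the proof of \Lem~\ref{Lemma_tame}, a second moment calculation moreover shows that $\Var(S)/N$ concentrates to a deterministic limit
\[
\Lambda(k)=\tfrac14\Erw\brk{(e-e')^2~\big|~(e-e')^2>100k2^k\ln k},
\]
where $e,e'$ are independent $\Po(kr/2)$. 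The claimed constant will be $C=(2\pi\Lambda(k))^{-1/2}$.

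The LLT will be established by Fourier inversion: since $S$ is integer-valued with mean $M$,
\[
\pr\brk{S=M}=\int_{-1/2}^{1/2}\prod_{v\in V_{t_{1/2}}}\cos(\pi t\,a_v)\,dt.
\]
I split the integral at $T_0:=N^{-1/2}\log N$. On the central region $|t|\le T_0$, the expansion $\cos(\pi ta_v)=1-\tfrac12\pi^2 t^2 a_v^2+O(t^4a_v^4)$ together with $\sum_v a_v^2=4\Var(S)$ yields, via a standard Laplace argument, the main contribution $(2\pi\Var(S))^{-1/2}(1+o(1))=(C+o(1))N^{-1/2}$.

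The main obstacle is the tail estimate on $T_0<|t|\le\tfrac12$, where one must show $\prod_v\abs{\cos(\pi t a_v)}=o(N^{-1/2})$. For $T_0\le|t|\le\tfrac14$ the pointwise bound $\abs{\cos(\pi ta_v)}\le\exp(-\Omega(t^2a_v^2))$ combined with $\sum_v a_v^2$ of order $k2^kN$ (up to $k$-dependent factors) furnishes the required decay. The delicate region is $|t|$ near $\tfrac12$, where periodicity of $\cos$ could conspire with the integer $a_v$'s: if all $a_v$'s were even, then $\cos(\pi t a_v)=\pm 1$ at $t=\tfrac12$. To handle this I would show that, for $\vec d$ drawn from $\vec D$, with high probability $\Omega(N)$ of the $a_v$'s with $v\in V_{t_{1/2}}$ are odd. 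This follows from the Poisson description of $\vec D$: conditionally on $a_v^2>100k2^k\ln k$ the parity of $a_v$ is (asymptotically) $\Be(1/2)$, the coordinates $(a_v)_{v\in V}$ are essentially independent, and a Chernoff/Azuma bound supplies the required number of odd entries. With $\Omega(N)$ factors satisfying $\abs{\cos(\pi t a_v)}\le c<1$ throughout $|t-\tfrac12|<\tfrac14$, the product is $\exp(-\Omega(N))$, and combining with the central Gaussian yields the claim.
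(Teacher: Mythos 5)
Your reinterpretation of the coefficient as $2^N\cdot\pr\brk{S=M}$ followed by Fourier inversion is exactly the paper's saddle-point integral in disguise (with $\theta=2\pi t$), and the central-region expansion is sound. The gaps are in the tail estimate, and they trace back to a misreading of which variables $V_{t_{1/2}}$ contains.

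First, the bound $\abs{\cos(\pi t a_v)}\le\exp(-\Omega(t^2a_v^2))$ does not hold once $\abs{ta_v}$ is of order one, because $\abs{\cos}$ is periodic: at $t=j/a_v$ for integer $j$ the factor equals $1$ while the exponential stays uniformly below $1$. You insist on $a_v^2>100k2^k\ln k$, so the bound already fails for $t$ of order $(k2^k\ln k)^{-1/2}$, well inside the range $\brk{T_0,1/4}$.

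Second, parity alone does not control the region near $t=1/2$. For odd $a_v$ with $\abs{a_v}\ge3$ there is an integer $j$ with $a_v/4<j<3a_v/4$, and at $t=j/a_v\in(1/4,3/4)$ one has $\abs{\cos(\pi ta_v)}=1$; so the claimed uniform bound $\abs{\cos(\pi ta_v)}\le c<1$ on $\abs{t-1/2}<1/4$ fails for every $\abs{a_v}\ge3$, hence in particular for the large-$a_v$ variables you use.

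Third -- and this is where the fix lies -- the proposition is stated under the \emph{refined} notion of $p_{\vec d}$-marginals introduced at the start of Section~\ref{sec:enumeration}. There $V_{t_{1/2}}$ additionally contains every variable whose signature is not ``good'', in particular variables of degree $\ge 3kr/4$ with arbitrary (even zero or tiny) difference $d_v-d_{\neg v}$. The paper's tail bound exploits precisely this: w.h.p.\ there are $\Omega(N)$ variables $v\in V_{t_{1/2}}$ with signature $(kr,kr-1)$, hence $\abs{d_v-d_{\neg v}}=1$. For those $v$ the factor is $\abs{\cos(\pi t)}$, which is monotone on $(0,1/2]$ and hence $\le 1-\Omega(T_0^2)$ uniformly for $T_0\le\abs t\le 1/2$; bounding the remaining factors trivially by $1$ and raising to the $\Omega(N)$-th power yields $\exp(-\Omega(NT_0^2))=o(N^{-1/2})$, which covers the entire tail in one stroke. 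Substituting this ingredient for your two tail estimates would close the gap; as written, the argument does not go through.
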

We proceed with the proof of Proposition~\ref{Prop_entropy}, i.e., we want to enumerate pairs of assignments with $p$-marginals that have a specific overlap. Let $s\in T$ be a signature. For any $\sigma, \tau\in\{0,1\}^n$ denote the by the $s$-overlap $o_s(\sigma,\tau)$ the number of literal occurrences that are satisfied in both $\sigma$ and $\tau$, where we consider only literals of signature $s$, i.e.,
\[
	o_s(\sigma, \tau) = \sum_{\ell \in L_s} \vecone_{[\sigma(\ell) = \tau(\ell) = 1]} d_\ell.
\]
Similarly, for any type $t\in \cal T$ we denote by $o_t(\sigma, \tau)$ the number of satisfied literal occurrences in both $\sigma$ and $\tau$, where only literals of type $t$ are considered. Note that $o_t(\sigma, \tau) = {\cal O}(\sigma, \tau)_t \pi(t) km$, where $\cal O$ is defined in Section~\ref{ssec:secMomOutline}. For the special case $t = t_{1/2}$ it follows
\begin{equation}
\label{eq:o12}
	o_{t_{1/2}}(\sigma, \tau) = \sum_{v \in V_{t_{1/2}}} \vecone_{[\sigma(v) = \tau(v) = 1]} d_v + \vecone_{[\sigma(\neg v) = \tau(\neg v) = 0]}.
\end{equation}
Let us begin with a simple observation. Let $s\in t$ such that $p(s)>1/2$, and let $\sigma,\tau$ be two assignments with $p$-marginals. Note that if $w_s(\sigma,\tau) = (1+\delta)p(s)^2\pi(s)km$, for some $\delta \ge -1$,  then the fraction of variables in $V_s$ that are set to true in $\sigma$ and $\tau$ is $(1+\delta)p(s)^2$. Consequently, the number of variables that are set to false in both assignments is $(1-p(s))|V_s| - (p(s)|V_s| - (1+\delta)p(s)^2|V_s|)$, and therefore
\[
\begin{split}
	w_{\neg s}(\sigma, \tau)
	& = (1-p(s))\pi(\neg s)km - \left(p(s)\pi(\neg s)km - (1+\delta)p(s)^2\pi(\neg s)km\right)\\
	& = \left(1 - \delta\frac{(1-p(\neg s))^2}{p(\neg s)^2}\right)p(\neg s)^2\pi(\neg s)km.
\end{split}
\]
In words, the overlap in $s$ determines the overlap in $\neg s$. However, note that the $s'$-overlap, for any $s' \neq s,\neg s$, is not affected by the quantities $w_s(\sigma, \tau)$ and $w_{\neg s}(\sigma, \tau)$.

Let $t\in \cal T$ be a type. With the previous observation at hand we are able to estimate the number of pairs of $p$-satisfying assignments with a given $t$- and $\neg t$-overlap. The proof can be found in Section~\ref{ssec:toverlap}.
\begin{proposition}
\label{prop:toverlap}
There is a $c > 0$ such that the following is true. Let $\eps, \eps' > 0$. Let $t\in\cal T$ be a type such that $p(t)\neq 1/2$. Denote by ${\cal H}^2_{t, \neg t}(\eps,\eps')$ the set of pairs $\sigma$, $\tau$ of assignments with $p$-marginals, such that
\[
	|w_t(\sigma,\tau) - p(t)^2\pi(t)km| \ge \eps p(t)^2\pi(t)km
	\quad \text{ and }\quad
	|w_{\neg t}(\sigma,\tau) - p(\neg t)^2\pi(\neg t)km| \ge \eps' p(\neg t)^2\pi(\neg t)km.
\]	
Then, 
\[
	|{\cal H}^2_t(\eps, \eps')| \le |{\cal H}|^2 \cdot \exp\left\{-cn \, \big(\eps^2 \pi(t) + \eps'^2 \pi(\neg t)\big)\right\}.
\]
\end{proposition}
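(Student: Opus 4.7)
The plan is to factor the counting problem along signatures, pass to a product measure on pairs of partial assignments, and then apply a two-dimensional Chernoff-type bound that exploits negative correlation between $o_t$ and $o_{\neg t}$. First I would decouple along signatures: since the refined $p$-marginal constraint is imposed separately on each good signature class $V_s$ and since the quantities $w_t(\sigma,\tau)=o_t(\sigma,\tau)$ and $w_{\neg t}(\sigma,\tau)=o_{\neg t}(\sigma,\tau)$ depend only on the restrictions of $\sigma,\tau$ to $V_t=V_{\neg t}=\bigsqcup_{s:p(s)\in\{t,\neg t\}}V_s$, the enumeration factors as $|\cH|=A\cdot B$ and $|\cH^2_{t,\neg t}(\eps,\eps')|=A_{\mathrm{bad}}(\eps,\eps')\cdot B^2$, where $A$ counts $p$-marginal partial assignments on $V_t$ and $B$ counts them on $V\setminus V_t$. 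It therefore suffices to show $A_{\mathrm{bad}}(\eps,\eps')\le A^2\exp(-cn(\eps^2\pi(t)+\eps'^2\pi(\neg t)))$.

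For this, I would put a product measure on pairs of partial assignments to $V_t$ in which for each $v\in V_t$ the pair $(\sigma(v),\tau(v))\in\{0,1\}^2$ is drawn independently with $\Pr[\sigma(v)=1]=\Pr[\tau(v)=1]=p(v)\in\{t,\neg t\}$ and the two coordinates independent. By Lemma~\ref{lem:locallimit}, the probability that both $\sigma$ and $\tau$ satisfy the per-signature cardinality constraints that define the $p$-marginals on $V_t$ is $\Theta(n^{-\kappa})$ for some constant $\kappa=\kappa(k)$, and conditional on this event the measure is uniform on the $A^2$ admissible pairs. Thus $A_{\mathrm{bad}}(\eps,\eps')/A^2$ equals, up to a $\Theta(n^\kappa)$ factor, the unconditional product-measure probability of the bad-overlap event, and this polynomial factor can afterwards be absorbed into the exponential bound by slightly shrinking $c$ (trivially true in the regime $\eps^2\pi(t)+\eps'^2\pi(\neg t)\le(\log n)/n$).

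The third ingredient is the two-dimensional concentration under the product measure. Each $v\in V_t$ contributes an independent increment to the pair $(o_t,o_{\neg t})$ taking only the values $(d_v,0)$, $(0,d_{\neg v})$ (with these two nonzero alternatives swapped when $p(v)=\neg t$), or $(0,0)$; in particular the two one-variable indicators giving a positive contribution to $o_t$ and to $o_{\neg t}$ are mutually exclusive and therefore negatively correlated. By stability of negative association under independent concatenation and under monotone operations on disjoint variable subsets, the pair $(o_t,o_{\neg t})$ is itself negatively associated, so upper and lower tail events satisfy a product inequality. Each one-dimensional deviation is then controlled by a standard Chernoff/Hoeffding bound: because the signatures under consideration are good, the per-variable ranges $d_v,d_{\neg v}$ are bounded by $O(kr)$, the variance of $o_t$ is of order $\pi(t)km=\Theta_k(\pi(t)n)$ and analogously for $o_{\neg t}$, and one obtains $\Pr[|o_t-\mu_t|\ge\eps\mu_t]\le\exp(-c\eps^2\pi(t)n)$ and an analogous bound for $o_{\neg t}$. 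Multiplying these two bounds and invoking the decoupling from the first paragraph yields the claim.

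The main obstacle is the two-dimensional character of the deviation event: a plain union bound only yields an exponent containing the maximum of the two one-dimensional rates, which is strictly weaker than the sum asserted by the proposition. Genuinely joint concentration is required, and the cleanest route is the negative-association observation above; this is the only nontrivial input beyond the factorization and the local limit theorem.
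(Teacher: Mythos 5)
Your route is genuinely different from the paper's: the paper fixes an arbitrary $p$-marginal $\sigma$, draws $\TAU$ by randomly permuting each signature class $V_s$, and applies McDiarmid's permutation concentration inequality (Theorem~\ref{thm:McDiarmid}) once for the $w_t$-deviation and once more for the $w_{\neg t}$-deviation; you instead factor along signatures, pass to a genuine product measure and use the local limit theorem (Lemma~\ref{lem:locallimit}) plus a negative-association argument.

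Two remarks, one on your "main obstacle" and one on the NA step, are in order. First, the obstacle you highlight is not actually there. The proposition only demands \emph{some} $c>0$, and since
\[
\max\bigl(\eps^2\pi(t),\eps'^2\pi(\neg t)\bigr)\ \ge\ \tfrac12\bigl(\eps^2\pi(t)+\eps'^2\pi(\neg t)\bigr),
\]
the \emph{minimum} of the two one-dimensional bounds already gives an exponent of the form $-\Omega\bigl(n(\eps^2\pi(t)+\eps'^2\pi(\neg t))\bigr)$, with $c$ halved. This is exactly the silent combination step behind the paper's "The claim follows'': nothing two-dimensional is needed, and both its proof and a corrected version of yours can stop after the two separate Chernoff/McDiarmid bounds.

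Second, the NA step as written has a gap. Negative association of the pair $(o_t,o_{\neg t})$ gives the product inequality only for \emph{same-direction} tail events (both upper or both lower). The deviation event you are bounding is a union over four sign patterns, and for the mixed patterns (e.g.\ $o_t$ large and $o_{\neg t}$ small) negative association supplies the inequality in the \emph{wrong} direction -- it lower-bounds rather than upper-bounds the joint probability. Moreover, on a single signature class $V_s$ and under the $p$-marginal constraint, $\#\{v:\sigma(v)=\tau(v)=0\}$ is a deterministic affine increasing function of $\#\{v:\sigma(v)=\tau(v)=1\}$, so within each signature class $o_t$ and $o_{\neg t}$ are in fact perfectly positively coupled -- underscoring that a joint product decay of the kind your NA argument aims for cannot hold in general. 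Fortunately, as observed above, you do not need it: replace the NA paragraph by "take the minimum of the two one-dimensional bounds'' and your proof is correct and close in spirit to the paper's, differing only in the concentration tool (product measure + Hoeffding/Azuma after the LLT conditioning, versus McDiarmid on random permutations).
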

What remains is to enumerate pairs of $p$-satisfying assignments with a given $t_{1/2}$-overlap. The next proposition provides this number as the coefficient of an appropriately defined generating function.
\begin{proposition}
Let $\eps\in(-1/4,1/4)$. Let ${\cal H}^2_{1/2}(\eps)$ denote the set of pairs $\sigma',\tau'$ of assignments to the variables in $V_{t_{1/2}}$ such that
\[
	o_{t_{1/2}}(\sigma', \tau') = \left(\frac14 + \eps\right) \pi(t_{1/2})km
\]
and
\[
	w_{t_{1/2}}(\sigma') = w_{t_{1/2}}(\sigma') = \pi(t_{1/2}km) / 2.
\]
Then ${\cal H}^2_{1/2}(\eps) = [(xy)^{\pi(t_{1/2})km/2}\, u^{(1/4 + \eps)\pi(t_{1/2})km}]F(x,y,u)$, where
\[
	F(x,y,u) = \prod_{v \in V_{t_{1/2}}}	\left((xyu)^{d_v} + (xyu)^{d_{\neg v}} + x^{d_v}y^{d_{\neg v}} + x^{d_{\neg v}}y^{d_v}\right).
\]
\end{proposition}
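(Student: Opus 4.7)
The plan is to realize $|{\cal H}^2_{1/2}(\eps)|$ as a product generating function over the variables $v \in V_{t_{1/2}}$, exploiting the fact that the three monitored quantities $w_{t_{1/2}}(\sigma')$, $w_{t_{1/2}}(\tau')$, and $o_{t_{1/2}}(\sigma',\tau')$ all decompose additively over $v$. The core observation is that each pair $(\sigma',\tau')$ restricted to a single variable $v$ has exactly four possible values, namely $(\sigma'(v),\tau'(v)) \in \{(1,1),(0,0),(1,0),(0,1)\}$, and these choices are independent across different $v$.

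The first step is a case analysis of the local contributions. Using the definition of $w_{t_{1/2}}$ together with the formula for $o_{t_{1/2}}$ (cf.~\eqref{eq:o12}, noting that the $(0,0)$ case contributes $d_{\neg v}$ to the overlap because the literal $\neg v$ is then satisfied in both assignments), one tabulates that the joint local contribution $(\alpha_v,\beta_v,\gamma_v)$ to $(w_{t_{1/2}}(\sigma'), w_{t_{1/2}}(\tau'), o_{t_{1/2}}(\sigma',\tau'))$ is
\[
(d_v,d_v,d_v),\ (d_{\neg v},d_{\neg v},d_{\neg v}),\ (d_v,d_{\neg v},0),\ (d_{\neg v},d_v,0)
\]
for the four joint values listed above, respectively.

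The second step is to introduce formal variables $x,y,u$ marking $w_{t_{1/2}}(\sigma')$, $w_{t_{1/2}}(\tau')$, and $o_{t_{1/2}}(\sigma',\tau')$, and to encode each local choice by the monomial $x^{\alpha_v}y^{\beta_v}u^{\gamma_v}$. Summing over the four choices at $v$ yields exactly $F_v(x,y,u) := (xyu)^{d_v} + (xyu)^{d_{\neg v}} + x^{d_v}y^{d_{\neg v}} + x^{d_{\neg v}}y^{d_v}$. Because the three monitored quantities are additive over $v$ and every joint assignment $(\sigma',\tau')$ on $V_{t_{1/2}}$ corresponds bijectively to selecting one monomial from each $F_v$, the standard product rule for generating functions gives
\[
\sum_{\sigma',\tau'} x^{w_{t_{1/2}}(\sigma')} y^{w_{t_{1/2}}(\tau')} u^{o_{t_{1/2}}(\sigma',\tau')} \;=\; \prod_{v \in V_{t_{1/2}}} F_v(x,y,u) \;=\; F(x,y,u).
\]

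The final step is simply to extract the appropriate coefficient. By definition, the pairs counted by $|{\cal H}^2_{1/2}(\eps)|$ are precisely those for which $w_{t_{1/2}}(\sigma') = w_{t_{1/2}}(\tau') = \pi(t_{1/2})km/2$ and $o_{t_{1/2}}(\sigma',\tau') = (1/4+\eps)\pi(t_{1/2})km$, so their number is $[(xy)^{\pi(t_{1/2})km/2}\,u^{(1/4+\eps)\pi(t_{1/2})km}]F(x,y,u)$. There is no real obstacle here; the only point requiring care is the case analysis for the $(0,0)$ choice, where it matters that $\neg v$ is also a literal of type $t_{1/2}$ and hence contributes $d_{\neg v}$ to both weights and to the overlap, which is what produces the term $(xyu)^{d_{\neg v}}$ in $F_v$.
\qed
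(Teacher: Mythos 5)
Your proof is correct and takes essentially the same approach as the paper: both identify the local contribution of each variable $v$ to the three tracked statistics via a four-way case analysis on $(\sigma'(v),\tau'(v))$, encode each case by a monomial, and multiply over $v$ using additivity. The paper's version is merely more compressed, writing the product directly with indicator functions rather than tabulating the cases.
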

\begin{proof}
Assign to a pair of assignments $\sigma',\tau'$ to the variables in $V_{t_{1/2}}$ the weight $x^{w_{t_{1/2}}(\sigma')} \, y^{w_{t_{1/2}}(\tau')} \, u^{o_{t_{1/2}}(\sigma', \tau')}$. Then, by using~\eqref{eq:wt12} and~\eqref{eq:o12} 
\[
\begin{split}
	 &\sum_{\sigma', \tau': V_{t_{1/2}} \to \{0,1\}} x^{w_{t_{1/2}}(\sigma')} \, y^{w_{t_{1/2}}(\tau')} \, u^{o_{t_{1/2}}(\sigma', \tau')} \\
	= & \sum_{\sigma', \tau': V_{t_{1/2}} \to \{0,1\}} \prod_{v\in V_{t_{1/2}}}
		\vecone_{[\sigma(v)=\tau(v)=1]}(xyu)^{d_v}
		+ \vecone_{[\sigma(v)=\tau(v)=0]}(xyu)^{d_{\neg v}} \\
		& \qquad \qquad \qquad \qquad \qquad \qquad + \vecone_{[\sigma(v)=1, \tau(v)=0]}x^{d_v}y^{d_{\neg v}}
		+ \vecone_{[\sigma(v)=0,\tau(v)=1]}x^{d_{\neg v}}y^{d_v}.
\end{split}
\]
Summing this expression up yields the claimed statement.
\qed
\end{proof}

The next statement provides the asymptotic value of the sought coefficients of $F(x,y,u)$ from the previous proposition. The proof can be found in Section~\ref{ssec:coeff_extract_triple}.
\begin{proposition}
\label{prop:coeff_extract_triple}
W.h.p.\ $\bf d$ chosen from $\bf D$ has the following property. There is a constant $C = C(k,\eps) > 0$ such that if we write $N = |V_{t_{1/2}}|$ and $M = \pi(t_{1/2})km$, then
\[
	[(xy)^{M/2}\, u^{(1/4 + \eps)M}]F(x,y,u) = (C + o(1)) \cdot E \cdot N^{-3/2},
\]
where
\begin{equation}
\label{eq:expGrowth_triple}
	E = \rho^{-(1-4\eps)M/2} \prod_{v\in V_{t_{1/2}}} (2 + 2\rho^{d_v + d_{\neg v}})
\end{equation}
and $\rho$ is the solution to the equation
\begin{equation}
\label{eq:rho}
	(1/4+\eps)M = \sum_{v \in V_{t_{1/2}}} \frac{d_v + d_{\neg v}}{2 +2\rho^{d_v + d_{\neg v}}}.
\end{equation}
\end{proposition}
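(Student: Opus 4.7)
The plan is to apply the multivariate saddle-point method (equivalently, a three-dimensional local limit theorem) to the factored generating function $F=\prod_{v\in V_{t_{1/2}}}g_v$ with $g_v(x,y,u)=(xyu)^{d_v}+(xyu)^{d_{\neg v}}+x^{d_v}y^{d_{\neg v}}+x^{d_{\neg v}}y^{d_v}$. Probabilistically: drawing independently for each $v$ one of the four terms with equal weight makes $\sum_{v}X_{v}$ a sum of $N$ independent $\ZZ^{3}$-valued vectors, and the sought coefficient equals $4^{N}\pr[\sum_{v}X_{v}=(M/2,M/2,(1/4+\eps)M)]$. Pick saddle radii $(x_{0},y_{0},u_{0})=(s,s,\sigma)$; the symmetry $g_v(x,y,u)=g_v(y,x,u)$ forces $x_{0}=y_{0}=s$, so only two saddle equations survive, namely $s\,\partial_{x}\log F=M/2$ and $\sigma\,\partial_{u}\log F=(1/4+\eps)M$ evaluated at $(s,s,\sigma)$.

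The algebraic crux is that these two equations are \emph{not} independent: imposing the one-parameter relation $s=\sigma^{-1/2}$ (i.e.\ $s^{2}\sigma=1$) automatically satisfies the $x$-equation. Indeed, on this curve the first two terms of $g_v$ collapse to $1$ so $g_v(s,s,\sigma)=2+2\sigma^{-(d_v+d_{\neg v})/2}$, and a short computation gives $s\,\partial_{x}\log g_v=(d_v+d_{\neg v})/2$, whence $\sum_{v}s\,\partial_{x}\log g_v=\tfrac12\sum_{v}(d_v+d_{\neg v})=M/2$. Reparametrising by $\rho=\sigma^{-1/2}$ (so that $\sigma^{-(d_v+d_{\neg v})/2}=\rho^{d_v+d_{\neg v}}$), the surviving $u$-equation becomes exactly~\eqref{eq:rho}; existence and uniqueness of a positive solution follow because the right-hand side of~\eqref{eq:rho} is continuous and strictly decreasing in $\rho$ from $M/2$ (as $\rho\to 0$) to $0$ (as $\rho\to\infty$), while for $\eps\in(-1/4,1/4)$ the left-hand side lies in $(0,M/2)$. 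Evaluating the Cauchy integrand at the saddle,
\begin{equation*}
	\frac{F(s,s,\sigma)}{s^{M}\sigma^{(1/4+\eps)M}}
		=\sigma^{(1/4-\eps)M}\prod_{v}\bigl(2+2\sigma^{-(d_v+d_{\neg v})/2}\bigr)
		=\rho^{-(1-4\eps)M/2}\prod_{v}\bigl(2+2\rho^{d_v+d_{\neg v}}\bigr)=E,
\end{equation*}
exactly as in~\eqref{eq:expGrowth_triple}.

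The $N^{-3/2}$ factor and the constant $C=C(k,\eps)$ arise from the standard Gaussian expansion around the saddle. Parametrising $x=se^{i\theta_{1}}$, $y=se^{i\theta_{2}}$, $u=\sigma e^{i\theta_{3}}$ and Taylor-expanding $\log F$ to second order at $(s,s,\sigma)$ produces a positive-definite quadratic form $\tfrac12\theta^{\top}\Sigma\theta$, where $\Sigma$ is assembled additively from the uniformly bounded per-$v$ Hessians of $\log g_v$; the Gaussian integral over $\theta$ contributes $(2\pi)^{3/2}/\sqrt{\det\Sigma}=\Theta_{k}(N^{-3/2})$, and the determinant is absorbed into $C$. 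The principal technical nuisance will be proving uniformity in the random degree sequence $\vec d\sim\vec D$: one must verify (i)~aperiodicity and tail decay that concentrate the integral on $|\theta|<N^{-1/2+\delta}$, (ii)~a uniform positive-definiteness bound for $\Sigma$, and (iii)~negligibility of the cubic Taylor remainder. All three follow from the typicality lemmas for $\vec d$ (notably \Lem s~\ref{Lemma_tame} and~\ref{Lemma_p}) combined with the `good signature' bound $(d_v-d_{\neg v})^{2}\le 100k2^{k}\ln k$, which guarantees that $|V_{t_{1/2}}|=\Omega(n)$ variables have sufficiently rich individual laws. Granted uniformity, the argument then concludes exactly as for \Prop~\ref{prop:coeff_extract_simple}, with the three-dimensional LCLT taking over the role played there by the one-dimensional \Lem~\ref{lem:locallimit}.
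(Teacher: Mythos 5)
Your proposal is essentially the paper's proof: both apply the three-dimensional saddle-point method to the Cauchy integral of the factored generating function $F=\prod_v g_v$, choose radii $(\rho,\rho,\rho^{-2})$ determined by equation~\eqref{eq:rho}, evaluate the integrand at the saddle to obtain $E$, extract $N^{-3/2}$ from the Gaussian approximation in the central region $|\theta|<N^{-2/5}$, and bound the tail by showing the integrand drops by an exponential factor off the central arc. Your additional observation that the constraint $x^2u=1$ automatically satisfies the $x$- and $y$-saddle equations (so the system collapses to the single equation~\eqref{eq:rho}) is a clean way to motivate the paper's choice of radii, but the underlying argument is the same.
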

In order to complete the proof of Proposition~\ref{Prop_entropy} we will estimate the exponential term in the previous statement as a function of $\eps$. Note that if $\eps = 0$, then clearly $\rho = 1$ and $E = 4^N$. Let $|\eps| < 1/100$. We begin with providing bounds for the value of $\rho$ from Equation~\eqref{eq:rho}. Let $f_g(\rho) = g/(2 + 2\rho^g)$, where $g \ge 3$. Then $f_g(1) = g/4, f_g'(1)=-g^2/8$ and
\[
	f_g''(\rho) = {\frac {{g}^{2} \left( g{\rho}^{2\,g-2}-g{\rho}^{g-2}+{\rho}^{g-2}+{\rho}^{2
\,g-2} \right) }{2 \left( 1+{\rho}^{g} \right) ^{3}}}.
\]
Note that if $0 \le \rho \le 1$, then, with room to spare, $|f_g''(\rho)| \le g^3$. Moreover, if $\rho > 1$, then we may estimate $f_g''$ as follows:
\[
	|f_g''(\rho)| < \frac{{g}^{2} \left( (g+1){\rho}^{2\,g-2}+g{\rho}^{g-2}+{\rho}^{g-2}\right) }{2\rho^{3g}} \le \frac{g^3}{\rho^g} \le g^3.
\]
Let us write $\rho = 1 + \delta$. Taylor's theorem then implies that $|f_g(\rho) - (g/4 - g^2\delta/8 )| \le g^2\delta^2$.
By writing $g_v = d_v + d_{\neg v}$ and recalling that $M = \sum_{v\in V_{t_{1/2}}} g_v$ we infer from~\eqref{eq:rho}
\[
	-\delta \frac{S_2}{8} - \delta^2 S_3 \le \eps M \le -\delta \frac{S_2}{8} + \delta^2 S_3,
	\quad \text{ where }\quad
	S_i = \sum_{v \in V_{t_{1/2}}} g_v^i, \quad \text{for } i\in\{2,3\}.
\]
In view of these inequalities we might expect that whenever $\eps$ is not too large, then $\delta \approx -\eps{8M}/{S_2}$. This can be made precise as follows. By solving the quadratic equations explicitly we infer that $\delta$ satisfies 
\[
	\frac1{16}\frac{-S_2 + \sqrt{S_2^2 - 256S_3 \eps M}}{S_3} \le \delta \le -\frac1{16}\frac{-S_2 + \sqrt{S_2^2 +256S_3 \eps M}}{S_3}
\]
Note that $\bf d$ is such that w.h.p.\ $S_2 = \Theta(kr M)$  and $S_3 = \Theta((kr)^2 M)$.
Thus, for sufficiently large $k$
\[
	\sqrt{S_2^2 +256S_3 \eps M} = S_2 \sqrt{1 + \frac{256S_3 \eps M}{S_2^2}}
	= S_2 + \frac{128S_3 \eps M}{S_2} + O\left(\frac{S_3^2 \eps^2 M^2}{S_2^3}\right).
\]
The square-root with the minus sign can be estimated analogously. We infer that
\begin{equation}
\label{eq:rho_eps}
	\rho = 1 + \delta, \quad \text{where}\quad \delta = -\eps \frac{8M}{S_2} + O((kr)^{-1} \eps^2 ).
\end{equation}
With the approximate value of $\rho$ at hand we can proceed with estimating the exponential term in~\eqref{eq:expGrowth_triple}. First of, we rearrange terms to obtain
\begin{equation}
\label{eq:E_probabilistic}
	E
	= \rho^{-(1-4\eps)M/2} \prod_{v\in V_{t_{1/2}}} (2 + 2\rho^{d_v + d_{\neg v}})
	= 4^N \cdot \rho^{2\eps M} \cdot \prod_{v\in V_{t_{1/2}}} (\rho^{-g_v/2} + \rho^{g_v/2})/2.
\end{equation}
The bounds on $\rho$ imply that
\begin{equation}
\label{eq:boundrhoinE}
	\rho^{2\eps M}
	= \left(1 -\eps \frac{8M}{S_2} + O((kr)^{-1} \eps^2 )\right)^{2\eps M}
	\le \exp\left\{-16 \eps^2 \frac{M^2}{S_2} + O(\eps^3 (kr)^{-1}M)\right\}.
\end{equation}
Regarding the last term involving the product in~\eqref{eq:E_probabilistic}, we bound it by the following probabilistic considerations. Note that
\[
	\prod_{v\in V_{t_{1/2}}} (\rho^{-g_v/2} + \rho^{g_v/2})/2
	= \sum_{(s_v) : v\in V_{t_{1/2}}, s_v \in \{-1,+1\}} 2^{-N} \rho^{-1/2 \sum_{v} s_v g_v}.
\]
Let $(S_v)_{v \in V_{t_{1/2}}}$ be a family of independent random variables, which are uniformly distributed in $\{-1,+1\}$. Then the last expression in the previous display is equal to the expected value of $\rho\left\{-1/2 \sum_{v} s_v g_v\right\}$. We obtain 
\[
	\mu := \Exp\brk{\rho^{-\frac12\, \sum_{v \in V_{t_{1/2}}} S_v g_v}}
	\le 2 \sum_{t \ge 0} \pr\brk{|\sum_{v \in V_{t_{1/2}}} S_v g_v| = t} (\rho^{t/2} + \rho^{-t/2})
\]
Note that since either $\rho^{t/2} \ge 1$ or $\rho^{-t/2} \ge 1$ we may assume without loss of generality that $\rho \ge 1$. The advantage of the above formulation is that we can estimate rather easily the probability for a large deviation of the sum $S = \sum_{v} S_v g_v$. Indeed, if we change the value of any $S_v$ to obtain a new sum $S'$, then $|S - S'| = 2g_v$. By applying Azuma-Hoeffding we obtain
\[
	\pr\brk{|\sum_{v \in V_{t_{1/2}}} S_v g_v| = t} \le \exp\left\{-2t^2/\sum_v (2g_v)^2\right\} = \exp\{-t^2/2S_2\}.
\]
Thus, by using~\eqref{eq:rho_eps} and noting that $\eps \le 0$ due to our assumption $\rho \ge 1$ we obtain the bound
\[
	\mu \le 4 \sum_{t \ge 0} e^{-t^2 / 2S_2} \cdot \rho^{t/2}
	\le 4 \sum_{t \ge 0} e^{-t^2 / 2S_2} \cdot \left(1 - \eps\frac{8M}{S_2}\right)^{t/2}
	\le 4 \sum_{t \ge 0} \exp\left\{-\frac{t^2}{2S_2} + |\eps|\frac{4Mt}{S_2}\right\}.
\]
Since the exponent is convex in $t$, it can easily be seen that it is maximized at $t = 4M|\eps|$, where its value equals
\[
	-\frac{(4M|\eps|)^2}{2S_2} + |\eps|\frac{4M(4M|\eps|)}{S_2} = 8\eps^2 \frac{M^2}{S_2}.
\]
Thus, $\mu = O(\sqrt{N}) e^{8\eps^2 \frac{M^2}{S_2}}$, and by combining~\eqref{eq:E_probabilistic} and~\eqref{eq:boundrhoinE} we infer that $E \le \sqrt{N} e^{-8\eps^2 \frac{M^2}{S_2}}$. But since $S_2 = \Theta(kr M)$ and $M = \Theta(kr N)$, this is at most $\sqrt{N} e^{-c \eps^2 N}$, for some $c > 0$.

Proposition~\ref{Prop_entropy} then follows immediately from Propositions~\ref{prop:toverlap}-\ref{prop:coeff_extract_triple}, and the (aforementioned) observation that the $t$- and $t'$-overlap of $\SIGMA,\TAU$ are independent for $t \neq t', \neg t$.

\subsection{Proof of Proposition~\ref{prop:coeff_extract_simple}}
\label{ssec:coeff_extract_simple}

Set $M = \pi(t_{1/2})km$.
By the virtue of Cauchy's integral formula we obtain
\[
	I := [z^{M/2}] F(z) = \frac{1}{2\pi i}\oint_C F(z) z^{-M/2-1} dz.
\]
Since $F$ is analytic in $\mathbf{C}$, $C$ can be any curve enclosing the origin. To estimate the integral we will use the saddle point method, which is commonly used to determine the asymptotic behavior of integrals that involve a large
parameter, and are simultaneously subject to huge variations. For an excellent overview and numerous applications we refer the reader to~\cite{FlajSed}.

The main idea is to choose $C$ such that the integrand 'peaks' at a unique point, so that the main contribution to the integral comes from a small neighborhood of this maximum. We choose $C$ to be the unit circle centered at the origin, i.e., $C = \{e^{i \theta}: -\pi < \theta < \pi \}$. Moreover, let $\theta_0 = \theta_0(n) = N^{-2/5}$, and write $C_0 = \{e^{i \theta} : |\theta| \le \theta_0(n)\}$ for the restriction of $C$ to the segment with $|\theta| \le \theta_0(n)$. Then we may write $I = I_0 + I_1$, where
\[
	I_0 = \frac{1}{2\pi i}\oint_{C_0 } F(z) z^{-M/2-1} dz
	~\text{ and }~
	I_1 = \frac{1}{2\pi i}\oint_{C \setminus C_0} F(z) z^{-M/2-1} dz.
\]
By changing variables, the first integral becomes
\begin{equation}
\label{eq:main_simple}
	I_0 =
	\frac{1}{2\pi}\int_{-\theta_0}^{\theta_0} H(\theta) d\theta,
	\quad\text{where}\quad
	H(\theta)
	= e^{-i\theta M/2} \cdot \prod_{v\in V_{t_{1/2}}} (e^{i\theta d_v} + e^{i\theta d_{\neg v}}).
\end{equation}
Moreover, by using the trivial bound for complex integrals and the fact $|z| = 1$ on $C$ we obtain
\begin{equation}
\label{eq:tails_simple}
	I_1 \le 2\pi \cdot \sup_{z \in C \setminus C_0 } \left| F(z) \right|
\end{equation}
Our subsequent proof strategy is as follows. We will first compute the asymptotic value of the integral over the 'central region'; in particular, we show that
\begin{equation}
\label{eq:main_simple_asympt}
	I_0 = (c+o(1)) N^{-1/2}2^N
\end{equation}
for an appropriate $c > 0$. Then, by using~\eqref{eq:tails_simple} we show that $I_1 = o(I_0)$. The two statements combined yield then immediately the conclusion of the proposition.

We proceed with showing~\eqref{eq:main_simple_asympt}. Recall that $|\theta| \le \theta_0 = N^{-2/5}$, and note that for any $d, d'$, by applying Taylor's Theorem
\[
	e^{i\theta d} + e^{i \theta d'} = 2 + i(d + d')\theta - \frac{1}{2}(d^2 + d'^2)\theta^2 + O\Big((1+i)(d^3 + d'^3)\theta^3\Big)
	\quad \text{ uniformly for all } d,d'\in \mathbf{N}, |\theta| \le \theta_0.
\]
Let us write
\[
	S_2 = \frac14 \, \sum_{v\in V_{t_{1/2}}} d_v^2 + d_{\neg v}^2 + (d_v + d_{\neg v})^2
	\quad\text{and}\quad
	S_j = \sum_{v\in V_{t_{1/2}}} (d_v^j + d_{\neg v}^j).
	\quad\text{ for $j \ge 3$. }
\]
Observe that $\bf d$ is w.h.p.\ such that $S_j = (1+o(1))c_j N$ for some $c_j = c_j(k) > 0$, where $2 \le j \le 9$. Using~\eqref{eq:main_simple} we infer that the integrand satisfies
\[
\begin{split}
	H(\theta)
	& = e^{-i\theta M/2} \cdot \prod_{v\in V_{t_{1/2}}}\left(2 + i(d_v + d_{\neg v})\theta - \frac12(d_v^2 + d_{\neg v}^2)\theta^2 + O\Big((1+i)(d_v^3 + d_{\neg v}^3)\theta^3\Big)\right) \\
	& = 2^N \, \exp\left\{-S_2 \theta^2 + O\left((1+i)(S_3 \theta^3 + S_4\theta^4 +\dots + S_9 \theta^9)\right)\right\} \\
	& = (1+o(1)) 2^N \, \exp\Big\{-S_2 \theta^2 \Big\}, \qquad \text{since } \theta \le N^{-2/5}.
\end{split}
\]
Thus,
\[
\begin{split}
	(2\pi)I_0
	= \int_{-\theta_0}^{\theta_0} H(\theta)d\theta
	& = (1+o(1))\, 2^N \, \int_{-\theta_0}^{\theta_0} e^{-S_2 \theta^2}d\theta \\
	& = (1+o(1))\, \frac{2^N}{\sqrt{c_2 N}} \, \int_{-\sqrt{c_2}N^{1/10}}^{\sqrt{c_2} N^{1/10}} e^{-x^2}dx
	= (1+o(1))\, \frac{2^N}{\sqrt{2\pi c_2 N}}.
\end{split}
\]

This proves~\eqref{eq:main_simple_asympt}. To complete the proof we will show that $\sup_{z \in C \setminus C_0 } \left| F(z) \right|$ is asymptotically negligible compared to $I_0$.
First, for any $v \in V_{t_{1/2}}$
\[
	f_v(\theta) := |e^{i\theta d_v} + e^{i\theta d_{\neg v}}| = \sqrt {2+2\,\cos \left( \theta \left( d_v-d_{\neg v} \right)  \right) }
\]
Let us collect some basic properties of $f_v$. Note that if $d_v = d_{\neg v}$, then $f_v(\theta) = 2$ for any $-\pi < \theta < \pi$. Otherwise, $f$ is maximized for any
\[
	\theta \in {\cal M}_{d_v - d_{\neg v}} = \left\{ j \frac{2\pi}{|d_v - d_{\neg v}|} :  |j| < \frac{|d_v - d_{\neg v}|}2\right\},
\]
where $f(\theta) = 2$.

For a pair $(d_+, d_-) \in \mathbf{N}^2$ let $V_{d_+, d_-} \subseteq V_{t_{1/2}}$ denote the set of variables $v$ such that $d_v = d_+$ and $d_{\neg v} = d_-$, and write $N_{d_+, d_-} = |V_{d_+, d_-}|$. Then,
\[
	|F(e^{i\theta})| = \prod_{v \in V_{t_{1/2}}} f_v(\theta) = \prod_{s=(d_+, d_-)} \big(2 + 2\cos(\theta(d_+ - d_-))\big)^{N_s / 2}.
\]
Note that $\sum_{s=(d_+, d_-)} N_s = N$. Thus, $|F(e^{i\theta})| \le 2^N$ for all $\theta$. However, this bound is achieved only if all factors are maximized simultaneously. We will argue in the sequel that if $|\theta| \in (\theta_0, \pi)$, then a linear (in $N$) fraction of the factors is $\le 2 - O(N^{-4/5})$. It follows for some $\alpha > 0$ that
\[
	|F(e^{i\theta})|
	\le 2^{(1-\alpha)N} \cdot (2 - O(N^{-4/5}))^{\alpha N}
	= 2^N \cdot e^{-O(N^{1/5})}
	= o(N^{-1/2} 2^N) = o(I_0).
\]
To see the claim, consider the specific pair $(d'_+, d'_-) = (kr, kr-1)$, and note that if $k$ is sufficiently large, then $kr-1 > kr/2 + 10\sqrt{k 2^k \ln k}$. So, indeed $V_{d_+, d_-} \subseteq V_{t_{1/2}}$. Furthermore, $\bf d$ is such that w.h.p.\ there is a constant $\alpha = \alpha(k) >0$ such that $N_{d_+, d_-} \ge \alpha N$. It follows that for all variables $v\in V_{d'_+, d'_-}$
\[
	f_v(\theta) = \sqrt{2 + 2\cos(\theta)}.
\]
It can easily be verified that $f_v$ is monotone increasing for $-\pi < \theta < 0$ and decreasing for $0 < \theta < \pi$. Thus, for any $|\theta| \in (\theta_0, \pi)$ we have $f_v(\theta) \le \max\{f_v(\theta_0), f_v(-\theta_0)\}$. By using the Taylor series expansion of the cosine and the square root we obtain that
\[
	f_v(\eps) = 2 - \frac{\theta^2}{4} + O(\theta^4), \quad \text{uniformly for all } -\pi < \theta < \pi.
\]
We conclude that $f_v(\theta) \le 2 - O(n^{-4/5})$ for at least $\alpha N$ variables $v$, and the proof is completed.

\subsection{Proof of Proposition~\ref{prop:toverlap}}
\label{ssec:toverlap}

We will exploit a concentration inequality due to McDiarmid~\cite{McD02}. We present it here in a simplified form that is appropriate for our purpose.  Given a finite non-empty set $B$, we denote by $Sym(B)$ the set of all $|B|!$ permutations of the elements of $B$. Let $B_1, \dots, B_N$ be a family of finite non-empty sets, and denote by $\Omega = Sym(B_1) \times \dots \times Sym(B_N)$. Moreover, let $\vec\pi = (\pi_1, \dots, \pi_N)$ be a family of independent random permutations, where $\pi_i$ is drawn uniformly from $Sym(B_i)$. 
\begin{theorem}\label{thm:McDiarmid} 
Let $c$ and $r$ be positive constants. Suppose that $h:\Omega \to \mathbf{R}_+$ is such that for any $\pi\in\Omega$ the following conditions are satisfied.
\begin{itemize}
\item If $\pi'$ can be obtained from $\pi$ by swapping two elements, then $|h(\pi) - h(\pi')| \le c$.
\item If $h(\pi) \ge s$, then there is a set of at most $rs$ coordinates such that $h(\pi')\geq s$ for any $\pi' \in \Omega$ that agrees with $\pi$ on these coordinates.  
\end{itemize}
Let $Z = h(\vec\pi)$ and let $m$ be the median of $Z$. Then, for any $t>0$ 
\begin{equation*}
\pr\brk{|Z-m|>t } \leq 4 \exp \left(- {t^2 \over 16rc^2(m+t)} \right). 
\end{equation*}
\end{theorem}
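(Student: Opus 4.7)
The plan is to combine a Doob martingale on $\Omega = \prod_i \mathrm{Sym}(B_i)$ with a Talagrand-style certificate argument, exploiting the two hypotheses together in the spirit of ``self-bounding'' concentration inequalities. The first hypothesis gives a uniform Lipschitz control of $h$ under transpositions, bounding any single elementary martingale increment by $c$; the second (certificate) hypothesis ensures that whenever $h \ge s$ its value is already forced by at most $rs$ of the coordinates, which lets the effective variance proxy in the Azuma exponent scale with the \emph{value} $h$ rather than with the ambient dimension $\sum_i |B_i|$. This is exactly the mechanism that turns a useless $c^2\sum_i|B_i|$ in the exponent into the desired $16rc^2(m+t)$.

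For the upper tail $\pr\brk{Z\ge m+t}$, I would expose the coordinates of $\vec\pi$ sequentially under a filtration $(\mathcal{F}_j)$ compatible with the transposition structure on each $\mathrm{Sym}(B_i)$ and form the Doob martingale $Z_j=\Exp\brk{Z\mid\mathcal{F}_j}$, whose increments are bounded by $c$ by the swap hypothesis. The key step replaces the full coordinate set by the random certifying set $S(\vec\pi)$ of size at most $r(m+t)$ supplied by the second hypothesis: a guess-and-verify union bound over candidate subsets $S$ of that size reduces the variance proxy to $rc^2(m+t)$ and yields an exponent of order $-t^2/(16rc^2(m+t))$, with the constant $16$ absorbing both the transposition-vs-single-coordinate mismatch and the cost of the union bound. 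For the lower tail $\pr\brk{Z\le m-t}$, I would run a symmetric coupling argument: since $\pr\brk{Z\ge m}\ge1/2$, pick a configuration $\pi^*$ with $h(\pi^*)\ge m$ together with its certifying set $S^*$ of size at most $rm$. Any $\vec\pi$ agreeing with $\pi^*$ on $S^*$ satisfies $Z\ge m$, so on $\{Z\le m-t\}$ the restriction $\vec\pi|_{S^*}$ must be far from $\pi^*|_{S^*}$ in transposition distance, and the Lipschitz property converts this into a binomial-type tail of the same form. Summing the two tails produces the factor $4$ in the statement.

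The main obstacle is that the certifying set $S=S(\vec\pi)$ is itself random and not adapted to the martingale filtration, so it cannot simply be frozen before $\vec\pi$ is revealed; decoupling requires the union bound over potential subsets of size $\le r(m+t)$, and the delicate point is to verify that this combinatorial factor is dominated by the Gaussian exponent without inflating the constant past $16$. A secondary technicality is that elementary moves on $\Omega$ are transpositions rather than single-coordinate updates, which is dealt with by constructing the martingale along a suitable edge-colouring of the transposition graph of each $\mathrm{Sym}(B_i)$ so that the bounded-differences property needed for Azuma is preserved edge by edge.
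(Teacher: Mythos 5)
The paper does not prove Theorem~\ref{thm:McDiarmid} at all: it is quoted verbatim (in simplified form) from McDiarmid's paper on concentration for independent permutations, so there is no in-paper proof to compare against. Judged on its own, your proposal has a genuine gap at its central step. The ``guess-and-verify union bound over candidate subsets $S$ of size $\le r(m+t)$'' cannot be afforded: with $N=\sum_i|B_i|$ coordinates the union bound costs a factor $\binom{N}{r(m+t)}=\exp\bc{\Theta(r(m+t)\ln N)}$, while the Gaussian gain is only $\exp\bc{-t^2/(16rc^2(m+t))}$. In the regime where the theorem is actually used (Proposition~\ref{prop:toverlap}: $m=\Theta(n)$, $t=\eps m$, $N=\Theta(n)$), and already for moderate deviations such as $t=\Theta(\sqrt m)$ where the claimed bound is a constant, the combinatorial factor overwhelms the exponent by an unbounded margin. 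This is precisely the obstruction that Talagrand-type inequalities are designed to circumvent: the known proof establishes a convex-distance isoperimetric inequality on $\prod_i\mathrm{Sym}(B_i)$ by induction on the number of coordinates (handling \emph{all} weightings of the coordinates simultaneously through the supremum in the definition of the convex distance), and only then deduces the functional statement. No exposure martingale with a per-certificate union bound is known to recover the stated exponent, and the quantitative mismatch above shows your version of it cannot.

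Your lower-tail argument is also structurally off. Fixing a single $\pi^*$ with $h(\pi^*)\ge m$ and its certificate $S^*$ only controls the configurations that agree with $\pi^*$ on \emph{all} of $S^*$ --- an exponentially small event --- and says nothing about a generic $\vec\pi$ with $Z\le m-t$, which may disagree with $\pi^*$ on a single coordinate of $S^*$. The correct deduction runs the other way: for $A=\cbc{h\le m}$ (which has probability $\ge1/2$) and any $\pi$ with $h(\pi)\ge m+t$, one forms, for each $\pi'\in A$, the hybrid agreeing with $\pi$ on the certificate $S$ of $\pi$ and with $\pi'$ elsewhere; the certificate forces the hybrid's value up to $m+t$, the transposition-Lipschitz property then forces $\pi$ and $\pi'$ to differ on at least $t/(2c)$ coordinates of $S$, whence the convex distance from $\pi$ to $A$ is at least $t/(2c\sqrt{r(m+t)})$, and the isoperimetric inequality finishes the proof (the factor $4$ being $1/\pr\brk A\le2$ times the two tails). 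If you want a self-contained proof, the piece you must supply is the convex-distance inequality for products of symmetric groups; everything downstream of it is the short deduction just sketched.
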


Let us proceed with the proof of Proposition~\ref{prop:toverlap}. We will assume without loss of generality that $t$ is such that $p(t) > 1/2$. We will abbreviate $p = p(t)$, $q = p(\neg t)$. Let $\sigma$ be an arbitrary assignment with $p$-marginals. Moreover, denote by $\TAU$ an assignment that is obtained by selecting for any signature $s\in t$ uniformly at random $p |V_s|$ variables from $V_s$ and setting them to true, and setting all other variables in $V\setminus V_t$ arbitrarily so that $\TAU$ has $p$-marginals. Equivalently, we may generate $\TAU$ by permuting the variables in $V_s$ randomly, and setting the first $p|V_s|$ variables in that permutation to true, for all $s\in t$. With this notation we obtain
\[
	|{\cal H}^2_{t, \neg t}(\eps, \eps')| \le |{\cal H}|^2 \cdot \pr\brk{|w_t(\sigma, \TAU) - p^2\pi(t)km| \ge \eps \pi(t)km}
\]
The latter probability can be estimated with Theorem~\ref{thm:McDiarmid}. Indeed, note that
\begin{itemize}
	\item if $\tau, \tau'$ have $p$-marginals and can be obtained by swapping the truth assignment of two variables, then $$|w_t(\sigma, \tau) - w_t(\sigma, \tau')| \le 2\max_{v \in V_t}d_v \le 4kr.$$
	\item if $w_t(\sigma, \tau) \ge s$, then there is a set $S$ of $\le s/ \min_{v \in V_t}d_v \le 2s/kr$ variables that are set to true, and any $\tau'$ with $p$-marginals that sets all variables is $S$ to true satisfies $w_t(\sigma, \tau') \ge s$.
\end{itemize}
We thus may apply Theorem~\ref{thm:McDiarmid} with $c =  4kr$ and $r = 2/kr$. Moreover, trivially $\Erw\brk{w_t(\sigma, \TAU)} \le \pi(t)km$. We infer that
\[
	\frac{|{\cal H}^2_{t, \neg t}(\eps, \eps')|}{|{\cal H}|^2} \le 4 \exp \left(- \Theta(1)\frac{(\eps \pi(t) k m)^2}{kr \cdot \pi(t) k m} \right) = 4 \exp \left(- \Theta(1)\, \eps^2 \pi(t) n \right). 
\]
Exactly the same argument, where we interchange the roles of $t$ and $\neg t$, shows that also
\[
	\frac{|{\cal H}^2_{t, \neg t}(\eps, \eps')|}{|{\cal H}|^2} \le 4 \exp \left(- \Theta(1)\frac{(\eps' \pi(\neg t) k m)^2}{kr \cdot \pi(\neg t) k m} \right) = 4 \exp \left(- \Theta(1)\, \eps^2 \pi(\neg t) n \right). 
\]
The claim follows.

\subsection{Proof of Proposition~\ref{prop:coeff_extract_triple}}
\label{ssec:coeff_extract_triple}

Set $M = \pi(t_{1/2})km$. By applying Cauchy's integral formula we obtain
\[
	I := [(xy)^{M/2}\, u^{(1/4 + \eps)M}] F(x,y,u) = \frac{1}{(2\pi i)^3}\oint_{C_1} \oint_{C_2} \oint_{C_o} F(x,y,u) (xy)^{-M/2-1} u^{-(1/4+\eps)M-1} du dy dx.
\]
The function $F$ is analytic in $\mathbf{C}^3$, implying that $C_1, C_2, C_o$ can be any curves enclosing the origin. We choose
\[
	C_1 = \{\rho e^{i\theta} : |\theta|<\pi\}, \quad C_2 = \{ \rho e^{i\varphi}: |\varphi|<\pi \}, \quad C_o =\{ \rho^{-2} e^{i \psi} : |\psi| < \pi\},
\]
where $\rho$ is the solution to the Equation~\eqref{eq:rho}. Some remarks are in place here. The choice of the integration paths may seem arbitrary at this point. Note, however, that $F$ is symmetric with respect to $x$ and $y$, and thus it is natural to assume similar integration curves for them. Moreover, the choice of $\rho$ is guided by the general principles of the saddle-point method and is such that the integrand has a unique maximum at $(\theta,\varphi,\psi) = (0,0,0)$. Indeed, as we will show subsequently, the integrand is around $(0,0,0)$ of elliptic type; this allows us to reduce the estimation of the main terms to the evaluation of a 3-dimensional Gaussian integral.

Denote by $\cal C$ the restriction of the circles $C_1, C_2, C_o$ to a small region around the origin, i.e.,
\[
	{\cal C} = \{\rho e^{i\theta}: |\theta| < N^{-2/5}\} \times \{\rho e^{i\varphi} : |\varphi| < N^{-2/5}\} \times \{\rho^{-2}e^{i\psi} : |\psi| < N^{-2/5}\}.
\]
Then we may write $I = I_0 + I_1$, where
\[
	I_0 = \frac{1}{(2\pi i)^3}\oint_{\cal C} F(x,y,u)\,(xy)^{-M/2-1}z^{-(1/4+\eps)M-1} dzdydx,
\]
and $I_1$ is the integral over $(C_1\times C_2 \times C_o)\setminus \cal C$. By changing variables we obtain
\begin{equation}
\label{eq:I0triple}
	I_0 = \frac1{(2\pi)^3}\int\limits_{[-N^{-2/5},N^{-2/5}]^3} H(\theta,\varphi,\psi) d\psi d\varphi d\theta,
	\text{ where }
	H = \rho^{-\frac{(1 - 4\eps)M}2} e^{-i\frac{(\theta + \varphi)M}2 - i\psi(1/4+\eps)M} \prod_{v\in V_{t_{1/2}}} h_v(\theta,\varphi,\psi),
\end{equation}
and
\[
	h_v(\theta,\varphi,\psi) = e^{i(\theta + \varphi + \psi)d_v} + e^{i(\theta + \varphi + \psi)d_{\neg v}} + \rho^{d_v + d_{\neg v}}e^{i\theta d_v + i\varphi d_{\neg v}} + \rho^{d_v + d_{\neg v}}e^{i\theta d_{\neg v} + i\varphi d_v}.
\]
Regarding $I_1$, we will use the trivial bound
\begin{equation}
\label{eq:trival_triple}
	I_1 \le (2\pi)^3 \sup_{(x,y,u) \in (C_1 \times C_2 \times C_o) \setminus {\cal C}} |H(x,y,u)|
\end{equation}
to show that $I_1 = o(I_0)$.

We begin with estimating $I_0$ by providing an appropriate asymptotic expansion of it for points around the origin. First of all, note that for any $v\in V_{t_{1/2}}$ we have $h_v(0,0,0) = 2 + 2\rho^{d_v + d_{\neg v}}$ and thus
\[
	H(0,0,0) = \rho^{-(1-4\eps)M/2} \, \prod_{v_\in V_{t_{1/2}}} (2 + 2\rho^{d_v + d_{\neg v}}) = E.
\]
Moreover,
\[
	\frac{\partial}{\partial \theta}h_v(0,0,0) = \frac{\partial}{\partial \varphi}h_v(0,0,0) = (2 + 2\rho^{d_v + d_{\neg v}})\, \frac{i}2(d_v + d_{\neg v}), 
	\quad\text{and}\quad
	\frac{\partial}{\partial \psi}h_v(0,0,0) = i(d_v + d_{\neg v}).
\]
The second derivatives at $(0,0,0)$ are given by
\[
	\frac{\partial^2}{\partial \theta^2}h_v = \frac{\partial^2}{\partial \varphi^2}h_v = -(d_v^2+d_{\neg v}^2)(1 + \rho^{d_v + d_{\neg v}}),
	\quad\text{and}\quad
	\frac{\partial^2}{\partial \psi^2}h_v = -(d_v^2+d_{\neg v}^2).
\]
Furthermore, the mixed second derivatives are
\[
	\frac{\partial^2}{\partial\theta\partial\varphi}h_v = -(d_v^2+d_{\neg v}^2+2d_vd_{\neg v}\rho^{d_v + d_{\neg v}})
	\quad\text{and}\quad
	\frac{\partial^2}{\partial\theta\partial\psi}h_v = \frac{\partial^2}{\partial\phi\partial\psi}h_v= -(d_v^2+d_{\neg v}^2).
\]
We will also need crude bounds for the third-order derivatives in order to establish an accurate approximation for $H$ around the origin. Note that $h_v$ linearly exponential in $\theta,\varphi,\psi$ and $d_v, d_{\neg v}$. Thus, every time we take a derivative with respect to some variable, the norm of each single term in the expression of $h_v$ can increase by at most $m_v = \max\{d_v, d_{\neg v}\}$. Thus, uniformly for $(\theta,\varphi,\psi)\in [-N^{2/5},N^{2/5}]$ we have that
\[
	\left|\frac{\partial^3}{\partial \xi_1 \partial \xi_2 \partial \xi_3}h_v\right| \le 2(1 + \rho^{d_v + d_{\neg v}}) (d_v + d_{\neg v})^3,
	\quad \text{ where }\quad
	\xi_1,\xi_2,\xi_3 \in \{\theta,\varphi,\psi\}.
\]
By using the uniform estimate $1+x = e^{x - x^2/2 + \Theta(x^3)}$, where we set $1 + x = h_v(\theta,\varphi,\psi)/h_v(0,0,0)$ we infer that
\begin{equation}
\label{eq:hvfirstorder}
	\ln \frac{h_v(\theta,\varphi,\psi)}{h_v(0,0,0)} = \frac{i}{2}(d_v + d_{\neg v})(\theta + \phi) + i\frac{d_v+d_{\neg v}}{2+2\rho^{d_v + d_{\neg v}}}\psi + \text{ 2nd order }  + \text{error},
\end{equation}
where the 2nd order terms are
\[
	-\frac{(d_v - d_{\neg v})^2}{8}(\theta^2+\phi^2)
	- \frac{(d_v-d_{\neg v})^2 + 2\rho^{d_v + d_{\neg v}}(d_v^2+d_{\neg v}^2)}{2(2+2\rho^{d_v + d_{\neg v}})^2}\psi^2
	+ \frac{(d_v-d_{\neg v})^2(\rho^{d_v+d_{\neg v}}-1)}{2(2+2\rho^{d_v + d_{\neg v}})}\theta\varphi
	- \frac{(d_v-d_{\neg v})^2}{4+4\rho^{d_v + d_{\neg v}}}(\theta + \varphi)\psi.
\]
Finally, since $(\theta,\varphi,\psi)\in [-N^{2/5},N^{2/5}]$ the error term is of order at most $(d_v + d_{\neg v})^3N^{-6/5}$. In order to obtain an approximation for $H$ we form the product over all $v\in V_{t_{1/2}}$. Observe that the (linear in the variables) exponential factor $e^{-i(\theta+\varphi)M/2 - i\psi(1/4+\eps)M}$ cancels exactly with the first order terms in~\eqref{eq:hvfirstorder}. By abbreviating
\[
	S_{\theta,\theta} = \sum_{v\in V_{t_{1/2}}} \frac{(d_v - d_{\neg v})^2}8,
	\quad S_{\psi,\psi} = \sum_{v\in V_{t_{1/2}}} \frac{(d_v - d_{\neg v})^2 + 2\rho^{d_v + d_{\neg v}}(d_v^2+d_{\neg v}^2)}{2(2+2\rho^{d_v + d_{\neg v}})^2},
\]
and
\[
	S_{\theta,\phi} = \sum_{v\in V_{t_{1/2}}}\frac{(d_v - d_{\neg v})^2(\rho^{d_v+d_{\neg v}}-1)}{4+4\rho^{d_v+d_{\neg v}}},
	\quad
	S_{\theta,\psi} = \sum_{v\in V_{t_{1/2}}} \frac{(d_v - d_{\neg v})^2}{4+4\rho^{d_v + d_{\neg v}}},
	\quad
	S_3 = \sum_{v\in V_{t_{1/2}}}(d_v + d_{\neg v})^3
\]
we obtain uniformly for any $(\theta,\varphi,\psi) \in [-N^{-2/5},N^{-2/5}]^3$
\[
\ln\left(\frac{H}E\right) = -S_{\theta,\theta}(\theta^2+\varphi^2) - S_{\psi,\psi}\psi^2 + S_{\theta,\phi}\theta\phi - S_{\theta,\psi}(\theta + \varphi)\psi + O(S_3 N^{-6/5}).
\]
Observe that $\bf d$ is such that w.h.p.\ all quantities $S_{.,.}$ and $S_3$ are linear in $N$. Thus, we are left with computing
\[
	I_0 = (1+o(1)) E \cdot \int_{[-N^{-2/5},N^{-2/5}]^3} e^{-S_{\theta,\theta}(\theta^2+\varphi^2) - S_{\psi,\psi}\psi^2 + S_{\theta,\phi}\theta\varphi - S_{\theta,\psi}(\theta + \varphi)\psi} d\psi d\varphi d\theta.
\]
In order to compute this integral we rescale each variable with $N^{-1/2}$. By writing $s_{.,.}$ for $S_{.,.}/N$ we obtain
\[
	I_0 = (1+o(1)) E \cdot N^{-3/2} \cdot \int_{[-N^{1/10},N^{1/10}]^3} e^{-s_{\theta,\theta}(\theta^2+\varphi^2) - s_{\psi,\psi}\psi^2 + s_{\theta,\varphi}\theta\varphi - s_{\theta,\psi}(\theta + \varphi)\psi} d\psi d\varphi d\theta.
\]
A termwise comparison and elementary algebraic manipulations yield that
\[
	4S_{\theta,\theta}^2 - S_{\theta,\varphi}^2 \ge 0
	\quad \text{and} \quad
	2S_{\psi,\psi}S_{\theta,\theta} - S_{\theta,\psi}^2 - S_{\psi,\psi}S_{\theta,\varphi} \ge 0
\]
Thus, the squares can be completed and the integral in the above expression equals a constant depending on the family $s_{.,.}$; this shows that asymptotically $I_1$ is proportional to $N^{-3/2} \cdot E$.

In order to complete the proof we will use~\eqref{eq:trival_triple} to show that $I_1$ is asymptotically negligible compared to $I_0$. Recall the definition of $H$ from~\eqref{eq:I0triple}. It follows that the absolute value of $H$ is given by 
\[
	\rho^{-(1-4\eps)M/2} \cdot \prod_{v\in V_{t_{1/2}}} f_v(\theta,\varphi,\psi),
	\quad \text{where}\quad
	f_v(\theta,\varphi,\psi) = |h_v(\theta,\varphi,\psi)|.
\]
Let us abbreviate $D_v = d_v - d_{\neg v}$. A lengthy calculation, which can be performed easily with the help of MAPLE, yields that
\[
\begin{split}
	f_v(\theta,\varphi,\psi)^2
	& = 2 + 2\rho^{2(d_v + d_{\neg v})}
	+ 2\cos\big(D_v(\theta+\varphi+\psi)\big)
	+ 2\rho^{2(d_v + d_{\neg v})}\cos\big(D_v(\theta-\varphi)\big) \\
	& + 2\rho^{d_v + d_{\neg v}}
		\left(
			\cos\big(D_v\varphi + d_v\psi\big)
			+ \cos\big(D_v\theta + d_v\psi\big)
			+ \cos\big(D_v\theta - d_{\neg v}\psi\big)
			+ \cos\big(D_v\varphi - d_{\neg v}\psi\big)
		\right).
\end{split}
\]
Note that we can get an upper bound for $f_v$ if we replace all terms involving a cosine by one; this implies that $|H| \le \rho^{-(1-4\eps)M/2}\prod_v (2 + 2\rho^{d_v + d_{\neg v}}) = E$. Moreover, the bound is achieved only if all factors are maximized simultaneously, and this happens for example when we choose $(\theta,\varphi,\psi)=(0,0,0)$. We will argue in the sequel that if $(\theta,\varphi,\psi) \in (C_1\times C_2 \times C_o)\setminus \cal C$, i.e., at least one of the variables $\theta, \varphi, \psi$ is assigned a value not lying in $[-N^{-2/5}, N^{-2/5}]$, then there is a subset of variables $V' \subset V_{t_{1/2}}$ such that $|V'| \ge \alpha N$ for some $\alpha > 0$ and for all $v\in V'$ it holds $f_v(0,0,0) \le f_v(0,0,0) - O(N^{-4/5})$. Indeed, if this is true, then
\[
	|H| \le \rho^{-(1-4\eps)M/2} \prod_{v\in V_{t_{1/2}} \setminus V'} (2 + 2\rho^{d_v + d_{\neg v}}) \prod_{v\in V'}(2 + 2\rho^{d_v + d_{\neg v}} - O(N^{-4/5})).
\]
Since $\rho$ is bounded and $\bf d$ is such that w.h.p.\ $d_v + d_{\neg v} = o(\log n)$, it follows that $|H|$ smaller that $E$ by an exponential factor, which shows with~\eqref{eq:trival_triple} that $I_1 = o(I_0)$.

To see that a set $V'$ with the desired properties exists, let us assume that at least one of $\theta,\varphi,\psi$ is in absolute value at least $N^{-2/5}$. For a pair $(d_+, d_-) \in \mathbf{N}^2$ let $V_{d_+, d_-} \subseteq V_{t_{1/2}}$ denote the set of variables $v$ such that $d_v = d_+$ and $d_{\neg v} = d_-$, and write $N_{d_+, d_-} = |V_{d_+, d_-}|$. Consider the specific pair $(d_+,d_-) = (kr, kr-1)$, and note that for all such variables we have $D_v=1$. Furthermore, $\bf d$ is such that w.h.p.\ there is a constant $\beta = \beta(k) >0$ such that $N_{d_+, d_-} \ge \beta N$. Then we may assume that 
\[
	\text{for all } v\in N_{d_+, d_-}: ~ f_v(\theta, \varphi, \psi) \ge (2 + 2\rho^{2kr-1} - O(N^{-4/5})),
\]
as otherwise there is nothing to show. This impliesthat the arguments of all cosines appearing in the expression of $f_v$ are close to multiples of $2\pi$, and in particular, 
\begin{equation}
\label{eq:estimatesangles}
	|\theta + \phi + \psi|, \quad | \theta - \varphi|, \quad | \varphi + d_+\psi| = O(N^{-2/5}) \quad (\bmod~2\pi); 
\end{equation}
this follows directly from the series expansion of the cosine around integer multiples of $2\pi$, which lack a linear term. Next, consider the pair $(d'_+,d'_-) = (kr, kr-2)$; again $\bf d$ is such that w.h.p.\ there is a constant $\beta' = \beta'(k) >0$ such that $N_{d'_+, d'_-} \ge \beta' N$. Note that for these variables we have $D_v=2$. Then, as previously, we may also assume that 
\[
	\text{for all } v\in N_{d'_+, d'_-}: ~ f_v(\theta, \varphi, \psi) \ge (2 + 2\rho^{2kr-2} - O(N^{-4/5})),
\]
But then, by the same argument as above, $|2\varphi + d'_+\psi| = O(N^{-2/5})~(\bmod~2\pi)$. Since $d_+ = d'_+$ and, by assumption, $|\varphi|<\pi$, by combining this with the third term in~\eqref{eq:estimatesangles}, we infer that $|\varphi| = O(N^{-2/5})$. In turn, together with the second term in~\eqref{eq:estimatesangles}, this implies that also $|\theta| = O(N^{-2/5})$. Finally, the fact  $|\theta + \varphi + \psi| = O(N^{-2/5})~(\bmod~2\pi)$ from~\eqref{eq:estimatesangles} then also implies that $|\delta| = O(N^{-2/5})$. Everything together yields that $(\theta,\varphi,\psi)\in\cal C$, a contradiction.

\section{Proof of Corollary~\ref{XCor_asymmetric}}

As a direct consequence of our second moment argument, the Paley-Zygmund inequality,
and a concentration result on the number of satisfying assignments from~\cite{Barriers} we obtain the following.

\begin{proposition}\label{Prop_partition}
For $r$ as in \eqref{eq:rrho} we have 
	$\abs{\cS(\PHI)}\geq\Erw\abs{\cS(\PHI)}\cdot\exp\brk{-\frac{nr}{k^94^k}}$ \whp
\end{proposition}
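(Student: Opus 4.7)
The plan is to combine the second-moment machinery developed earlier with a sharp concentration result for $\ln|\cS(\PHI)|$ from~\cite{Barriers}. First, \Thm~\ref{XThm_secondPlain} together with the Paley-Zygmund inequality~(\ref{XeqPZ}) yields that \whp\ over $\vec d\sim\vec D$ and $\vec m\sim\vec M_{\vec d}$, with probability at least $1/(4C)$ over the remaining randomness, $Z_{p,\mathrm{good}}(\PHIdm)\geq\frac12\Erw[Z_{p,\mathrm{good}}(\PHIdm)]$. Because generating $\PHI$ is equivalent to first drawing $\vec d,\vec m$ and then $\PHIdm$, and because $|\cS(\PHI)|\geq Z_{p,\mathrm{good}}(\PHIdm)$, we conclude that with probability bounded away from zero over uniformly random $\PHI$,
\[
|\cS(\PHI)|\geq\tfrac12\Erw[Z_{p,\mathrm{good}}(\PHIdm)].
\]

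Next I match this lower bound against $\Erw|\cS(\PHI)|$. Expanding the trivial first moment $\Erw|\cS(\PHI)|=2^n(1-2^{-k})^m$ with $r=2^k\ln2-\rho$ gives $\ln\Erw|\cS(\PHI)|=\frac{n}{2^k}(\rho-\frac{\ln2}{2})+O(n/4^k)$. The sharpened first-moment estimate of \Prop~\ref{Prop_firstMoment} combined with \Lem~\ref{XLemma_theLocalCluster} yields the matching lower bound $\ln\Erw[Z_{p,\mathrm{good}}(\PHIdm)]\geq\frac{n}{2^k}[\rho-\frac{\ln2}{2}-k^{-9}]-O(1)$ \whp. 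Since $nr/(k^9 4^k)=(\ln 2)\,n/(k^9\cdot 2^k)(1+o_k(1))$ dominates the gap between these two estimates, we obtain that with probability bounded away from zero,
\[
|\cS(\PHI)|\geq\Erw|\cS(\PHI)|\cdot\exp\bc{-\tfrac{nr}{2k^9\cdot 4^k}}.
\]

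Finally, this positive-probability bound is boosted to a \whp\ statement using the concentration of $\ln|\cS(\PHI)|$ from~\cite{Barriers}: that result provides a median $M(n)$ around which $\ln|\cS(\PHI)|$ concentrates at scale $o(nr/(k^9\cdot 4^k))$ \whp. The displayed inequality then forces $M(n)\geq\ln\Erw|\cS(\PHI)|-\frac{nr}{2k^9\cdot 4^k}-o(nr/(k^9\cdot 4^k))$, and concentration yields the desired \whp\ bound after absorbing the lower-order slack. The main obstacle is ensuring the concentration from~\cite{Barriers} truly operates at the required fine scale $\Theta(n/(k^9\cdot 2^k))$; if it only delivers $o(n)$ fluctuations, one would need to supplement it, for instance by applying Azuma's inequality to the clause-by-clause exposure martingale of $\PHI$ (which gives $\sqrt n$-scale fluctuations) and then using the structure of the two-step sampling $(\vec d,\vec m)\to\PHIdm$ to handle the degree-sequence variability separately.
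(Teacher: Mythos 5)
Your overall strategy matches what the paper indicates: Paley--Zygmund applied to $Z_{p,\mathrm{good}}$ via \Thm~\ref{XThm_secondPlain}, a comparison of $\Erw[Z_{p,\mathrm{good}}(\PHIdm)]$ with $\Erw|\cS(\PHI)|$, and a boost from a positive-probability bound to a \whp\ bound using the concentration result from~\cite{Barriers}. So the route is essentially the paper's.

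There is, however, a genuine quantitative slip in your ``dominates'' step. Quoting \Prop~\ref{Prop_firstMoment} you bound the gap between $\ln\Erw|\cS(\PHI)|$ and $\ln\Erw[Z_{p,\mathrm{good}}(\PHIdm)]$ by $k^{-9}2^{-k}n+O(n/4^k)$. But the slack you need to beat is $\frac{nr}{k^9 4^k}=(\ln 2+o_k(1))k^{-9}2^{-k}n$, and you have already earmarked half of it for the concentration step, leaving roughly $\tfrac{\ln 2}{2}k^{-9}2^{-k}n$. Since $\tfrac{\ln 2}{2}<1$, the gap you computed does not fit inside this slack, so the displayed inequality $|\cS(\PHI)|\geq\Erw|\cS(\PHI)|\exp(-nr/(2k^9 4^k))$ does not follow from the bound you quoted. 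The fix is to open \Prop~\ref{Prop_firstMoment} up: \Lem~\ref{Lemma_first} has error $O(k(\delta+\delta')/2^k)+\tilde O(2^{-3k/2})$, and (\ref{eqProp_firstMoment2})--(\ref{eqProp_firstMoment3}) give $\delta,\delta'=O_k(k^{-12})$, so the true gap is $O(k^{-11}2^{-k}n)$, comfortably below the required threshold for $k\ge 3$. The $k^{-9}$ in the statement of \Prop~\ref{Prop_firstMoment} is a deliberately loosened constant and cannot be compared against $k^{-9}$ in the target.

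Your worry about the scale of the concentration result is reasonable to raise but resolves itself: the proposition is a fixed-$k$, $n\to\infty$ statement, so $nr/(k^94^k)$ is a positive constant times $n$, and any concentration of the form ``for every fixed $\eps>0$, \whp\ $|\ln|\cS(\PHI)|-\mathrm{med}|\leq\eps n$'' already suffices after choosing $\eps$ smaller than (half of) $r/(k^94^k)$. You do not need a $\sqrt n$-scale result. Moreover your proposed fallback of applying Azuma directly to the clause-exposure martingale of $\ln|\cS(\PHI)|$ is not as immediate as you suggest, because swapping a single clause can in general change $\ln|\cS(\PHI)|$ by an unbounded amount; this is precisely the obstacle that~\cite{Barriers} has to work around, so it is cleaner to just cite their result rather than to attempt a naive martingale argument.
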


We consider the following ``planted model'':
let $\Lambda=\Lambda_k(n,m)$ be the the of all pairs $(\Phi,\sigma)$ of $k$-CNFs
$\Phi$ over $V$ with $m$ clauses and satisfying assignments $\sigma\in\cS(\Phi)$.
Let $\pr_\Lambda$ signify the uniform distribution over $\Lambda$;
	$\pr_\Lambda$ is sometimes called the \emph{planted model}.
Moreover, let $\pr_G$ be the distribution on $\Lambda$ obtained by first choosing a random formula $\PHI$ and then
	a uniformly random $\sigma\in\cS(\PHI)$ (provided that $\PHI$ is satisfiable);
		$\pr_G$ is sometimes called the \emph{Gibbs distribution}.
Combining \Prop~\ref{Prop_partition} with an argument from~\cite{}, we obtain the following ``transfer result''.

\begin{corollary}\label{Cor_partition}
For any $\cB\subset\Lambda$ the following is true.
If $\pr_\Lambda\brk{\cB}\leq\exp\brk{-\frac{2nr}{k^94^k}}$, then $\pr_G\brk{\cB}=o(1)$.
\end{corollary}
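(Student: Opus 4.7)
The plan is to use Proposition~\ref{Prop_partition} to control the Radon--Nikodym derivative of $\pr_G$ with respect to $\pr_\Lambda$. A direct computation shows that for any pair $(\Phi,\sigma)\in\Lambda$,
\[
\frac{\pr_G\brk{(\Phi,\sigma)}}{\pr_\Lambda\brk{(\Phi,\sigma)}}=\frac{\Erw\abs{\cS(\PHI)}}{\abs{\cS(\Phi)}},
\]
since $\pr_\Lambda\brk{(\Phi,\sigma)}=1/\abs\Lambda$ with $\abs\Lambda=(2n)^{km}\Erw\abs{\cS(\PHI)}$, while $\pr_G\brk{(\Phi,\sigma)}=(2n)^{-km}\abs{\cS(\Phi)}^{-1}$. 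Hence the only obstacle to a pointwise bound on this ratio is the possibility that $\abs{\cS(\Phi)}$ is anomalously small compared with $\Erw\abs{\cS(\PHI)}$.

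I would then define the ``typical'' event
\[
\cG=\bigg\{\Phi:\abs{\cS(\Phi)}\geq \Erw\abs{\cS(\PHI)}\cdot\exp\brk{-\frac{nr}{k^94^k}}\bigg\},
\]
and invoke Proposition~\ref{Prop_partition} to conclude $\pr\brk{\PHI\in\cG}=1-o(1)$. On this event, the ratio above is at most $\exp\brk{nr/(k^94^k)}$.

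To conclude, split $\cB=\cB_1\cup\cB_2$, where $\cB_1=\cB\cap\cbc{(\Phi,\sigma):\Phi\in\cG}$ and $\cB_2=\cB\setminus\cB_1$. On $\cB_2$, the formula $\Phi$ is atypical, so
\[
\pr_G\brk{\cB_2}\leq\pr\brk{\PHI\not\in\cG}=o(1).
\]
On $\cB_1$, the bounded Radon--Nikodym derivative yields
\[
\pr_G\brk{\cB_1}\leq\exp\brk{\frac{nr}{k^94^k}}\cdot\pr_\Lambda\brk{\cB_1}\leq\exp\brk{\frac{nr}{k^94^k}-\frac{2nr}{k^94^k}}=\exp\brk{-\frac{nr}{k^94^k}}=o(1),
\]
using the hypothesis $\pr_\Lambda\brk\cB\leq\exp\brk{-2nr/(k^94^k)}$. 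Adding the two bounds gives $\pr_G\brk\cB=o(1)$. There is no genuine obstacle here: the entire argument is a change-of-measure computation, and the ``$2nr/(k^94^k)$'' in the hypothesis is chosen precisely to absorb the one-sided concentration loss from Proposition~\ref{Prop_partition} while leaving an $\exp\brk{-nr/(k^94^k)}$ slack.
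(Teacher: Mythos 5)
Your change-of-measure argument is correct and is exactly the standard ``planting'' transfer that the paper invokes (it cites an external reference rather than spelling it out, but this Radon--Nikodym computation is the canonical argument). The only small gloss is that $\pr_G\brk{(\Phi,\sigma)}=|\cS(\Phi)|^{-1}(2n)^{-km}/\pr\brk{\PHI\mbox{ satisfiable}}$, i.e.\ the normalization carries a conditioning on satisfiability; since $\pr\brk{\PHI\mbox{ satisfiable}}=1-o(1)$ at the densities in question this only perturbs your ratio by a $1+o(1)$ factor and affects nothing.
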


\noindent
Thus, in order to show that some `bad' event $\cB$ is unlikely under $\pr_G$, we ``just'' need to show
that $\pr_\Lambda\brk{\cB}\leq\exp\brk{-\frac{2nr}{k^94^k}}$ is exponentially small.

\begin{lemma}\label{Lemma_towardMaj}
There is a number $\delta=\delta(k)>0$ such that
	\begin{eqnarray*}
	\pr_\Lambda\brk{\dist(\sigma,\sigmaMAJ)>\frac12-\delta}&\leq&\exp\brk{-\frac{2nr}{k^94^k}}.
	\end{eqnarray*}
\end{lemma}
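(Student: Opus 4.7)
The proof will have three steps: a symmetry reduction to the planted formula, a Berry--Esseen estimate of the expected overlap with $\sigmaMAJ$, and a sub-Gaussian concentration bound. The last step will be the main technical hurdle.

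\textbf{Reduction to the planted model.} I will first observe that for every $\sigma \in \cbc{0,1}^V$ the number of $k$-CNFs $\Phi$ with $\sigma \in \cS(\Phi)$ equals $((2n)^k - n^k)^m$; hence under $\pr_\Lambda$ the $\sigma$-marginal is uniform on $\cbc{0,1}^V$, and conditionally on $\sigma$ the formula $\PHI$ is a product of $m$ i.i.d.\ clauses drawn uniformly from the $((2n)^k - n^k)$ clauses satisfied by $\sigma$. Relabelling variables via $\sigma$ preserves both $\pr_\Lambda$ and the map $\sigmaMAJ$, so I may assume $\sigma = \vecone$ throughout; write $\PHI^\ast$ for the resulting random formula.

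\textbf{Expected overlap.} Next I will compute the mean of $\dist(\vecone, \sigmaMAJ(\PHI^\ast))$. A direct count gives $\pr[\PHI^\ast_{ij} = x] = \frac{2^{k-1}}{n(2^k-1)}$ and $\pr[\PHI^\ast_{ij} = \neg x] = \frac{2^{k-1}-1}{n(2^k-1)}$, so $\Erw[d_x - d_{\neg x}] = kr/(2^k-1) = \Theta(k)$ while $\Var(d_x - d_{\neg x}) = \Theta(kr) = \Theta(k\cdot 2^k)$. A Berry--Esseen bound applied to the sum of $m$ bounded i.i.d.\ per-clause increments yields
\[
\pr\brk{d_x \leq d_{\neg x}} = \tfrac{1}{2} - \gamma\sqrt{k/2^k} + o(\sqrt{k/2^k})
\]
for some $\gamma = \gamma(k) > 0$. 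Summing over $x$ gives $\Erw[\dist(\vecone, \sigmaMAJ)] \leq (\tfrac{1}{2} - \gamma')n$ with $\gamma' = \Theta(\sqrt{k/2^k})$, and I will choose $\delta := \gamma'/2$.

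\textbf{Concentration (main obstacle).} The required tail is $\exp(-2nr/(k^9 4^k)) = \exp(-\Theta(n/(k^9 2^k)))$. A plain McDiarmid bound on the $m$ i.i.d.\ clauses of $\PHI^\ast$ with per-clause Lipschitz constant $2k$ will only yield $\exp(-\Omega(n/(k \cdot 4^k)))$, which is too weak once $k^8 > 2^k$. Instead I plan to exploit a variance bound: for $x\neq y$, $\mathrm{Cov}(d_x - d_{\neg x}, d_y - d_{\neg y}) = O(k^2 r/n)$ because distinct variables interact only through clauses containing both, and translating this to the indicators $\vecone[d_x > d_{\neg x}]$ gives pairwise covariances of size $O(k/n)$ and $\Var(\dist) = O(kn)$. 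Combining this with Efron--Stein and a Burkholder--Rosenthal moment inequality (equivalently, Freedman's martingale inequality after a Chebyshev-style control of the predictable quadratic variation) will then upgrade the flat variance to sub-Gaussian tails of the form $\exp(-\Omega(s^2/(kn)))$. Plugging in $s = \delta n = \Theta(n\sqrt{k/2^k})$ gives $\exp(-\Omega(n/2^k))$, which is comfortably stronger than $\exp(-\Theta(n/(k^9 2^k)))$. The delicate point will be converting the $O(kn)$ variance into exponential tails: this is possible because the typical sensitivity of $\dist$ to a single clause is only $O(\sqrt{k/2^k})$ (the majority vote of a variable $x$ flips only when $|d_x - d_{\neg x}| = O(k)$, an event of probability $O(1/\sqrt{kr})$ by the local CLT, summed over the $O(k)$ variables in the altered clause), but making this rigorous will be the core technical work.
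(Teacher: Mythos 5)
Your first two steps — the reduction to the planted model and the Berry--Esseen estimate of the expected overlap, giving per-variable probability $\frac12-\Theta(\sqrt{k/2^k})$ and target deviation $\delta n=\Theta(n\sqrt{k/2^k})$ — match the paper's in substance. The concentration step is where you diverge from the paper, and it is also where the gap lies. You correctly observe that clause-wise McDiarmid with Lipschitz constant $2k$ only yields $\exp(-\Omega(n/(k4^k)))$, which is too weak for large $k$, but the upgrade you propose via Efron--Stein and Freedman is not a routine filling-in. Freedman's inequality requires an almost-sure (or high-probability plus union bound) control of the \emph{predictable quadratic variation} $\sum_i\Erw[\Delta_i^2\mid\cF_{i-1}]$, which is strictly stronger than the unconditional bound $\Var(\dist)=O(kn)$; at step $i$ the conditional variance is governed by how many variables currently sit at margin $|d_x-d_{\neg x}|=O(1)$, a quantity that itself fluctuates and would need its own concentration lemma, creating a circularity. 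Moreover, the Doob martingale $\Erw[\dist\mid\cF_i]$ averages $\sigmaMAJ$ over the unseen future clauses, so the heuristic ``flip $x$'s majority vote by changing one clause'' does not literally describe the increments $\Delta_i$.

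The paper avoids all of this with a Poissonization of the degree sequence. It introduces an \emph{independent} family $(e_l)_{l\in L}$ of Poisson variables with $\Erw[e_x]=\frac{kr}2(1+\zeta)$ and $\Erw[e_{\neg x}]=\frac{kr}2(1-\zeta)$ for $\zeta=\Theta(2^{-k})$, and notes that $(e_l)$ conditioned on $\cE=\{\sum_le_l=km\}$ has exactly the law of the planted degree sequence $(d_l)$. Under the unconditional Poisson law the per-variable quantities $\vecone_{e_x>e_{\neg x}}+\frac12\vecone_{e_x=e_{\neg x}}$ are independent $[0,1]$-valued random variables, since each depends only on the disjoint pair $(e_x,e_{\neg x})$; a single Hoeffding/Chernoff bound on their average immediately gives a tail of order $\exp(-\Omega(nk/2^k))$, and deconditioning on $\cE$ costs only the factor $\pr[\cE]^{-1}=O(\sqrt n)$ from the local CLT, which the exponential absorbs. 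Poissonization turns the approximate per-variable independence underlying your plan into exact independence and replaces your entire martingale machinery with a one-line Hoeffding bound; this is the key idea missing from your proposal.
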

\begin{proof}
We can generate a pair $(\Phi,\sigma)$ from the planted model as follows:
	first, choose $\sigma\in\cbc{0,1}^V$ uniformly;
		then, generate $m$ clauses that are satisfied under $\sigma$ uniformly and independently.
Without loss of generality, we may assume that $\sigma=\vecone$ is the all-true assignment.
We need to study the distribution $\vec d=(d_l)_{l\in L}$ of literal degrees.
To this end, let $(e_l)_{l\in L}$ be a family of independent Poisson variables such that
	$\Erw\brk{e_l}=\Erw\brk{d_l}$ for all $l$.
It is easily verified that there is $\zeta=\Theta(2^{-k})$ such that
	\begin{equation}\label{eqtowardMaj1}
	\Erw\brk{d_x}=\frac{kr}2(1+\zeta),\quad\Erw\brk{d_{\neg x}}=\frac{kr}2(1-\zeta)
	\end{equation}
for all $x\in V$.
Furthermore, if we let $\cE$ be the event that $\sum_{l\in L}e_l=km$, then $\vec e=(e_l)_{l\in L}$ given $\cE$ has the
same distribution as $\vec d$.
Moreover,
	\begin{equation}\label{eqtowardMaj2}
	\pr\brk{\cE}=\Omega(n^{-1/2}).
	\end{equation}
Let
	$$Y=\frac1n\sum_{x\in V}\vecone_{e_x>e_{\neg x}}+\frac12\vecone_{e_x=e_{\neg x}}.$$
Viewing the difference $e_x-e_{\neg x}$ as a random walk of length $\Po(kr)$ and using
limit theorems for resulting distribution (the  Skellam distribution),
we obtain from~(\ref{eqtowardMaj1}) that $\Erw\brk Y\geq\frac12+\Omega(\sqrt{kr}/2^k)$.
Further, applying Chernoff bounds to $Y$ (which is a sum of independent contributions), we find that for
a certain $\delta=\Omega(\sqrt{kr}/2^k)$ 
	\begin{equation}\label{eqtowardMaj3}
	\pr\brk{Y<\frac12+\delta}\leq\exp\brk{-\Omega(\sqrt{kr}/2^k)^2n}\leq \exp\brk{-\frac{3nr}{k^94^k}}.
	\end{equation}
Finally, the assertion follows from~(\ref{eqtowardMaj2}) and~(\ref{eqtowardMaj3}).
\qed\end{proof}

\section{Proof of Lemma~\ref{XLemma_wmaj}}

The expected majority weight in $\PHI$ is easily computed.
In $\PHI$, for each $x$ the numbers $d_x$, $d_{\neg x}$ of positive/negative occurrences are
asymptotically independently Poisson with mean $kr/2$.
Therefore, for any $d=\Theta(kr)$ we obtain
	$$\Erw\brk{\abs{d_x-d_{\neg x}}\,|\,d_x+d_{\neg x}=d}=\sqrt{2d/\pi}+O_k(1).$$
In effect,
	\begin{equation}\label{eqwmaj1}
	\Erw\brk{w_{maj}(\PHI)}\sim\frac12+\sqrt{\frac{2}{\pi kr}}+O_k(1/kr).
	\end{equation}

By comparison, given that, say, the all-true assignment is satisfying,
the number $d_x$ of positive occurrences has distribution $\Po((1+1/(2^k-1))kr/2)$, while 
$d_{\neg x}$ has distribution $\Po((1-1/(2^k-1))kr/2)$.
The normal approximation to the  Poisson distribution yields for $d=\Theta(kr)$,
	$$\Erw\brk{\abs{d_x-d_{\neg x}}\,|\,\vecone\in\cS(\PHI),d_x+d_{\neg x}=d}=\sqrt{2d/\pi}+\Theta(4^{-k}d^{3/2})+O_k(1).$$
for a certain constant $c>0$.
Consequently,
	\begin{equation}\label{eqwmaj2}
	\Erw\brk{w_{maj}(\PHI)\,|\,\vecone\in\cS(\PHI)}\sim\frac12+\sqrt{\frac{2}{\pi kr}}+\Theta(4^{-k}(kr)^{1/2}).
	\end{equation}

Both with and without conditioning on $\vecone\in\cS(\PHI)$, $w_{maj}$ enjoys the following Lipschitz property:
	changing one single clause can alter the value of $w_{maj}$ by at most $k/(km)=1/(rn)$.
Therefore, Azuma's inequality yields
	\begin{eqnarray*}
	\pr\brk{\abs{w_{maj}-\Erw\brk{w_{maj}}}>\lambda}&\leq&2\exp\brk{-\frac{(r\lambda n)^2}{2m}}=2\exp\brk{-\frac{r\lambda^2 n}{2}},\\
	\pr\brk{\abs{w_{maj}-\Erw\brk{w_{maj}}}>\lambda|\vecone\in\cS(\PHI)}&\leq&2\exp\brk{-\frac{r\lambda^2n}{2}}.
	\end{eqnarray*}
In effect, for a certain constant $\zeta>0$ we have
	\begin{eqnarray}\label{eqwmaj3}
	\pr\brk{w_{maj}\geq\frac12+\sqrt{\frac{2}{\pi kr}}+\zeta4^{-k}(kr)^{1/2}
		}&\leq&\exp\brk{-\Omega\bc{k/4^k}n},\\
	\pr\brk{w_{maj}\leq\frac12+\sqrt{\frac{2}{\pi kr}}+\zeta4^{-k}(kr)^{1/2}
		|\vecone\in\cS(\PHI)}&\leq&\exp\brk{-\Omega\bc{k/4^k}n}.\label{eqwmaj4}
	\end{eqnarray}
Combining~(\ref{eqwmaj3}) and~(\ref{eqwmaj4}) with a simple counting argument yields Lemma 2 from the extended abstract.

\bigskip
\noindent{\bf Acknowledgment.}
The first author thanks Dimitris Achlioptas 
 for helpful discussions on the second moment
method. 
We also thank Charilaos Efthymiou for helpful comments
that have led to an improved presentation.

\end{document}